 \documentclass{article}

\usepackage{amsmath,amsfonts,amsthm,amssymb,amscd,color}
\usepackage{enumitem}
\usepackage{verbatim}
\usepackage{xcolor}
\usepackage[dvipdfmx]{graphicx}
\usepackage{ulem}

\setlength{\textheight}{8in}
\setlength{\oddsidemargin}{-0.1in}
\setlength{\textwidth}{6in}
\setlength{\parindent}{0.75cm}

\binoppenalty=9999 \relpenalty=9999

\renewcommand{\Re}{\mathop{\rm Re}\nolimits}
\renewcommand{\Im}{\mathop{\rm Im}\nolimits}
\def\S{\mathhexbox278}

\theoremstyle{plain}
\newtheorem{theorem}{Theorem}[section]
\newtheorem{lemma}[theorem]{Lemma}
\newtheorem{proposition}[theorem]{Proposition}

\theoremstyle{definition}
\newtheorem{definition}[theorem]{Definition}
\theoremstyle{remark}
\newtheorem{remark}[theorem]{Remark}

\newtheorem{assumption}[theorem]{Assumption}
\newtheorem{claim}[theorem]{Claim}

\newcommand{\R}{{\mathbb R}}

\newcommand{\N}{{\mathbb N}}

\def\im{{\rm i}}

\newcommand{\C}{\mathbb{C}}

\newcommand{\sign}{{\mathrm{sign}}}
\newcommand{\even}{{\mathrm{even}}}
\newcommand{\odd}{{\mathrm{odd}}}
\newcommand{\sech}{{\mathrm{sech}}}
\def\({\left(}
\def\){\right)}
\def\<{\left\langle}
\def\>{\right\rangle}

\numberwithin{equation}{section}

\setcounter{section}{0}
\begin{document}

\title{Asymptotic stability of kink with internal modes \\under odd perturbation}

\author{Scipio Cuccagna, Masaya Maeda}
\maketitle

\begin{abstract}
	We give a sufficient condition, in the spirit of Kowalczyk-Martel-Munoz-Van Den Bosch \cite{KMMvdB21AnnPDE}, for the local asymptotic stability of kinks under odd perturbations.
	In particular, we  allow the existence of quite general configuration of  internal modes.
	The extension  of our result   to   moving kinks remains an open problem.
\end{abstract}

\section{Introduction}
In this paper, we consider  the problem of the asymptotic stability of kink solutions of the  $(1+1)$ dimensional nonlinear scalar field model
\begin{equation}\label{nwe1}
 \Box u_1 +  W'(u_1) =0      ,\quad (t,x)\in \R^{1+1} \text{  ,   where }  \Box = \partial_t ^2     -\partial _x ^2.
\end{equation}
We can write the above problem as
\begin{align}\label{nlkg}
\partial_t \begin{pmatrix}
u_1\\ u_2
\end{pmatrix}
=
\mathbf{J}
\begin{pmatrix}
-\partial_x^2 u_1 + W'(u_1)\\ u_2
\end{pmatrix},\ u_1,u_2:\R^{1+1}\to\R, \text{ where }\mathbf{J}:=\begin{pmatrix}
0 & 1 \\ -1 & 0
\end{pmatrix}.
\end{align}
Our    nonlinear potential   $W$  is an even $C^\infty$ function  such that
\begin{align}\label{ass:W}
\exists\zeta>0\ \mathrm{s.t.}\ W(\zeta)=W'(\zeta)=0,\ \omega^2:=W''(\zeta)>0\ \text{and}\  \forall h\in (-\zeta,\zeta),\ W(h)>0.
\end{align}
Under   assumption \eqref{ass:W}, it is well known that an odd kink solution exists, see Lemma 1.1 of Kowalczyk et al. \cite{KMMvdB21AnnPDE}.
\begin{proposition}\label{prop:kink}
There exists odd
$H\in C^\infty(\R)$ satisfying
$H''=W'(H)$.
Furthermore, we have
$H'(x)>0$, $\lim_{x\to \infty}H(x)=\zeta$, $|H(x)-\zeta|\lesssim e^{-\omega  |x|}$
and
\begin{align*}
\forall k\geq 1,\ |H^{(k)}(x)|\lesssim_k e^{-\omega  |x|}.
\end{align*}
\end{proposition}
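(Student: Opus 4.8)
\emph{Proof sketch.} Since the ODE $H''=W'(H)$ is autonomous, the plan is to reduce it to a first-order equation by conservation of energy: multiplying by $H'$ and integrating gives $\tfrac{1}{2}(H')^2-W(H)\equiv c$, and a heteroclinic profile must satisfy $H'\to0$ with $W(H)\to W(\pm\zeta)=0$ as $x\to\pm\infty$, which forces $c=0$. So I would look for an increasing $H$ solving
\begin{equation}\label{eq:firstorder}
H'=\sqrt{2W(H)},\qquad H(0)=0,
\end{equation}
the right-hand side being smooth and strictly positive, by \eqref{ass:W}, as long as $H$ lies in $(-\zeta,\zeta)$.

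Next I would integrate \eqref{eq:firstorder} by separation of variables. Set
\begin{equation*}
G(h):=\int_0^h\frac{ds}{\sqrt{2W(s)}},\qquad h\in(-\zeta,\zeta),
\end{equation*}
which is $C^\infty$ with $G'=1/\sqrt{2W}>0$, hence strictly increasing, and odd since $W$ is even. By Taylor's theorem and \eqref{ass:W}, $W(\zeta-\rho)=\tfrac{\omega^2}{2}\rho^2(1+O(\rho))$ near the endpoints, so the integrand is $\sim1/(\omega\rho)$ and non-integrable; hence $G(h)\to\pm\infty$ as $h\to\pm\zeta$, and $G:(-\zeta,\zeta)\to\R$ is a $C^\infty$ diffeomorphism. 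Then $H:=G^{-1}:\R\to(-\zeta,\zeta)$ is $C^\infty$, odd, strictly increasing, with $H(0)=0$, $\lim_{x\to\pm\infty}H(x)=\pm\zeta$, and $H'=1/G'(H)=\sqrt{2W(H)}>0$; differentiating once more gives $H''=W'(H)H'/\sqrt{2W(H)}=W'(H)$. This yields the profile of Proposition~\ref{prop:kink}.

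For the exponential bounds, for $x\ge0$ put $\rho(x):=\zeta-H(x)$, which decreases to $0$; from \eqref{eq:firstorder} and the expansion of $W$, $\rho'=-\sqrt{2W(\zeta-\rho)}=-\omega\rho(1+O(\rho))$. Once $\rho$ is small this gives $\rho'\le-\tfrac{\omega}{2}\rho$, so $\rho(x)\lesssim e^{-\omega x/2}$; in particular $\rho$ is integrable at $+\infty$, and integrating $(\log\rho)'=-\omega(1+O(\rho))$ then yields $\log\rho(x)=-\omega x+O(1)$, i.e. $|H(x)-\zeta|\lesssim e^{-\omega x}$ as $x\to+\infty$ (the decay as $x\to-\infty$ follows by oddness). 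For the derivatives, writing $W'(H)=(H-\zeta)g(H)$ and $2W(H)=(H-\zeta)^2\tilde g(H)$ with $g,\tilde g\in C^\infty$, $\tilde g(\zeta)=\omega^2$, shows $H'=(\zeta-H)\sqrt{\tilde g(H)}$ and $H''=(H-\zeta)g(H)$ are $O(e^{-\omega|x|})$; differentiating $H''=W'(H)$ repeatedly, one checks by induction that each $H^{(k)}$ with $k\ge1$ is a finite sum of terms of the form $($bounded smooth function of $H)\times($nonempty product of factors from $\{H-\zeta,H',\dots,H^{(k-1)}\})$, whence $|H^{(k)}(x)|\lesssim_k e^{-\omega|x|}$.

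The existence, smoothness and symmetry of $H$ are routine; the one point needing care is getting the \emph{sharp} rate $\omega$ in the decay, rather than merely some positive rate, which is what the short bootstrap on $\rho$ provides. (As an alternative to that bootstrap one can read off the rate $\omega=\sqrt{W''(\pm\zeta)}$ directly from the phase portrait of $u'=v,\ v'=W'(u)$, the kink being the zero-energy heteroclinic lying on the one-dimensional stable manifold of the hyperbolic equilibrium $(\zeta,0)$, whose linearization has eigenvalues $\pm\omega$.)
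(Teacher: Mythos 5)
The paper does not give a proof of this proposition: it is introduced with the sentence ``it is well known that an odd kink solution exists, see Lemma 1.1 of Kowalczyk et al.\ \cite{KMMvdB21AnnPDE}'' and the statement is closed immediately with a \qed, the result being imported from that reference. So there is no ``paper's own proof'' to compare against, but your argument is the standard quadrature derivation and it is correct. The reduction to $H'=\sqrt{2W(H)}$ via the energy first integral, the inversion of the odd increasing diffeomorphism $G(h)=\int_0^h ds/\sqrt{2W(s)}$ (whose divergence at $h\to\pm\zeta$ you correctly read off from $W(\zeta-\rho)=\tfrac{\omega^2}{2}\rho^2(1+O(\rho))$), the two-step bootstrap giving first the crude rate $e^{-\omega x/2}$ and then the sharp $e^{-\omega x}$ by integrating $(\log\rho)'=-\omega(1+O(\rho))$, and the Fa\`a di Bruno induction on $H^{(k+2)}=\tfrac{d^k}{dx^k}W'(H)$ for the derivative bounds, are all sound. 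Two cosmetic points worth recording explicitly if this were to be written up: (i) the estimate $|H(x)-\zeta|\lesssim e^{-\omega|x|}$ is only meaningful for $x\ge0$ (as $x\to-\infty$, $H\to-\zeta$); the statement for $x\le0$ is $|H(x)+\zeta|\lesssim e^{-\omega|x|}$, which you obtain by oddness, and the derivative bound $|H^{(k)}|\lesssim e^{-\omega|x|}$ is two-sided since $H^{(k)}$ has definite parity; (ii) the factorizations $W'(h)=(h-\zeta)g(h)$ and $2W(h)=(h-\zeta)^2\tilde g(h)$ with $g,\tilde g$ smooth are instances of Hadamard's lemma, valid in a neighborhood of $\zeta$, which is all that is needed since for $x$ in a bounded set the claimed bounds are trivial.
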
\qed

\begin{remark}
	By $A\lesssim B$, we mean that there exists $C>0$ s.t.\ $A\leq B$.
	The implicit constant $C$ is independent of important parameters (e.g.\ in the claim of Proposition \ref{prop:kink}, the implicit constants are independent of $x$ but depends on $k$).
\end{remark}

The purpose of this paper is to study the  case when  the kink  has internal modes, but only in the context of odd solutions of \eqref{nlkg}.
We set $\mathbf{H}=(H,0)$.  We   denote by $\boldsymbol{\Phi} [  \mathbf{z} ]  $ the \textit{refined profile},  introduced later in Sect. \ref{sec:refprof}, where
\begin{align}\label{eq:discr_mores}
		 \mathbf{z}= (z_1,...,z_{ {N}}  ),
	\end{align}
encodes the discrete modes and where $\boldsymbol{\Phi} [  \mathbf{0} ]= \mathbf{H}$. In analogy to Kowalczyk et al. \cite{KMMvdB21AnnPDE} we set \begin{align}&\label{def:energy_space}
\mathbf{E}_{\odd} =\{    \mathbf{u} \in L ^{1}_{loc}(\R, \R^2): \quad  u_1'\in  L ^{2}_{\even} (\R  ) , \quad  u_2\in  L ^{2}_{\odd} (\R  ) , \quad  \sqrt{W(u_1)}  \in L ^{2}_{\even} (\R  )\}  \text{  and}\\&
\mathbf{E}_{\mathbf{H}} =\{    \mathbf{u} \in \mathbf{E}_{\odd} : \quad  u_1'\in  L ^{2} _{\even} (\R  ) , \quad  u_2 \in  L ^{2}_{\odd}  (\R  ) , \quad  u_1-H\in   L ^{2}_{\odd}  (\R  )\} .\label{eq:restr_energy_sp}
\end{align}
For any $ \mathbf{u}$, there is a natural identification,  a natural \textit{trivialization} in fact,  of the tangent space
\begin{align}&
 T _{\mathbf{u}}\mathbf{E}_{\mathbf{H}} =  \boldsymbol{\mathcal{H}}^{1} \text{ where }  \boldsymbol{\mathcal{H}}^{s}:= H ^{s}_{\odd}(\R, \R) \times H ^{s-1}_{\odd}(\R, \R) .\label{eq:restr_energy_sptang}
\end{align}
There is a   natural distance   in $\mathbf{E}_{\mathbf{H}}$, given by $
  \|   \mathbf{u} -  \mathbf{v}    \| _{ \boldsymbol{\mathcal{H}}^{1}}     .$
Our main result is the following.
\begin{theorem}\label{thm:main}
	Under Assumptions  \ref{ass:repuls}, \ref{ass:generic} and \ref{ass:FGR} given below, for any $\epsilon>0$ and $a>0$, there exists a  $\delta _0>0$    s.t.,  for all odd functions
\begin{align}\label{eq:thm:main0}
		 \mathbf{u}(0)\in \mathbf{E}_{\mathbf{H}}\ \text{satisfying}\ \delta :=\| \mathbf{u}(0)-\mathbf{H}\|_{ \boldsymbol{\mathcal{H}}^{1}}<\delta _0
	\end{align}
and  for the corresponding solution  $\mathbf{u}$ of \eqref{nlkg},  we have
 \begin{align}
 \mathbf{u}(t) =     {\boldsymbol{\Phi}}[ \mathbf{z}(t)]   + \boldsymbol{\eta }    (t) \text{ for appropriate $\mathbf{z}\in C (\R , \C ^{N})$ and $ \boldsymbol{\eta }\in C (\R , \boldsymbol{\mathcal{H}}^{1})$,}
 \label{eq:main1}
\end{align}
and , for  $I=\R$ and  $\<x\> := (1+x^2)^{1/2}$,
\begin{align}
 \int _{I }  \|  e^{-a\<x\>}  \boldsymbol{\eta }  (t) \| ^2 _{\boldsymbol{\mathcal{H}}^{1} (\R )}    \le \epsilon,     \label{eq:main2}
\end{align}
   and
\begin{align}&
\lim_{t\to  \infty} \mathbf{z} (t) =0     \text{  .  }   \label{eq:main3}
\end{align}

\end{theorem}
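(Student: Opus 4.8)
The plan is to follow the virial/Fermi-Golden-Rule (FGR) scheme of Kowalczyk–Martel–Muñoz–Van Den Bosch \cite{KMMvdB21AnnPDE}, adapted to accommodate the internal modes encoded in $\mathbf{z}$. First I would set up the modulational decomposition \eqref{eq:main1}: writing $\mathbf{u}(t)=\boldsymbol{\Phi}[\mathbf{z}(t)]+\boldsymbol{\eta}(t)$, I would impose orthogonality (symplectic) conditions on $\boldsymbol{\eta}$ relative to the discrete modes of the linearized operator around $\mathbf{H}$, so that $\mathbf{z}(t)$ absorbs the projection onto the internal modes while $\boldsymbol{\eta}$ lives in a complement where the continuous-spectrum part has good dispersive/virial behavior. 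A standard implicit-function-theorem argument, using smallness of $\delta$ from \eqref{eq:thm:main0} and the coercivity inherited from assumption \eqref{ass:W}, gives local existence and uniqueness of this decomposition on the maximal interval, together with modulation equations of the schematic form $\dot z_j = \im \lambda_j z_j + (\text{quadratic in }\mathbf{z}) + (\text{coupling with }\boldsymbol{\eta})$, and a Hamiltonian-type equation for $\boldsymbol{\eta}$ with source terms that are quadratic in $\mathbf{z}$ (this is the whole point of the refined profile $\boldsymbol{\Phi}[\mathbf{z}]$: it is chosen precisely so that the worst, non-integrable-in-time, quadratic-in-$\mathbf{z}$ source terms are removed from the $\boldsymbol{\eta}$-equation, up to a residual that is either higher order or, at the resonant order, enters the FGR computation with a definite sign).

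Next I would run the virial estimates. The core is a pair of coercive virial functionals: a transversal/Martel-type functional built from a well-chosen weight $\varphi_A$ (truncated dilation generator) controlling the $\boldsymbol{\eta}$ part on an exponentially weighted norm, and a secondary virial functional at a larger scale $B\gg A$ handling the slow, long-range part of the radiation. Differentiating these along the flow and using assumption \ref{ass:repuls} (the repulsivity/no-bad-potential condition that makes the quadratic form in the virial identity nonnegative modulo the discrete directions) yields, after absorbing the $\mathbf{z}$-coupling using assumption \ref{ass:generic} (the non-degeneracy/no-resonance condition on the internal mode frequencies ensuring the relevant harmonics $k\lambda_j$, $\lambda_i\pm\lambda_j$ avoid the essential spectrum except at the single resonant combination that the FGR handles), a bound of the form $\int_I \|e^{-a\langle x\rangle}\boldsymbol{\eta}(t)\|_{\boldsymbol{\mathcal H}^1}^2\,dt \lesssim \delta^2 + \int_I (\text{mode terms})\,dt$. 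In parallel, the modulation equations combined with assumption \ref{ass:FGR} (the Fermi Golden Rule: the resonant coupling coefficient is strictly positive) give the key dissipation estimate $\int_I |\mathbf{z}(t)|^{2m}\,dt \lesssim \delta^2 + \int_I \|e^{-a\langle x\rangle}\boldsymbol{\eta}(t)\|^2_{\boldsymbol{\mathcal H}^1}\,dt$ for the appropriate power $2m$ dictated by the lowest resonant harmonic, so that the two inequalities close on each other.

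Combining these, a continuity/bootstrap argument on the maximal interval $I$ where the decomposition is valid upgrades $I$ to all of $\R$ and produces \eqref{eq:main2} with $\epsilon$ as small as desired (by taking $\delta_0$ small), and also gives $\mathbf{z}\in L^{2m}(\R)$ together with $\dot{\mathbf{z}}\in L^1_{loc}$ bounded, whence $\mathbf{z}(t)\to 0$: since $|\mathbf{z}|$ is integrable in an $L^{2m}$ sense and its derivative is controlled, the standard argument (writing $\tfrac{d}{dt}|\mathbf{z}|^2$ and using that the oscillatory parts do not affect the limit) forces $\liminf|\mathbf{z}(t)|=0$, and then the modulation equation plus the $L^{2m}$ bound upgrade this to $\lim_{t\to\infty}\mathbf{z}(t)=0$, which is \eqref{eq:main3}. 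Finally, the decay of $\mathbf{z}$ feeds back into the $\boldsymbol{\eta}$-equation to confirm $\boldsymbol{\eta}\in C(\R,\boldsymbol{\mathcal H}^1)$ and hence \eqref{eq:main1}.

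The main obstacle I anticipate is the interplay between the virial estimate for $\boldsymbol{\eta}$ and the internal modes. With several internal modes at various frequencies, one must control a whole algebra of harmonics $\{k_1\lambda_1+\cdots+k_N\lambda_N\}$ and ensure, via assumptions \ref{ass:repuls}–\ref{ass:generic}, that none of the non-resonant ones destroys the coercivity of the virial quadratic form (they produce, after the refined-profile subtraction, genuinely higher-order or exponentially localized-in-favorable-sign terms), while exactly one resonant harmonic survives and must be treated by the FGR with the correct sign — and the bookkeeping of which power $2m$ governs the $\mathbf z$-decay, together with the choice of the weight parameters $A,B$ relative to the exponential localization scales $\omega$ and $a$ and the lowest resonant frequency, is where the technical heart of the argument lies. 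Constructing the refined profile $\boldsymbol{\Phi}[\mathbf{z}]$ so that all these cancellations actually occur (i.e., solving the relevant elliptic equations for the correction terms with appropriate spatial decay, avoiding small divisors precisely because of the non-resonance assumptions) is the other delicate point, but that is carried out separately in Sect.\ \ref{sec:refprof}.
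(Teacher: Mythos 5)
Your outline captures the correct large-scale architecture of the paper's proof: a modulational decomposition with symplectic orthogonality and a refined profile that absorbs non-integrable quadratic-in-$\mathbf z$ source terms, two virial estimates at two spatial scales, an FGR dissipation estimate, and a continuity/bootstrap argument closing the loop. This matches the logical skeleton in Propositions \ref{prop:continuation}, \ref{prop:contreform}, \ref{prop:1stvirial}, \ref{prop:2ndvirial}, \ref{prop:modp} and \ref{prop:FGR}, and the deduction of \eqref{eq:main3} from integrability of $\mathbf z^{\mathbf m}$ for $\mathbf m\in\mathbf R_{\min}$ plus boundedness of $\dot{\mathbf z}$ is indeed how the paper finishes.

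There are, however, two genuine gaps that go beyond unstated technicalities. First, you describe the second virial functional as a secondary functional ``at a larger scale $B\gg A$'' whose coercivity comes from ``the repulsivity/no-bad-potential condition that makes the quadratic form in the virial identity nonnegative modulo the discrete directions.'' As stated this cannot work: $L_1$ has internal modes, so the virial quadratic form is \emph{not} nonnegative, and subtracting or projecting away the discrete directions is incompatible with the exponentially localized virial weight (projections are nonlocal). The paper's actual device is the Darboux transformation: one passes to $\mathbf v=\mathcal T\chi_{B^2}\widetilde{\boldsymbol\eta}$ with $\mathcal T=\langle\im\varepsilon\partial_x\rangle^{-\widetilde N}\mathcal A^*$, and the second virial (Lemma \ref{lem:2ndv}) is run for $\boldsymbol\xi=\chi_{B^2}\zeta_B\mathbf v$ against the operator $\mathbf L_D$, whose potential $V_D$ is what Assumption \ref{ass:repuls} makes repulsive; Proposition \ref{prop:coerc} then feeds the bound on $\xi_1$ back to $w_1$. (Also, the hierarchy in \eqref{eq:relABg} is $A\gg B^2$, not $B\gg A$.) Without the Darboux step the virial part of your proposal has no coercivity and does not close.

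Second, the FGR estimate does not come from ``the modulation equations combined with Assumption \ref{ass:FGR}.'' To extract the definite sign, the paper splits the transformed variable as $\mathbf v=\mathbf g - Z(\mathbf z)$, where $Z(\mathbf z)$ is built from the boundary-value resolvents $R^+_{\im\mathbf L_D}(\boldsymbol\lambda\cdot\mathbf m)$, and then bounds the remainder $\mathbf g$ in $L^2_t\boldsymbol{\mathcal H}^{1,-S}$ by Kato smoothing (Proposition \ref{prop:estg1}, Lemma \ref{lem:smoothest}); the smoothing requires the limiting absorption estimate of Lemma \ref{lem:LAP}, which in turn hinges on the absence of an $\omega^2$ resonance for $L_D$ (Lemma \ref{lem:generic}). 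Only after these reductions does the localized-energy computation of Section \ref{sec:FGR} produce the positive quantity in \eqref{eq:line1111aa}. Your sketch omits the auxiliary variable $\mathbf g$ and the smoothing/LAP machinery entirely; without them the coefficient multiplying $\sum_{\mathbf m}|\mathbf z^{\mathbf m}|^2$ has no sign and the FGR step fails. As a smaller point, it is the whole (generally non-singleton) set $\mathbf R_{\min}$ of resonant multi-indices that enters, not ``exactly one resonant harmonic.''
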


In Sect.\ \ref{sec:phi8} we show that the $\phi ^8$ model, when near the $\phi ^4$ model, satisfies our repulsivity hypothesis.

\begin{remark}
	\begin{itemize}
		\item
		The local well-posedness of \eqref{nlkg} in $\mathbf{E}_{\mathbf{H}}$ in a small neighborhood of the kink is known, see section 3.1 of Kowalczyk et al. \cite{KMMvdB21AnnPDE}.
		\item
		Under the assumption that $\mathbf{u}(0)$ is odd,  then $\mathbf{u}(t)$ is odd for all $t$, by uniqueness.
		For general perturbations and also for  boosted kinks, Kowalczyk et al.\cite{KMMvdB21AnnPDE} gave a sufficient repulsivity  hypothesis for   asymptotic stability of the kinks.
		The  repulsivity  hypothesis in Kowalczyk et al.\cite{KMMvdB21AnnPDE} implies  the absence of internal modes.
		The purpose of this paper is to give  a somewhat related   repulsivity  hypothesis in Assumption \ref{ass:repuls}, which  allows the existence of internal modes. Unfortunately, at this time  we are
unable to treat moving kinks, which were  the main focus of Kowalczyk et al. \cite{KMMvdB21AnnPDE}.
\item The restriction to odd solutions  can  simplify considerably dispersion problems, as for example is shown in Kowalczyk et al.\cite{KMM1}. In Germain  et al. \cite{GP20,GPZ},  restricting to a  smaller class of odd solutions   of \eqref{nlkg}  and assuming the absence of internal modes, but avoiding any explicit repulsivity   hypothesis,     there is a different kind of proof of asymptotic stability, involving rates of decay of remainder terms.  Such  rates of decay  cannot hold with data   like in   \eqref{eq:thm:main0}, where the estimates and the asymptotic behavior  on the solutions need to be invariant by time translation, like in \eqref{eq:main2}--\eqref{eq:main3}. Notice, also, that  the literature which uses dispersive estimates such as also \cite{DM20,LP} as many others, has   been able so far to  treat only rather simple configurations of internal modes, usually at most a single  one, usually nicely    positioned.
    \item Just before completion of this work we learned about   Kowalczyk and Martel \cite{KM22}, which   in the case of a favorably   placed internal mode, encompassing the $\phi ^4$ model, simplifies  and generalizes the proof in \cite{KMM2}. Our paper is independent from Kowalczyk and Martel \cite{KM22}.
        Kowalczyk and Martel \cite{KM22} make a more efficient use of dispersivity, adapting this notion to the context of odd solutions of our problem.

        By and large, it should be possible to combine \cite{KM22} with our framework   in  the  context of more general eigenvalues. Somewhat  delicate, at least in our context, should be the case when $L_D$ has a resonance at $\omega ^2$, as for example when $L_D=-\partial ^2_x+\omega ^2$, for instance in the case of the $\phi ^4$ problem. In that case, our proof of Lemma \ref{lem:LAP} does not work, and perhaps the statement is wrong. On the other hand, this lemma for us is important     when we bound the auxiliary variable $\mathbf{g}$ in Sect. \ref{sec:smoothing}, needed in our proof of the Fermi Golden Rule.

        Our proof of the Fermi Golden Rule is different  from Kowalczyk and Martel \cite{KM22}. From a combinatorial view point, we are dealing with a more complicated problem. In our   framework, we need an expansion on the transformed variable $\mathbf{v}$, see   \eqref{eq:expan_v}, rather than the original variable   $\mathbf{u}$,  to take advantage of   the cutoff  factor $\chi _{B^2}$ in front of the nonlinear terms in the equation of $\mathbf{g}$. The cutoff offsets the long range nature of the  nonlinearity.
         Unfortunately, the commutator of cutoff and linear  part of the equation,  generates an extra term, for example in \eqref{eq:equation_g13}, which is delicate. Kowalczyk and Martel \cite{KM22} study the Fermi Golden Rule in the initial variable $\mathbf{u}$, so they do not have this commutator.   We  are neither able  to generalize their FGR argument     nor, exactly because of the long range nonlinearity,    to perform smoothing estimates directly  in the original variable  $\mathbf{u}$.

        \item Early work on kinks by Komech and Kopylova \cite{KomKop111,KomKop112}, treated special cases with  short range nonlinearities and within the framework of Buslaev and Perelman \cite{BP2}. We utilize  Komech and Kopylova \cite{KomKop10,KomKop111} when we consider some smoothing estimates in Sections \ref{sec:Preliminary} and \ref{sec:smoothing}.

            \item For a treatment of
            the sine--Gordon model, using he  Nonlinear Steepest Descent method and   techniques of Integrable Systems, see \cite{Chen} and therein.
	\end{itemize}
\end{remark}
This paper is very similar to   our previous \cite{CM2109.08108},  which in turn used    the dispersion theory of Kowalczyk et al.\ \cite{KMM3,KMMvdB21AnnPDE}  (see also \cite{LiLu} for a very recent paper related to \cite{KMM3})  combined with our theory on the Fermi Golden Rule (FGR).  In \cite{CM2109.08108} and in a number of other papers, like  \cite{CM2111.02681}, we have produced a framework very  effective  in sorting out effortlessly  the complexities  of the transient  patterns of the internal discrete modes that can occur  in stability problems, and exploiting dissipation induced on the discrete modes, due to their hemorrhaging energy which, by nonlinear interaction,  leaks in the radiation component  where it escapes to infinity. Since Bambusi and Cuccagna \cite{BC11AJM}, which can be considered the first paper in our series, we avoided the  decay rates analysis of the early PDE papers on this subject,  the earliest involving internal modes being Buslaev and Perelman\cite{BP2}.  Notice that  the  rarely cited  paper \cite{CMP2016} extends considerably the result in  \cite{BC11AJM}.  As mentioned above, decay rates cannot exist in the Energy space.  Kowalczyk et al. \cite{KMM2,KMM3,KMMvdB21AnnPDE} as well as  KdV papers by Martel and Merle  such as  \cite{MM05,MM081,MM082}  work in energy space, presumably to achieve a maximum of generality. In fact,  in the presence of discrete modes, the Energy Space framework tends to be   conductive to  rather simple sorting out of the discrete modes.  Early in the literature,   for example in \cite{BP2,SW3} or   Komech and Kopylova \cite{KomKop111,KomKop112}, as well as in many others such as, for example \cite{AS20DCDS}, there was use of dispersive estimates. This is also related to the fact that  the earliest papers   predate Keel and Tao  \cite{Kl-Tao}, whose endpoint Strichartz estimate has played an important role in the theory, even though  it can be replaced by smoothing estimates.
 Ultimately, the literature using dispersive estimates,  so far  has not dealt with discrete mode configurations which  are    not simple.

In the presence of short range nonlinearities, where it is possible to prove dispersion using Strichartz estimates, papers such  as \cite{CM2111.02681} provide proofs of asymptotic stability and scattering in the presence of very general discrete modes configurations. In the case of possibly  long range nonlinearities, like here and  \cite{CM2109.08108}, we use the Virial Inequalities framework originating in Merle's school,  in the  particular elaboration of  Kowalczyk et al. We need that the linearization $L_1$ be, not directly dispersive, but, rather,   dispersive after a sufficient number of  Darboux transformations, and not just   a  single one  like  in  Kowalczyk et al. \cite{KMM2} or Martel \cite{Martel2110.01492}. Darboux transformations are beautifully discussed by Deift and Trubowitz    \cite {DT79CPAM}, although their theory would be not sufficient in the context of more general kinks than the ones discussed here.

Kowalczyk et al.  \cite{KMM2} have been able to prove an asymptotic stability result in the absence of dissipative operators, see also   \cite{Snelson}.    Furthermore, as we remarked above,  Kowalczyk and Martel \cite{KM22} gave a new more general proof. We refer also to \cite{Snelson,AS1,AS2}.

  In the context of the dispersion theory   of papers such as  \cite{LS1}--\cite{LS4} and \cite{Sterbenz},
  and the framework in \cite{DM20,GP20,LuSchlag,GPZ,LP}, if it works in the absence of dissipative potentials, can obviously prove very useful.  A natural problem would be to prove some form of scattering of the remainders for solutions in Energy space.

\subsection{Internal modes, Darboux transform and repulsivity assumption}
\label{sec:intmodes}

We consider the  Schr\"odinger operator
\begin{align}\label{eq:L1}
L_1=-\partial_x^2 + W''(H).
\end{align}
By differentiating $H''=W'(H)$, we obtain $H'\in \mathrm{ker}L_1$.
Since $H'>0$, we have $\mathrm{ker}L_1=\mathrm{span}\{H'\}$.
By Proposition \ref{prop:kink}, $W''(H)-\omega^2$ decays exponentially.
Thus, $L_1$ will have at most finitely many eigenvalues, which, since we are in 1D,  are all simple.
We label the eigenvalues corresponding to the odd eigenfunctions as follows:
\begin{itemize}
\item $\sigma_{\mathrm{d}}(\left.L_1\right|_{L_{\mathrm{odd}}^2})=\{\lambda_j^2\ |\ j=1,\cdots, N\}$ with
$0<\lambda_1<\cdots<\lambda_N <\omega$.
\end{itemize}
We set $\phi_j \in L_{\mathrm{odd}}^2$ to be the corresponding (odd, normalized and $\R$-valued) eigenfunctions, i.e. $L_1\phi_j=\lambda_j^2\phi_j$ and $\|\phi_j\|_{L^2}^2=\(2\lambda_j\)^{-1}$.
\begin{remark}\label{rem:repul}
	The repulsivity  condition in \cite{KMMvdB21AnnPDE} implies $\sigma_{\mathrm{p}}\(    L_1    \)=\{0\}$ and, therefore, $N=0$.
\end{remark}
In the following, we assume $N\geq 1$.
The case $N=0$ is contained in \cite{KMMvdB21AnnPDE}.

By the Sturm-Liouville theory $L_1$ will have a number $\widetilde{N} $, equal to      $2N$ or $2N+1$, of eigenvalues.
We  consider  $\widetilde{\lambda}_j>0$ so that we have $\sigma_{\mathrm{d}}(L_1)=\{\widetilde{\lambda}_j^2\}_{j=1}^{\widetilde{N}}$.
In this case we have $\widetilde{\lambda}_1=0$ and $\lambda_j=\widetilde{\lambda}_{2j}$.

We set
\begin{align}\label{eq:Lineariz1}
\mathbf{L}_1:=
\mathbf{J}
\begin{pmatrix}
L_1 & 0 \\ 0 & 1
\end{pmatrix}
=
\begin{pmatrix}
0 & 1\\ -L_1 & 0
\end{pmatrix}
\ \text{ and }\ \boldsymbol{\Phi}_j:=\begin{pmatrix}
\phi_j\\
-\im \lambda_j \phi_j
\end{pmatrix}.
\end{align}
This operator  $\mathbf{L}_1$ is relevant here  because it is obtained linearizing \eqref{nlkg}      at $ \mathbf{H} $.
Indeed, substituting $\mathbf{u}=\mathbf{H}+\mathbf{r}$ into \eqref{nlkg}, we   have
\begin{align*}
	\partial_t \mathbf{r} = \mathbf{L}_1 \mathbf{r} + O(\mathbf{r}^2).
\end{align*}
From now on, we will consider only odd functions. In particular $\mathbf{L}_1$ will act only on odd in $x$ functions.

\noindent By direct computation, we see that
\begin{align}\label{eq:eigfunL1}
\mathbf{L}_1\boldsymbol{\Phi}_j=-\im \lambda_j \boldsymbol{\Phi}_j\ \text{and}\  \mathbf{L}_1\overline{\boldsymbol{\Phi}_j}=\im \lambda_j \overline{\boldsymbol{\Phi}_j}.
\end{align}
We consider
 \begin{align}\label{eq:inner0} &
     \(
 \mathbf{f},  \mathbf{ g}\)  =\int _{\R} {^t\mathbf{f}}(x) \mathbf{ g}(x) dx ,\\&  \label{eq:inner1}    \<
 \mathbf{f},  \mathbf{ g}\> =\Re   \(
 \mathbf{f},  \overline{\mathbf{ g}}\)
\end{align}
and the   symplectic form \begin{align}\label{eq:symplect0}
 \< \mathbf{J }
 \mathbf{f},   \mathbf{ g}\>   .
\end{align}
Notice that $ \< \mathbf{J }
 \boldsymbol{\Phi}_j,   \overline{\im \boldsymbol{\Phi}}_j\> =1$.

\noindent It is easy to check
\begin{align}\label{eq:specL1}
 \sigma_{\mathrm{d}}( \mathbf{L}_ 1)
=\{\pm \im \lambda_j\ |\ j=1,\cdots, N\}
\ \text{and}\ \sigma_{\mathrm{ess}}(\mathbf{L}_ 1)=\im\( (-\infty,-\omega]\cup [\omega,\infty)  \).
\end{align}
Notice also that $\mathbf{L}_1$  leaves the following decomposition invariant,  \begin{align}& \label{eq:Linz2eig13}   L^2_{\odd}(\R , \C ^2 ) =   L^2_{discr} \oplus  L^2_{disp}  \text{  where }L^2_{discr}:= \oplus _{\lambda \in \sigma _p (\mathbf{L} _{1}) }  \ker \( \mathbf{L} _{1} -  \lambda \) ,
   \end{align}
   where $L^2_{disp} $   is   the $   \< \mathbf{J} \cdot , \cdot \>$--orthogonal  of $L^2_{discr}$.

 Thus, the linearized operator $\mathbf{L}_1$ has neutral eigenvalues, which will create   oscillating and non-decaying solutions in the linear level.
 Such oscillations will last for long time in the full nonlinear problem, they will loose energy  and  oscillations will eventually decay.
 The Fermi Golden Rule (FGR) non-degeneracy condition, which will be introduce in the next subsection, guarantees such  phenomenon, but it has to be combined  with dispersion of the continuous modes.  To prove dispersion  we use  virial estimates of  Kowalczyk et al.  \cite{KMMvdB21AnnPDE}. For this  we need to  assume that the potential $W''(H)$ is ``repulsive" after     a series of Darboux transforms   which eliminate the eigenvalues, as we explain now.
  The discussion is similar to  \cite{CM2109.08108}, which was based on \cite{DT79CPAM}.

\subsubsection{Darboux Transformations}\label{sec:darboux}
\noindent We inductively define the Schr\"odinger operator $L_j$ $(j=1,\cdots,\widetilde{N}+1)$ and a differential operator $A_j$ ($j=1,\cdots,\widetilde{N}$) as follows.
\begin{enumerate}
\item $L_1=-\partial_x^2+W''(H)$ and $A_1=(H')^{-1}\partial_x\(H' \cdot \)$.
In this case, we have
\begin{align}\label{eq:dar1}
L_1= A_1 A_1^* ,
\end{align}
and we define $L_2$ by
\begin{align*}
L_2:=A_1^*A_1 .
\end{align*}
\item
Inductively, given $L_k$ with  $\psi_k$   the   ground state of $L_k$,  we set $A_k:=\psi_k^{-1}\partial_x\(\psi_k \cdot \)$.
Then
\begin{align}\label{eq:dar3}
L_k=A_k A_k^*-\widetilde{\lambda}_k^2
\end{align}
and we define
\begin{align*}
L_{k+1}:=A_k^* A_k - \widetilde{\lambda}_k^2
\end{align*}
\item
In the last step,   $L_{\widetilde{N}+1}:=A_{\widetilde{N}}^* A_{\widetilde{N}}- \widetilde{\lambda}_{\widetilde{N}}^2$.
We set
\begin{align}\label{eq:dar5}
L_{D}=L_{\widetilde{N}+1}=-\partial_x^2 + V_D \text{  where, here, } V_D-\omega ^2\in \mathcal{S}(\R,\R).
\end{align}
\end{enumerate}

For the above we refer to Section 3 of \cite{DT79CPAM} and Proposition 1.9 of \cite{CM2109.08108}.
We set
\begin{align}\label{def:A}
\mathcal{A}:=A_1\cdots A_{\widetilde{N} }.
\end{align}
Then,  by simple computation we obtain.
\begin{align}\label{eq:DarConj2}
\mathcal{A}^* L_1 =  L_D  \mathcal{A}^*.
\end{align}


\noindent We  assume that    $V_D$ is repulsive, in the following sense:
\begin{assumption}\label{ass:repuls}
 $x V_D'(x)\leq 0$ for all $x\in \R$ and $V_D$ is not identically zero.
\end{assumption}

\begin{remark}\label{rem:ass:repuls}
  In Kowalczyk and Martel \cite{KM22}  the above assumption is eased into the following:  there exists a $\gamma >0$ such that the operator $-(1-\gamma ) \partial ^2 _x - 2^{-1/2}x V_D'(x)$  has at most one negative eigenvalue.  In order to prove their result, Kowalczyk and Martel \cite{KM22} modify the first virial inequality of \cite{KMM3}. This could  be arranged here as well, but there is an issue that we face, and which we describe in Remark \ref{rem:ass:repuls1} below.
  \end{remark}

\subsection{Refined profile and Fermi Golden Rule assumption}\label{sec:refprof}

As in the asymptotic stability of solitons for nonlinear Schr\"odinger equations \cite{CM2109.08108}, we introduce the notion of \textit{refined profile}.

We introduce some notation.
For $\mathbf{m}=(\mathbf{m}_+,\mathbf{m}_-)\in \N_0^{2N}$, where $\N_0:=\N\cup\{0\}$, we write $\overline{\mathbf{m}}=(\mathbf{m}_-,\mathbf{m}_+)$ and $|\mathbf{m}|=\sum_{j=1}^N(m_{+j}+m_{-j})$.
We set $\mathbf{e}^j=(\delta_{j1},\cdots,\delta_{jN},0,\cdots,0)$.
We set
\begin{align}
\boldsymbol{\lambda}:=(\lambda_1,\cdots,\lambda_N,-\lambda_1,\cdots,-\lambda_N),
\end{align}
and
\begin{align}\label{eq:ldotm}
\boldsymbol{\lambda}\cdot \mathbf{m}:=\sum_{j=1}^N \lambda_j(m_{+j}-m_{-j}).
\end{align}
We assume the following.
\begin{assumption}\label{ass:generic}For  $M  $ be the largest number in $\N$ such that $(M -1)\lambda _1< \omega$, then for a multi--index $ \mathbf{m} \in \N_0^{2N}$
\begin{align}&  \|  \mathbf{m}  \|  \le M   \Longrightarrow      \(   \mathbf{m} \cdot \boldsymbol{\lambda} \) ^2 \neq \omega   . \label{eq:generic12}
\end{align}
We also assume that    for $\mathbf{m}=( \mathbf{m}_+,  \mathbf{m}_-) \in \N_0^{2N}$ then
\begin{align}&  \| \mathbf{m} \|  \le 2M \text{  and    }  \mathbf{m}\cdot \boldsymbol{\lambda} = 0 \Longrightarrow       \mathbf{m}_+ =  \mathbf{m}_- . \label{eq:generic11}
\end{align}
\end{assumption}
As in \cite{CM2109.08108}, we  set
\begin{align*}
\mathbf{R}&:=\{\mathbf{m}\in \N_0^{2N}\ |\ |\boldsymbol{\lambda}\cdot \mathbf{m}|>\omega\},\\
\mathbf{R}_{\mathrm{min}}&:=\{\mathbf{m}\in \mathbf{R}\ |\  \not \exists \mathbf{n}\in \mathbf{R}\ \mathrm{s.t.}\ \mathbf{n}\prec \mathbf{m}\},\\
\mathbf{I}&:=\{\mathbf{m}\in \N_0^{2N}\ |\ \exists \mathbf{n}\in \mathbf{R}_{\mathrm{min}},\ \mathbf{n}\prec \mathbf{m}\}\\
\mathbf{NR}&:=\N_0^{2N}\setminus (\mathbf{I}\cup \mathbf{R}_{\mathrm{min}}),\\
\boldsymbol{\Lambda}_j&:=\{\mathbf{m} \in \mathbf{NR}\ |\ \boldsymbol{\lambda}\cdot \mathbf{m}=\lambda_j\} \\  \boldsymbol{\Lambda}_0&:=\{\mathbf{m} \in \mathbf{NR}\ |\ \boldsymbol{\lambda}\cdot \mathbf{m}=0\} ,
\end{align*}
where the partial order $\prec$ is defined by
\begin{align*}
\mathbf{n}\prec \mathbf{m}\ \Leftrightarrow\ \forall j, n_{+j}+n_{-j}\leq m_{+j}+m_{-j}\ \text{and}\ |\mathbf{n}|<|\mathbf{m}|.
\end{align*}
 \begin{lemma}\label{lem:combinat1}   The following facts hold.
 \begin{enumerate}
   \item  If $| \mathbf{m} |> M$, with $M$ the constant in Assumption (H2),   then  $\mathbf{m}\in \mathbf{I}$.
   \item  $\mathbf{R}_{\mathrm{min}}$ and  $\mathbf{NR}$ are finite sets.
   \item If $\mathbf{m}\in \mathbf{NR}$, then $|\boldsymbol{\lambda}\cdot \mathbf{m}|<\omega$ and if $\mathbf{m}\in \mathbf{R}_{\mathrm{min}}$, then $\mathbf{m}_+=0$ or $\mathbf{m}_-=0$.
   \item  If $\mathbf{m}\in \boldsymbol{\Lambda}_{j}$ then  there is   a $\mathbf{n} \in \boldsymbol{\Lambda} _{0}$ with $\mathbf{m}= \mathbf{e}^j+\mathbf{n}$.
\end{enumerate}
 \end{lemma}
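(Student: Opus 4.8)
The plan is to prove the four combinatorial facts in Lemma \ref{lem:combinat1} more or less in the order stated, since the later parts use the earlier ones, and the whole argument rests only on Assumptions \ref{ass:generic} (H2) and the elementary arithmetic of the pairing $\boldsymbol{\lambda}\cdot\mathbf{m}$.

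\emph{Part (1).} First I would argue that $|\mathbf{m}|>M$ forces $\mathbf{m}\in\mathbf{I}$. The key observation is that if $|\mathbf{m}|\ge M+1$ then, since $M$ is the largest integer with $(M-1)\lambda_1<\omega$, we have $M\lambda_1\ge\omega$, so one can always find a sub-multi-index $\mathbf{n}\preceq\mathbf{m}$ (concretely, take $\mathbf{n}$ supported on the first coordinate, $\mathbf{n}=(n_{+1},0,\dots,0)$ with $n_{+1}$ as large as possible subject to $n_{+1}\le m_{+1}$, and if $m_{+1}$ is not big enough, combine with $m_{-1}$, etc.) with $|\boldsymbol{\lambda}\cdot\mathbf{n}|\ge\omega$, i.e. $\mathbf{n}\in\mathbf{R}$; a little care is needed here because $\boldsymbol{\lambda}\cdot\mathbf{m}$ can vanish through cancellation between $\mathbf{m}_+$ and $\mathbf{m}_-$, so one should first pass to the coordinate-wise ``collapsed'' index where on each $j$ only one of $m_{+j},m_{-j}$ survives — this only decreases $|\mathbf{m}|$ by controlled amounts but I want to keep $|\mathbf{m}|$ large, so instead I would directly build $\mathbf{n}$ inside the larger of $\mathbf{m}_+,\mathbf{m}_-$ componentwise. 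Once some $\mathbf{n}\in\mathbf{R}$ with $\mathbf{n}\preceq\mathbf{m}$ is found, minimality of $\preceq$ on the finite set of indices below $\mathbf{m}$ yields $\mathbf{n}'\in\mathbf{R}_{\min}$ with $\mathbf{n}'\preceq\mathbf{n}\preceq\mathbf{m}$; if $\mathbf{n}'\prec\mathbf{m}$ we are done, $\mathbf{m}\in\mathbf{I}$, and if $\mathbf{n}'=\mathbf{m}$ then in fact $\mathbf{m}\in\mathbf{R}_{\min}\subseteq\mathbf{I}$ is false as stated — so I must ensure the $\mathbf{n}$ constructed has $|\mathbf{n}|<|\mathbf{m}|$, which is possible precisely because $M\lambda_1\ge\omega$ means we need at most $M<M+1\le|\mathbf{m}|$ units to exceed $\omega$. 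This is the step I expect to require the most care.

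\emph{Parts (2), (3).} Finiteness of $\mathbf{NR}$ is then immediate from (1): every $\mathbf{m}$ with $|\mathbf{m}|>M$ lies in $\mathbf{I}$, hence $\mathbf{NR}\cup\mathbf{R}_{\min}\subseteq\{|\mathbf{m}|\le M\}$, a finite set; in particular $\mathbf{R}_{\min}$ is finite. For (3): if $\mathbf{m}\in\mathbf{NR}$ then $\mathbf{m}\notin\mathbf{R}$ (otherwise it would be above, or equal to, some element of $\mathbf{R}_{\min}$), so $|\boldsymbol{\lambda}\cdot\mathbf{m}|\le\omega$; the equality $|\boldsymbol{\lambda}\cdot\mathbf{m}|=\omega$ is excluded by \eqref{eq:generic12} of Assumption \ref{ass:generic} since $\|\mathbf{m}\|\le M$ (using the notation that $\|\mathbf{m}\|$ here is $|\mathbf{m}|$). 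Hence $|\boldsymbol{\lambda}\cdot\mathbf{m}|<\omega$. For the statement about $\mathbf{R}_{\min}$: suppose $\mathbf{m}\in\mathbf{R}_{\min}$ but both $\mathbf{m}_+\neq0$ and $\mathbf{m}_-\neq0$; pick $j$ with $m_{+j}\ge1$ and $k$ with $m_{-k}\ge1$. If $j=k$, subtracting $\mathbf{e}^j$ from the $+$ part and $\mathbf{e}^j$ from the $-$ part decreases $|\mathbf{m}|$ by $2$ and leaves $\boldsymbol{\lambda}\cdot\mathbf{m}$ unchanged, contradicting minimality since the smaller index is still in $\mathbf{R}$; if $j\neq k$, subtract the one of $\mathbf{e}^j,\mathbf{e}^k$ whose removal keeps $|\boldsymbol{\lambda}\cdot\mathbf{m}|\ge\omega$ — since $\lambda_j>0$ and $\boldsymbol{\lambda}\cdot\mathbf{e}^j$ and $-\boldsymbol{\lambda}\cdot\mathbf{e}^k$ have opposite signs relative to $\boldsymbol{\lambda}\cdot\mathbf{m}$, removing the one that points against $\boldsymbol{\lambda}\cdot\mathbf{m}$ increases $|\boldsymbol{\lambda}\cdot\mathbf{m}|$, again contradicting minimality. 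So one of $\mathbf{m}_\pm$ vanishes.

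\emph{Part (4).} Given $\mathbf{m}\in\boldsymbol{\Lambda}_j$, so $\mathbf{m}\in\mathbf{NR}$ and $\boldsymbol{\lambda}\cdot\mathbf{m}=\lambda_j$, I want to write $\mathbf{m}=\mathbf{e}^j+\mathbf{n}$ with $\mathbf{n}\in\boldsymbol{\Lambda}_0$. The natural candidate is $\mathbf{n}:=\mathbf{m}-\mathbf{e}^j$, which makes sense as an element of $\N_0^{2N}$ provided $m_{+j}\ge1$; then $\boldsymbol{\lambda}\cdot\mathbf{n}=\lambda_j-\lambda_j=0$, and $\mathbf{n}\preceq\mathbf{m}$ componentwise with $|\mathbf{n}|=|\mathbf{m}|-1<|\mathbf{m}|$, so $\mathbf{n}\in\mathbf{NR}$ because $\mathbf{NR}$ is downward closed under $\prec$ (if $\mathbf{n}$ were in $\mathbf{I}\cup\mathbf{R}_{\min}$, then there would be $\mathbf{p}\in\mathbf{R}_{\min}$ with $\mathbf{p}\preceq\mathbf{n}\prec\mathbf{m}$, forcing $\mathbf{m}\in\mathbf{I}$, contradiction), hence $\mathbf{n}\in\boldsymbol{\Lambda}_0$. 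It remains to see $m_{+j}\ge1$: this is where \eqref{eq:generic11} enters. Suppose $m_{+j}=0$; consider the index $\mathbf{m}+\overline{\mathbf{m}}$ (i.e. symmetrize), which satisfies $\boldsymbol{\lambda}\cdot(\mathbf{m}+\overline{\mathbf{m}})=0$ and has $\|\mathbf{m}+\overline{\mathbf{m}}\|=2\|\mathbf{m}\|\le2M$, so by \eqref{eq:generic11} its $+$ and $-$ parts agree — which is automatic here and gives nothing directly. Instead, the cleaner route: since $\boldsymbol{\lambda}\cdot\mathbf{m}=\lambda_j>0$, write $\boldsymbol{\lambda}\cdot\mathbf{m}=\sum_i\lambda_i(m_{+i}-m_{-i})=\lambda_j$; set $\mathbf{m}'$ to be $\mathbf{m}$ with $m'_{+j}:=m_{+j}+1$, $m'_{-j}:=m_{-j}+1$ — no, the right move is to show directly that in $\mathbf{m}$, after cancelling $\min(m_{+i},m_{-i})$ pairs in each coordinate (which changes neither $\boldsymbol{\lambda}\cdot\mathbf{m}$ nor membership properties, only lowers $|\cdot|$), the reduced index $\widehat{\mathbf{m}}$ has $\widehat{m}_{+i}\widehat{m}_{-i}=0$ for all $i$, still satisfies $\boldsymbol{\lambda}\cdot\widehat{\mathbf{m}}=\lambda_j$ and $\widehat{\mathbf{m}}\in\mathbf{NR}$; on $\widehat{\mathbf{m}}$, if $\widehat{m}_{+j}=0$ then $\widehat{m}_{-j}\ge0$ and $\lambda_j=\boldsymbol{\lambda}\cdot\widehat{\mathbf{m}}$ is a nonneg.\ combination of $\pm\lambda_i$ with the $j$-th $+$ slot empty — one then appends $\mathbf{e}^j$ to both $+$ and $-$ slots of $\widehat{\mathbf{m}}$ to get an index $\mathbf{p}$ with $\boldsymbol{\lambda}\cdot\mathbf{p}=\lambda_j$ and $p_{+j}\ge1$, $p_{-j}=\widehat m_{-j}+1\ge1$; applying the difference $\mathbf{p}-\mathbf{e}^j$ at the $+$ slot and separately recording that $\mathbf{p}-\mathbf{e}^j$ (removing from the $-$ slot) gives $\boldsymbol{\lambda}\cdot(\mathbf{p}-\mathbf{e}^{j,-})=2\lambda_j$ — and now I invoke \eqref{eq:generic11} on the symmetric combination $\mathbf{q}:=(\widehat{\mathbf{m}}-\text{its }j\text{-part})+(\text{reflection})$ to force $\widehat{\mathbf{m}}_+=\widehat{\mathbf{m}}_-$, contradicting $\widehat{m}_{+j}=0<\widehat{m}_{-j}$ unless $\widehat m_{-j}=0$ too, in which case $\lambda_j$ is a combination of the other $\lambda_i$'s with $i\neq j$ and empty $j$-slots, whose symmetrization contradicts \eqref{eq:generic11} again. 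I expect this last point — extracting $m_{+j}\ge1$ from the non-resonance hypothesis \eqref{eq:generic11} — to be the genuine subtlety; the honest write-up likely parallels the corresponding lemma in \cite{CM2109.08108}, and I would follow that template, reducing to canceled indices and using \eqref{eq:generic11} with the factor-of-two budget precisely to rule out $m_{+j}=0$.
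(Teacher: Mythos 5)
Your parts (2) and (3) are sound and essentially match the paper's argument, but parts (1) and (4) each have a real problem.

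In part (1), the construction you settle on --- ``build $\mathbf{n}$ inside the larger of $\mathbf{m}_+,\mathbf{m}_-$ componentwise'' --- fails: if $\mathbf{m}_+=\mathbf{m}_-$ and $|\mathbf{m}|=M+1$ then $|\mathbf{m}_+|=(M+1)/2<M$, so no componentwise sub-index of $\mathbf{m}_+$ can have size $\geq M$, and you cannot force $|\boldsymbol{\lambda}\cdot\mathbf{n}|>\omega$. The idea you flirt with and then abandon (``combine with $m_{-1}$'') is the right one. The partial order $\prec$ only constrains the sums $n_{+j}+n_{-j}\le m_{+j}+m_{-j}$; it places no constraint on how those sums split between $+$ and $-$. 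So one may take any $\boldsymbol{\alpha}\leq\mathbf{m}$ componentwise with $|\boldsymbol{\alpha}|=M$ (possible since $|\mathbf{m}|>M$) and then set $\mathbf{n}_+:=\boldsymbol{\alpha}_++\boldsymbol{\alpha}_-$, $\mathbf{n}_-:=0$. Then $\mathbf{n}\prec\mathbf{m}$ and $\boldsymbol{\lambda}\cdot\mathbf{n}\geq M\lambda_1\geq\omega$, hence $>\omega$ by \eqref{eq:generic12}; from $\mathbf{n}\in\mathbf{R}$ one obtains $\boldsymbol{\mathfrak{a}}\in\mathbf{R}_{\min}$ with $\boldsymbol{\mathfrak{a}}\preceq\mathbf{n}\prec\mathbf{m}$, so $\mathbf{m}\in\mathbf{I}$.

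Part (4) is never actually finished --- you say so yourself --- and the moves you try (symmetrizing to $\mathbf{m}+\overline{\mathbf{m}}$, or appending $\mathbf{e}^j$ to both the $+$ and $-$ slots) keep the pairing unchanged and so extract nothing from \eqref{eq:generic11}. The missing idea is to add the unit \emph{only to the minus-$j$ slot}: set $\mathbf{p}:=\mathbf{m}+\overline{\mathbf{e}}^j$. Then $\boldsymbol{\lambda}\cdot\mathbf{p}=\lambda_j-\lambda_j=0$ and $|\mathbf{p}|=|\mathbf{m}|+1\le M+1\le 2M$, so \eqref{eq:generic11} applies and gives $\mathbf{p}_+=\mathbf{p}_-$. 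Unwinding, $\mathbf{m}_+$ and $\mathbf{m}_-$ agree in every coordinate except $m_{+j}=m_{-j}+1$, so in particular $m_{+j}\geq 1$, and $\mathbf{n}:=\mathbf{m}-\mathbf{e}^j$ is a legitimate multi-index with $\boldsymbol{\lambda}\cdot\mathbf{n}=0$; by downward closure of $\mathbf{NR}$ under $\prec$ (which you argued correctly) $\mathbf{n}\in\boldsymbol{\Lambda}_0$ and $\mathbf{m}=\mathbf{e}^j+\mathbf{n}$.
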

\proof If
  $| \mathbf{m} |> M$,  we can write $\mathbf{m} =\boldsymbol{\alpha  } +\boldsymbol{ \beta}  $  with
   $|\boldsymbol{\alpha  } | = M $.  If $   \boldsymbol{\alpha  }=(\boldsymbol{\alpha  }_+,\boldsymbol{\alpha  }_-)$ and if we set $   \mathbf{n}=(\mathbf{n}_+,\mathbf{n}_-)$  with $\mathbf{n}_+= \boldsymbol{\alpha  }_++\boldsymbol{\alpha  }_-$ and
   $\mathbf{n}_-= 0$, then $\mathbf{n} \cdot \boldsymbol{\lambda} \ge M \lambda _1 >  {\omega}   $. This implies that $ \mathbf{{n  } }\in    \mathbf{R} $ and that there exists $\boldsymbol{\mathfrak{ a} }\in \mathbf{R}_{\mathrm{min} }$  with $\boldsymbol{\mathfrak{ a} } \preceq  \mathbf{n}$. From  $| \boldsymbol{\beta  }| \ge 1 $ it follows that   $\boldsymbol{\mathfrak{ a} } \prec \mathbf{m}$ and so $\mathbf{m}\in \mathbf{I}$.

   Obviously, from the 1st claim it follows that if  $\mathbf{m}\in  \mathbf{R}_{\mathrm{min}}\cup \mathbf{NR}$ then $| \mathbf{m} |\le  M$. Next we observe that $\mathbf{m}\in \mathbf{NR}$ implies $| \mathbf{m} |\le  M$ and $|\boldsymbol{\lambda}\cdot \mathbf{m}| \le \omega$ and, by Assumption \ref{ass:generic}, $|\boldsymbol{\lambda}\cdot \mathbf{m}| < \omega$.
    If $\mathbf{m}\in \mathbf{R}_{\mathrm{min}}$ with, say, $\mathbf{m} \cdot \boldsymbol{\lambda} >\omega$, then obviously from \eqref{eq:ldotm} we have
 $\mathbf{m} _+ \cdot \boldsymbol{\lambda} >\omega$ and it is elementary that $\mathbf{m} =(\mathbf{m} _+,0)$. Finally,  from the first claim we know that if $\mathbf{m}\in \boldsymbol{\Lambda}_{j}$ then   $\| \mathbf{m} \| \le  M$. From $  \mathbf{m}   \cdot \boldsymbol{\lambda  } -\lambda _j =0$ it follows from \eqref{eq:generic11}  that  we have the last claim.\qed

For $\mathbf{z}\in \C^N$ and $\mathbf{m}\in \N_0^2$, we write
 $\mathbf{z}^{\mathbf{m}}=\prod_{j=1}^N z_j^{m_{+j}}\overline{z_j}^{m_{-j}}
 $.

 For $f\in C^1(\C^N,X)$ (differentiability is taken in the real sense), we set
 $D_{\mathbf{z}}f(\mathbf{z})\mathbf{w}:=\frac{d}{d\epsilon}f(\mathbf{z}+\epsilon\mathbf{w})$.

 \begin{definition}\label{def:spaces}   We set   $\|\cdot\|_{\Sigma^s}:=\| \cdot \|_{H^s_{a_1}}:=\| e^{a_1\<x\>} \cdot\|_{H^s}$ where $a_1 = \frac{1}{2}\sqrt{\omega^2 - \lambda_N^2}$ and denote by $\Sigma^s$  the corresponding spaces. We write $\boldsymbol{\mathcal{H}}^s=\Sigma^s\times \Sigma^s$

 \noindent For $ b\in \R$ we write $\|\cdot\|_{ L ^2_{b}}:= \| e^{b\<x\>} \cdot\|_{L^2}$

 \noindent We write $\Sigma :=\Sigma ^{1}$ and denote by $\Sigma ^*$  its dual.

\noindent   For any $s,\sigma\in \R$, we will use also other weighted spaces, defined by the norm   $\|\cdot\|_{ L^{2,\sigma} }:= \|  \<x\> ^\sigma \cdot\|_{L^2}$ and spaces defined by the norm  $\|\cdot\|_{ \boldsymbol{\mathcal{H}}^{s,\sigma}}:= \|  \<x\> ^\sigma \cdot\|_{ \boldsymbol{\mathcal{H}}^{s}}$.

 \noindent We pick $a\in (0, a_1  )$ and consider  the following norm,
 \begin{align}\label{eq:norm_rho}
 \|f\|_{\widetilde{\Sigma} }^2=  \<\(-\partial_x^2+ \sech ^2\( \frac{a  x}{10}   \)   \)  f,f\> \sim \|f\|_{\dot{H}^1}^2+\|f\|_{L^2_{-\frac{a}{10}}}^2,
 \end{align}
  denoting by $\widetilde{\Sigma}$  the corresponding space. For $\mathbf{f}=(f_1,f_2)$, we will consider the norm
\begin{align}\label{eq:norm_rhovec}
 \|\mathbf{f}\|_{\boldsymbol{\widetilde{\Sigma}} } =  \|f_1\|_{\widetilde{\Sigma} }+\|f_2\|_{L^2_{-\frac{a}{10}}} .
 \end{align}
\end{definition}

    We observe that $H^{(n)}, \phi_j \in \Sigma^s$ for arbitrary $n\geq 1$, $s\in \R$ and $j=1,\cdots N$.

 The refined profile is an approximate solution of \eqref{nlkg} which encodes the  kink with its internal modes.

\begin{proposition}\label{prop:rp}
There exist $\alpha _0>0$,  functions $\{\boldsymbol{\phi}_{\mathbf{m}}: \mathbf{m}\in \mathbf{NR}  \} \subset \Sigma ^{\infty}$, $\widetilde{\mathbf{z}} _{  R} \in C ^{\infty}( \mathcal{B} _{\C ^N} (0, \alpha _0), \C^N)$  and $\{\lambda_{j\mathbf{m}}\}_{\mathbf{m}\in \Lambda_j}\subset\R$ for $j=1,\cdots,N$ with $\boldsymbol{\phi}_0=\mathbf{H}$, $\boldsymbol{\phi}_{\mathbf{e}^j}=\boldsymbol{\Phi}_j$, $\boldsymbol{\phi}_{\overline{\mathbf{m}}}=\overline{\boldsymbol{\phi}_{\mathbf{m}}}$ and $\lambda_{j\mathbf{e}^j}=\lambda_j$ s.t.
setting
\begin{align}\label{eq:refprof1}
&\boldsymbol{\phi}[\mathbf{z}] :=\begin{pmatrix}
\phi_1[\mathbf{z}]\\ \phi_2[\mathbf{z}]
\end{pmatrix}=  \boldsymbol{\phi}_0 +  \widetilde{\boldsymbol{\phi}}[\mathbf{z}] :=  \boldsymbol{\phi}_0+    \sum_{\mathbf{m}\in \mathbf{NR}, | \mathbf{{m}}|\ge 1}\mathbf{z}^{\mathbf{m}}\boldsymbol{\phi}_{\mathbf{m}},\\ &   \boldsymbol{\phi}_{\overline{\mathbf{m}}}=\overline{\boldsymbol{\phi}_{\mathbf{m}}}
    \label{eq:symmrp} \\
\label{eq:tildez} &\widetilde{z}_j :=-\im \sum_{\mathbf{m}\in \Lambda_j}\lambda_{j\mathbf{m}}\mathbf{z}^{\mathbf{m}} +\widetilde{z}_{jR}    ,\ \widetilde{\mathbf{z}}:=(\widetilde{z}_1,\cdots,\widetilde{z}_N) \text{  and }\widetilde{\mathbf{z}}_{R}=(\widetilde{z}_{1R},\cdots,\widetilde{z}_{NR}),\\& \label{eq:boundzjR}
|\widetilde{\mathbf{z}} _{  R} | \lesssim   \sum_{\mathbf{m}\in \mathbf{ R}_{\min} } |\mathbf{z}^{\mathbf{m}} |,  \\ & \boldsymbol{\lambda} _{ \overline{\mathbf{m}}} =  \boldsymbol{\lambda} _{ \mathbf{m}} \in \R ^{2N}\label{eq:lambdan}
\end{align}
where $\boldsymbol{\lambda} _{ \mathbf{m}} :=(\lambda_{1\mathbf{m}},..., \lambda_{N\mathbf{m}},-\lambda_{1\mathbf{m}},..., -\lambda_{N\mathbf{m}} ) $,
and
\begin{align}\label{eq:RF}
\boldsymbol{ \mathcal{R}}  [\mathbf{z}]:=\mathbf{J} \begin{pmatrix}
-\partial_x^2 \phi_1[\mathbf{z}]+W'(\phi_1[\mathbf{z}])\\ \phi_2[\mathbf{z}]
\end{pmatrix}
-D_{\mathbf{z}}\boldsymbol{\phi}[\mathbf{z}]\widetilde{\mathbf{z}} -D_{\mathbf{z}}\boldsymbol{\phi}[\mathbf{z}]\widetilde{\mathbf{z}}_{R},
\end{align}
the remainder function $\boldsymbol{ \mathcal{R}}[\mathbf{z}]$ can be expanded as
\begin{align}\label{eq:RFRemainder}
\boldsymbol{ \mathcal{R}}[\mathbf{z}]=\sum_{\mathbf{m}\in \mathbf{R}_{\mathrm{min}}}\mathbf{z}^{\mathbf{m}} \mathcal{R}_{\mathbf{m}}+\boldsymbol{ \mathcal{R}}_1[\mathbf{z}],
\end{align}
with   $ \mathcal{R}_{\overline{\mathbf{m}}}= \overline{\mathcal{R}_{\mathbf{m}}} \in \boldsymbol{\Sigma}^{\infty}$ and for any $l\in \N$
\begin{align}\label{eq:R1FRemainder}
\|\boldsymbol{ \mathcal{R}}_1[\mathbf{z}]\|_{\boldsymbol{\Sigma} ^{l}}\lesssim _{l }|\mathbf{z}| \sum_{\mathbf{m}\in \mathbf{R}_{\mathrm{min}}} |\mathbf{z}^{\mathbf{m}}|.
\end{align}
Furthermore,  \begin{align}\label{eq:R1FRemainder--}\< \mathbf{J} \boldsymbol{ \mathcal{R}}[\mathbf{z}],D_{\mathbf{z}}\boldsymbol{\phi}[\mathbf{z}] \boldsymbol{\zeta}\>=0  \text{ for any $\boldsymbol{\zeta}\in \C^N$}.\end{align}
\end{proposition}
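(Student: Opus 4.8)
The plan is to construct the refined profile iteratively, expanding all quantities in powers of $\mathbf{z}$ and solving the resulting hierarchy of linear equations, exactly as in \cite{CM2109.08108}. First I would substitute the ansatz \eqref{eq:refprof1}--\eqref{eq:tildez} into the expression \eqref{eq:RF} for $\boldsymbol{\mathcal R}[\mathbf{z}]$ and collect terms according to the monomial $\mathbf{z}^{\mathbf{m}}$. Using $\boldsymbol{\phi}_0=\mathbf{H}$, the order-zero term vanishes because $\mathbf H$ solves the static equation; at order $\mathbf{e}^j$ one gets the condition $(\mathbf L_1 + \im\lambda_j)\boldsymbol\Phi_j=0$, which holds by \eqref{eq:eigfunL1}, forcing $\boldsymbol\phi_{\mathbf e^j}=\boldsymbol\Phi_j$ and $\lambda_{j\mathbf e^j}=\lambda_j$. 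For a general $\mathbf m\in\mathbf{NR}$ with $|\mathbf m|\ge 2$, matching the coefficient of $\mathbf{z}^{\mathbf m}$ yields an equation of the schematic form
\begin{align*}
(\mathbf L_1 + \im\,\boldsymbol\lambda\cdot\mathbf m)\,\boldsymbol\phi_{\mathbf m} = \mathbf F_{\mathbf m} + (\text{multiples of }\boldsymbol\Phi_j\text{ when }\mathbf m\in\boldsymbol\Lambda_j),
\end{align*}
where $\mathbf F_{\mathbf m}$ depends only on the $\boldsymbol\phi_{\mathbf n}$, $\lambda_{j\mathbf n}$ with $\mathbf n\prec\mathbf m$ (coming from the Taylor expansion of $W'(\phi_1[\mathbf z])$ and from $D_{\mathbf z}\boldsymbol\phi\,\widetilde{\mathbf z}$), hence is already known and lies in $\boldsymbol\Sigma^\infty$ by the inductive hypothesis together with the exponential decay in Proposition \ref{prop:kink} and the remark that $H^{(n)},\phi_j\in\Sigma^s$.

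The key dichotomy is whether $\im\,\boldsymbol\lambda\cdot\mathbf m$ lies in $\sigma(\mathbf L_1)$. By Lemma \ref{lem:combinat1}(3), for $\mathbf m\in\mathbf{NR}$ we have $|\boldsymbol\lambda\cdot\mathbf m|<\omega$, so $\im\,\boldsymbol\lambda\cdot\mathbf m$ is never in the essential spectrum; it equals one of the eigenvalues $\pm\im\lambda_j$ precisely when $\mathbf m\in\boldsymbol\Lambda_j$, and is in the resolvent set otherwise (the case $\boldsymbol\lambda\cdot\mathbf m=0$, i.e. $\mathbf m\in\boldsymbol\Lambda_0$, also lands in the resolvent set since $0\notin\sigma_{\mathrm d}(\mathbf L_1|_{L^2_{\mathrm{odd}}})$ as $\ker L_1=\mathrm{span}\{H'\}$ is \emph{even}... wait, $H'$ is even, so on the odd subspace $\mathbf L_1$ is invertible at $0$). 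When $\im\,\boldsymbol\lambda\cdot\mathbf m$ is in the resolvent set of $\mathbf L_1$ acting on $L^2_{\odd}$, I simply set $\boldsymbol\phi_{\mathbf m}=(\mathbf L_1+\im\,\boldsymbol\lambda\cdot\mathbf m)^{-1}\mathbf F_{\mathbf m}$ and take $\lambda_{j\mathbf m}=0$; the resolvent preserves the exponentially weighted spaces $\boldsymbol\Sigma^s$ (standard ODE/Combes--Thomas type argument, or direct Green's function bound, using that the weight $a_1=\tfrac12\sqrt{\omega^2-\lambda_N^2}$ is below the decay rate dictated by $\sqrt{\omega^2-(\boldsymbol\lambda\cdot\mathbf m)^2}$), so $\boldsymbol\phi_{\mathbf m}\in\boldsymbol\Sigma^\infty$. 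When $\mathbf m\in\boldsymbol\Lambda_j$, the operator $\mathbf L_1+\im\lambda_j$ has a kernel spanned by $\boldsymbol\Phi_j$, so I use the Fredholm alternative: choose the constant $\lambda_{j\mathbf m}$ (appearing via $-\im\lambda_{j\mathbf m}\mathbf z^{\mathbf m}$ in $\widetilde z_j$, which contributes $-\im\lambda_{j\mathbf m}\,D_{z_j}\boldsymbol\phi_{\mathbf e^j}=-\im\lambda_{j\mathbf m}\boldsymbol\Phi_j$ to the equation) to cancel the component of $\mathbf F_{\mathbf m}$ along the kernel with respect to the symplectic pairing, using $\<\mathbf J\boldsymbol\Phi_j,\overline{\im\boldsymbol\Phi_j}\>=1$; then invert on the symplectic-orthogonal complement $L^2_{disp}$, again with weighted bounds. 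Reality/symmetry \eqref{eq:symmrp}, \eqref{eq:lambdan} follow by applying complex conjugation to the whole hierarchy and using $\boldsymbol\phi_{\overline{\mathbf m}}=\overline{\boldsymbol\phi_{\mathbf m}}$, $\overline{\mathbf L_1\mathbf f}=\mathbf L_1\overline{\mathbf f}$.

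Once the $\boldsymbol\phi_{\mathbf m}$ for $\mathbf m\in\mathbf{NR}$ are fixed, by Lemma \ref{lem:combinat1}(2) the sum in \eqref{eq:refprof1} is finite and $\widetilde{\boldsymbol\phi}[\mathbf z]$ is a polynomial in $\mathbf z$ with $\boldsymbol\Sigma^\infty$ coefficients; all terms in $\boldsymbol{\mathcal R}[\mathbf z]$ whose monomial lies in $\mathbf{NR}$ then cancel by construction, and by Lemma \ref{lem:combinat1}(1) every remaining monomial $\mathbf z^{\mathbf m}$ has either $\mathbf m\in\mathbf R_{\mathrm{min}}$ or $\mathbf m\in\mathbf I$, the latter being divisible by some $\mathbf z^{\mathbf n}$ with $\mathbf n\in\mathbf R_{\mathrm{min}}$ and carrying a positive extra power of $|\mathbf z|$. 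Collecting the lowest-order-in-$\mathbf R_{\mathrm{min}}$ terms gives $\sum_{\mathbf m\in\mathbf R_{\mathrm{min}}}\mathbf z^{\mathbf m}\mathcal R_{\mathbf m}$ with $\mathcal R_{\mathbf m}\in\boldsymbol\Sigma^\infty$, and the rest is $\boldsymbol{\mathcal R}_1[\mathbf z]$ satisfying \eqref{eq:R1FRemainder} because the Taylor remainder of $W'$ is smooth; here $\widetilde{\mathbf z}_R$ is defined to absorb precisely the would-be secular contributions of the $\mathbf R_{\mathrm{min}}$-monomials to the $\boldsymbol\Phi_j$-directions (via the same Fredholm projection), which gives both the bound \eqref{eq:boundzjR} and, after a regularity/bootstrap argument on the implicit equation it solves, $\widetilde{\mathbf z}_R\in C^\infty$ on a small ball $\mathcal B_{\C^N}(0,\alpha_0)$. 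Finally \eqref{eq:R1FRemainder--} is a structural identity: differentiating the definition of $\boldsymbol{\mathcal R}[\mathbf z]$ and using that $\mathbf J$ makes $\mathbf L_1$ symplectically self-adjoint, the terms pair off so that $\<\mathbf J\boldsymbol{\mathcal R}[\mathbf z],D_{\mathbf z}\boldsymbol\phi[\mathbf z]\boldsymbol\zeta\>$ reduces to a combination of $\<\mathbf J\boldsymbol\Phi_j,\overline{\boldsymbol\Phi_k}\>$-type quantities that vanish by the orthogonality relations built into the construction — alternatively one notes that $\boldsymbol{\mathcal R}[\mathbf z]$ is by design symplectically orthogonal to the tangent space of the manifold $\{\boldsymbol\phi[\mathbf z]\}$, which is exactly \eqref{eq:R1FRemainder--}. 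The main obstacle is the bookkeeping at the resonant steps $\mathbf m\in\boldsymbol\Lambda_j$: one must verify that the single real parameter $\lambda_{j\mathbf m}$ suffices to kill the obstruction (it does, because the kernel is one-dimensional and the relevant pairing is normalized to $1$), and simultaneously that the weighted-space bounds are not lost when inverting near the eigenvalue — this is where the precise choice of weight $a_1$ relative to the spectral gaps $\sqrt{\omega^2-(\boldsymbol\lambda\cdot\mathbf m)^2}$ for $\mathbf m\in\mathbf{NR}$ is used, and where one leans on the analysis already carried out in \cite{CM2109.08108}.
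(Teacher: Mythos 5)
Your proposal follows essentially the same route as the paper: iterative expansion in $\mathbf{z}^{\mathbf{m}}$, solving $(\mathbf L_1+\im\,\boldsymbol\lambda\cdot\mathbf m)\boldsymbol\phi_{\mathbf m}=\mathcal E_{\mathbf m}$ with the resolvent in the non-resonant case and via the Fredholm alternative with $\lambda_{j\mathbf m}$ as the tuning parameter when $\mathbf m\in\boldsymbol\Lambda_j$, establishing $\lambda_{j\mathbf m}\in\R$ and $\boldsymbol\phi_{\overline{\mathbf m}}=\overline{\boldsymbol\phi_{\mathbf m}}$ by conjugation, and finally introducing $\widetilde{\mathbf z}_R$ via the Implicit Function Theorem to enforce the symplectic orthogonality \eqref{eq:R1FRemainder--}. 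This matches the paper's proof; the only cosmetic difference is that the paper first works with the auxiliary quantity $\widehat{\boldsymbol{\mathcal R}}[\mathbf z]$ (without $\widetilde{\mathbf z}_R$), kills the $\mathbf{NR}$-coefficients, and only then appends the $\widetilde{\mathbf z}_R$ correction, whereas you fold these two steps together.
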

\proof We insert \eqref{eq:refprof1} in \eqref{eq:RF}, using \eqref{eq:tildez}. We expand
\begin{align*} &  W' ( H  + \widetilde{{\phi  }}_1 [ \mathbf{z} ] )=  W' ( H   )+ W ^{\prime\prime} ( H  )  \widetilde{{\phi  }}_1 [  \mathbf{z}]   +  \sum_{\ell =2}^M\frac{W^{(1+\ell )}(H  )}{\ell !} \widetilde{{\phi  }}_1 ^{\ell}[  \mathbf{z}] +  O(\|  \mathbf{ z} \| ^{M+1}),
\end{align*}
where $\widetilde{\boldsymbol{\phi}}[\mathbf{z}] = ( \widetilde{{\phi  }}_1  [  \mathbf{z}] ,\widetilde{{\phi  }}_2 [  \mathbf{z}]  )$. Then, for $\overrightarrow{\mathbf{j}} ={ ^t(0,1)},$
\begin{align*} &     \sum_{\ell =2}^M\frac{W^{(1+\ell )}(H )}{\ell !} \widetilde{{\phi  }}_1 ^{\ell}[   \mathbf{z}]   \overrightarrow{ \mathbf{j}} =  \sum_{\mathbf{m} \in \mathbf{NR}  }\mathbf{z} ^{\mathbf{m} }  \mathbf{g}_\mathbf{m}  + \sum_{\substack{\mathbf{m} \in \mathbf{R} \cup \mathbf{I} \\  |\mathbf{m}|\le M} }\mathbf{z} ^{\mathbf{m} }  \mathbf{g}_\mathbf{m}   +  O(\|  \mathbf{ z} \| ^{M+1})
\end{align*}
 where,   for   $ {\boldsymbol{\phi  }}  _{\mathbf{m}}   ={ ( {{\phi  }}_{1\mathbf{m}}  ,  {{\phi  }}_{2\mathbf{m}}  )}$,
\begin{align}\label{eq:gm}&
  \mathbf{g}_\mathbf{m}  =  \sum_{\ell =2}^M\frac{W^{(1+\ell )}(H  )}{\ell !} \sum_{\substack{\mathbf{m}^1,\cdots,\mathbf{m}^\ell \in \mathbf{NR}\\ \mathbf{m}^1+\cdots+\mathbf{m}^{\ell}=\mathbf{m}}} {\phi}_{1 \mathbf{m}^1} \cdots {\phi}_{1\mathbf{m}^\ell}    \overrightarrow{\mathbf{j}} .
\end{align}
Using   \begin{align}\label{eq:difzm}
\( D_{\mathbf{z}}\mathbf{z}^{\mathbf{m}}\)(\im \boldsymbol{\lambda}\mathbf{z})=\im \mathbf{m}\cdot \boldsymbol{\lambda} \  \mathbf{z}^{\mathbf{m}}, \text{ where $\boldsymbol{\lambda}\mathbf{z} := ( \lambda _1z_1,..., \lambda _Nz_N)$,}
\end{align}
we obtain
\begin{align*} &    D _{\mathbf{z}} \boldsymbol{\phi  } [ \mathbf{z}]     \widetilde{\mathbf{z}}[ \mathbf{z}] = - \im
\sum_{\mathbf{m} \in \mathbf{NR}  } \mathbf{m}\cdot  \boldsymbol{\lambda  }    \mathbf{z} ^{\mathbf{m} } \boldsymbol{\phi  }_{\mathbf{m} } - \im
\sum_{\substack{  \mathbf{m}  \in \mathbf{NR }, \ \mathbf{n} \in \boldsymbol{\Lambda  }_0   }}  \mathbf{m}\cdot \boldsymbol{\lambda  } _{\mathbf{n}} \mathbf{z}^{\mathbf{n}}    \mathbf{z}^{\mathbf{m}}\boldsymbol{\phi  }_{\mathbf{m} } - D _{\mathbf{z}} \boldsymbol{\phi  } [ \mathbf{z}] \im \widetilde{\mathbf{z}} _{R} .
\end{align*}
Let us set  \begin{align}\label{eq:RFtilde}
\widehat{\boldsymbol{ \mathcal{R}}}  [\mathbf{z}]:=\mathbf{J} \begin{pmatrix}
-\partial_x^2 \phi_1[\mathbf{z}]+W'(\phi_1[\mathbf{z}])\\ \phi_2[\mathbf{z}]
\end{pmatrix}
-D_{\mathbf{z}}\boldsymbol{\phi}[\mathbf{z}]( \widetilde{\mathbf{z}}- \widetilde{\mathbf{z}}_R ) .
\end{align}
We expand now to get
\begin{align}\label{eq:RFtildeexpan} &  \widehat{\boldsymbol{ \mathcal{R}}}[\mathbf{z}] =  \sum_{\mathbf{m} \in \mathbf{NR}  }  \mathbf{z} ^{\mathbf{m}} \widehat{\mathcal{{R}}}_{\mathbf{m}} +   \sum_{\substack{\mathbf{m} \in \mathbf{R} \cup \mathbf{I} \\  |\mathbf{m}|\le M} }\mathbf{z} ^{\mathbf{m} }\widehat{ \mathcal{{R}}}_{\mathbf{m}}   +  O(\|  \mathbf{ z} \| ^{M+1}),
\end{align}
where
\begin{align*} &   \widehat{\mathcal{R}}_{\mathbf{m}} =    \(  \mathbf{L} _{1}   +  {\im}  \boldsymbol{\lambda  } \cdot \mathbf{m}  \)    \boldsymbol{\phi  }_{\mathbf{m} } - \mathcal{E}_\mathbf{m} \text{  where }  \\&   \mathcal{E}_\mathbf{m} =\mathbf{g}_\mathbf{m}   -   \sum_{\substack{\mathbf{m}' +\mathbf{n}'=\mathbf{m}\\ \mathbf{m}' \in \mathbf{NR }, \ \mathbf{n}'\in \boldsymbol{\Lambda  }_0   }}      \im  \boldsymbol{\lambda  }_{\mathbf{n}'} \cdot \mathbf{m}'   \boldsymbol{\phi  }_{\mathbf{m}'   }            .
\end{align*}
We seek $\widehat{\mathcal{R}}_{\mathbf{m}} \equiv 0$   for $\mathbf{m} \in \mathbf{NR} $. For $|\mathbf{m}|=1$ the equation reduces to
$\(  \mathbf{L} _{1}    +\im \boldsymbol{\lambda  } \cdot \mathbf{m}  \)    \boldsymbol{\phi  }_{\mathbf{m} }  =0$, so that we can set $\boldsymbol{\phi  }_{\mathbf{e}^j } = \boldsymbol{\Phi  }_{ j }   $   and  $\boldsymbol{\phi  }_{\overline{\mathbf{e} }^j } = \overline{\boldsymbol{\Phi  }}_{ j }   $.  Let us consider now $|\mathbf{m}|\ge 2 $   with $\mathbf{m}\not \in  \cup _{j=1}^{N}\(    \boldsymbol{\Lambda  }_j\cup   \overline{\boldsymbol{\Lambda  }}_j \) $. In this case let us assume by induction that  $\boldsymbol{\phi  }_{\mathbf{m} '}$ and $ \boldsymbol{\lambda  }_{\mathbf{m} '}$
have been defined for $|\mathbf{m}' | <| \mathbf{m} | $ and that they satisfy \eqref{eq:symmrp}--\eqref{eq:lambdan}.  Then, from \eqref{eq:gm} we obtain $g_{\overline{\mathbf{m}}}=\overline{g}_\mathbf{m}$ and $\mathcal{E}_{\overline{\mathbf{m}}}=\overline{\mathcal{E}}_\mathbf{m}$.
 We can solve  $\widehat{\mathcal{R}}_{\mathbf{m}}= 0$  writing  $\boldsymbol{\phi  }_{\mathbf{m}   }=\(  \mathbf{L} _{1}    + \im \boldsymbol{\lambda  } \cdot \mathbf{m}  \) ^{-1}\mathcal{E}_\mathbf{m}$.  By  $\boldsymbol{\lambda  } \cdot \overline{\mathbf{m}}=-\boldsymbol{\lambda  } \cdot \mathbf{m}$, we conclude
$\boldsymbol{\phi  }_{\overline{\mathbf{m}}}=\overline{\boldsymbol{\phi  }}_\mathbf{m}$.

Let us now consider $\mathbf{m}\in  \boldsymbol{\Lambda  }_j$. We assume by induction $\boldsymbol{\phi  }_{\mathbf{m} '} $ have been defined for $|\mathbf{m}' | <| \mathbf{m} | $ and  so too $ \boldsymbol{\lambda  }_{\mathbf{n} '}$
  for $\|\mathbf{n}' \| <\| \mathbf{m} \| -1$. Then, for $ \mathbf{m}=\mathbf{n}+\mathbf{e}^j$ where $\mathbf{n}\in  \boldsymbol{\Lambda  }_0$,  $\widehat{\mathcal{R}}_{\mathbf{m}}= 0$  becomes
\begin{align} &    \nonumber     \(  \mathbf{L} _{1}     + \im   {\lambda  } _j  \)    \boldsymbol{\phi  }_{\mathbf{m} }  =-\im  \boldsymbol{\lambda  }_{\mathbf{n} } \cdot \mathbf{e}^{j} \boldsymbol{\Phi  }_{j }  -  \mathcal{K}_\mathbf{m}  \text{  with } \\&  \mathcal{K}_\mathbf{m}:=  g_\mathbf{m}- \sum_{\substack{\mathbf{m}' +\mathbf{n}'=\mathbf{m}\\ \mathbf{m}' \in \mathbf{NR  }, |\mathbf{m}'|\ge 2, \ \mathbf{n}'\in \boldsymbol{\Lambda  }_0   }}     {\im}  \boldsymbol{\lambda  }_{\mathbf{n}'} \cdot \mathbf{m}'   \boldsymbol{\phi  }_{\mathbf{m}'   }   .\label{eq:lambdaj}
\end{align}
This equation can be solved if we impose $ \( \mathbf{J}  {\mathcal{E}}_\mathbf{m} ,\overline{\boldsymbol{\Phi  }}_{j } \) =0 $, that is, for  $ {\lambda   }_{\mathbf{n} j } :=\boldsymbol{\lambda  }_{\mathbf{n} } \cdot \mathbf{e}^{j}$,
\begin{align*} & - {\im}  {\lambda   }_{\mathbf{n} j }   \( \mathbf{J}  \boldsymbol{\Phi  }_{j } ,\overline{\boldsymbol{\Phi  }}_{j }   \) =- {\im}  {\lambda   }_{\mathbf{n} j }    (-\im ) =  -    {\lambda   }_{\mathbf{n} j }        =      \( \mathbf{J}  {\mathcal{K}}_\mathbf{m} ,\overline{\boldsymbol{\Phi  }}_{j }\)   ,
\end{align*}
 which is true for ${\lambda   }_{\mathbf{n} j }        =    -  \( \mathbf{J}  {\mathcal{K}}_\mathbf{m} ,\overline{\boldsymbol{\Phi  }}_{j }\)  $.   Then we can solve for $ \boldsymbol{\phi  }_{\mathbf{m} } = -\(   \mathbf{L} _{1}   + \im  {\lambda  } _j  \) ^{-1} \mathcal{K}_\mathbf{m} $     in the complement, in \eqref{eq:Linz2eig13}, of $\ker (   \mathbf{L} _{1} -\im  \lambda _j) $.

\noindent We want to show that ${\lambda   }_{\mathbf{n} j } \in \R $. For the corresponding $\overline{\mathbf{m}} \in \overline{\boldsymbol{\Lambda  }}_{j }$,
we have
\begin{align} &        \(  \mathbf{L} _{1}  - \im  {\lambda  } _j  \)    \boldsymbol{\phi  }_{\overline{\mathbf{m}} }  =-\im  \boldsymbol{\lambda  }_{\mathbf{n} } \cdot \overline{\mathbf{e}}^{j} \overline{\boldsymbol{\Phi  }}_{j }  -  \mathcal{K}  _{ \overline{\mathbf{m}} }     \text{  with } \nonumber \\&  \mathcal{K}_  { \overline{\mathbf{ {m}}} }:=  g_ { \overline{\mathbf{ {m}}} }- \sum_{\substack{\overline{\mathbf{m}}' +\mathbf{n}'=\overline{\mathbf{m}}\\ \mathbf{m}' \in \mathbf{NR_2 }, \ \mathbf{n}'\in \boldsymbol{\Lambda  }_0   }}   \im  \boldsymbol{\lambda  }_{\mathbf{n}'} \cdot \overline{\mathbf{m}}'   \boldsymbol{\phi  }_{\overline{\mathbf{m}}'   }     . \label{eq:barlambdaj}
\end{align}
Notice that by induction  $\mathcal{K}  _{ \overline{\mathbf{m}} }=\overline{\mathcal{K} } _{ \mathbf{m} }$. Since  $\boldsymbol{\lambda  }_{\mathbf{n} } \cdot \overline{\mathbf{e}}^{j} = - {\lambda  }_{\mathbf{n}j }$,  taking the complex conjugate of  \eqref{eq:lambdaj} we obtain
\begin{equation}\label{eq:bothlambdaj}
\begin{aligned} &  \(  \mathbf{L} _{1}  -  \im  {\lambda  } _j  \)    \boldsymbol{\phi  }_{\overline{\mathbf{m}} }  = \im   {\lambda  }_{\mathbf{n} j}   \overline{\boldsymbol{\Phi  }}_{j }  -  \overline{\mathcal{K}}  _{ \mathbf{m}  } \text{  and  }\\& \(  \mathbf{L} _{1}   -  \im   {\lambda  } _j  \)    \overline{\boldsymbol{\phi  }}_{ \mathbf{m}  }  =  \im   \overline{{\lambda  }}_{\mathbf{n} j}  \overline{ \boldsymbol{\Phi  }} _{j }  -  \overline{\mathcal{K}}  _{ \mathbf{m}  } .
\end{aligned}
\end{equation}
Applying $  \( \mathbf{J} \cdot ,    \boldsymbol{\Phi  } _{j }\) $  on both the last two equations,  we obtain
\begin{align*} &      \text{$\im  {\lambda  }_{\mathbf{n} j}  \( \mathbf{J}   \overline{\boldsymbol{\Phi   }}_{j } ,   \boldsymbol{\Phi   }_{j } \) =  \( \mathbf{J}  \overline{\mathcal{K}}  _{ \mathbf{m}  } ,    \boldsymbol{\Phi   }_{j }\)  $   and   $\im   \overline{{\lambda  }}_{\mathbf{n} j}    \( \mathbf{J}  \overline{ \boldsymbol{\Phi  }}_{j } ,    \boldsymbol{\Phi   }_{j }\)  =  \( \mathbf{J}    \overline{\mathcal{K}}  _{ \mathbf{m}  } ,    \boldsymbol{\Phi   }_{j }\) $.}
\end{align*}
Hence ${\lambda  }_{\mathbf{n} j}=  \overline{{\lambda  }}_{\mathbf{n} j} $ and we have proved that ${\lambda  }_{\mathbf{n} j} \in \R$.

\noindent Since the equations in \eqref{eq:bothlambdaj} are the same, we conclude $ \boldsymbol{\phi  }_{\overline{\mathbf{m}} } =  \overline{\boldsymbol{\phi  }}_{ \mathbf{m}  } $.

\noindent  We consider now  $ \boldsymbol{ \mathcal{R}} [\mathbf{z}] = \widehat{\boldsymbol{ \mathcal{R}}}[\mathbf{z}] -D_{\mathbf{z}}\boldsymbol{\phi}[\mathbf{z}]\widetilde{\mathbf{z}}_{R}$ where we seek $\widetilde{\mathbf{z}}_{R}$  so that \eqref{eq:R1FRemainder--} is true.
This will follow from
\begin{align*} \<\mathbf{J} \widetilde{\boldsymbol{ \mathcal{R}}}[\mathbf{z}],D_{\mathbf{z}}\boldsymbol{\phi}[\mathbf{z}] \boldsymbol{\zeta}\>-\<\mathbf{J} D_{\mathbf{z}}\boldsymbol{\phi}[\mathbf{z}]\widetilde{\mathbf{z}}_{R},D_{\mathbf{z}}\boldsymbol{\phi}[\mathbf{z}] \boldsymbol{\zeta}\>=0  \text{ for the standard basis $\boldsymbol{\zeta} = e_1, \im e_1,...,e_N, \im e_N$}.\end{align*}
Since the restriction of  $\< \mathbf{J}\cdot , \cdot \>$ in $L^2_{discr}$ is a non--degenerate symplectic form and from  $\boldsymbol{\phi  }_{\mathbf{e}^j } = \boldsymbol{\Phi  }_{ j }   $   and  $\boldsymbol{\phi  }_{\overline{\mathbf{e} }^j } = \overline{\boldsymbol{\Phi  }}_{ j }
$,
the Implicit Function  Theorem guarantees the existence of $\widetilde{\mathbf{z}} _{  R} \in C ^{\infty}( \mathcal{B} _{\C ^N} (0, \alpha _0), \C^N)$  with $\widetilde{\mathbf{z}} _{  R}(\mathbf{0})=\mathbf{0}$ for a sufficiently small $\alpha _0>0$. Furthermore, from the last formula and from the fact that in the expansion \eqref{eq:RFtildeexpan} we have  $\widehat{\mathcal{{R}}}_{\mathbf{m}}=0$  for all $\mathbf{m} \in \mathbf{NR}$, we obtain  the bound \eqref{eq:boundzjR}. This in turn implies  expansion \eqref{eq:RFRemainder} and bound \eqref{eq:R1FRemainder}.
 \qed

Let us consider now the expansion \eqref{eq:RFRemainder}.
An important assumption, related to the  Fermi Golden Rule (FGR), is the following.
\begin{assumption}\label{ass:FGR}
We assume that for all $\mathbf{m}\in \mathbf{R}_{\mathrm{min}}$,
\begin{align*}
\sum_{\sigma=\pm 1}  \left |  \left [-\im \sqrt{(\boldsymbol{\lambda} \cdot \mathbf{m})^2-\omega ^2} \ \widehat{( \mathcal{R} _{\mathbf{m}})} _{1}( \sigma     \sqrt[4]{ {(\boldsymbol{\lambda} \cdot \mathbf{m})^2-\omega ^2}} )+\widehat{( \mathcal{R}_{\mathbf{m}})} _{2}( \sigma     \sqrt[4]{ {(\boldsymbol{\lambda} \cdot \mathbf{m})^2-\omega ^2}} ) \right ]     \right |>0,
\end{align*}
where $( \mathcal{R}_{\mathbf{m}}) _{j}$ are the two components of  $ \mathcal{R}_{\mathbf{m}}$ for $j=1,2$ and we are taking the distorted Fourier transform associated to operator $L_1$, for which we refer to Weder \cite{weder}.
\end{assumption}

\section{Modulation and transformed equations}\label{sec:dar}

  For small
    $\alpha \in (0, 1)$    we set
\begin{align*}
 \mathcal{M}_\alpha (c_*)=\{     \boldsymbol{\phi} [    \mathbf{z} ] \   |\    \mathbf{z}\in \mathcal{B}_{\C^N}(0,\alpha)\},\ \text{where}\ \mathcal{B}_{\C^N}(0,\alpha):=\{\mathbf{w}\in \C^N\ |\ |\mathbf{w}|<\alpha \}
\end{align*}
We first observe the following.
\begin{lemma}\label{lem:symplect_man0}
There is an $\alpha  _0\in (0, 1)$  such that for $\alpha \in (0,\alpha  _0)$ the map $  \mathbf{z}  \to     \boldsymbol{\phi} [    \mathbf{z} ]$ in an embedding   $\mathcal{B}_{\C^N}(0,\alpha)   \hookrightarrow \mathbf{E _{H}}$.
\end{lemma}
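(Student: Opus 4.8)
The plan is to verify that the map $\mathbf{z}\mapsto\boldsymbol{\phi}[\mathbf{z}]$ is a smooth, injective immersion from the finite-dimensional ball $\mathcal{B}_{\C^N}(0,\alpha)$ (viewed as a real $2N$-dimensional manifold) into $\mathbf{E}_{\mathbf{H}}$, and that it is a homeomorphism onto its image, for $\alpha$ small enough. First I would record that $\boldsymbol{\phi}[\mathbf{z}]-\mathbf{H}=\widetilde{\boldsymbol{\phi}}[\mathbf{z}]=\sum_{\mathbf{m}\in\mathbf{NR},|\mathbf{m}|\ge1}\mathbf{z}^{\mathbf{m}}\boldsymbol{\phi}_{\mathbf{m}}$ is a \emph{finite} sum (since $\mathbf{NR}$ is finite, by Lemma \ref{lem:combinat1}(2)) of monomials in $(\mathbf{z},\overline{\mathbf{z}})$ with coefficients $\boldsymbol{\phi}_{\mathbf{m}}\in\boldsymbol{\Sigma}^{\infty}\subset\boldsymbol{\mathcal{H}}^1$; consequently $\mathbf{z}\mapsto\boldsymbol{\phi}[\mathbf{z}]$ is polynomial, hence $C^\infty$, as a map into $\boldsymbol{\mathcal{H}}^1$, and it takes values in $\mathbf{E}_{\mathbf{H}}$ because $\mathbf{H}\in\mathbf{E}_{\mathbf{H}}$ and the correction lies in the tangent fibre $\boldsymbol{\mathcal{H}}^1$ (using \eqref{eq:restr_energy_sptang}); one must of course check that the nonlinear energy-space constraints ($\sqrt{W(\phi_1[\mathbf{z}])}\in L^2$, etc.) survive the perturbation, which follows from $|\phi_1[\mathbf{z}]-H|\lesssim|\mathbf{z}|$ being small, a Taylor expansion of $W$ near its zero at $\pm\zeta$ together with the exponential localization of $\widetilde{\phi}_1[\mathbf{z}]$.

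Next I would compute the differential at $\mathbf{z}=\mathbf{0}$. Since $\boldsymbol{\phi}_0=\mathbf{H}$, $\boldsymbol{\phi}_{\mathbf{e}^j}=\boldsymbol{\Phi}_j$ and $\boldsymbol{\phi}_{\overline{\mathbf{e}}^j}=\overline{\boldsymbol{\Phi}_j}$, the linear part of $\widetilde{\boldsymbol{\phi}}[\mathbf{z}]$ is $\sum_j(z_j\boldsymbol{\Phi}_j+\overline{z_j}\,\overline{\boldsymbol{\Phi}_j})$, so $D_{\mathbf{z}}\boldsymbol{\phi}[\mathbf{0}]$ sends the real tangent directions $\partial_{z_j},\partial_{\overline{z_j}}$ onto $\{\boldsymbol{\Phi}_j,\overline{\boldsymbol{\Phi}_j}\}_{j=1}^N$. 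These $2N$ vectors are linearly independent over $\R$: indeed $\boldsymbol{\Phi}_j=(\phi_j,-\im\lambda_j\phi_j)$ and the $\phi_j$ are eigenfunctions of $L_1$ for distinct eigenvalues $\lambda_j^2$, hence $\R$-linearly independent, and for each fixed $j$ the pair $\boldsymbol{\Phi}_j,\overline{\boldsymbol{\Phi}_j}$ is independent because $\lambda_j\neq0$ (equivalently, $\< \mathbf{J}\boldsymbol{\Phi}_j,\overline{\im\boldsymbol{\Phi}_j}\>=1\neq0$, which was noted in the excerpt). Thus $D_{\mathbf{z}}\boldsymbol{\phi}[\mathbf{0}]:\C^N\to\boldsymbol{\mathcal{H}}^1$ is injective with closed $2N$-dimensional range, so $\mathbf{z}\mapsto\boldsymbol{\phi}[\mathbf{z}]$ is an immersion at $\mathbf{0}$, and by continuity of $\mathbf{z}\mapsto D_{\mathbf{z}}\boldsymbol{\phi}[\mathbf{z}]$ (polynomial dependence) and the fact that injectivity of a linear map between finite-dim spaces is an open condition, it remains an immersion on $\mathcal{B}_{\C^N}(0,\alpha_0)$ after shrinking $\alpha_0$.

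It then remains to upgrade from immersion to embedding, i.e. to prove global injectivity on the ball and that the inverse on the image is continuous. For injectivity: if $\boldsymbol{\phi}[\mathbf{z}]=\boldsymbol{\phi}[\mathbf{z}']$ then $\widetilde{\boldsymbol{\phi}}[\mathbf{z}]=\widetilde{\boldsymbol{\phi}}[\mathbf{z}']$, and projecting onto the eigenspaces with the symplectic pairings $\<\mathbf{J}\cdot,\overline{\im\boldsymbol{\Phi}_j}\>$ recovers $z_j$ from $\boldsymbol{\phi}[\mathbf{z}]$ up to an error $O(|\mathbf{z}|^2+|\mathbf{z}'|^2)$ coming from the higher monomials (whose exponents $\mathbf{m}\in\mathbf{NR}$ with $|\mathbf{m}|\ge2$ contribute at least quadratically); hence $|\mathbf{z}-\mathbf{z}'|\lesssim(|\mathbf{z}|+|\mathbf{z}'|)\,|\mathbf{z}-\mathbf{z}'|$, which for $\alpha_0$ small forces $\mathbf{z}=\mathbf{z}'$. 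The same estimate shows the inverse map $\boldsymbol{\phi}[\mathbf{z}]\mapsto\mathbf{z}$ is Lipschitz, hence continuous, so the map is a homeomorphism onto its image and therefore an embedding. I expect the only genuinely delicate point to be the bookkeeping in this last quantitative inversion — making sure the quadratic remainder from the $\mathbf{NR}$-monomials is uniformly controlled on the ball in the $\boldsymbol{\mathcal{H}}^1$ norm — but this is routine given that the sum defining $\widetilde{\boldsymbol{\phi}}$ is finite with Schwartz-class coefficients; everything else is soft finite-dimensional linear algebra plus the inverse/implicit function theorem philosophy already invoked in the proof of Proposition \ref{prop:rp}.
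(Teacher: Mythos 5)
Your proof is correct and follows essentially the same route as the paper: the key observation in both is that $D_{\mathbf{z}}\boldsymbol{\phi}[\mathbf{0}]$ has range $L^2_{\mathrm{discr}}$, on which the form $\langle\mathbf{J}\cdot,\cdot\rangle$ is non-degenerate, and this gives the injective-immersion and (after shrinking the ball) the embedding property. You simply spell out the immersion/injectivity/continuous-inverse bookkeeping that the paper's very terse proof leaves implicit.
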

\proof  It is clear that the map is smooth. Next we observe that for $ \alpha >0$ sufficiently small,   the   above map
  $\mathcal{B}_{\C^N}(0,\alpha) \to \mathbf{E _{H}}$ is an embedding. This follows from the fact that the partial derivatives computed  at  $\mathbf{z}=0$    span  $L^2_{discr}$, which is symplectic with respect to the   form $\< \mathbf{J} \cdot, \cdot \>$.\qed

We set
\begin{align}\label{eq:RForth}
\mathcal{H}_c[\mathbf{z}]:=\{\mathbf{u}\in \mathbf{H} +\Sigma^*\ |\ \forall  \boldsymbol{\zeta}\in \C^N,  \<\mathbf{J}\mathbf{u},D_{\mathbf{z}}\boldsymbol{\phi}[\mathbf{z}] \boldsymbol{\zeta}\>=0\}.
\end{align}
\begin{lemma}[Modulation]\label{lem:modulat}
There exists $\delta>0$ s.t. there exists $\mathbf{z}(\cdot)\in C^\infty(B_{\mathbf{E_H}}(0,\delta),\C^N)$ s.t.
\begin{align}
 \boldsymbol{\eta}(\mathbf{u}):=\mathbf{u}-\boldsymbol{\phi}[\mathbf{z}(\mathbf{u})]\in \mathcal{H}_c[\mathbf{z}(\mathbf{u})]  \label{eq:lem:mod-1}
\end{align}  and, leaving implicit the dependence of $\mathbf{z}$ and $   \boldsymbol{\eta}$   on $\mathbf{u}$,
\begin{align}
|\mathbf{z}| + \|\boldsymbol{\eta}\|_{ \boldsymbol{\mathcal{H}}^1}\sim \|\mathbf{u}-\mathbf{H}\|_{\boldsymbol{\mathcal{H}}^{1}}. \label{eq:lem:mod-2}
\end{align}
\end{lemma}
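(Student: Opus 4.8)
The plan is to set up the defining equations for $\mathbf{z}(\mathbf{u})$ as the vanishing of a smooth map and invoke the Implicit Function Theorem, with the nondegeneracy coming from the symplectic structure on $L^2_{discr}$ recorded in Lemma \ref{lem:symplect_man0}. Concretely, for $\mathbf{u}\in \mathbf{E}_{\mathbf{H}}$ near $\mathbf{H}$ and $\mathbf{z}\in \mathcal{B}_{\C^N}(0,\alpha)$, consider the $\R$-valued function $F(\mathbf{u},\mathbf{z})$ whose components are $\<\mathbf{J}(\mathbf{u}-\boldsymbol{\phi}[\mathbf{z}]),D_{\mathbf{z}}\boldsymbol{\phi}[\mathbf{z}]\boldsymbol{\zeta}\>$ as $\boldsymbol{\zeta}$ ranges over the standard real basis $e_1,\im e_1,\dots,e_N,\im e_N$ of $\C^N$ viewed as $\R^{2N}$. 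Since $\boldsymbol{\phi}[\mathbf{z}]-\mathbf{H}\in\Sigma^{\infty}$ depends smoothly on $\mathbf{z}$ and $D_{\mathbf{z}}\boldsymbol{\phi}[\mathbf{z}]\boldsymbol{\zeta}$ lies in a fixed weighted space, the pairing $\<\mathbf{J}\cdot,\cdot\>$ makes sense for $\mathbf{u}\in\mathbf{H}+\Sigma^*$ and $F$ is $C^\infty$ in $(\mathbf{u},\mathbf{z})$, with $F(\mathbf{H},\mathbf{0})=\mathbf{0}$.

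The key step is the invertibility of the partial differential $D_{\mathbf{z}}F(\mathbf{H},\mathbf{0})$. Differentiating, and using $\boldsymbol{\phi}[\mathbf{0}]=\mathbf{H}$, the leading term is $-\<\mathbf{J}D_{\mathbf{z}}\boldsymbol{\phi}[\mathbf{0}]\boldsymbol{\xi},D_{\mathbf{z}}\boldsymbol{\phi}[\mathbf{0}]\boldsymbol{\zeta}\>$ (the term involving the second derivative of $\boldsymbol{\phi}$ paired against $\mathbf{u}-\boldsymbol{\phi}[\mathbf{z}]$ vanishes at $\mathbf{u}=\mathbf{H}$, $\mathbf{z}=\mathbf{0}$ since the pairing is zero there). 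By Proposition \ref{prop:rp} we have $\boldsymbol{\phi}_{\mathbf{e}^j}=\boldsymbol{\Phi}_j$ and $\boldsymbol{\phi}_{\overline{\mathbf{e}}^j}=\overline{\boldsymbol{\Phi}_j}$, so $D_{\mathbf{z}}\boldsymbol{\phi}[\mathbf{0}]$ maps $\C^N$ isomorphically onto $L^2_{discr}=\mathrm{span}_{\C}\{\boldsymbol{\Phi}_j\}$; since $\< \mathbf{J}\cdot,\cdot\>$ restricted to $L^2_{discr}$ is a nondegenerate symplectic form (with $\<\mathbf{J}\boldsymbol{\Phi}_j,\overline{\im\boldsymbol{\Phi}}_j\>=1$), the bilinear form $(\boldsymbol{\xi},\boldsymbol{\zeta})\mapsto\<\mathbf{J}D_{\mathbf{z}}\boldsymbol{\phi}[\mathbf{0}]\boldsymbol{\xi},D_{\mathbf{z}}\boldsymbol{\phi}[\mathbf{0}]\boldsymbol{\zeta}\>$ is nondegenerate on $\R^{2N}$, hence $D_{\mathbf{z}}F(\mathbf{H},\mathbf{0})$ is invertible. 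The Implicit Function Theorem then yields $\delta>0$ and $\mathbf{z}(\cdot)\in C^\infty(B_{\mathbf{E}_{\mathbf{H}}}(0,\delta),\C^N)$ with $\mathbf{z}(\mathbf{H})=\mathbf{0}$ and $F(\mathbf{u},\mathbf{z}(\mathbf{u}))=\mathbf{0}$, which is exactly \eqref{eq:lem:mod-1}: $\boldsymbol{\eta}(\mathbf{u})=\mathbf{u}-\boldsymbol{\phi}[\mathbf{z}(\mathbf{u})]\in\mathcal{H}_c[\mathbf{z}(\mathbf{u})]$.

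For the norm equivalence \eqref{eq:lem:mod-2}, I would argue as follows. From $\mathbf{u}-\mathbf{H}=\boldsymbol{\eta}+(\boldsymbol{\phi}[\mathbf{z}]-\mathbf{H})$ and the bound $\|\boldsymbol{\phi}[\mathbf{z}]-\mathbf{H}\|_{\boldsymbol{\mathcal{H}}^1}=\|\widetilde{\boldsymbol{\phi}}[\mathbf{z}]\|_{\boldsymbol{\mathcal{H}}^1}\lesssim|\mathbf{z}|$ (immediate from \eqref{eq:refprof1} and $\boldsymbol{\phi}_{\mathbf{m}}\in\Sigma^{\infty}$), the inequality $\|\mathbf{u}-\mathbf{H}\|_{\boldsymbol{\mathcal{H}}^1}\lesssim|\mathbf{z}|+\|\boldsymbol{\eta}\|_{\boldsymbol{\mathcal{H}}^1}$ is trivial. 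For the reverse, smoothness of $\mathbf{z}(\cdot)$ with $\mathbf{z}(\mathbf{H})=\mathbf{0}$ gives $|\mathbf{z}(\mathbf{u})|\lesssim\|\mathbf{u}-\mathbf{H}\|_{\boldsymbol{\mathcal{H}}^1}$ on a small ball, and then $\|\boldsymbol{\eta}\|_{\boldsymbol{\mathcal{H}}^1}\le\|\mathbf{u}-\mathbf{H}\|_{\boldsymbol{\mathcal{H}}^1}+\|\widetilde{\boldsymbol{\phi}}[\mathbf{z}]\|_{\boldsymbol{\mathcal{H}}^1}\lesssim\|\mathbf{u}-\mathbf{H}\|_{\boldsymbol{\mathcal{H}}^1}$, so $|\mathbf{z}|+\|\boldsymbol{\eta}\|_{\boldsymbol{\mathcal{H}}^1}\lesssim\|\mathbf{u}-\mathbf{H}\|_{\boldsymbol{\mathcal{H}}^1}$. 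Combining the two directions gives \eqref{eq:lem:mod-2}. The only mild subtlety, and the point I would be most careful about, is the functional-analytic setting for the Implicit Function Theorem: $\mathbf{u}$ ranges in the affine Hilbert space $\mathbf{H}+\boldsymbol{\mathcal{H}}^1$ while the orthogonality conditions are naturally formulated using the $\Sigma^*$-$\Sigma$ duality, so one should check that $F$ and its $\mathbf{u}$-derivative are continuous in the $\boldsymbol{\mathcal{H}}^1$ topology — which holds because the test functions $D_{\mathbf{z}}\boldsymbol{\phi}[\mathbf{z}]\boldsymbol{\zeta}$ are Schwartz-class and exponentially localized, so $\<\mathbf{J}\cdot,D_{\mathbf{z}}\boldsymbol{\phi}[\mathbf{z}]\boldsymbol{\zeta}\>$ is a bounded linear functional on $\boldsymbol{\mathcal{H}}^1$ depending smoothly on $\mathbf{z}$.
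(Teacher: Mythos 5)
Your proposal is correct and follows essentially the same route as the paper: define $F(\mathbf{u},\mathbf{z})$ via the symplectic pairing against $\partial_{z_{jJ}}\boldsymbol{\phi}[\mathbf{z}]$, check $F(\mathbf{H},\mathbf{0})=0$, observe that the Jacobian in $\mathbf{z}$ is nondegenerate because $\<\mathbf{J}\cdot,\cdot\>$ restricted to $L^2_{discr}$ is symplectic, invoke the Implicit Function Theorem, and obtain \eqref{eq:lem:mod-2} from the Lipschitz regularity of $\mathbf{z}(\cdot)$ combined with the triangle inequality. Your added remarks on the functional-analytic setting (boundedness of $\<\mathbf{J}\cdot,D_{\mathbf{z}}\boldsymbol{\phi}[\mathbf{z}]\boldsymbol{\zeta}\>$ on $\boldsymbol{\mathcal{H}}^1$) are correct and make explicit what the paper leaves implicit.
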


\begin{proof} For $z_{jR}=\Re (z_j)$ and $z_{jI}=\Im (z_j)$,  consider a function
$F(\mathbf{u} ,\mathbf{z})$ with components
\begin{align}&    \label{eq:modulation1}
	 \< \mathbf{J} \(  \mathbf{ u}   -    \boldsymbol{\phi}   [  \mathbf{z} ]  \)  ,  \partial  _{z_{jJ}}  \boldsymbol{\phi}   [  \mathbf{z} ]      \>    \text{   for   } j=1,...,\widetilde{N} \text{  and } J=R,I.
	\end{align}
Then $F\in C^\infty \( \mathbf{E} _\mathbf{H}    \times   \C ^{ {N}}, \R ^{2 {N} }\) $, trivially we have  $F( \mathbf{H} ,0)=0$  and  the Jacobian matrix
$
   \frac{\partial F  }{\partial    \mathbf{z} }  { (\mathbf{H} ,  0)}
$
a non--degenerate   $ {N}\times  {N}$  matrix,  exactly  because,    for the space in $ L^2_{discr}$ in \eqref{eq:Linz2eig13},  the form $\< \mathbf{J} \cdot , \cdot \>$  is symplectic.  Then, by Implicit Function Theorem,  there exists the desired function $\mathbf{z}(\mathbf{u})$ such that $F(\mathbf{u} , \mathbf{z}(\mathbf{u}))=0$, i.e.  which satisfies \eqref{eq:lem:mod-1},
with $\mathbf{z}(\mathbf{H})=0$.  The fact that $|\mathbf{z}| + \|\boldsymbol{\eta}\|_{ \boldsymbol{\mathcal{H}}^1}\lesssim \|\mathbf{u}-\mathbf{H}\|_{ \boldsymbol{\mathcal{H}}^{1}}$ follows from the Lipschitz regularity of $\mathbf{u}\to (  \mathbf{{z}}(\mathbf{u}), \boldsymbol{\eta}(\mathbf{u}))$  at $\mathbf{H}$, while we have
\begin{align*}  \|\mathbf{u}-\mathbf{H}\|_{ \boldsymbol{\mathcal{H}}^{1}} =  \|  \boldsymbol{\phi}   [  \mathbf{z} ]  -\mathbf{H}    +\boldsymbol{\eta} \|_{ \boldsymbol{\mathcal{H}}^{1}} \le \|  \boldsymbol{\phi}   [  \mathbf{z} ]  -\mathbf{H} \|_{ \boldsymbol{\mathcal{H}}^{1}}    +\|  \boldsymbol{\eta} \|_{ \boldsymbol{\mathcal{H}}^{1}} \lesssim |\mathbf{z}| + \|\boldsymbol{\eta}\|_{ \boldsymbol{\mathcal{H}}^{1}}.
\end{align*}

\end{proof}

Substituting $\mathbf{u}=\boldsymbol{\phi}[\mathbf{z}]+\boldsymbol{\eta}$, we obtain
\begin{align}\label{eq:eta}
\partial_t \boldsymbol{\eta} + D\boldsymbol{\phi}[\mathbf{z}](\dot{\mathbf{z}}-\widetilde{\mathbf{z}})= \mathbf{L}_1\boldsymbol{\eta}   + (\mathbf{L}[\mathbf{z}]- \mathbf{L}_1)\boldsymbol{\eta}  + \mathbf{J} \mathbf{F}[\mathbf{z},\boldsymbol{\eta}] + \mathbf{R}[\mathbf{z}],
\end{align}
where

\begin{align}& \mathbf{L}[\mathbf{z}] :=\mathbf{J}\mathbf{H}[\mathbf{z}],\ \mathbf{H}[\mathbf{z}]=\begin{pmatrix}
-\partial_x^2 + W''(\phi_1[\mathbf{z}]) & 0 \\ 0 & 1
\end{pmatrix}    \label{eq:defLz} \\&
\mathbf{F}[\mathbf{z},\boldsymbol{\eta}] :=\begin{pmatrix}
F_1[\mathbf{z},\eta_1]\\ 0
\end{pmatrix} \text{,  where } F_1[\mathbf{z},\eta_1]:=  W'(\phi_1[\mathbf{z}]+\eta_1)-W'(\phi_1[\mathbf{z}])-W''(\phi_1[\mathbf{z}])\eta_1  .  \label{eq:defF}
\end{align}

We denote by $P_{\mathrm{c}}$ the projection onto $L^2_{disp}$ associated to the splitting \eqref{eq:Linz2eig13} and let
$P_d=1-P_{\mathrm{c}}$.
\begin{remark} \label{rem:comm}    Notice that $L_1=\text{diag}(L_1,L_1)$  commutes with  $\mathbf{L}_1$ and  with the resolvent  $ R _{ \mathbf{L}_1 } ( \varsigma ):= (\mathbf{L}_1 - \varsigma)^{-1}    $ for $\varsigma$  in the resolvent set of $\mathbf{L}_1$. It then follows that
$L_1$ commutes with the projections $P_d$ and $P_c$.
   \end{remark}

\begin{lemma}[Inverse of $P_{\mathrm{c}}$]\label{lem:R} There exists an $\alpha _0>0$  and $R[\mathbf{z}] \in C ^{\infty}  \( \mathcal{B} _{\C ^N}(0,\alpha _0), \mathcal{L} ( L^2) \)$  s.t.\ $\left.R[\mathbf{z}]P_{c}\right|_{\mathcal{H}_c[\mathbf{z}]}=\left.1\right|_{\mathcal{H}_c[\mathbf{z}]}$, $\left.P_c R[\mathbf{z}]\right|_{P_c (\boldsymbol{\Sigma}^{ l})^*}=\left.1\right|_{P_c (\boldsymbol{\Sigma}^{ l})^*}$  for all $ l \in \N _0$  and
\begin{align}
\|R[\mathbf{z}]-1\|_{(\boldsymbol{\Sigma}^{ l})^*\to \boldsymbol{\Sigma}^{ l}}\lesssim _{ l}|\mathbf{z}|.\label{eq:lem:R}
\end{align}
\end{lemma}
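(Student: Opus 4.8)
The plan is to construct $R[\mathbf{z}]$ explicitly as a finite–rank perturbation of the identity, exploiting that the constraint space $\mathcal{H}_c[\mathbf{z}]$ of \eqref{eq:RForth} is, for $|\mathbf{z}|$ small, a small perturbation of $L^2_{disp}$. Fix a real basis $\boldsymbol{\zeta}_1,\dots,\boldsymbol{\zeta}_{2N}$ of $\C^N$ and put $\mathbf{f}_k[\mathbf{z}]:=D_{\mathbf{z}}\boldsymbol{\phi}[\mathbf{z}]\boldsymbol{\zeta}_k$. Since $\widetilde{\boldsymbol{\phi}}[\mathbf{z}]=\sum_{\mathbf{m}\in\mathbf{NR},\,|\mathbf{m}|\ge 1}\mathbf{z}^{\mathbf{m}}\boldsymbol{\phi}_{\mathbf{m}}$ is a finite sum with $\boldsymbol{\phi}_{\mathbf{m}}\in\boldsymbol{\Sigma}^{\infty}$, the map $\mathbf{z}\mapsto\mathbf{f}_k[\mathbf{z}]$ is polynomial with values in $\boldsymbol{\Sigma}^{\infty}$; and because $\boldsymbol{\phi}_{\mathbf{e}^j}=\boldsymbol{\Phi}_j$, $\boldsymbol{\phi}_{\overline{\mathbf{e}}^j}=\overline{\boldsymbol{\Phi}_j}$, the vectors $\mathbf{e}_k:=\mathbf{f}_k[\mathbf{0}]$ form a real basis of $L^2_{discr}$. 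I would then define $M[\mathbf{z}]\in\R^{2N\times 2N}$ by $M[\mathbf{z}]_{kl}:=\< \mathbf{J}\mathbf{e}_l,\mathbf{f}_k[\mathbf{z}]\>$; at $\mathbf{z}=0$ this is, up to sign, exactly the Jacobian matrix appearing in the proof of Lemma \ref{lem:modulat}, which is invertible because $\< \mathbf{J}\cdot,\cdot\>$ is symplectic on $L^2_{discr}$. Hence, after shrinking $\alpha_0$, $M[\mathbf{z}]$ is invertible on $\mathcal{B}_{\C^N}(0,\alpha_0)$, with $\mathbf{z}\mapsto M[\mathbf{z}]^{-1}$ smooth and $M[\mathbf{z}]^{-1}=M[\mathbf{0}]^{-1}+O(|\mathbf{z}|)$.

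With these data I would set, for $\mathbf{v}\in L^2$,
\[
R[\mathbf{z}]\mathbf{v}:=\mathbf{v}-\sum_{l=1}^{2N}\big(M[\mathbf{z}]^{-1}\mathbf{b}[\mathbf{z},\mathbf{v}]\big)_l\,\mathbf{e}_l,\qquad \mathbf{b}[\mathbf{z},\mathbf{v}]_k:=\< \mathbf{J}P_c\mathbf{v},\mathbf{f}_k[\mathbf{z}]\>,
\]
so that $S[\mathbf{z}]:=R[\mathbf{z}]-1$ is finite rank with range in $L^2_{discr}\subset\boldsymbol{\Sigma}^{\infty}$. Since $P_d=1-P_c$ is the finite sum of pairings $\< \mathbf{J}\,\cdot\,,\boldsymbol{\Phi}_j\>$, $\< \mathbf{J}\,\cdot\,,\overline{\boldsymbol{\Phi}_j}\>$ against the exponentially decaying eigenfunctions of $\mathbf{L}_1$ (normalized through $\< \mathbf{J}\boldsymbol{\Phi}_j,\overline{\im\boldsymbol{\Phi}_j}\>=1$), both $P_c$ and $P_d$ are bounded on $L^2$ and on each $(\boldsymbol{\Sigma}^{l})^*$, with $P_d$ in fact mapping $(\boldsymbol{\Sigma}^{l})^*$ into $\boldsymbol{\Sigma}^{l}$; hence each $\mathbf{v}\mapsto\mathbf{b}[\mathbf{z},\mathbf{v}]_k$ is bounded on $L^2$ and on $(\boldsymbol{\Sigma}^{l})^*$, and $\mathbf{z}\mapsto R[\mathbf{z}]$ is a smooth map into $\mathcal{L}(L^2)$.

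It then remains to verify the three assertions. If $\boldsymbol{\eta}\in\mathcal{H}_c[\mathbf{z}]$, write $P_d\boldsymbol{\eta}=\sum_l d_l\mathbf{e}_l$; testing the constraint $\< \mathbf{J}\boldsymbol{\eta},\mathbf{f}_k[\mathbf{z}]\>=0$ for $k=1,\dots,2N$ and splitting $\boldsymbol{\eta}=P_c\boldsymbol{\eta}+P_d\boldsymbol{\eta}$ gives $\mathbf{b}[\mathbf{z},\boldsymbol{\eta}]+M[\mathbf{z}]\mathbf{d}=0$, i.e. $P_d\boldsymbol{\eta}=S[\mathbf{z}]\boldsymbol{\eta}=S[\mathbf{z}]P_c\boldsymbol{\eta}$ (the last because $\mathbf{b}[\mathbf{z},\boldsymbol{\eta}]=\mathbf{b}[\mathbf{z},P_c\boldsymbol{\eta}]$), whence $R[\mathbf{z}]P_c\boldsymbol{\eta}=P_c\boldsymbol{\eta}+P_d\boldsymbol{\eta}=\boldsymbol{\eta}$. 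If $\mathbf{v}\in P_c(\boldsymbol{\Sigma}^{l})^*$, then $R[\mathbf{z}]\mathbf{v}=\mathbf{v}+S[\mathbf{z}]\mathbf{v}$ with $S[\mathbf{z}]\mathbf{v}\in L^2_{discr}$, so $P_cR[\mathbf{z}]\mathbf{v}=\mathbf{v}$. For \eqref{eq:lem:R}, use that $\mathbf{f}_k[\mathbf{0}]=\mathbf{e}_k\in L^2_{discr}$ while $P_c\mathbf{v}$ is $\< \mathbf{J}\cdot,\cdot\>$-orthogonal to $L^2_{discr}$ (true on $L^2$, hence on $(\boldsymbol{\Sigma}^{l})^*$ by density), so that $\mathbf{b}[\mathbf{z},\mathbf{v}]_k=\< \mathbf{J}P_c\mathbf{v},\mathbf{f}_k[\mathbf{z}]-\mathbf{f}_k[\mathbf{0}]\>$; combining $\|\mathbf{f}_k[\mathbf{z}]-\mathbf{f}_k[\mathbf{0}]\|_{\boldsymbol{\Sigma}^{l}}\lesssim|\mathbf{z}|$ (polynomiality), $\|M[\mathbf{z}]^{-1}\|\lesssim 1$, $\|\mathbf{e}_l\|_{\boldsymbol{\Sigma}^{l}}\lesssim 1$ and the boundedness of $P_c$ on $(\boldsymbol{\Sigma}^{l})^*$ yields $\|(R[\mathbf{z}]-1)\mathbf{v}\|_{\boldsymbol{\Sigma}^{l}}\lesssim_l|\mathbf{z}|\,\|\mathbf{v}\|_{(\boldsymbol{\Sigma}^{l})^*}$.

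The routine part here is the finite–dimensional linear algebra; the point that needs care is the functional–analytic input on the weighted spaces, namely that $P_c$ (equivalently $P_d$) extends to a bounded operator on each $(\boldsymbol{\Sigma}^{l})^*$, that $P_d$ even improves regularity to $\boldsymbol{\Sigma}^{l}$, and that the relation $\< \mathbf{J}P_c\mathbf{v},\mathbf{e}\>=0$ for $\mathbf{e}\in L^2_{discr}$ survives for $\mathbf{v}\in(\boldsymbol{\Sigma}^{l})^*$. All of this follows from the explicit Riesz–type formula for $P_d$ in terms of the eigenfunctions $\boldsymbol{\Phi}_j,\overline{\boldsymbol{\Phi}_j}$ of the \emph{non}-self-adjoint operator $\mathbf{L}_1$ and their exponential decay (so that $\boldsymbol{\Phi}_j\in\boldsymbol{\Sigma}^{\infty}$), together with non-degeneracy of $\< \mathbf{J}\cdot,\cdot\>$ on $L^2_{discr}$; I would record these as a preliminary observation before carrying out the construction above.
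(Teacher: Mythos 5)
Your construction of $R[\mathbf{z}]$ as the identity plus a finite--rank correction ranging in $L^2_{discr}$, with coefficients obtained by inverting the $2N\times 2N$ Gram matrix of the symplectic form on $D_{\mathbf{z}}\boldsymbol{\phi}[\mathbf{z}]\,\C^N$, is essentially the paper's argument, and the verification steps (including the preliminary observations about $P_c,P_d$ on $(\boldsymbol{\Sigma}^{l})^*$) are sound. One small refinement in your version: inserting $P_c$ into the pairing $\mathbf{b}[\mathbf{z},\mathbf{v}]_k=\< \mathbf{J}P_c\mathbf{v},\mathbf{f}_k[\mathbf{z}]\>$ forces $R[\mathbf{0}]=1$ on the nose, so \eqref{eq:lem:R} holds verbatim on all of $(\boldsymbol{\Sigma}^{l})^*$, whereas the paper's ansatz, which requires $R[\mathbf{z}]\boldsymbol{\theta}\in\mathcal{H}_c[\mathbf{z}]$ for every $\boldsymbol{\theta}$, yields $R[\mathbf{0}]=P_c$ and hence the estimate literally only after restriction to $P_c(\boldsymbol{\Sigma}^{l})^*$ (which is anyway the only place it is applied).
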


\begin{proof}
Let us write, summing on repeated indexes,
\begin{align*}
 R[\mathbf{z}] =1+ \< \mathbf{J} \cdot , C _{j\mathcal{A}}[\mathbf{z}] \> \partial _{z_{j\mathcal{A}}}\boldsymbol{\phi}[\mathbf{0}] , \text{ with $j=1,...,N$, $\mathcal{A}=R,I$, $z_{jR} :=\Re z_j$ and $z_{jI} :=\Im z_j$}.
\end{align*}
Then $R[\mathbf{z}] \boldsymbol{\theta} \in \mathcal{H}_c[\mathbf{z}] $ for all $\boldsymbol{\theta} \in L^{2}_{\odd}(\R , \C^2)$ is equivalent to
\begin{align*}
 \<  \mathbf{J} \boldsymbol{\theta} + \mathbf{J}\< \mathbf{J} \boldsymbol{\theta}  , C _{j\mathcal{A}}[\mathbf{z}] \> \partial _{z_{j\mathcal{A}}}\boldsymbol{\phi}[\mathbf{0}]  , \partial _{z_{j'\mathcal{A}'}}\boldsymbol{\phi}[\mathbf{z}] \> =0 \text{  for all  $j'=1,...,N$, $\mathcal{A}'=R,I$}
\end{align*}
or, equivalently, for all $\boldsymbol{\theta} \in L^{2}_{\odd}(\R , \C^2)$
\begin{align*}
 \<   \<\mathbf{J} \boldsymbol{\theta}  , C _{j\mathcal{A}}[\mathbf{z}] \>\mathbf{J} \partial _{z_{j\mathcal{A}}}\boldsymbol{\phi}[\mathbf{0}]  , \partial _{z_{j'\mathcal{A}'}}\boldsymbol{\phi}[\mathbf{z}] \>   =  \< \mathbf{J} \boldsymbol{\theta}  ,      \< \mathbf{J}    \partial _{z_{j\mathcal{A}}}\boldsymbol{\phi}[\mathbf{0}]  , \partial _{z_{j'\mathcal{A}'}}\boldsymbol{\phi}[\mathbf{z}] \>  C _{j\mathcal{A}}[\mathbf{z}] \>  = -  \< J \boldsymbol{\theta} , \partial _{z_{j'\mathcal{A}'}}\boldsymbol{\phi}[\mathbf{z}] \> ,
\end{align*}
that is, still summing on  the repeated indexes $j=1,...,N$, $\mathcal{A}=R,I$,
\begin{align*}
    \< \mathbf{J}    \partial _{z_{j\mathcal{A}}}\boldsymbol{\phi}[\mathbf{0}]  , \partial _{z_{j'\mathcal{A}'}}\boldsymbol{\phi}[\mathbf{z}] \>   C _{j\mathcal{A}}[\mathbf{z}]   = -   \partial _{z_{j'\mathcal{A}'}}\boldsymbol{\phi}[\mathbf{z}]  \text{  for all  $j'=1,...,N$, $\mathcal{A}'=R,I$} .
\end{align*}
By the invertibility of the matrix $\left \{  \<   \mathbf{J}  \partial _{z_{j\mathcal{A}}}\boldsymbol{\phi}[\mathbf{0}]  , \partial _{z_{j'\mathcal{A}'}}\boldsymbol{\phi}[\mathbf{z}] \>  \right \}  $, this equation has a solution for $|\mathbf{z}|$ small, which is unique. So $\mathbf{z}\to  C _{j\mathcal{A}}[\mathbf{z}]$ is smooth near $\mathbf{0}$  with values in $\boldsymbol{\Sigma} ^{l}$ for all $l\in \N _0$.
We conclude   $R[\mathbf{z}]\in \mathcal{L}\( (\boldsymbol{\Sigma} ^{l})^*, \mathcal{H}_c[\mathbf{z}]\)$.  Now
\begin{align*}
     P_c R[\mathbf{z}] =P_c  +  \< \mathbf{J} \cdot , C _{j\mathcal{A}}[\mathbf{z}] \> P_c\partial _{z_{j\mathcal{A}}}\boldsymbol{\phi}[\mathbf{0}] =P_c
\end{align*}
so it equals $\left.1\right|_{P_c (\boldsymbol{\Sigma} ^{l})^*}$  when restricted in $P_c (\boldsymbol{\Sigma} ^{l})^*$, and in particular for $l=1$.

Next,  notice that for $\boldsymbol{\theta}\in \mathcal{H}_c[\mathbf{z}]$, we have $ R[\mathbf{z}]P_c\boldsymbol{\theta}\in \mathcal{H}_c[\mathbf{z}]$ with $P_c\boldsymbol{\theta}= P_cR[\mathbf{z}]P_c\boldsymbol{\theta} $. Since,  for $|\mathbf{z}|$ small, $P_c$ is an isomorphism
from $\mathcal{H}_c[\mathbf{z}]$ to $ L ^{2}_{disp}$, we have $\left.R[\mathbf{z}]P_{c}\right|_{\mathcal{H}_c[\mathbf{z}]}=\left.1\right|_{\mathcal{H}_c[\mathbf{z}]}$.

\end{proof}

We set $\widetilde{\boldsymbol{\eta}}=P_c \boldsymbol{\eta}$ (and thus $\boldsymbol{\eta}=R[\mathbf{z}]\widetilde{\boldsymbol{\eta}}$).
Then, $\widetilde{\boldsymbol{\eta}}$ satisfies
\begin{align}\label{eq:tildeeta}
\partial_t \widetilde{\boldsymbol{\eta}} = \mathbf{L}_1\widetilde{\boldsymbol{\eta}} + \sum_{\mathbf{m}\in \mathbf{R}_{\mathrm{min}}}\mathbf{z}^{\mathbf{m}}P_c\mathcal{R}_{\mathbf{m}} + \mathbf{R}_{\widetilde{\boldsymbol{\eta}}},
\end{align}
where
\begin{align}\label{eq:Rtildeeta}
\mathbf{R}_{\widetilde{\boldsymbol{\eta}}}=P_c \boldsymbol{\mathcal{R}}_1[\mathbf{z}] + P_c \mathbf{J}\mathbf{F}+ P_c (\mathbf{L}[\mathbf{z}]- \mathbf{L}_1)\boldsymbol{\eta}  - P_cD\boldsymbol{\phi}[\mathbf{z}](\dot{\mathbf{z}}-\widetilde{\mathbf{z}})
+P_c \mathbf{L}_1(R[\mathbf{z}]-1)\widetilde{\boldsymbol{\eta}}   .
\end{align}
  We set
\begin{align} \label{def:Tg} &
\mathcal{T}:=\<\im \varepsilon \partial_x\>^{-\widetilde{N} }\mathcal{A}^* \text{   and }
\\& \label{def:vBg}
\mathbf{v}:=\mathcal{T}\chi_{B^2}\widetilde{\boldsymbol{\eta}}.
\end{align}
Then, also multiplying by the imaginary unit $\im$, we obtain
\begin{align}\label{eq:vBg}
\im \partial_t \mathbf{v}=\im \mathbf{L}_{D}\mathbf{v} + \sum_{\mathbf{m}\in \mathbf{R}_{\mathrm{min}}} \mathbf{z}^{\mathbf{m}}   \im \widetilde{ \mathcal{R}}_{\mathbf{m}}+\im \mathbf{R}_{\mathbf{v}},
\end{align}
where
\begin{align}  &  \mathbf{L}_{D} := \begin{pmatrix} 0  & 1 \\  -L_D  & 0
\end{pmatrix} \label{eq:LTR}
\\& \widetilde{ \mathcal{R}}_{\mathbf{m}}:= \mathcal{T}\chi_{B^2}P_c \mathcal{R}_{\mathbf{m}}  \text{  and}    \label{def:Gbgm}\\&
\mathbf{R}_{\mathbf{v}}=\mathcal{T}\chi_{B^2}P_c\mathbf{R}_{\widetilde{\boldsymbol{\eta}}}+ \begin{pmatrix} 0 \\ -\mathcal{T}(2\chi_{B^2}'\partial_x +\chi_{B^2}'')\widetilde{\eta}_1+\<\im \varepsilon\partial_x\>^{-\widetilde{N} }\left [V_{D},\<\im \varepsilon\partial_x\>^{\widetilde{N} }\right ]v_1
\end{pmatrix}. \label{def:RvBg}
\end{align}
Before stating our  main estimates, we state the following orbital stability result, which follows from   the Modulation Lemma \ref{lem:modulat} and the Orbital Stability Theorem in \cite{KMMvdB21AnnPDE}, in fact also the classical \cite{HPW82}.
\begin{proposition}[Orbital stability]\label{prop:OrbStab}
 There exist    $C>0$ and $\delta _0>0$ such that, for $\delta :=\| \mathbf{u}(0)-\mathbf{H}\|_{ \boldsymbol{\mathcal{H}}^{1}}<\delta _0$ the claim in line \eqref{eq:main1} is true for all $t\in \R$ and
 we have
\begin{align}\label{eq:lincor1}
\|\mathbf{z}\|_{L^\infty(\R )}+\|\boldsymbol{\eta}\|_{L^\infty (\R,  H^1)}\le C_0 \delta.
\end{align}
\end{proposition}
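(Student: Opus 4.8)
The plan is to obtain this from the conservation of the energy $E(\mathbf{u}):=\frac12\int_\R\big(u_2^2+(u_1')^2\big)\,dx+\int_\R W(u_1)\,dx$ --- which is finite on $\mathbf{E}_{\mathbf{H}}$ and conserved by the flow, by the local well-posedness theory of \cite{KMMvdB21AnnPDE} --- together with the coercivity of its Hessian at $\mathbf{H}$ restricted to odd functions, after which the $\boldsymbol{\mathcal{H}}^1$-closeness is inserted into the Modulation Lemma \ref{lem:modulat}. First I would record that $\mathbf{H}=(H,0)$ is a critical point of $E$: the first component of $E'(\mathbf{H})$ is $-\partial_x^2H+W'(H)=0$ by Proposition \ref{prop:kink}, and the second is $0$. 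Hence, writing $\mathbf{r}=\mathbf{u}-\mathbf{H}$ (which stays odd for all $t$ by uniqueness) and using the one-dimensional embedding $H^1(\R)\hookrightarrow L^\infty(\R)$ together with $W\in C^\infty$, one has for $\|\mathbf{r}\|_{\boldsymbol{\mathcal{H}}^1}\le 1$ the expansion $E(\mathbf{H}+\mathbf{r})-E(\mathbf{H})=\tfrac12\langle E''(\mathbf{H})\mathbf{r},\mathbf{r}\rangle+O(\|\mathbf{r}\|_{\boldsymbol{\mathcal{H}}^1}^3)$, with $\langle E''(\mathbf{H})\mathbf{r},\mathbf{r}\rangle=\langle L_1r_1,r_1\rangle+\|r_2\|_{L^2}^2$.

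The substantive step is the coercivity $\langle L_1r_1,r_1\rangle\gtrsim\|r_1\|_{H^1}^2$ for $r_1\in H^1_{\odd}(\R)$, and here the restriction to odd functions is exactly what is used. The zero mode $H'$ of $L_1$ is \emph{even}, so on $L^2_{\odd}$ the spectrum of $L_1$ starts at $\lambda_1^2>0$ and $\langle L_1r_1,r_1\rangle\ge\lambda_1^2\|r_1\|_{L^2}^2$; since moreover $\langle L_1r_1,r_1\rangle=\|r_1'\|_{L^2}^2+\int_\R W''(H)r_1^2\,dx\ge\|r_1'\|_{L^2}^2-C\|r_1\|_{L^2}^2$ (using that $W''(H)$ is bounded --- it need not be pointwise nonnegative, e.g.\ in the $\phi^4$ case), a small convex combination of the two bounds gives $\langle L_1r_1,r_1\rangle\gtrsim\|r_1\|_{H^1}^2$. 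No quotienting by $L^2_{discr}$ is needed, since that subspace is spanned by odd eigenfunctions of $L_1$ with \emph{positive} eigenvalues $\lambda_j^2$. Combined with the trivial upper bound (again boundedness of $W''(H)$), this yields $\langle E''(\mathbf{H})\mathbf{r},\mathbf{r}\rangle\sim\|\mathbf{r}\|_{\boldsymbol{\mathcal{H}}^1}^2$ for odd $\mathbf{r}$.

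A standard continuity/bootstrap argument then closes the estimate: on the maximal interval of existence $I$, as long as $\|\mathbf{r}(t)\|_{\boldsymbol{\mathcal{H}}^1}$ stays below a fixed threshold $\rho$, the lower coercivity bound, energy conservation, the cubic remainder and the two-sided bound on the Hessian (which bounds $E(\mathbf{u}(0))-E(\mathbf{H})\lesssim\delta^2$) give $c_0\|\mathbf{r}(t)\|_{\boldsymbol{\mathcal{H}}^1}^2\le C\delta^2+C\rho\|\mathbf{r}(t)\|_{\boldsymbol{\mathcal{H}}^1}^2$, hence $\|\mathbf{r}(t)\|_{\boldsymbol{\mathcal{H}}^1}\le C_1\delta$ once $\rho$ is chosen small; taking $\delta_0$ with $C_1\delta_0<\rho$ makes $\{t\in I:\|\mathbf{r}(t)\|_{\boldsymbol{\mathcal{H}}^1}\le\rho\}$ open, closed and nonempty, hence equal to $I$, forcing $I=\R$ and $\sup_{t\in\R}\|\mathbf{u}(t)-\mathbf{H}\|_{\boldsymbol{\mathcal{H}}^1}\le C_1\delta$. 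Finally, shrinking $\delta_0$ so that $C_1\delta_0$ lies below the radius in Lemma \ref{lem:modulat}, I apply that lemma at each time to get $\mathbf{z}(t):=\mathbf{z}(\mathbf{u}(t))$ and $\boldsymbol{\eta}(t):=\mathbf{u}(t)-\boldsymbol{\phi}[\mathbf{z}(t)]\in\mathcal{H}_c[\mathbf{z}(t)]$ with $|\mathbf{z}(t)|+\|\boldsymbol{\eta}(t)\|_{\boldsymbol{\mathcal{H}}^1}\sim\|\mathbf{u}(t)-\mathbf{H}\|_{\boldsymbol{\mathcal{H}}^1}\le C_1\delta$, which is \eqref{eq:main1} together with \eqref{eq:lincor1}; the continuity $\mathbf{z}\in C(\R,\C^N)$ and $\boldsymbol{\eta}\in C(\R,\boldsymbol{\mathcal{H}}^1)$ follows from composing the continuous flow $t\mapsto\mathbf{u}(t)\in\boldsymbol{\mathcal{H}}^1$ with the smooth modulation map of Lemma \ref{lem:modulat}. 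I expect the coercivity of the second paragraph to be the only real point; the rest is the classical orbital-stability scheme of \cite{HPW82}, simplified here because the oddness constraint freezes the translation (and boost) symmetry, so that no additional modulation parameter enters the stability statement.
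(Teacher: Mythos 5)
Your proof is correct and follows the classical Henry--Perez--Wreszinski energy-coercivity scheme, which is precisely what the paper invokes by citing \cite{HPW82} together with the Modulation Lemma (the paper gives no proof of its own). Your key observation — that restricting to odd functions removes the even kernel element $H'$ from the picture, so $L_1|_{L^2_{\odd}}\ge\lambda_1^2>0$ and no translation/boost modulation nor spectral projection is needed to obtain $\langle L_1 r_1,r_1\rangle\gtrsim\|r_1\|_{H^1}^2$ — is exactly the simplification that makes the odd setting work, and the rest (energy conservation, Taylor expansion with cubic remainder, bootstrap, then Lemma \ref{lem:modulat}) is standard and correct.
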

\qed

Notice that the above result and \eqref{eq:main2}, which will be proved below, along with  Lemma \ref{lem:combinat1} guarantee by elementary arguments the limit
\eqref{eq:main3}.  So here the main point is \eqref{eq:main2}
 for $I=\R_ +$.

\section{Main estimates and proof of Theorem \ref{thm:main}}



The proof of \eqref{eq:main2} in Theorem \ref{thm:main} is by means of a continuation argument. In particular, we will show the following.
\begin{proposition}\label{prop:continuation}  Assumptions  \ref{ass:repuls}, \ref{ass:generic} and \ref{ass:FGR} are given.
Then  for any small $\epsilon>0 $ there exists  a    $\delta _0= \delta _0(\epsilon )   $ s.t.\  if    \eqref{eq:main2}
holds  for $I=[0,T]$ for some $T>0$ and for $\delta  \in (0, \delta _0)$
then in fact for $I=[0,T]$    inequality   \eqref{eq:main2} holds   for   $\epsilon$ replaced by $\epsilon/2$.
\end{proposition}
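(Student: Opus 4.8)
The plan is to run the standard Kowalczyk--Martel--Mu\~noz--Van den Bosch virial machinery, but adapted so as to extract dissipation from the discrete modes $\mathbf{z}$ via the Fermi Golden Rule. First I would set up the bootstrap: assume \eqref{eq:main2} holds on $I=[0,T]$ for some $\epsilon$, so that by Proposition \ref{prop:OrbStab} the modulated decomposition $\mathbf{u}(t)=\boldsymbol{\phi}[\mathbf{z}(t)]+\boldsymbol{\eta}(t)$ with $\widetilde{\boldsymbol{\eta}}=P_c\boldsymbol{\eta}$ is valid throughout, with $\|\mathbf{z}\|_{L^\infty}+\|\boldsymbol{\eta}\|_{L^\infty H^1}\lesssim\delta$, and all the transformed equations \eqref{eq:tildeeta}, \eqref{eq:vBg} are available. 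The goal is to close on the weighted spacetime norm appearing in \eqref{eq:main2}, which I would control by the norm $\|e^{-a\<x\>}\boldsymbol{\eta}\|_{\boldsymbol{\mathcal H}^1}$, comparable (after inserting $\boldsymbol{\eta}=R[\mathbf{z}]\widetilde{\boldsymbol{\eta}}$ and using Lemma \ref{lem:R}) to a weighted norm of $\widetilde{\boldsymbol{\eta}}$, and then, after the chain of Darboux transforms encoded in $\mathcal T$ and the cutoff $\chi_{B^2}$, to a weighted norm of $\mathbf{v}$ solving \eqref{eq:vBg}.

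The core is then two (or three) virial/Morawetz-type estimates. The first virial estimate is applied to $\mathbf{v}$ using the repulsivity Assumption \ref{ass:repuls} on $V_D$: with a weight function built from $\tanh$ or an antiderivative of $\sech^2(ax/10)$ as in \eqref{eq:norm_rho}, one differentiates a virial functional $\<A_\varphi \mathbf{v},\mathbf{v}\>$ in time along \eqref{eq:vBg}; the quadratic form of $L_D$ produces, thanks to $xV_D'\le 0$, a coercive term $\gtrsim\|\mathbf{v}\|_{\boldsymbol{\widetilde\Sigma}}^2$ controlling the localized energy norm, up to (i) errors from the source terms $\mathbf{z}^{\mathbf{m}}\widetilde{\mathcal R}_{\mathbf{m}}$ and $\mathbf{R}_{\mathbf{v}}$, and (ii) the delicate commutator term in \eqref{def:RvBg} coming from $[V_D,\<\im\varepsilon\partial_x\>^{\widetilde N}]$ and from $\mathcal T(2\chi_{B^2}'\partial_x+\chi_{B^2}'')\widetilde\eta_1$; the latter are handled by choosing the cutoff scale $B$ and the parameter $\varepsilon$ appropriately and absorbing, with the smoothing estimates of Sections \ref{sec:Preliminary}--\ref{sec:smoothing} (the auxiliary variable $\mathbf{g}$ and Lemma \ref{lem:LAP}) used to bound the long-range pieces. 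A second virial estimate, of the type in \cite{KMM3}, upgrades control from the transformed variable back to a genuine weighted norm of $\widetilde{\boldsymbol{\eta}}$ on a slightly larger region, closing the loop between $\mathbf v$ and $\boldsymbol\eta$. In both, the terms quadratic and higher in $\boldsymbol{\eta}$ (coming from $\mathbf J\mathbf F$ and $(\mathbf L[\mathbf z]-\mathbf L_1)\boldsymbol\eta$) are cubic-type and absorbed by smallness $\delta$ together with the bootstrap bound on the weighted norm itself.

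It remains to deal with the discrete-mode source terms, which do \emph{not} decay and are the genuine obstruction: here one derives the modulation ODE for $\mathbf{z}$ from \eqref{eq:eta} projected onto $L^2_{discr}$, showing $\dot z_j-\widetilde z_j$ is quadratically small in $(\mathbf z,\boldsymbol\eta)$, and then uses the normal-form structure built into the refined profile (the remainder $\boldsymbol{\mathcal R}[\mathbf z]$ has leading part $\sum_{\mathbf m\in\mathbf R_{\min}}\mathbf z^{\mathbf m}\mathcal R_{\mathbf m}$ with $\mathbf R_{\min}$ indexing exactly the resonant frequencies $|\boldsymbol\lambda\cdot\mathbf m|>\omega$). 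Testing \eqref{eq:vBg} against the outgoing/incoming solutions of $(\im\mathbf L_D - \boldsymbol\lambda\cdot\mathbf m)\mathbf w = \im\widetilde{\mathcal R}_{\mathbf m}$ — whose existence and limiting absorption behavior is where Lemma \ref{lem:LAP} enters — and integrating by parts in time produces, via Assumption \ref{ass:FGR} (nonvanishing of the distorted Fourier transform of $\mathcal R_{\mathbf m}$ at the resonant frequency $\sqrt[4]{(\boldsymbol\lambda\cdot\mathbf m)^2-\omega^2}$), a strictly dissipative term $-\gamma\sum_{\mathbf m\in\mathbf R_{\min}}\int_I|\mathbf z^{\mathbf m}|^2\,dt$ with $\gamma>0$, plus errors controlled by the weighted norm of $\boldsymbol\eta$ and by $\delta$ times that norm. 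Adding a large multiple of this FGR inequality to the virial inequalities, all $\int_I|\mathbf z^{\mathbf m}|^2$ contributions and all $\int_I\|e^{-a\<x\>}\boldsymbol\eta\|^2_{\boldsymbol{\mathcal H}^1}$ contributions on the left dominate the right-hand side, which consists of boundary terms $O(\delta^2)$ plus $O(\delta)$ times the left-hand side; choosing $\delta_0=\delta_0(\epsilon)$ small enough forces \eqref{eq:main2} with $\epsilon$ replaced by $\epsilon/2$. I expect the main obstacle to be precisely the commutator term generated in \eqref{def:RvBg} by conjugating the cutoff $\chi_{B^2}$ and the smoothing operator $\<\im\varepsilon\partial_x\>^{-\widetilde N}$ past the linear flow — as flagged in the introduction, this is the term that Kowalczyk--Martel \cite{KM22} avoid by working in the original variable — and controlling it requires the smoothing estimates on $\mathbf{g}$ of Section \ref{sec:smoothing} together with a careful matching of the parameters $\varepsilon$, $B$, and the weight $a$.
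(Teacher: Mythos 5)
Your proposal follows the same strategy as the paper: a bootstrap combining a virial estimate in the Darboux-transformed variable $\boldsymbol{\xi}=\chi_{B^2}\zeta_B\mathbf{v}$ (exploiting the repulsivity of $V_D$, Lemma \ref{lem:2ndv1m}), a companion virial estimate in the original variable $\widetilde{\boldsymbol{\eta}}$ together with the coercivity transfer of Proposition \ref{prop:coerc}, the modulation estimate Proposition \ref{prop:modp}, and the Fermi Golden Rule Proposition \ref{prop:FGR} which introduces the auxiliary variable $\mathbf{g}=\mathbf{v}+Z(\mathbf{z})$ and relies on the limiting-absorption and smoothing estimates of Sections \ref{sec:Preliminary}--\ref{sec:smoothing}. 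Two points where your account differs in bookkeeping from the paper: the paper first reformulates as Proposition \ref{prop:contreform} in the four norms $\|\dot{\mathbf{z}}-\widetilde{\mathbf{z}}\|_{L^2}$, $\sum\|\mathbf{z}^{\mathbf{m}}\|_{L^2}$, $\|\boldsymbol{\xi}\|_{L^2\boldsymbol{\widetilde{\Sigma}}}$, $\|\mathbf{w}\|_{L^2\boldsymbol{\widetilde{\Sigma}}}$ and then closes by a \emph{cascade} (FGR and modulation $\Rightarrow$ second virial $\Rightarrow$ coercivity $\Rightarrow$ first virial) rather than by adding a large multiple of the FGR inequality to a single Lyapunov sum; and the improvement factor driving the bootstrap is $o_{\varepsilon}(1)$ coming from the hierarchy \eqref{eq:relABg}, not $O(\delta)$ as you write — several of the decisive error terms (e.g.\ the commutator bound \eqref{eq:essential1}, which is $\lesssim B^{-1/2}\varepsilon$) carry no power of $\delta$ at all, so $\delta_0$ small merely absorbs boundary terms while the gain comes from $\varepsilon$, $B$, $A$. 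You do flag the ``careful matching of parameters'' and correctly identify the commutator in \eqref{def:RvBg} as the technical obstacle that forces the $\mathbf{g}$-variable smoothing, so the strategy is essentially the paper's.
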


Theorem \ref{thm:main} is a corollary of Proposition \ref{prop:continuation}.
\begin{proof}
By completely routine arguments, which we skip,
it is possible to show that {Proposition} \ref{prop:continuation}  implies   \eqref{eq:main2} with $I=[0,\infty)$. The time reversibility  of the system, yields immediately \eqref{eq:main2} for $I=\R$.
Finally, \eqref{eq:main3} follows from the integrabililty of $|z_j|^{2m_j}$ where $m_j$ is the smallest integer satisfying $\omega<m_j \lambda_j$, which follows from the FGR estimate given in Proposition \ref{prop:FGR} below and the boundedness of $\dot{\mathbf{z}}$ which can be easily obtained from the modulation equation and orbital stability.
\end{proof}

We set $\chi \in C_{\mathrm{even}}^\infty(\R)$ to satisfy $1_{|x|\leq 1}\leq \chi \leq 1_{|x|\leq 2}$ and $\chi'(x)\leq 0$ for $x>0$.
For $C>0$,
\begin{align}\label{def:zetaphi}
	\zeta_C(x):=\exp\(-\frac{|x|}{C}\(1-\chi(x)\)\),\ \varphi_C(x):=\int_0^x \zeta_C^2(y)\,dy .
\end{align}
We will consider constants  $A, B,\varepsilon >0$ satisfying
\begin{align}\label{eq:relABg}
	\log(\delta ^{-1})\gg\log(\epsilon ^{-1})\gg  A\gg    B^2\gg B \gg  \exp \( \varepsilon ^{-1} \) \gg 1.
\end{align}
We will denote by    $o_{\varepsilon}(1)$  constants depending on $\varepsilon$ such that
\begin{align}\label{eq:smallo}
	\text{ $o_{\varepsilon}(1) \xrightarrow {\varepsilon  \to 0^+   }0.$}
\end{align}
We set
\begin{align}\label{def:wAxiB}&
	\mathbf{w}=\zeta_A \widetilde{\boldsymbol{\eta}},\quad  \boldsymbol{\xi}:=\chi_{B^2} \zeta_B \mathbf{v}.
\end{align}
 We will prove the following  continuation argument.
 \begin{proposition}\label{prop:contreform}   Assumptions  \ref{ass:repuls}, \ref{ass:generic} and \ref{ass:FGR} are given. Then
  for any small $\epsilon>0 $
there exists  a    $\delta _0 = \delta _0(\epsilon )   $ s.t.\  if    in  $I=[0,T]$ we have
\begin{align}&
\|\dot {\mathbf{z}} - \widetilde{\mathbf{z}}\|_{L^2(I)}+\sum_{\mathbf{m}\in \mathbf{R}_{\mathrm{min}}}\|\mathbf{z}^\mathbf{m}\|_{L^2(I)}+ \| \boldsymbol{\xi}    \|_{L^2(I,\boldsymbol{\widetilde{\Sigma}} )}+\|  \mathbf{w} \|_{L^2(I, \boldsymbol{\widetilde{\Sigma}})}\le   \epsilon   \label{eq:main11}
\end{align}
then  for $\delta  \in (0, \delta _0)$
     inequality   \eqref{eq:main11} holds   for   $\epsilon$ replaced by $ o_{\varepsilon}(1)   \epsilon $    where $o_{\varepsilon}(1) \xrightarrow {\varepsilon  \to 0^+   }0 $.
\end{proposition}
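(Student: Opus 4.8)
\textbf{Proof plan for Proposition \ref{prop:contreform}.}
The plan is to run a system of coupled virial/energy estimates for the four quantities appearing in \eqref{eq:main11}, in the spirit of Kowalczyk--Martel--Mu\~noz--Van Den Bosch, but carried out after the $\widetilde N$ Darboux transformations encoded in $\mathcal{T}$, so that dispersion is governed by the repulsive operator $L_D$ of Assumption \ref{ass:repuls}. First I would establish the two ``outer'' virial inequalities. Using the weight $\varphi_B$ (built from $\zeta_B$) and the equation \eqref{eq:vBg} for $\mathbf{v}$, a virial functional of the form $\int \varphi_B\, \partial_x v_1\, v_2\,dx$ (suitably symmetrized) produces, after integration by parts and using $xV_D'\le 0$, a coercive bound controlling $\|\boldsymbol{\xi}\|_{L^2(I,\boldsymbol{\widetilde\Sigma})}$ by the other three terms plus $o_\varepsilon(1)\epsilon$; the commutator term in \eqref{def:RvBg} coming from $[\,\cdot\,,\langle \im\varepsilon\partial_x\rangle^{\widetilde N}]$ and the cutoff terms $\chi_{B^2}'\partial_x+\chi_{B^2}''$ are $O(\varepsilon)$ or supported where $\zeta_B$ is exponentially small, hence absorbed. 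Next, an analogous virial with the wider weight $\varphi_A$ applied to $\widetilde{\boldsymbol{\eta}}$ via \eqref{eq:tildeeta}, again using repulsivity after transformation (this is where I invoke that $L_1$ becomes $L_D$ under $\mathcal{A}^*$ through \eqref{eq:DarConj2}), bounds $\|\mathbf{w}\|_{L^2(I,\boldsymbol{\widetilde\Sigma})}$; the transference of the virial structure through the $\langle\im\varepsilon\partial_x\rangle^{-\widetilde N}$ smoothing and through $R[\mathbf{z}]-1$ (estimated by \eqref{eq:lem:R}) contributes only $o_\varepsilon(1)\epsilon$ or $|\mathbf{z}|$-small errors.

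Second, I would close the estimate on the discrete modes. From the modulation equation \eqref{eq:eta} projected onto $D_{\mathbf{z}}\boldsymbol{\phi}$ directions, together with \eqref{eq:R1FRemainder--} which kills the leading resonant interaction, one gets $\|\dot{\mathbf{z}}-\widetilde{\mathbf{z}}\|_{L^2(I)}\lesssim \|\mathbf{w}\|_{L^2(I,\boldsymbol{\widetilde\Sigma})}^2+\cdots$, i.e.\ quadratically small, hence $o_\varepsilon(1)\epsilon$. The genuinely dissipative term $\sum_{\mathbf{m}\in\mathbf{R}_{\min}}\|\mathbf{z}^{\mathbf{m}}\|_{L^2(I)}$ is controlled by the Fermi Golden Rule: one sets up the auxiliary variable $\mathbf{g}$ (as announced in Sect.\ \ref{sec:smoothing}), subtracts from $\mathbf{v}$ the quasi-stationary part $\sum_{\mathbf{m}\in\mathbf{R}_{\min}}\mathbf{z}^{\mathbf{m}}$ times the outgoing solution of $(\mathbf{L}_D-\im\boldsymbol{\lambda}\cdot\mathbf{m})\,\cdot = \im\widetilde{\mathcal{R}}_{\mathbf{m}}$, and the positivity in Assumption \ref{ass:FGR} (a nonvanishing distorted Fourier coefficient of $\mathcal{R}_{\mathbf{m}}$ at the frequency $\sqrt[4]{(\boldsymbol{\lambda}\cdot\mathbf{m})^2-\omega^2}$) forces $\int_I |\mathbf{z}^{\mathbf{m}}|^2 \lesssim \epsilon^{?}$ with a gain. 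Here the smoothing estimate for $\mathbf{g}$, which rests on the limiting absorption principle Lemma \ref{lem:LAP} for $L_D$ at energies $\omega^2$, is what makes the $\mathbf{g}$-remainder small; the exponential localization built into $\Sigma^s$ with $a_1=\tfrac12\sqrt{\omega^2-\lambda_N^2}$ is exactly tuned so that $\widetilde{\mathcal{R}}_{\mathbf{m}}$ lies in the right weighted space.

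Third, I would feed these four inequalities into one another: the outer virials bound $\boldsymbol{\xi},\mathbf{w}$ by the discrete-mode norms plus $o_\varepsilon(1)\epsilon$; the modulation and FGR bound the discrete-mode norms by $\boldsymbol{\xi},\mathbf{w}$ (quadratically) plus a genuine gain. Choosing $A\gg B^2\gg B\gg e^{1/\varepsilon}\gg 1$ as in \eqref{eq:relABg} makes all cross terms and error terms carry a factor $o_\varepsilon(1)$ or a power of $\epsilon$, so the linear system in the four unknowns has a contraction structure and yields \eqref{eq:main11} with $\epsilon$ replaced by $o_\varepsilon(1)\epsilon$, provided $\delta_0$ is small enough that orbital stability (Proposition \ref{prop:OrbStab}) keeps $\mathbf{z},\boldsymbol{\eta}$ in the regime where all the above manipulations (invertibility of $R[\mathbf{z}]$, smallness of $\mathbf{L}[\mathbf{z}]-\mathbf{L}_1$, boundedness of $\widetilde{\mathbf{z}}_R$) are valid.

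\textbf{Main obstacle.} The delicate point is the commutator term in \eqref{def:RvBg}, i.e.\ $\langle\im\varepsilon\partial_x\rangle^{-\widetilde N}[V_D,\langle\im\varepsilon\partial_x\rangle^{\widetilde N}]v_1$, together with the cutoff-commutator $\mathcal{T}(2\chi_{B^2}'\partial_x+\chi_{B^2}'')\widetilde\eta_1$: these are the price paid for doing the Fermi Golden Rule on the transformed variable $\mathbf{v}$ rather than on $\mathbf{u}$ (as in \cite{KM22}), and they are not automatically lower order. Controlling them requires simultaneously exploiting the $O(\varepsilon)$ gain from the regularization, the exponential smallness of $\zeta_B$ and $\chi_{B^2}'$ off a compact set, and the smoothing/LAP bounds for $\mathbf{g}$; the interplay with the hierarchy \eqref{eq:relABg} is what makes the bookkeeping heavy, and it is the one place where the argument is genuinely more involved than in the internal-mode-free case of \cite{KMMvdB21AnnPDE}.
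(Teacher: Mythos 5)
Your plan has the same overall architecture as the paper's proof: two virial inequalities (one for $\mathbf{w}$ from the $\widetilde{\boldsymbol{\eta}}$ equation with weight $\varphi_A$, one for $\boldsymbol{\xi}$ from the transformed $\mathbf{v}$ equation with weight $\varphi_B$), a modulation estimate controlling $\dot{\mathbf{z}}-\widetilde{\mathbf{z}}$, a Fermi Golden Rule estimate via the auxiliary variable $\mathbf{g}$, and then a bootstrap with the hierarchy \eqref{eq:relABg}. That is correct.

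However, there is a genuine gap in your description of the first virial, the one for $\mathbf{w}$. You write that this estimate is obtained ``again using repulsivity after transformation (this is where I invoke that $L_1$ becomes $L_D$ under $\mathcal{A}^*$)'' and that the virials ``bound $\boldsymbol{\xi},\mathbf{w}$ by the discrete-mode norms plus $o_\varepsilon(1)\epsilon$.'' Neither claim is accurate. The first virial is performed \emph{directly} on $\widetilde{\boldsymbol{\eta}}$ with the operator $\mathbf{L}_1$; it does not see $L_D$ or the Darboux transform at all, and since $L_1$ has bound states this virial cannot be coercive by itself. What it actually produces (Proposition \ref{prop:1stvirial}) is a bound on $\|w_1'\|_{L^2}+\|w_2\|_{L^2_{-a/10}}$ with a term $\|w_1\|_{L^2_{-a/10}}$ of the same size remaining on the right-hand side. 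The piece you are missing is the Coercivity Proposition \ref{prop:coerc}, $\|w_1\|_{L^2_{-a/10}}\lesssim \|\xi_1\|_{\widetilde\Sigma}+e^{-B/20}\|w_1'\|_{L^2}$, which is the mechanism that transfers the uncontrolled low-frequency part of $w_1$ to the transformed variable $\boldsymbol{\xi}$ (where the second virial, and only the second virial, exploits $xV_D'\le 0$). Without this link the bootstrap chain FGR $\to$ modulation $\to$ second virial $\to$ coercivity $\to$ first virial $\to$ $\|\mathbf{w}\|$ does not close, because the first virial on its own is circular.

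A smaller imprecision: the modulation estimate (Proposition \ref{prop:modp}) gives $\|\dot{\mathbf{z}}-\widetilde{\mathbf{z}}\|_{L^2(I)}=o_\varepsilon(1)\|\mathbf{w}\|_{L^2(I,L^2_{-a/10})}$, i.e.\ a $\delta$-small prefactor times a \emph{linear} expression in $\|\mathbf{w}\|$, not a quadratic one in the $L^2(I)$-norm. This does not affect the conclusion but changes how the linear system you set up in the final step actually closes; the paper proceeds sequentially (first discrete modes, then $\boldsymbol{\xi}$, then coercivity, then $\mathbf{w}$) rather than inverting a genuinely coupled $4\times4$ system.
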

Notice that Proposition \ref{prop:contreform} implies Proposition \ref{prop:continuation}.
In the following, we always assume the assumptions of the claim of Proposition \ref{prop:contreform}, which are true for $T>0$ small enough.

The following is proved is   Proposition 9.1 of \cite{CM2109.08108}.
\begin{proposition}[Coercivity]\label{prop:coerc}
We have
\begin{align}
\|w_1\|_{L^2_{-\frac{a}{10}}  }\lesssim \|\xi_1\|_{\widetilde{\Sigma}}+e^{-\frac{B}{20}}\|w_1'\|_{L^2}. \label{eq:coerc1}
\end{align}
\end{proposition}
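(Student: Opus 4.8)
The plan is to reduce the inequality to a boundedness property of an explicit parametrix for the transformation $\mathcal T=\langle\im\varepsilon\partial_x\rangle^{-\widetilde N}\mathcal A^*$ on the continuous spectral subspace, and then to track the three cutoff/weight factors $\zeta_A,\zeta_B,\chi_{B^2}$ by a near/far splitting. The key algebraic input is that $\mathcal A\mathcal A^*$ commutes with $L_1$ (by \eqref{eq:DarConj2}), annihilates the discrete modes, and, being a differential operator of order $2\widetilde N$ with principal part $(-\partial_x^2)^{\widetilde N}$, equals $Q(L_1)$ with $Q(s):=\prod_{j=1}^{\widetilde N}(s-\widetilde\lambda_j^2)$. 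On the continuous spectral subspace of $\left.L_1\right|_{L^2_\odd}$ one has $L_1\ge\omega^2>\widetilde\lambda_j^2$, so $Q(L_1)\ge Q(\omega^2)>0$ there. I would then set
\begin{align*}
\mathcal B:=\big(Q(L_1)|_{P_c}\big)^{-1}\,P_c\,\mathcal A\,\langle\im\varepsilon\partial_x\rangle^{\widetilde N},
\end{align*}
which is a zeroth--order operator: the multiplier $\langle\varepsilon\xi\rangle^{\widetilde N}$ exactly offsets the $2\widetilde N$ derivatives of $Q(L_1)^{-1}\mathcal A$, so $\mathcal B$ is bounded on $L^2$, with norm $O(1)$ (at worst $\lesssim\varepsilon^{-\widetilde N}$, negligible for the hierarchy \eqref{eq:relABg}).

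Next, since $\langle\im\varepsilon\partial_x\rangle^{\widetilde N}v_1=\mathcal A^*(\chi_{B^2}\widetilde\eta_1)$ and $Q(L_1)$ commutes with $P_c$, one gets the identity $\mathcal B v_1=P_c(\chi_{B^2}\widetilde\eta_1)=\widetilde\eta_1-P_c((1-\chi_{B^2})\widetilde\eta_1)$, using $P_c\widetilde\eta_1=\widetilde\eta_1$. The correction is negligible in the norms in play: $(1-\chi_{B^2})\widetilde\eta_1$ is supported in $|x|\ge B^2$, the finitely many eigenfunctions making up $P_d=1-P_c$ decay exponentially, and the relevant weights vary on scale $O(1)$ near the origin, so $\|P_c((1-\chi_{B^2})\widetilde\eta_1)\|_{L^2_{-a/10}}\lesssim e^{-cB^2}\|\widetilde\eta_1\|_{L^2}\lesssim e^{-cB^2}\delta$. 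Hence, using $0\le\zeta_A\le1$, $\|w_1\|_{L^2_{-a/10}}\le\|\mathcal B v_1\|_{L^2_{-a/10}}+e^{-cB^2}\delta$, and it remains to bound $\|\mathcal B v_1\|_{L^2_{-a/10}}$ by $\|\xi_1\|_{\widetilde\Sigma}$ up to admissible errors.

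For this I would use that $\mathcal B$ has an exponentially localized kernel: each factor $(L_1-\widetilde\lambda_j^2)^{-1}$ has off--diagonal decay at rate $\sqrt{\omega^2-\widetilde\lambda_j^2}$, which exceeds $a$ for every $j$ (this is how $a_1>a$ is calibrated; on the odd subspace $L_1$ has no spectrum below $\omega^2$ other than $\lambda_1^2,\dots,\lambda_N^2$, and the case $\widetilde N=2N+1$ where the top even eigenvalue of $L_1$ also enters needs a separate but routine check), and $\langle\im\varepsilon\partial_x\rangle^{-\widetilde N}$ has kernel decaying at rate $1/\varepsilon\gg a$, so the kernel of $\mathcal B$ decays faster than $e^{-a|x-y|}$. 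Now split $v_1=v_1\mathbf1_{|x|\le B^2}+v_1\mathbf1_{|x|>B^2}$. The far piece, acted on by the localized $\mathcal B$ and then weighted by $e^{-a\langle x\rangle/10}$, contributes $\lesssim e^{-cB^2}\|v_1\|_{L^2}\lesssim e^{-cB^2}\varepsilon^{-\widetilde N}\delta$. On $|x|\le B^2$ one has $\chi_{B^2}\equiv1$, hence $v_1\mathbf1_{|x|\le B^2}=\zeta_B^{-1}\xi_1\mathbf1_{|x|\le B^2}$; since $\zeta_B^{-1}(y)=e^{|y|/B}\le e^{|x-y|/B}e^{|x|/B}$ with $1/B$ smaller than the kernel decay rate, Schur's test (comparing $e^{-a\langle x\rangle/10}$ with $e^{-a\langle y\rangle/10}$) gives $\|\mathcal B(\zeta_B^{-1}\xi_1\mathbf1_{|x|\le B^2})\|_{L^2_{-a/10}}\lesssim\|\xi_1\|_{L^2_{-a/10}}\le\|\xi_1\|_{\widetilde\Sigma}$. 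Collecting, $\|w_1\|_{L^2_{-a/10}}\lesssim\|\xi_1\|_{\widetilde\Sigma}+e^{-cB^2}\varepsilon^{-\widetilde N}\delta$; since $\varepsilon^{-\widetilde N}\delta$ is smaller than any fixed power of $e^{-B}$ by \eqref{eq:relABg}, the last term is absorbed into $e^{-B/20}\|w_1'\|_{L^2}$ once one also carries along, as a genuine first--derivative term, the contribution of the commutator $[\chi_{B^2},\langle\im\varepsilon\partial_x\rangle^{\widetilde N}]$, whose support $|x|\sim B^2$ is precisely where such a term appears with the stated prefactor; this yields \eqref{eq:coerc1}.

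The main obstacle lies in the first point — the construction and estimates for $\mathcal B$: establishing that $\mathcal B$, despite containing the anti--smoothing multiplier $\langle\im\varepsilon\partial_x\rangle^{\widetilde N}$, is bounded with an exponentially localized kernel, \emph{uniformly} enough that the unavoidable loss $\varepsilon^{-\widetilde N}$ only ever multiplies quantities carrying a gain $e^{-cB^2}$ (recall $B\gg\exp(\varepsilon^{-1})$). This rests on analyticity of $\langle\varepsilon\xi\rangle^{\pm\widetilde N}$ in a strip of width $\sim\varepsilon^{-1}$, the known exponential off--diagonal decay of the resolvents $(L_1-\widetilde\lambda_j^2)^{-1}$, and the fact — built into the Darboux construction, and ultimately into Assumption \ref{ass:repuls} through $L_D=L_{\widetilde N+1}$ having no eigenvalue — that $Q(L_1)$ is invertible precisely on the range of $P_c$.
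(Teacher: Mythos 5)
The paper does not actually prove Proposition \ref{prop:coerc} in the text — it refers to Proposition~9.1 of \cite{CM2109.08108} — so there is no in-paper argument to compare against line by line. That said, the strategy you propose is the one built into the surrounding machinery: the operator $\mathcal B$ you construct is exactly the inverse of the Darboux/smoothing transformation on the range of $P_c$ that the paper records as Lemma \ref{lem:coer6}, and the algebraic identity $\mathcal A\mathcal A^*=\prod_{j=1}^{\widetilde N}(L_1-\widetilde\lambda_j^2)$ you rely on is the one the paper itself invokes in Section \ref{sec:FGR}. So the conceptual skeleton is right.

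There is, however, a genuine gap in the final absorption step. You control the two ``far'' remainders — $P_c\big((1-\chi_{B^2})\widetilde\eta_1\big)$ and $\mathcal B\big(v_1\mathbf 1_{|x|>B^2}\big)$ — by $e^{-cB^2}\|\widetilde\eta_1\|_{L^2}\lesssim e^{-cB^2}\delta$ and $e^{-cB^2}\|v_1\|_{L^2}\lesssim e^{-cB^2}\varepsilon^{-\widetilde N}\delta$, and then assert that $e^{-cB^2}\varepsilon^{-\widetilde N}\delta$ is ``absorbed into $e^{-B/20}\|w_1'\|_{L^2}$''. That cannot be: $\|w_1'\|_{L^2}$ is a time-dependent quantity that may be arbitrarily small compared to the fixed datum size $\delta$, so $e^{-cB^2}\delta\not\lesssim e^{-B/20}\|w_1'\|_{L^2}$, and no hierarchy such as \eqref{eq:relABg} changes that. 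The inequality \eqref{eq:coerc1} has no additive $\delta$-term on its right-hand side, so a proof cannot end with one.

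The fix stays entirely inside the paper's toolkit: bound the remainders by $\|w_1\|_{\widetilde\Sigma}$ rather than by $\delta$. Since $\zeta_A\gtrsim 1$ on $|x|\le 2B^2$, Lemma \ref{lem:coer3} gives $\|\chi_{B^2}\widetilde\eta_1\|_{L^2}\lesssim\|w_1\|_{L^2(|x|\le 2B^2)}\lesssim B^2\|w_1\|_{\widetilde\Sigma}$ and hence $\|v_1\|_{L^2}\lesssim\varepsilon^{-\widetilde N}B^2\|w_1\|_{\widetilde\Sigma}$, which is \eqref{eq:estimates115}; and for the cutoff tail, $e^{-a\langle x\rangle/10}\zeta_A^{-1}(x)\lesssim e^{-a\langle x\rangle/20}$ for $A$ large, so Lemma \ref{lem:equiv_rho} yields $\|(1-\chi_{B^2})\widetilde\eta_1\|_{L^2_{-a/10}}\lesssim e^{-aB^2/40}\|w_1\|_{\widetilde\Sigma}$, with the same bound for the $P_d$-correction. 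Collecting, you get
\begin{align*}
\|w_1\|_{L^2_{-\frac{a}{10}}}\lesssim\|\xi_1\|_{\widetilde\Sigma}+e^{-cB^2}\varepsilon^{-\widetilde N}B^2\|w_1\|_{\widetilde\Sigma}.
\end{align*}
Now use $\|w_1\|_{\widetilde\Sigma}\sim\|w_1'\|_{L^2}+\|w_1\|_{L^2_{-a/10}}$: the $\|w_1\|_{L^2_{-a/10}}$ part is absorbed into the left-hand side since its coefficient is $\ll 1$, and the $\|w_1'\|_{L^2}$ part is $\ll e^{-B/20}\|w_1'\|_{L^2}$ by \eqref{eq:relABg}, giving \eqref{eq:coerc1}. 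Two minor points: (i) your Schur bound for the near piece cannot output $\|\xi_1\|_{L^2_{-a/10}}$ as stated — the factor $\zeta_B^{-1}(y)$ has to be paid for out of the outgoing weight $e^{-a\langle x\rangle/10}$, which leaves only a weaker weight (say $L^2_{-a/20}$) on $\xi_1$; this is harmless because Lemma \ref{lem:equiv_rho} controls any $\|\cdot\|_{L^2_{-b}}$, $b>0$, by $\|\cdot\|_{\widetilde\Sigma}$. (ii) The ``routine check'' in the case $\widetilde N=2N+1$ is not cosmetic: the off-diagonal decay rate of $R_{L_1}(\widetilde\lambda_{2N+1}^2)P_c$ is $\sqrt{\omega^2-\widetilde\lambda_{2N+1}^2}$, which can lie below $2a_1=\sqrt{\omega^2-\lambda_N^2}$, so one must verify it still dominates the weight rate $a/10$ actually used in the Schur estimate; this constrains the choice of $a$ in Definition \ref{def:spaces}.
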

\qed

In   analogy to \cite{CM2109.08108}, we now   consider essentially two virial estimates, one for $\mathbf{w}$ and the other for $\boldsymbol{\xi}$.
 The first is based directly on the equation for $\boldsymbol{\widetilde{\eta}}$,  \eqref{eq:tildeeta}.

\begin{proposition}[1st virial estimate]\label{prop:1stvirial}
We have
\begin{align}
\|w_1'\|_{L^2(I,L^2)}+\|w_2\|_{L^2(I, L^2_{-\frac{a}{10}})}&\lesssim  o_{\varepsilon}(1) \epsilon  \nonumber \\&+ \|w_1\|_{L^2(I,L^2_{-\frac{a}{10}})} + \sum_{\mathbf{m}\in \mathbf{R}_{\mathrm{min}}} \| \mathbf{z}^{\mathbf{m}}\|_{L^2(I)} + \delta \|\dot{\mathbf{z}}-\widetilde{\mathbf{z}}\|_{L^2(I)}.\label{eq:1stvInt}
\end{align}
\end{proposition}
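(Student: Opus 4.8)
The strategy is the classical virial/Morawetz computation adapted to the coupled system, following Section 7 of \cite{KMMvdB21AnnPDE} and the corresponding step in \cite{CM2109.08108}. I would introduce the virial functional
\begin{align*}
\mathcal{I}(t):=\< \boldsymbol{\xi}_{\mathrm{vir}}, \widetilde{\boldsymbol{\eta}}\>, \quad \text{e.g. } \mathcal{I}(t)=\int \varphi_A\, \widetilde{\eta}_2\, \partial_x\widetilde{\eta}_1 + \tfrac12\int \varphi_A'\, \widetilde{\eta}_1\widetilde{\eta}_2\,dx,
\end{align*}
built from the weight $\varphi_A$ of \eqref{def:zetaphi} (with scale $A$), so that $\mathcal{I}$ is a bounded quantity of size $O(\|\widetilde{\boldsymbol{\eta}}\|_{\boldsymbol{\mathcal{H}}^1}^2)=O(\delta^2)$. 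Differentiating $\mathcal{I}$ in $t$ and substituting the equation \eqref{eq:tildeeta} for $\partial_t\widetilde{\boldsymbol{\eta}}$, the leading term coming from $\mathbf{L}_1\widetilde{\boldsymbol{\eta}}$ produces, after integration by parts, a positive quadratic form in $w_1'=\partial_x(\zeta_A\widetilde{\eta}_1)$ and $w_2=\zeta_A\widetilde{\eta}_2$ — this is exactly the repulsivity mechanism, except that here $L_1$ itself is \emph{not} repulsive, so I cannot close directly on $\widetilde{\boldsymbol{\eta}}$.

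The resolution, as in \cite{CM2109.08108}, is to accept a loss and only extract from the first virial estimate a bound of the stated \emph{conditional} form: the positive part controls $\|w_1'\|_{L^2(I,L^2)}^2+\|w_2\|_{L^2(I,L^2_{-a/10})}^2$, while all the remaining contributions are moved to the right-hand side. I would organize the error terms into three groups. First, the discrete/modulation contribution: the term $\sum_{\mathbf{m}\in\mathbf{R}_{\min}}\mathbf{z}^{\mathbf{m}}P_c\mathcal{R}_{\mathbf{m}}$ in \eqref{eq:tildeeta} pairs against the bounded virial weight to give $\lesssim \sum_{\mathbf{m}\in\mathbf{R}_{\min}}\|\mathbf{z}^{\mathbf{m}}\|_{L^2(I)}$ (using $\mathcal{R}_{\mathbf{m}}\in\boldsymbol{\Sigma}^\infty$, hence exponentially localized, to absorb the polynomial growth of $\varphi_A$); and the $P_cD\boldsymbol{\phi}[\mathbf{z}](\dot{\mathbf{z}}-\widetilde{\mathbf{z}})$ piece of $\mathbf{R}_{\widetilde{\boldsymbol{\eta}}}$ in \eqref{eq:Rtildeeta} gives, via Cauchy–Schwarz and $|\mathcal{I}|\lesssim\delta\|\cdot\|$, the term $\delta\|\dot{\mathbf{z}}-\widetilde{\mathbf{z}}\|_{L^2(I)}$. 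Second, the genuinely small terms: $P_c\boldsymbol{\mathcal{R}}_1[\mathbf{z}]$ is $O(\delta)$ smaller than the $\mathbf{R}_{\min}$ terms by \eqref{eq:R1FRemainder}, $(\mathbf{L}[\mathbf{z}]-\mathbf{L}_1)\boldsymbol{\eta}$ and $\mathbf{L}_1(R[\mathbf{z}]-1)\widetilde{\boldsymbol{\eta}}$ carry a factor $|\mathbf{z}|\lesssim\delta$ by \eqref{eq:lem:R} (and $\mathbf{L}[\mathbf{z}]-\mathbf{L}_1$ is exponentially localized), and the nonlinearity $P_c\mathbf{J}\mathbf{F}$ is quadratic in $\eta_1$ — all of these are bounded by $\delta$ times the quantities in \eqref{eq:main11}, which under the bootstrap hypothesis \eqref{eq:main11} is $\le\delta\epsilon = o_\varepsilon(1)\epsilon$ after also using \eqref{eq:relABg}. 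Third, the ``diagonal'' lower-order term left over from the positive form itself, which is the unavoidable $\|w_1\|_{L^2(I,L^2_{-a/10})}$ on the right — this is the term that will later be absorbed by the \emph{second} virial estimate for $\boldsymbol{\xi}$ together with the coercivity Proposition \ref{prop:coerc}.

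\textbf{Main obstacle.} The delicate point is the commutator/cutoff bookkeeping between the virial weight $\varphi_A$ (scale $A$) and the operators in \eqref{eq:tildeeta}, and in particular making sure the quadratic form produced by $\mathbf{L}_1$ is genuinely nonnegative down to the scale $-a/10$ after the change of variables $\widetilde{\boldsymbol{\eta}}\mapsto\mathbf{w}=\zeta_A\widetilde{\boldsymbol{\eta}}$; the cross terms $\varphi_A'\,W''(H)$ and the $(H')$-weighted pieces from $L_1$ must be shown to be either sign-definite or lower order in $A^{-1}$. The other subtlety is that $\widetilde{\boldsymbol{\eta}}=P_c\boldsymbol{\eta}$ only lives in $\boldsymbol{\Sigma}^*$ a priori, not in the energy space, so the pairing $\< \text{virial}, \widetilde{\boldsymbol{\eta}}\>$ and all integrations by parts must be justified by a density/approximation argument — but since this is the exact analogue of Proposition in \cite{CM2109.08108} and of \cite{KMMvdB21AnnPDE}, I would invoke those computations and only indicate the modifications forced by the presence of the discrete modes, i.e. the extra $\sum_{\mathbf{m}\in\mathbf{R}_{\min}}\|\mathbf{z}^{\mathbf{m}}\|_{L^2(I)}$ and $\delta\|\dot{\mathbf{z}}-\widetilde{\mathbf{z}}\|_{L^2(I)}$ terms that do not appear in the internal-mode-free case.
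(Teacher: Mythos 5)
Your sketch captures the correct skeleton for the error-term bookkeeping (the $\sum_{\mathbf{m}}\|\mathbf{z}^{\mathbf{m}}\|_{L^2}$ contribution from $P_c\mathcal{R}_{\mathbf{m}}$, the $\delta\|\dot{\mathbf{z}}-\widetilde{\mathbf{z}}\|_{L^2}$ from $P_cD\boldsymbol{\phi}[\mathbf{z}](\dot{\mathbf{z}}-\widetilde{\mathbf{z}})$, the $O(\delta)$ gain on $\boldsymbol{\mathcal{R}}_1$, $(\mathbf{L}[\mathbf{z}]-\mathbf{L}_1)$, $(R[\mathbf{z}]-1)$, and the nonlinearity), and the unavoidable $\|w_1\|_{L^2_{-a/10}}$ on the right-hand side to be absorbed later by the coercivity step. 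However, there is a concrete gap in the \emph{positive part} of the argument.

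Your proposed functional $\mathcal{I}=\int\varphi_A\,\widetilde{\eta}_2\,\partial_x\widetilde{\eta}_1+\tfrac12\int\varphi_A'\,\widetilde{\eta}_1\widetilde{\eta}_2$ is precisely $\tfrac12\<\mathbf{J}\widetilde{\boldsymbol{\eta}},S_A\widetilde{\boldsymbol{\eta}}\>$ with $S_A=\tfrac12\varphi_A'+\varphi_A\partial_x$. When you differentiate and insert $\mathbf{L}_1\widetilde{\boldsymbol{\eta}}$, the $\widetilde{\eta}_2$-contribution vanishes identically: $\<\widetilde{\eta}_2,S_A\widetilde{\eta}_2\>=0$ because $S_A$ is skew-adjoint, so what survives is only $-\<L_1\widetilde{\eta}_1,S_A\widetilde{\eta}_1\>$, which after commuting through $\zeta_A$ produces $\|w_1'\|_{L^2}^2$ plus exponentially localized lower-order terms. \emph{This single functional controls $w_1'$ only; it gives no sign-definite term in $w_2$.} The claim that one virial computation yields ``a positive quadratic form in $w_1'$ and $w_2$'' is therefore incorrect, and as stated your sketch does not prove the $\|w_2\|_{L^2(I,L^2_{-a/10})}$ half of the estimate.

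The paper's argument uses a \emph{second} functional $\mathcal{I}_2(\widetilde{\boldsymbol{\eta}})=\tfrac12\<\mathbf{J}\widetilde{\boldsymbol{\eta}},\zeta_{\widetilde{A}}^2\sigma_3\widetilde{\boldsymbol{\eta}}\>$ (a localized ``$\eta_1\eta_2$ density'' rather than a Morawetz multiplier). Its time derivative gives $\|\zeta_{10/a}w_2\|_{L^2}^2$ up to terms controlled by $\|w_1\|_{\widetilde\Sigma}^2$ — i.e.\ it controls $w_2$ but costs $w_1'$. The two differential inequalities are coupled in opposite senses: $\mathcal{I}_1$ controls $w_1'$ with a cost $c\|w_2\|^2$ (and a compensating $c^{-1}$ on the $|\mathbf{z}^{\mathbf{m}}|^2$ term), while $\mathcal{I}_2$ controls $w_2$ at a cost $\|w_1'\|^2$; one then chooses $c$ small so that $4cC_1C_2\le1$ and decouples by substitution. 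This two-functional, $c$-tuning structure is the essential mechanism behind the estimate and is what your proposal is missing. One further detail worth naming: the nonlinear term $\<\mathbf{F},S_A\boldsymbol\eta\>$ is not simply ``$O(\delta)$'' — it is reduced to $\int|\eta_1|^3\zeta_A^2$ and then bounded by $\delta^{1/3}\|w_1'\|_{L^2}^2$ via a Gagliardo–Nirenberg-type interpolation (Lemma 2.7 of \cite{CM19SIMA}), so it is absorbed by the positive $\|w_1'\|^2$ term on the left rather than placed on the right.
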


The second  virial estimate,  involves  the transformed problem \eqref{eq:vBg}.

\begin{proposition}[2nd virial]\label{prop:2ndvirial}
We have
\begin{align}\label{eq:2ndv}
\|    \boldsymbol{\xi}   \|_{L^2(I,\boldsymbol{\widetilde{\Sigma}} )} \lesssim  o_{\varepsilon}(1) \epsilon +\sum_{\mathbf{m}\in \mathbf{R}_{\mathrm{min}}}\|\mathbf{z}^{\mathbf{m}}\|_{L^2(I)}+o_{\varepsilon}(1) \|\dot{\mathbf{z}}-\widetilde{\mathbf{z}}\|_{L^(I)2}+o_{\varepsilon}(1)  \| \mathbf{w} \|_{L^2,\boldsymbol{\widetilde{\Sigma}} )}
\end{align}
\end{proposition}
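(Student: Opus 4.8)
The plan is to prove the 2nd virial estimate \eqref{eq:2ndv} by testing the transformed equation \eqref{eq:vBg} against a virial multiplier adapted to the repulsive operator $L_D$, following the strategy of Kowalczyk--Martel--Mu\~noz and its elaboration in \cite{CM2109.08108}. Concretely, I would introduce the weighted quantity $\mathcal{I}(t)=\<\mathbf{v}, \mathcal{S}_B \mathbf{v}\>$ (or the analogous bilinear form on $\boldsymbol{\xi}$), where $\mathcal{S}_B$ is built from the functions $\varphi_{B}, \zeta_{B}$ of \eqref{def:zetaphi} (a Morawetz-type multiplier $\varphi_B \partial_x + \frac12 \varphi_B'$ in the first component, paired suitably with the second component to respect the Hamiltonian structure of $\mathbf{L}_D$). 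Differentiating $\mathcal{I}(t)$ along \eqref{eq:vBg} and integrating by parts, the principal term is the virial bilinear form for $-\partial_x^2+V_D$, which by Assumption \ref{ass:repuls} ($xV_D'\le 0$, $V_D\not\equiv 0$) is coercive: it controls $\|v_1'\|_{L^2}^2$ plus a weighted $L^2$ norm, hence $\|\boldsymbol{\xi}\|_{\boldsymbol{\widetilde\Sigma}}^2$ up to the cutoff errors coming from $\chi_{B^2}\zeta_B$. This is exactly where the $B\gg \exp(\varepsilon^{-1})$ hierarchy in \eqref{eq:relABg} is used: the commutator terms between the multiplier and the cutoffs are $O(e^{-cB})$ or $o_\varepsilon(1)$ relative to the main term, and they get absorbed.

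Next I would estimate the inhomogeneous contributions on the right-hand side of \eqref{eq:vBg} after pairing with $\mathcal{S}_B\mathbf{v}$. The term $\sum_{\mathbf{m}\in\mathbf{R}_{\min}} \mathbf{z}^{\mathbf{m}}\widetilde{\mathcal{R}}_{\mathbf{m}}$ is handled using that $\widetilde{\mathcal{R}}_{\mathbf{m}}=\mathcal{T}\chi_{B^2}P_c\mathcal{R}_{\mathbf{m}}$ lies in a weighted space with $\mathcal{R}_{\mathbf{m}}\in\boldsymbol{\Sigma}^\infty$ (Proposition \ref{prop:rp}), so by Cauchy--Schwarz in $t$ it produces $\sum_{\mathbf{m}\in\mathbf{R}_{\min}}\|\mathbf{z}^{\mathbf{m}}\|_{L^2(I)}$ times a bounded factor, plus a small-in-$\varepsilon$ loss from the multiplier growth (the multiplier $\varphi_B$ is bounded but the natural pairing costs a factor controlled by the hierarchy). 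The remainder $\mathbf{R}_{\mathbf{v}}$ in \eqref{def:RvBg} splits into: (i) $\mathcal{T}\chi_{B^2}P_c\mathbf{R}_{\widetilde{\boldsymbol\eta}}$, whose pieces (nonlinear $\mathbf{F}$, the $(\mathbf{L}[\mathbf{z}]-\mathbf{L}_1)\boldsymbol\eta$ term which is exponentially localized and size $|\mathbf{z}|$, the $\dot{\mathbf{z}}-\widetilde{\mathbf{z}}$ term, and the $(R[\mathbf{z}]-1)$ term bounded by \eqref{eq:lem:R}) yield respectively $o_\varepsilon(1)\|\mathbf{w}\|_{L^2(I,\boldsymbol{\widetilde\Sigma})}$, $o_\varepsilon(1)\epsilon$, $o_\varepsilon(1)\|\dot{\mathbf{z}}-\widetilde{\mathbf{z}}\|_{L^2(I)}$, and small terms; (ii) the genuinely new term $-\mathcal{T}(2\chi_{B^2}'\partial_x+\chi_{B^2}'')\widetilde\eta_1$ and the commutator term $\<\im\varepsilon\partial_x\>^{-\widetilde N}[V_D,\<\im\varepsilon\partial_x\>^{\widetilde N}]v_1$. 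The commutator of cutoff and linear part, supported where $\chi_{B^2}'\neq 0$, i.e.\ on $|x|\sim B^2$, is controlled because $\mathbf{w}=\zeta_A\widetilde{\boldsymbol\eta}$ with $A\gg B^2$ gives exponential smallness $e^{-cB^2/A}$... actually the relevant smallness is in the reverse direction, so one uses instead that on that region $\widetilde\eta_1$ is measured by $\|w_1\|$ with the weight, up to $o_\varepsilon(1)$; and the $\varepsilon$-pseudodifferential commutator with $V_D-\omega^2\in\mathcal{S}$ is $O(\varepsilon)$ in operator norm on the weighted spaces, hence $o_\varepsilon(1)$.

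Finally I would integrate the differential inequality $\frac{d}{dt}\mathcal{I}\ge c\|\boldsymbol{\xi}\|_{\boldsymbol{\widetilde\Sigma}}^2 - (\text{RHS terms})$ over $I=[0,T]$, using $|\mathcal{I}(t)|\lesssim \|\mathbf{v}(t)\|_{L^2}^2\lesssim \delta^2$ (from orbital stability, Proposition \ref{prop:OrbStab}, so the boundary terms are $O(\delta^2)=o_\varepsilon(1)\epsilon$ by the hierarchy $\log\delta^{-1}\gg\log\epsilon^{-1}$), together with the coercivity of the main term, to arrive at \eqref{eq:2ndv}. I expect the main obstacle to be the careful bookkeeping of the extra commutator term generated by the cutoff $\chi_{B^2}$ in front of $\mathbf{v}$ (the issue flagged in the introduction via \eqref{eq:equation_g13}): one must show that $-\mathcal{T}(2\chi_{B^2}'\partial_x+\chi_{B^2}'')\widetilde\eta_1$, living at spatial scale $B^2$ where the Morawetz weight $\varphi_B$ is essentially constant $=\|\zeta_B\|_{L^2}^2$, contributes only $o_\varepsilon(1)\|\mathbf{w}\|_{L^2(I,\boldsymbol{\widetilde\Sigma})}$ rather than something of the same order as the main term; this requires exploiting both the decay of $\widetilde\eta_1$ relative to the $\mathbf{w}$-weight and the smoothing gain in $\mathcal{T}=\<\im\varepsilon\partial_x\>^{-\widetilde N}\mathcal{A}^*$, and is where the precise choice of constants in \eqref{eq:relABg} is essential.
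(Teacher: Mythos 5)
Your plan only reconstructs the \emph{first} half of the argument, namely the Morawetz-type virial $\mathcal{J}_1(\mathbf{v})=\tfrac12\<\mathbf{J}\mathbf{v},\widetilde{S}_B\mathbf{v}\>$, and then claims that its time derivative is coercive on the full transformed norm $\|\boldsymbol\xi\|_{\boldsymbol{\widetilde\Sigma}}^2=\|\xi_1\|_{\widetilde\Sigma}^2+\|\xi_2\|_{L^2_{-a/10}}^2$. That step is not correct: because $\widetilde{S}_B$ is skew-adjoint and $\mathbf{J}\mathbf{L}_D=\mathrm{diag}(-L_D,-1)$, the principal term coming from $\mathbf{L}_D$ is exactly $-\<L_D v_1,\widetilde{S}_B v_1\>$ and the second component drops out entirely ($\<v_2,\widetilde{S}_B v_2\>=0$). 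So the Morawetz identity only gives control of $\|\xi_1\|_{\widetilde\Sigma}^2$, not of $\|\xi_2\|_{L^2_{-a/10}}^2$. The paper therefore introduces a \emph{second} functional $\mathcal{J}_2(\mathbf{v})=\tfrac12\<\mathbf{J}\mathbf{v},(\chi_{B^2}\zeta_{\widetilde B})^2\sigma_3\mathbf{v}\>$ (a localized ``energy/mass''-type form, see \eqref{eq:2nviqf}), whose time derivative produces the missing $\|\xi_2\|_{L^2_{-a/10}}^2$ term, at the cost of reintroducing an $\|\xi_1\|_{\widetilde\Sigma}^2$ error (Lemma \ref{lem:2ndv}). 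The two differential inequalities \eqref{eq:2ndv1} and \eqref{eq:2ndv2} are then combined with a small parameter $c$ (absorbing the cross-terms), exactly as in the proof of Proposition \ref{prop:1stvirial}. Without this second functional your plan cannot reach the stated estimate \eqref{eq:2ndv}.

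Two smaller points. First, the boundary contribution from integrating $d\mathcal{J}_i/dt$ is not merely $O(\delta^2)$: because $\mathbf{v}=\mathcal{T}\chi_{B^2}\widetilde{\boldsymbol\eta}$ and $\mathcal{T}=\<\im\varepsilon\partial_x\>^{-\widetilde N}\mathcal{A}^*$ costs a factor $\varepsilon^{-\widetilde N}$ on $L^2$, the paper's bound is $|\mathcal{J}_1|\lesssim B\varepsilon^{-2\widetilde N}\delta^2$; this is still $o_\varepsilon(1)\epsilon^2$ thanks to \eqref{eq:relABg}, but you should record it correctly, since it is precisely here that $\log(\delta^{-1})\gg\log(\epsilon^{-1})\gg A\gg B^2$ is invoked. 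Second, your treatment of the cutoff-commutator term $-\mathcal{T}(2\chi_{B^2}'\partial_x+\chi_{B^2}'')\widetilde\eta_1$ and of the $\varepsilon$-pseudodifferential commutator is in the right spirit (these yield $O(\varepsilon+\varepsilon^{-\widetilde N}B^{-1})(\|\xi_1\|_{\widetilde\Sigma}^2+\|w_1\|_{\widetilde\Sigma}^2)$, cf.\ \eqref{eq:v21r2}), but the hand-waving about ``$A\gg B^2$ giving exponential smallness $e^{-cB^2/A}$'' is, as you yourself note, in the wrong direction and should simply be dropped: the relevant mechanism is the $1/B$ gain from $\chi_{B^2}'$ against the bounded $\varphi_B$, combined with the $\varepsilon^{-\widetilde N}$ loss from $\mathcal{T}$, which nets $o_\varepsilon(1)$ by \eqref{eq:relABg}.
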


We will also need a control of modulation parameters.

\begin{proposition}\label{prop:modp}
We have
\begin{align}
\|\dot{\mathbf{z}}-\widetilde{\mathbf{z}}\|_{L^2(I)}=
o_{\varepsilon}(1) \|\mathbf{w}\|_{L^2(I,L^2_{-\frac{a}{10}})}. \label{eq:lem:estdtz}
\end{align}
\end{proposition}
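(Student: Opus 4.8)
\textbf{Proof plan for Proposition \ref{prop:modp}.} The estimate on $\dot{\mathbf{z}}-\widetilde{\mathbf{z}}$ must come from the modulation equations, so the first step is to derive a clean formula for $\dot{\mathbf{z}}-\widetilde{\mathbf{z}}$. Apply $\<\mathbf{J}\,\cdot\,,D_{\mathbf z}\boldsymbol{\phi}[\mathbf z]\boldsymbol{\zeta}\>$ to equation \eqref{eq:eta} for each $\boldsymbol{\zeta}$ in the standard basis. By the orthogonality condition \eqref{eq:lem:mod-1}, $\boldsymbol{\eta}\in\mathcal{H}_c[\mathbf z]$, the term $\partial_t\boldsymbol{\eta}$ contributes only through a term where the time derivative falls on $D_{\mathbf z}\boldsymbol{\phi}[\mathbf z]\boldsymbol{\zeta}$, which is $O(|\dot{\mathbf z}|\,\|\boldsymbol{\eta}\|)$; the refined-profile identity \eqref{eq:R1FRemainder--} kills the pairing with $\mathbf{R}[\mathbf z]$; and the term $\<\mathbf J D\boldsymbol{\phi}[\mathbf z](\dot{\mathbf z}-\widetilde{\mathbf z}),D_{\mathbf z}\boldsymbol{\phi}[\mathbf z]\boldsymbol{\zeta}\>$ produces the non-degenerate matrix $\{\<\mathbf J\,\partial_{z_{j\mathcal A}}\boldsymbol{\phi}[\mathbf z],\partial_{z_{j'\mathcal A'}}\boldsymbol{\phi}[\mathbf z]\>\}$, which is invertible for $|\mathbf z|$ small (it is $O(1)$-close to the symplectic form on $L^2_{discr}$). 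Inverting, one obtains schematically
\begin{align*}
\dot{\mathbf z}-\widetilde{\mathbf z} = \mathcal{M}[\mathbf z]^{-1}\Big(\<\mathbf J\big((\mathbf L[\mathbf z]-\mathbf L_1)\boldsymbol{\eta}+\mathbf J\mathbf F[\mathbf z,\boldsymbol{\eta}]+\mathbf L_1\boldsymbol{\eta}\big),D_{\mathbf z}\boldsymbol{\phi}[\mathbf z]\,\cdot\,\>+O(|\dot{\mathbf z}|\,\|\boldsymbol{\eta}\|)\Big).
\end{align*}

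The key point is that every term on the right is pointwise in $t$ bounded by a weighted norm of $\boldsymbol{\eta}$, because $D_{\mathbf z}\boldsymbol{\phi}[\mathbf z]\boldsymbol{\zeta}\in\boldsymbol{\Sigma}^\infty$ is exponentially localized with weight $e^{a_1\<x\>}$, far stronger than the weight $e^{-\frac{a}{10}\<x\>}$ (recall $a<a_1$). So pairing against it converts any reasonable norm of $\boldsymbol{\eta}$ into the weak weighted norm: $|\<\mathbf J\, G,D_{\mathbf z}\boldsymbol{\phi}[\mathbf z]\boldsymbol{\zeta}\>|\lesssim\|G\|_{L^2_{-a/10}}$. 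For the linear term $\mathbf L_1\boldsymbol{\eta}$ one integrates by parts against the smooth localized function to move derivatives off $\boldsymbol{\eta}$, reducing to $\|\boldsymbol{\eta}\|_{L^2_{-a/10}}$ with constant $O(1)$; the nonlinear term $\mathbf F[\mathbf z,\boldsymbol{\eta}]$ is quadratic in $\eta_1$, hence bounded by $\|\boldsymbol{\eta}\|_{L^\infty}\|\boldsymbol{\eta}\|_{L^2_{-a/10}}\lesssim\delta\|\boldsymbol{\eta}\|_{L^2_{-a/10}}$ using orbital stability \eqref{eq:lincor1}; the term $(\mathbf L[\mathbf z]-\mathbf L_1)\boldsymbol{\eta}$ carries a factor $|\mathbf z|\lesssim\delta$. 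Thus pointwise $|\dot{\mathbf z}-\widetilde{\mathbf z}|\lesssim\|\boldsymbol{\eta}\|_{L^2_{-a/10}}+|\dot{\mathbf z}|\,\delta$; absorbing the last term for $\delta$ small (or bounding $|\dot{\mathbf z}|$ crudely first) gives $|\dot{\mathbf z}-\widetilde{\mathbf z}|\lesssim\|\boldsymbol{\eta}\|_{L^2_{-a/10}}$.

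Next I would replace $\boldsymbol{\eta}$ by $\mathbf w=\zeta_A\widetilde{\boldsymbol{\eta}}$. Since the relevant region is $|x|\lesssim 1$ where $\zeta_A\equiv 1$, and since $\boldsymbol{\eta}=R[\mathbf z]\widetilde{\boldsymbol{\eta}}$ with $\|R[\mathbf z]-1\|\lesssim|\mathbf z|\lesssim\delta$ by Lemma \ref{lem:R}, one has $\|\boldsymbol{\eta}\|_{L^2_{-a/10}}\lesssim\|\widetilde{\boldsymbol{\eta}}\|_{L^2_{-a/10}}+\delta\|\widetilde{\boldsymbol{\eta}}\|_{\boldsymbol{\Sigma}^*}\lesssim\|\mathbf w\|_{L^2_{-a/10}}+(\text{tail})$; here the very localized weight of $D_{\mathbf z}\boldsymbol{\phi}[\mathbf z]\boldsymbol{\zeta}$ again makes the part of $\widetilde{\boldsymbol{\eta}}$ outside the support of $\zeta_A$ negligible, being controlled by $e^{-cA}$ times a coarse norm, which is $o_\varepsilon(1)$ small given \eqref{eq:relABg}. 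Taking the $L^2(I)$ norm in $t$ then yields $\|\dot{\mathbf z}-\widetilde{\mathbf z}\|_{L^2(I)}\lesssim\|\mathbf w\|_{L^2(I,L^2_{-a/10})}$, and the gain of the $o_\varepsilon(1)$ factor in \eqref{eq:lem:estdtz} comes from the smallness of $|\mathbf z|\lesssim\delta$ and of the weight-mismatch tails $e^{-cA}$, all of which are $o_\varepsilon(1)$. The main obstacle is purely bookkeeping: being careful that the pairing with the localized profile derivatives genuinely produces the weak weight $L^2_{-a/10}$ (not a stronger norm of $\boldsymbol{\eta}$) after integration by parts in the linear term, and that the $|\dot{\mathbf z}|$ self-interaction is safely absorbed; no hard analysis is needed beyond what is already set up.
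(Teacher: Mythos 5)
The general strategy is right (pair the equation for $\boldsymbol{\eta}$ with $\mathbf{J}D_{\mathbf{z}}\boldsymbol{\phi}[\mathbf{z}]\boldsymbol{\zeta}$, use $\boldsymbol{\eta}\in\mathcal{H}_c[\mathbf{z}]$ and the cancelation \eqref{eq:R1FRemainder--} to kill the leading terms, and invert the non-degenerate $\mathcal{M}[\mathbf{z}]$), but there is a genuine gap in how you handle the linear term. You keep $\<\mathbf{J}\mathbf{L}_1\boldsymbol{\eta},D_{\mathbf{z}}\boldsymbol{\phi}[\mathbf{z}]\boldsymbol{\zeta}\>$ on the right-hand side of your schematic formula and estimate it, after integrating by parts against the localized profile derivative, by $\|\boldsymbol{\eta}\|_{L^2_{-a/10}}$ \emph{with an $O(1)$ constant}. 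That destroys the claim: you then conclude $|\dot{\mathbf{z}}-\widetilde{\mathbf{z}}|\lesssim\|\mathbf{w}\|_{L^2_{-a/10}}$ with $O(1)$ constant, whereas Proposition \ref{prop:modp} needs a vanishing factor $o_\varepsilon(1)$. The other sources of smallness you list ($|\mathbf{z}|\lesssim\delta$, $e^{-cA}$) act on the \emph{other} terms, not on $\<\mathbf{J}\mathbf{L}_1\boldsymbol{\eta},D_{\mathbf{z}}\boldsymbol{\phi}[\mathbf{z}]\boldsymbol{\zeta}\>$, so your final sentence is not consistent with your own bound.

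What is missing is the additional cancelation on that very term. Because $\mathbf{J}\mathbf{L}_1$ is self-adjoint one has $\<\mathbf{J}\mathbf{L}_1\boldsymbol{\eta},D_{\mathbf{z}}\boldsymbol{\phi}[\mathbf{z}]\boldsymbol{\zeta}\>=\<\boldsymbol{\eta},\mathbf{J}\mathbf{L}_1 D_{\mathbf{z}}\boldsymbol{\phi}[\mathbf{z}]\boldsymbol{\zeta}\>$, and since $D_{\mathbf{z}}\boldsymbol{\phi}[\mathbf{0}]\boldsymbol{\zeta}$ is a real-linear combination of $\boldsymbol{\Phi}_j$ and $\overline{\boldsymbol{\Phi}_j}$, which are eigenvectors of $\mathbf{L}_1$, the main piece $\mathbf{J}\mathbf{L}_1 D_{\mathbf{z}}\boldsymbol{\phi}[\mathbf{0}]\boldsymbol{\zeta}$ lies back in the span of $\mathbf{J}D_{\mathbf{z}}\boldsymbol{\phi}[\mathbf{0}]\cdot$; pairing it with $\boldsymbol{\eta}\in\mathcal{H}_c[\mathbf{z}]$ then costs only $O(|\mathbf{z}|)\lesssim\delta$, not $O(1)$. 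Without making this eigenvector cancelation explicit, your bound is simply too weak. The paper encodes exactly this systematically by differentiating the refined-profile equation \eqref{eq:RF} in $\mathbf{z}$, obtaining the identity \eqref{eq:RFdiff}, $\mathbf{L}[\mathbf{z}]D_{\mathbf{z}}\boldsymbol{\phi}[\mathbf{z}]\boldsymbol{\zeta}=-D^2_{\mathbf{z}}\boldsymbol{\phi}[\mathbf{z}](\widetilde{\mathbf{z}},\boldsymbol{\zeta})+D_{\mathbf{z}}\boldsymbol{\phi}[\mathbf{z}](D_{\mathbf{z}}\widetilde{\mathbf{z}}\boldsymbol{\zeta})-D_{\mathbf{z}}\boldsymbol{\mathcal{R}}[\mathbf{z}]\boldsymbol{\zeta}$, so that after pairing with $\boldsymbol{\eta}$ the middle term vanishes exactly by orthogonality and the remaining terms carry explicit $|\mathbf{z}|\lesssim\delta$ smallness. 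You should either use \eqref{eq:RFdiff} or make the eigenvector-plus-orthogonality cancelation explicit; otherwise the $o_\varepsilon(1)$ factor cannot be obtained.
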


The last ingredient is the FGR estimate.
 \begin{proposition}[FGR estimate]\label{prop:FGR}
 We have
 \begin{align}\label{eq:FGRint}
 \sum_{\mathbf{m}\in \mathbf{R}_{\mathrm{min}}}\|\mathbf{z}^{\mathbf{m}}\|_{L^2(I)}=  o_{\varepsilon}(1)   \epsilon   .
 \end{align}
 \end{proposition}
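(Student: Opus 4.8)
The plan is to run the Fermi Golden Rule scheme of \cite{CM2109.08108}, adapted to the transformed variable $\mathbf{v}$ and the Darboux operator $\mathbf{L}_D$. For $\mathbf{m}\in\mathbf{R}_{\mathrm{min}}$ the monomial $\mathbf{z}^{\mathbf{m}}(t)$ oscillates, to leading order, at frequency $\boldsymbol{\lambda}\cdot\mathbf{m}$, and by definition of $\mathbf{R}_{\mathrm{min}}$ one has $|\boldsymbol{\lambda}\cdot\mathbf{m}|>\omega$, i.e.\ $-\im(\boldsymbol{\lambda}\cdot\mathbf{m})$ lies in the interior of $\sigma_{\mathrm{ess}}(\mathbf{L}_D)=\im\((-\infty,-\omega]\cup[\omega,\infty)\)$. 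First I would use the limiting absorption principle of Lemma \ref{lem:LAP} to define the outgoing profiles $\mathbf{Z}_{\mathbf{m}}:=-\(\mathbf{L}_D+\im(\boldsymbol{\lambda}\cdot\mathbf{m})+0\)^{-1}\widetilde{\mathcal{R}}_{\mathbf{m}}$, which belong to a weighted space $\boldsymbol{\mathcal{H}}^{1,-\sigma}$, and then substitute the expansion $\mathbf{v}=\sum_{\mathbf{m}\in\mathbf{R}_{\mathrm{min}}}\mathbf{z}^{\mathbf{m}}\mathbf{Z}_{\mathbf{m}}+\mathbf{g}$, i.e.\ \eqref{eq:expan_v}, into \eqref{eq:vBg}. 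Using the modulation equations (which via \eqref{eq:tildez} give $\dot{\mathbf{z}}^{\mathbf{m}}=-\im(\boldsymbol{\lambda}\cdot\mathbf{m})\mathbf{z}^{\mathbf{m}}+O(|\mathbf{z}|(|\dot{\mathbf{z}}-\widetilde{\mathbf{z}}|+\sum_{\mathbf{n}\in\mathbf{R}_{\mathrm{min}}}|\mathbf{z}^{\mathbf{n}}|))$), the forcing terms proportional to $\mathbf{z}^{\mathbf{m}}$ in \eqref{eq:vBg} cancel to leading order, so that $\mathbf{g}$ solves an equation (schematically \eqref{eq:equation_g13}) whose source is either higher order in $\mathbf{z}$ or contained in the good remainder $\mathbf{R}_{\mathbf{v}}$; the smoothing estimates of Section \ref{sec:smoothing} then bound $\mathbf{g}$ in $L^2(I,\boldsymbol{\mathcal{H}}^{1,-\sigma})$ by $o_{\varepsilon}(1)\epsilon$ plus terms controlled by the quantities in \eqref{eq:main11}.

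Next I would set up the FGR functional. Following \cite{CM2109.08108} one considers a quantity of the form $\mathcal{I}(t):=\sum_{\mathbf{m}\in\mathbf{R}_{\mathrm{min}}}\<\mathbf{J}\mathbf{v}(t),\overline{\mathbf{z}^{\mathbf{m}}(t)\mathbf{Z}_{\mathbf{m}}}\>$, differentiates it in $t$ using \eqref{eq:vBg} and the modulation equations, and checks that the leading quadratic contribution is $\frac{d}{dt}\mathcal{I}(t)=-\sum_{\mathbf{m}\in\mathbf{R}_{\mathrm{min}}}\Gamma_{\mathbf{m}}\,|\mathbf{z}^{\mathbf{m}}(t)|^2+(\mathrm{errors})$, with $\Gamma_{\mathbf{m}}:=-\Im\<\(\mathbf{L}_D+\im(\boldsymbol{\lambda}\cdot\mathbf{m})+0\)^{-1}\widetilde{\mathcal{R}}_{\mathbf{m}},\overline{\widetilde{\mathcal{R}}_{\mathbf{m}}}\>$, the sign being dictated by the outgoing choice of resolvent. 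By Plemelj's formula and the structure of the distorted Fourier transform of $L_D$ — equivalent to that of $L_1$, since $\mathcal{A}^*$ intertwines the two by \eqref{eq:DarConj2} and the factors $\<\im\varepsilon\partial_x\>^{-\widetilde{N}}$ in $\mathcal{T}$ contribute only a smooth positive factor at the relevant energy — the number $\Gamma_{\mathbf{m}}$ equals a positive multiple of the quantity in Assumption \ref{ass:FGR}; here Lemma \ref{lem:combinat1}(3), giving $\mathbf{m}_+=0$ or $\mathbf{m}_-=0$ and hence $\widetilde{\mathcal{R}}_{\overline{\mathbf{m}}}=\overline{\widetilde{\mathcal{R}}_{\mathbf{m}}}$, is what collapses the two--sided sum there into $\Gamma_{\mathbf{m}}$. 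Thus $\Gamma_{\mathbf{m}}>0$ for every $\mathbf{m}\in\mathbf{R}_{\mathrm{min}}$.

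Finally comes the accounting and the integration. Orbital stability (Proposition \ref{prop:OrbStab}) gives $|\mathcal{I}(t)|\lesssim\|\mathbf{v}(t)\|_{\boldsymbol{\mathcal{H}}^{1,-\sigma}}\sum_{\mathbf{m}\in\mathbf{R}_{\mathrm{min}}}|\mathbf{z}^{\mathbf{m}}(t)|\lesssim\delta^2$ uniformly in $t$. The error terms in $\frac{d}{dt}\mathcal{I}$ are products of $\sum_{\mathbf{m}\in\mathbf{R}_{\mathrm{min}}}|\mathbf{z}^{\mathbf{m}}|$ with one of $\|\mathbf{g}\|_{\boldsymbol{\mathcal{H}}^{1,-\sigma}}$, $|\dot{\mathbf{z}}-\widetilde{\mathbf{z}}|$, $\|\mathbf{w}\|_{\boldsymbol{\widetilde{\Sigma}}}$, $\|\boldsymbol{\xi}\|_{\boldsymbol{\widetilde{\Sigma}}}$, or with the higher order factor $|\mathbf{z}|\,|\mathbf{z}^{\mathbf{m}}|$, so by Cauchy--Schwarz in $t$ each absorbs a small fraction of $\sum_{\mathbf{m}\in\mathbf{R}_{\mathrm{min}}}\|\mathbf{z}^{\mathbf{m}}\|_{L^2(I)}^2$ and leaves $o_{\varepsilon}(1)\epsilon^2$ once \eqref{eq:main11} and the bound on $\mathbf{g}$ from the first step are used. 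Integrating over $I=[0,T]$ and using $\Gamma_{\mathbf{m}}>0$ then yields $\sum_{\mathbf{m}\in\mathbf{R}_{\mathrm{min}}}\|\mathbf{z}^{\mathbf{m}}\|_{L^2(I)}^2\lesssim\delta^2+o_{\varepsilon}(1)\epsilon^2$, and since \eqref{eq:relABg} forces $\delta\ll\epsilon$, this gives \eqref{eq:FGRint}. The hard part is the first step: the cutoff $\chi_{B^2}$, indispensable to tame the long--range nonlinearity, does not commute with $\mathbf{L}_D$, producing the commutator term in $\mathbf{R}_{\mathbf{v}}$ (cf.\ \eqref{def:RvBg} and \eqref{eq:equation_g13}) whose control rests on Lemma \ref{lem:LAP} — precisely the estimate that fails if $L_D$ has a resonance at $\omega^2$.
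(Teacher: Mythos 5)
Your proposal replaces the paper's Lyapunov quantity with a different one: the paper works with the localized energy $E(\boldsymbol{\phi}[\mathbf{z}])$, whose a priori bound is handed over directly by Proposition \ref{prop:OrbStab}, differentiates it via \eqref{eq:RF}, \eqref{eq:R1FRemainder--} and \eqref{eq:eta} to isolate the key coupling $\<\mathbf{J}\boldsymbol{\eta},D_{\mathbf{z}}\boldsymbol{\mathcal{R}}[\mathbf{z}]\widetilde{\mathbf{z}}\>$, and then feeds in $\mathbf{v}=\mathbf{g}-Z(\mathbf{z})$ only inside that coupling (through the chain $\boldsymbol{\eta}\to\widetilde{\boldsymbol{\eta}}\to\chi_{B^2}\widetilde{\boldsymbol{\eta}}\to$ Lemma \ref{lem:coer6}); you instead propose the Soffer--Weinstein-type bilinear functional $\mathcal{I}(t)=\sum_{\mathbf{m}}\<\mathbf{J}\mathbf{v},\overline{\mathbf{z}^{\mathbf{m}}\mathbf{Z}_{\mathbf{m}}}\>$. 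That choice is not wrong in principle, but note two things. First, a small technical slip: $\mathbf{Z}_{\mathbf{m}}=R^+_{\im\mathbf{L}_D}(\boldsymbol{\lambda}\cdot\mathbf{m})\im\widetilde{\mathcal{R}}_{\mathbf{m}}$ is bounded but not $L^2$, so $\mathbf{Z}_{\mathbf{m}}\in\boldsymbol{\mathcal{H}}^{1,-S}$ only; the pairing is absolutely convergent only because $\mathbf{v}$ itself carries strong spatial decay from the exponential kernel of $\mathcal{T}$ (Lemma \ref{lem:coer5}), so the a priori bound must read $|\mathcal{I}(t)|\lesssim\|\mathbf{v}\|_{L^{2,S}}\sum|\mathbf{z}^{\mathbf{m}}|$, not $\|\mathbf{v}\|_{\boldsymbol{\mathcal{H}}^{1,-\sigma}}\sum|\mathbf{z}^{\mathbf{m}}|$; this still gives a term of size $\varepsilon^{-\widetilde{N}}B^{2S+2}\delta^2$ which by \eqref{eq:relABg} is $o_\varepsilon(1)\epsilon^2$, so this is only a wrong label, not a fatal flaw.

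There is however a genuine gap in your error accounting. When you differentiate $\mathcal{I}$, the terms coming from the source $\sum_{\mathbf{n}}\mathbf{z}^{\mathbf{n}}\widetilde{\mathcal{R}}_{\mathbf{n}}$ in \eqref{eq:vBg} paired against $\overline{\mathbf{z}^{\mathbf{m}}\mathbf{Z}_{\mathbf{m}}}$, and likewise the part of $\<\mathbf{J}\mathbf{L}_D\mathbf{v},\overline{\mathbf{z}^{\mathbf{m}}\mathbf{Z}_{\mathbf{m}}}\>$ that does not cancel against the $\partial_t(\mathbf{z}^{\mathbf{m}})\approx-\im(\boldsymbol{\lambda}\cdot\mathbf{m})\mathbf{z}^{\mathbf{m}}$ rotation after you substitute $\mathbf{v}=\mathbf{g}+\sum_{\mathbf{n}}\mathbf{z}^{\mathbf{n}}\boldsymbol{\rho}_{\mathbf{n}}$, contain \emph{off-diagonal} contributions $\mathbf{z}^{\mathbf{n}}\overline{\mathbf{z}^{\mathbf{m}}}\cdot\text{(O(1) constant)}$ with $\mathbf{n}\ne\mathbf{m}$, $\mathbf{n},\mathbf{m}\in\mathbf{R}_{\mathrm{min}}$. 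These are \emph{not} in your list of errors: they are not multiples of $\|\mathbf{g}\|$, $|\dot{\mathbf{z}}-\widetilde{\mathbf{z}}|$, $\|\mathbf{w}\|$, $\|\boldsymbol{\xi}\|$, nor of the higher order form $|\mathbf{z}||\mathbf{z}^{\mathbf{m}}|$ — they are the same order $|\mathbf{z}^{\mathbf{n}}||\mathbf{z}^{\mathbf{m}}|$ as the diagonal Lyapunov term, and Cauchy--Schwarz in $t$ only yields $O(\epsilon^2)$, not $o_\varepsilon(1)\epsilon^2$. The paper handles exactly these in the treatment of \eqref{eq:line12}: one writes $\<\mathbf{z}^{\mathbf{n}}\mathbf{z}^{\overline{\mathbf{m}}},A\>=\frac{1}{\boldsymbol{\lambda}\cdot(\mathbf{n}-\mathbf{m})}\frac{d}{dt}\<\mathbf{z}^{\mathbf{n}}\mathbf{z}^{\overline{\mathbf{m}}},-\im A\>+\text{(correction)}$, uses Assumption \ref{ass:generic} via \eqref{eq:generic11} to know $\boldsymbol{\lambda}\cdot(\mathbf{n}-\mathbf{m})\ne 0$ for $\mathbf{n}\ne\mathbf{m}$ both in $\mathbf{R}_{\mathrm{min}}$, and then integrates in time — gaining one extra power of $|\mathbf{z}|$ in the non-boundary contribution and producing $O(\delta^2)$ boundary terms. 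Without this time integration-by-parts step your scheme does not close. The rest of your proposal — the use of the intertwining $\mathcal{A}^*R^+_{\im\mathbf{L}_1}=R^+_{\im\mathbf{L}_D}\mathcal{A}^*$ together with $\mathcal{A}\mathcal{A}^*=\prod(L_1-\widetilde{\lambda}_j^2)$ to relate $\Gamma_{\mathbf{m}}$ to Assumption \ref{ass:FGR}, the role of Lemma \ref{lem:LAP} for bounding $\mathbf{g}$ and in particular the term \eqref{eq:equation_g13}, and the final accounting of the remaining error terms — is sound and matches the paper's strategy.
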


\subsubsection{Proof of Proposition \ref{prop:contreform} assuming Propositions
  \ref{prop:1stvirial}--\ref{prop:FGR}.}\label{sec:profcont}

By \eqref{eq:lem:estdtz}--\eqref{eq:FGRint} and by  the relation between $A,B,\varepsilon , \epsilon $ and $\delta$ in \eqref{eq:relABg},  we have
\begin{align}
\|\dot{\mathbf{z}}-\widetilde{\mathbf{z}}\|_{L^2(I)} +\sum_{\mathbf{m}\in \mathbf{R}_{\mathrm{min}} }\|\mathbf{z}^{\mathbf{m}}\|_{L^2(I)}\le  o_\varepsilon(1) \epsilon .\label{FGReqfinal}
\end{align}
Entering this in \eqref{eq:2ndv} we get
\begin{align}\label{eq:2ndvfin}
\|  \boldsymbol{\xi}   \|_{L^2(I, \boldsymbol{\widetilde{\Sigma}})}\le     o_\varepsilon(1) \epsilon
\end{align}
which, fed in \eqref{eq:coerc1},  yields
\begin{align*} &
 \|  w  _1\|_{L^2(I, L^2_{-\frac{a}{10}})}\le   o_\varepsilon(1) \epsilon   .
 \end{align*}
Using this in \eqref{eq:1stvInt},  we obtain
\begin{align*}
\|w_1'\|_{L^2(I,L^2)}+\|w_2\|_{L^2(I, L^2_{-\frac{a}{10}})}&\le   o_\varepsilon(1) \epsilon .
\end{align*}
This and   the previous one  together, yield
\begin{align}\label{eq:1stvIntfin}
\| \mathbf{w} \|_{L^2(I, \boldsymbol{\widetilde{\Sigma}})}\le     o_\varepsilon(1) \epsilon .
\end{align}
Taken together, \eqref{FGReqfinal}--\eqref{eq:1stvIntfin} yield the improvement $o_\varepsilon(1) \epsilon $  of the statement of Proposition \ref{prop:contreform}, concluding the proof. \qed

 We now turn to  the proofs of  Propositions
  \ref{prop:1stvirial}--\ref{prop:FGR}. The structure of the proofs is very similar to the analogous  ones in \cite{CM2109.08108}. In particular, 
Propositions
  \ref{prop:1stvirial}--\ref{prop:2ndvirial}  are very close to Kowalczyk   et al.  \cite{KMM3}. The proof of Proposition \ref{prop:FGR} requires the introduction of an additional variable $\mathbf{g}$, which, like in \cite{CM2109.08108}, is bounded using smoothing estimates: in particular here we borrow from Komech--Kapytula \cite{KomKop10,KomKop111}.

\section{First virial estimate, for $\mathbf{w}$}
\label{sec:1virial}

Recall $\< f,g\> =Re (f,\overline{g})$, see \eqref{eq:inner1}.
For $A\gg 1$ to be determined, we set
\begin{align*}
\mathcal{I}_1(  \widetilde{\boldsymbol{\eta}} ):=\frac{1}{2}\<\mathbf{J}  \widetilde{\boldsymbol{\eta}},S_A  \widetilde{\boldsymbol{\eta}}\>,\quad
\mathcal{I}_2(  \widetilde{\boldsymbol{\eta}}):=\frac{1}{2}\<\mathbf{J}  \widetilde{\boldsymbol{\eta}},\zeta_{\widetilde{A}}^2 \sigma_3   \widetilde{\boldsymbol{\eta}}\>, \text{ with }\sigma_3 = \begin{pmatrix}
1 & 0 \\ 0 & -1
\end{pmatrix} ,
\end{align*}
where $\widetilde{A}^{-1}=A^{-1}+\frac{a}{10}$ and
\begin{align*}
S_A:=\frac{1}{2}\varphi_A' +\varphi_A \partial_x.
\end{align*}
\begin{remark}
By the definition of $\widetilde{A}$ and \eqref{def:zetaphi}, we have $\zeta_{\widetilde{A}}=\zeta_{\frac{10}{a}}\zeta_A$.
\end{remark}

\begin{lemma}\label{lem:1stvdiff}
For any $c\in (0,1)$, we have
\begin{align}
\frac{d}{dt}\mathcal{I}_1(  \widetilde{\boldsymbol{\eta}})+\frac{1}{2}\|w_1'\|_{L^2}^2 &\lesssim \|w_1\|_{L^2_{-\frac{a}{10}}}^2+ c\|w_2\|_{L^2_{-\frac{a}{10}}}^2 + c^{-1}\sum_{\mathbf{m}\in \mathbf{R}_{\mathrm{min}}} |\mathbf{z}^{\mathbf{m}}|^2+\delta |\dot{\mathbf{z}}-\widetilde{\mathbf{z}}|^2,\label{eq:1stv1}\\
-\frac{d}{dt}\mathcal{I}_2(  \widetilde{\boldsymbol{\eta}})+\frac{1}{2}\|w_2\|_{L^2_{-\frac{a}{10}}}^2
&\lesssim \|w_1\|_{L^2_{-\frac{a}{10}}}^2 +  \|w_1'\|_{L^2}^2+ \sum_{\mathbf{m}\in \mathbf{R}_{\mathrm{min}}} |\mathbf{z}^{\mathbf{m}}|^2+\delta |\dot{\mathbf{z}}-\widetilde{\mathbf{z}}|^2,\label{eq:1stv2}
\end{align}
where the implicit constants are independent of $c$.
\end{lemma}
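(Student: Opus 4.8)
The plan is to establish the two differential identities \eqref{eq:1stv1}--\eqref{eq:1stv2} by differentiating the quadratic functionals $\mathcal{I}_1$ and $\mathcal{I}_2$ along the flow \eqref{eq:tildeeta}, then estimating each resulting term. First I would differentiate $\mathcal{I}_1(\widetilde{\boldsymbol{\eta}})=\tfrac12\<\mathbf{J}\widetilde{\boldsymbol{\eta}},S_A\widetilde{\boldsymbol{\eta}}\>$, substituting $\partial_t\widetilde{\boldsymbol{\eta}} = \mathbf{L}_1\widetilde{\boldsymbol{\eta}} + \sum_{\mathbf{m}\in\mathbf{R}_{\mathrm{min}}}\mathbf{z}^{\mathbf{m}}P_c\mathcal{R}_{\mathbf{m}} + \mathbf{R}_{\widetilde{\boldsymbol{\eta}}}$. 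The main (linear) term $\<\mathbf{J}\mathbf{L}_1\widetilde{\boldsymbol{\eta}},S_A\widetilde{\boldsymbol{\eta}}\>$ is the standard Martel-type virial computation: writing $\mathbf{L}_1 = \mathbf{J}\,\mathrm{diag}(L_1,1)$ and using the structure of $S_A = \tfrac12\varphi_A' + \varphi_A\partial_x$, integration by parts produces the positive quantity $\tfrac12\int \varphi_A'|\partial_x w_1|^2$ (which dominates $\tfrac12\|w_1'\|_{L^2}^2$ since $\varphi_A'=\zeta_A^2\geq$ a fixed positive function on any ball, up to the $e^{-|x|/A}$ tails which contribute to the $L^2_{-a/10}$ norms) plus the term involving $W''(H)$, whose $x\varphi_A'$ contribution is handled by the exponential decay of $W''(H)-\omega^2$, and lower-order commutator terms bounded by $\|w_1\|_{L^2_{-a/10}}^2$. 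Here I would use Remark \ref{rem:comm} (that $P_c$ commutes with $L_1$, hence with $\mathbf{L}_1$) so the main term is genuinely the linearized-flow virial.

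Second, I would treat the inhomogeneous terms. For the source $\sum_{\mathbf{m}\in\mathbf{R}_{\mathrm{min}}}\mathbf{z}^{\mathbf{m}}P_c\mathcal{R}_{\mathbf{m}}$, Cauchy--Schwarz against $S_A\widetilde{\boldsymbol{\eta}}$ combined with the localization of $\mathcal{R}_{\mathbf{m}}\in\boldsymbol{\Sigma}^\infty$ (exponential spatial decay) and the weighted bound on $S_A\widetilde{\boldsymbol{\eta}}$ gives a contribution $\lesssim c^{-1}\sum|\mathbf{z}^{\mathbf{m}}|^2 + c(\text{weighted }\widetilde{\boldsymbol{\eta}}\text{ norms})$; the weighted $\widetilde{\boldsymbol{\eta}}$ part is absorbed into $\|w_1\|_{L^2_{-a/10}}^2 + c\|w_2\|_{L^2_{-a/10}}^2$ after relating $\widetilde{\boldsymbol{\eta}}$ localized by $\zeta_A$ to $\mathbf{w}=\zeta_A\widetilde{\boldsymbol{\eta}}$. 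For $\mathbf{R}_{\widetilde{\boldsymbol{\eta}}}$ in \eqref{eq:Rtildeeta}, I would go term by term: $P_c\boldsymbol{\mathcal{R}}_1[\mathbf{z}]$ is controlled by \eqref{eq:R1FRemainder} giving $|\mathbf{z}|\sum|\mathbf{z}^{\mathbf{m}}|$, absorbed into $\sum|\mathbf{z}^{\mathbf{m}}|^2$ using orbital stability $|\mathbf{z}|\lesssim\delta$; $P_c\mathbf{J}\mathbf{F}$ is the nonlinearity, quadratic in $\eta_1$, handled via pointwise bounds and the smallness $\|\boldsymbol{\eta}\|_{H^1}\lesssim\delta$ giving a term $\lesssim\delta\|w_1\|_{L^2_{-a/10}}^2$ or better (this exploits that $W$ is $C^\infty$ so $F_1$ is at least quadratic); $P_c(\mathbf{L}[\mathbf{z}]-\mathbf{L}_1)\boldsymbol{\eta}$ has coefficient $W''(\phi_1[\mathbf{z}])-W''(H) = O(|\mathbf{z}|)$ times an exponentially localized factor, giving $\lesssim\delta\times$ weighted norms; $P_cD\boldsymbol{\phi}[\mathbf{z}](\dot{\mathbf{z}}-\widetilde{\mathbf{z}})$ yields the $\delta|\dot{\mathbf{z}}-\widetilde{\mathbf{z}}|^2$ term after Cauchy--Schwarz (using $\|D\boldsymbol{\phi}[\mathbf{z}]\|\lesssim 1$ in weighted spaces and again $|\mathbf{z}|\lesssim\delta$ to gain the $\delta$); and $P_c\mathbf{L}_1(R[\mathbf{z}]-1)\widetilde{\boldsymbol{\eta}}$ is controlled by \eqref{eq:lem:R}, $\|R[\mathbf{z}]-1\|\lesssim|\mathbf{z}|$, again absorbed. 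The identity \eqref{eq:1stv2} for $\mathcal{I}_2$ is analogous but simpler: differentiating $\mathcal{I}_2 = \tfrac12\<\mathbf{J}\widetilde{\boldsymbol{\eta}},\zeta_{\widetilde{A}}^2\sigma_3\widetilde{\boldsymbol{\eta}}\>$, the linear term produces the coercive $\tfrac12\|w_2\|_{L^2_{-a/10}}^2$ (using $\zeta_{\widetilde{A}}=\zeta_{10/a}\zeta_A$ from the Remark) at the cost of $\|w_1'\|_{L^2}^2$ and $\|w_1\|_{L^2_{-a/10}}^2$, and the inhomogeneous terms are estimated exactly as before.

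The main obstacle I expect is twofold. First, the purely linear virial computation with the $W''(H)$ potential is delicate because $L_1$ is \emph{not} repulsive — it has eigenvalues — so one cannot close the estimate at this stage; the identity \eqref{eq:1stv1} is deliberately stated with $\|w_1\|_{L^2_{-a/10}}^2$ on the right-hand side, and gaining coercivity there is deferred to the second virial estimate (Proposition \ref{prop:2ndvirial}) via the Darboux-transformed variable $\mathbf{v}$. So the point here is just to extract $\|w_1'\|_{L^2}^2$ cleanly while dumping everything non-sign-definite into controllable weighted norms, being careful that the commutators $[S_A,\partial_x^2]$ and $[S_A, W''(H)]$ and the cross terms between $\varphi_A$ (unbounded, linear growth) and the exponentially decaying data produce only $L^2_{-a/10}$-type contributions — this requires tracking that $\varphi_A'=\zeta_A^2$ decays like $e^{-|x|/A}$ so that $\varphi_A'$ times an $O(e^{-\omega|x|})$ coefficient is integrable uniformly in $A$. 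Second, bookkeeping the many error terms in $\mathbf{R}_{\widetilde{\boldsymbol{\eta}}}$ so that each one either carries a factor $\delta$ (to be absorbed on the left after integrating in time, since $\delta\ll 1$) or reduces to $\sum_{\mathbf{m}\in\mathbf{R}_{\mathrm{min}}}|\mathbf{z}^{\mathbf{m}}|^2$ or $|\dot{\mathbf{z}}-\widetilde{\mathbf{z}}|^2$ — this is routine but requires consistent use of orbital stability \eqref{eq:lincor1}, the exponential localization of the structure functions in $\boldsymbol{\Sigma}^\infty$, and the operator bounds from Lemmas \ref{lem:R} and the modulation lemma. Finally, integrating \eqref{eq:1stv1}--\eqref{eq:1stv2} over $I$, adding with a suitable small weight on $\mathcal{I}_2$, using $|\mathcal{I}_j(\widetilde{\boldsymbol{\eta}}(t))|\lesssim\|\boldsymbol{\eta}(t)\|_{H^1}^2\lesssim\delta^2 = o_\varepsilon(1)\epsilon$ to bound the boundary terms, and Proposition \ref{prop:coerc} plus the continuation hypothesis \eqref{eq:main11} to absorb the residual $\|w_1\|_{L^2_{-a/10}}$ terms, yields the integrated estimate \eqref{eq:1stvInt}.
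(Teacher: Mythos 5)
Your proposal is correct and follows essentially the same strategy as the paper: differentiate $\mathcal{I}_1, \mathcal{I}_2$ along the flow \eqref{eq:tildeeta} using the symmetry of the bilinear forms, extract $-\|w_1'\|_{L^2}^2$ (resp. $\|\zeta_{10/a}w_2\|_{L^2}^2$) from the $\mathbf{J}\mathbf{L}_1\widetilde{\boldsymbol{\eta}}$ contribution via the localized virial identity and the exponential decay of $V'=\partial_x W''(H)$, and then estimate the source term $\sum\mathbf{z}^{\mathbf{m}}P_c\mathcal{R}_{\mathbf{m}}$ and the five pieces of $\mathbf{R}_{\widetilde{\boldsymbol{\eta}}}$ term by term, using \eqref{eq:R1FRemainder}, \eqref{eq:lem:R}, the quadratic nature of $\mathbf{F}$, and orbital stability to produce factors of $\delta$ or $c$. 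The paper packages the main linear computation into Lemmas \ref{lem:v11main} and \ref{lem:v12main} (invoking Lemma 4.2 of \cite{CM2109.08108} for the exact identity after the change of variable $w_1=\zeta_A\widetilde{\eta}_1$) and the remainders into Lemmas \ref{lem:v11remainder} and \ref{lem:v12remainder}, but the ingredients and order of absorption are the same as you describe.
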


\begin{proof}[Proof of Proposition \ref{prop:1stvirial}]
From the orbital stability bound Proposition \ref{prop:OrbStab}, we have $|\mathcal{I}_1(  \widetilde{\boldsymbol{\eta}})|\lesssim A \delta^2$ and $|\mathcal{I}_2( \widetilde{\boldsymbol{\eta}})|\lesssim \delta^2$.
Thus, integrating \eqref{eq:1stv1} and \eqref{eq:1stv2}, we have
\begin{align}
&\frac{1}{2}\|w_1'\|_{L^2(I,L^2)}^2 \label{eq:proofprop1stv1}\\& \leq C_1\(A\delta^2+\|w_1\|_{L^2(I,L^2_{-\frac{a}{10}})}^2 +c\|w_2\|_{L^2(I,L^2(I))}^2+ c^{-1}\sum_{\mathbf{m}\in \mathbf{R}_{\mathrm{min}}} \|\mathbf{z}^{\mathbf{m}}\|_{L^2(I)}^2+\delta^2 \|\dot{\mathbf{z}}-\widetilde{\mathbf{z}}\|_{L^2(I)}^2\), \nonumber\\
&\frac{1}{2}\|w_2\|_{L^2(I,L^2)}^2 \label{eq:proofprop1stv2}\\&\leq C_2\( \delta^2+\|w_1\|_{L^2(I,L^2_{-\frac{a}{10}})}^2 +\delta\|w_1'\|_{L^2(I,L^2(I))}^2+ \sum_{\mathbf{m}\in \mathbf{R}_{\mathrm{min}}} \|\mathbf{z}^{\mathbf{m}}\|_{L^2(I)}^2+\delta^2 \|\dot{\mathbf{z}}-\widetilde{\mathbf{z}}\|_{L^2(I)}^2\). \nonumber
\end{align}
Taking $c$ sufficiently small so that $4cC_1C_2\leq 1$ and substituting \eqref{eq:proofprop1stv2} into \eqref{eq:proofprop1stv1}, we can bound $\|w_1'\|_{L^2(I,L^2)}^2\lesssim \(\mathrm{r.h.s.\ of\ }\eqref{eq:1stvInt}\)^2$.
Finally, using \eqref{eq:proofprop1stv2} again, we have the conclusion.
\end{proof}

The remainder of this section is devoted to the proof of Lemma \ref{lem:1stvdiff}.
First, since both $\<\mathbf{J} \cdot, S_A\cdot\>$ and $\<\mathbf{J} \cdot, \zeta_A^2\sigma_3 \cdot\>$ are symmetric, we have
\begin{align}
\frac{d}{dt} \mathcal{I}_1( \widetilde{\boldsymbol{\eta}})
&=\<\mathbf{J}\(  \mathbf{L}_1 \widetilde{\boldsymbol{\eta}} + \sum_{\mathbf{m}\in \mathbf{R}_{\mathrm{min}}}\mathbf{z}^{\mathbf{m}}G_{\mathbf{m}} + \mathbf{R}_{ \widetilde{\boldsymbol{\eta}}} \),S_A \widetilde{\boldsymbol{\eta}} \>,\label{eq:diffI1}\\
\frac{d}{dt} \mathcal{I}_2(\widetilde{\boldsymbol{\eta}})
&=\<\mathbf{J}\(  \mathbf{L}_1 \widetilde{\boldsymbol{\eta}} + \sum_{\mathbf{m}\in \mathbf{R}_{\mathrm{min}}}\mathbf{z}^{\mathbf{m}}G_{\mathbf{m}} + \mathbf{R}_{ \widetilde{\boldsymbol{\eta}}} \),\zeta_{\widetilde{A}}^2 \sigma_3\widetilde{\boldsymbol{\eta}} \>.\label{eq:diffI2}
\end{align}
We will investigate each terms in the r.h.s.\ of \eqref{eq:diffI1} and \eqref{eq:diffI2}.

\begin{lemma}\label{lem:v11main}
We have
\begin{align*}
\<\mathbf{J}  \mathbf{L}_1 \widetilde{\boldsymbol{\eta}} ,S_A \widetilde{\boldsymbol{\eta}} \>=-\|w_1'\|_{L^2}^2 + O\( \|w_1\|_{L^2_{-\frac{a}{10}}}^2 \) .
\end{align*}
\end{lemma}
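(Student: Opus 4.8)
The claim is a standard virial computation for the linearized operator. The plan is to expand $\mathbf{L}_1\widetilde{\boldsymbol{\eta}}$ componentwise and use the explicit form of the symmetric operator $S_A = \frac12\varphi_A' + \varphi_A\partial_x$. Writing $\widetilde{\boldsymbol{\eta}} = (\widetilde\eta_1,\widetilde\eta_2)$, we have $\mathbf{J}\mathbf{L}_1\widetilde{\boldsymbol{\eta}} = \mathbf{J}\begin{pmatrix}0&1\\-L_1&0\end{pmatrix}\widetilde{\boldsymbol{\eta}} = \begin{pmatrix}-L_1\widetilde\eta_1\\-\widetilde\eta_2\end{pmatrix}$, so that
$$\<\mathbf{J}\mathbf{L}_1\widetilde{\boldsymbol{\eta}},S_A\widetilde{\boldsymbol{\eta}}\> = -\<L_1\widetilde\eta_1, S_A\widetilde\eta_1\> - \<\widetilde\eta_2, S_A\widetilde\eta_2\>.$$
The second term vanishes: $S_A$ is anti-self-adjoint up to the $\frac12\varphi_A'$ piece, and a direct integration by parts gives $\<f,S_Af\> = 0$ for real $f$ (the $\varphi_A\partial_x$ part integrates to $-\frac12\int \varphi_A' f^2$, cancelling the $\frac12\varphi_A'$ term). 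So the whole contribution comes from $-\<L_1\widetilde\eta_1, S_A\widetilde\eta_1\>$.

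For this term I would use the classical identity for $-\partial_x^2 + V$ against $S_A = \frac12\varphi_A' + \varphi_A\partial_x$, namely
$$\<(-\partial_x^2+V)f, S_A f\> = \int \varphi_A' |f'|^2\,dx + \frac14\int \varphi_A''' |f|^2 \,dx - \frac12\int \varphi_A' V' |f|^2\,dx,$$
applied with $V = W''(H)$ and $f = \widetilde\eta_1$. Here $w_1 = \zeta_A\widetilde\eta_1$ and $\varphi_A' = \zeta_A^2$, so $\int\varphi_A'|f'|^2 = \int \zeta_A^2|\widetilde\eta_1'|^2$, which one rewrites in terms of $\|w_1'\|_{L^2}^2$ modulo a commutator error $O(\|w_1\|_{L^2_{-a/10}}^2)$ using $w_1' = \zeta_A\widetilde\eta_1' + \zeta_A'\widetilde\eta_1$ and the fact that $|\zeta_A'|\lesssim A^{-1}\zeta_A \lesssim A^{-1} e^{-\frac{a}{10}\<x\>}\cdot(\text{bounded})$ — actually one needs the standard observation from \cite{KMM3,CM2109.08108} that $\zeta_A'$ is supported and bounded so that $\|\zeta_A' \widetilde\eta_1\|_{L^2}\lesssim \|w_1\|_{L^2_{-a/10}}$. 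The remaining two integrals, $\frac14\int\varphi_A'''|f|^2$ and $-\frac12\int\varphi_A' W'''(H)H'|f|^2$, are both lower-order: $\varphi_A'''$ is bounded with fast decay relative to $\zeta_A^2$, and $W'''(H)H'$ decays exponentially by Proposition \ref{prop:kink}; each is therefore absorbed into $O(\|w_1\|_{L^2_{-a/10}}^2)$.

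The main (very minor) obstacle is bookkeeping the weight manipulations: converting $\int \zeta_A^2 |\widetilde\eta_1'|^2$ into $\|w_1'\|_{L^2}^2$ requires commuting the derivative past $\zeta_A$ and controlling the cross terms $\int \zeta_A\zeta_A' \widetilde\eta_1'\widetilde\eta_1$ and $\int (\zeta_A')^2|\widetilde\eta_1|^2$; here one uses $|\zeta_A'/\zeta_A|\lesssim A^{-1}$ together with the pointwise bound $\zeta_A(x)\gtrsim e^{-\frac{a}{10}\<x\>}$ implicit in the definition, so these are all $O(\|w_1\|_{L^2_{-a/10}}^2)$ with an $A$-independent constant — which is why the sign of the leading term $-\|w_1'\|_{L^2}^2$ survives. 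This is exactly the computation carried out in \cite{KMM3} and in \cite[Section 4]{CM2109.08108}, and I would simply follow it, flagging only that the potential term differs by the exponentially small $W''(H)-\omega^2$ correction which causes no difficulty.
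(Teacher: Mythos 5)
Your overall route — reduce to the scalar virial $-\langle L_1\widetilde\eta_1, S_A\widetilde\eta_1\rangle$ (using skew-symmetry of $S_A$ to kill the $\widetilde\eta_2$ contribution) and then pull it back to $w_1=\zeta_A\widetilde\eta_1$ — is the same as the paper's, which simply cites the pulled-back identity from Lemma 4.2 of \cite{CM2109.08108}. The first reduction is correct. But your treatment of the error terms has a genuine gap.

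First, two slips in the scalar identity: the correct form is $\langle(-\partial_x^2+V)f, S_Af\rangle = \int\varphi_A'|f'|^2 - \tfrac14\int\varphi_A'''|f|^2 - \tfrac12\int\varphi_A V'|f|^2$, with a \emph{minus} in front of $\varphi_A'''$ and with $\varphi_A$ (not $\varphi_A'$) multiplying $V'$. The second slip is harmless; the sign is not, as explained below. Second, and this is the real gap, you claim that the commutation error in $\int\zeta_A^2|\widetilde\eta_1'|^2=\|w_1'\|^2+(\text{error})$ and the term $\tfrac14\int\varphi_A'''|f|^2$ are \emph{each separately} $O(\|w_1\|^2_{L^2_{-a/10}})$, citing that $|\zeta_A'/\zeta_A|\lesssim A^{-1}$ and $\varphi_A'''/\zeta_A^2$ is ``fast decaying''. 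Neither claim holds. One has $\int\zeta_A^2|\widetilde\eta_1'|^2 = \|w_1'\|^2 + \int(\zeta_A''/\zeta_A)\,w_1^2$ with $\zeta_A''/\zeta_A = (\log\zeta_A)'' + \bigl((\log\zeta_A)'\bigr)^2$, and $\varphi_A'''/\zeta_A^2 = 2(\log\zeta_A)'' + 4\bigl((\log\zeta_A)'\bigr)^2$. But $(\log\zeta_A)' = -A^{-1}\bigl(\tfrac{x}{|x|}(1-\chi)-|x|\chi'\bigr)$ is $O(A^{-1})$ \emph{uniformly} and equals $\mp A^{-1}$ for $|x|\ge 2$ — it does not decay at infinity. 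Hence each of the two error terms contains an $\int\bigl((\log\zeta_A)'\bigr)^2 w_1^2 \sim A^{-2}\|w_1\|_{L^2}^2$ piece, which is \emph{not} controlled by $\|w_1\|^2_{L^2_{-a/10}}$ (the latter is the weaker norm, and in the continuation argument $\|w_1\|_{L^2(I,L^2)}$ cannot be estimated). The lemma is true only because, with the correct $-\tfrac14$ sign, the two non-decaying $\bigl((\log\zeta_A)'\bigr)^2$ contributions cancel exactly: $\int(\zeta_A''/\zeta_A)w_1^2 - \tfrac14\int(\varphi_A'''/\zeta_A^2)w_1^2 = \tfrac12\int(\log\zeta_A)''w_1^2$, and $(\log\zeta_A)'' = A^{-1}\bigl(2\tfrac{x}{|x|}\chi'+|x|\chi''\bigr)$ is supported in $\{1\le|x|\le 2\}$ and hence absorbable. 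Absorbing the pieces independently, as you propose, both misses this algebraic cancellation and (with your $+\tfrac14$ sign) would produce a residual $A^{-2}\|w_1\|_{L^2}^2$ term that cannot be accommodated in the rest of the virial argument.
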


\begin{proof}
First, we have
\begin{align}
\<\mathbf{J}  \mathbf{L}_1  \widetilde{\boldsymbol{\eta}} ,S_A   \widetilde{\boldsymbol{\eta}} \>=-\<L_1 \widetilde{{\eta}}_1,S_A \widetilde{{\eta}}_1\>-\< \widetilde{{\eta}}_2,S_A \widetilde{{\eta}}_2\>=-\<L_1 \widetilde{{\eta}}_1,S_A \widetilde{{\eta}}_1\>.
\end{align}
From \cite{CM2109.08108} Lemma 4.2, we have
\begin{align}
\<L_1 \widetilde{{\eta}}_1,S_A \widetilde{{\eta}}_1\>=\|w_1'\|_{L^2}^2 - \frac{1}{2}\int_{\R}\frac{\varphi_A}{\zeta_A^2}V' |w_1|^2\,dx + \frac{1}{2A}\int \(\chi'' |x| +2 \chi' \frac{x}{|x|}\)|w_1|^2\,dx,
\end{align}
where $V=W''(H)$.
Since $|\varphi_A \zeta_A^{-2}V'|+A^{-1}|\chi''x|+2|\chi'|\lesssim e^{-\frac{2a}{10}|x|}$, we have the conclusion.
\end{proof}

\begin{lemma}\label{lem:v11remainder}
For arbitrary $c\in (0,1)$, we have
\begin{align}
\<\mathbf{J}\(   \sum_{\mathbf{m}\in \mathbf{R}_{\mathrm{min}}}\mathbf{z}^{\mathbf{m}}P_c\mathcal{R}_{\mathbf{m}} + \mathbf{R}_{\widetilde{\boldsymbol{\eta}}} \),S_A \widetilde{\boldsymbol{\eta}} \>\label{eq:v11remainder} \lesssim \delta^{1/3}\|w_1'\|_{L^2} + c\|\mathbf{w}\|_{L^2_{-\frac{a}{10}}}^2 + c^{-1}\sum_{\mathbf{m}\in \mathbf{R}_{\mathrm{min}}} |\mathbf{z}^{\mathbf{m}}|^2+\delta |\dot{\mathbf{z}}-\widetilde{\mathbf{z}}|^2.
\end{align}
Here, the implicit constant is independent of $c$.
\end{lemma}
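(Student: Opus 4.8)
The plan is to insert the decomposition \eqref{eq:Rtildeeta} of $\mathbf R_{\widetilde{\boldsymbol\eta}}$ and to bound separately the pairing with $S_A\widetilde{\boldsymbol\eta}$ of each of the six terms $\sum_{\mathbf m\in\mathbf R_{\min}}\mathbf z^{\mathbf m}P_c\mathcal R_{\mathbf m}$, $P_c\boldsymbol{\mathcal R}_1[\mathbf z]$, $P_c\mathbf J\mathbf F$, $P_c(\mathbf L[\mathbf z]-\mathbf L_1)\boldsymbol\eta$, $-P_cD_{\mathbf z}\boldsymbol\phi[\mathbf z](\dot{\mathbf z}-\widetilde{\mathbf z})$ and $P_c\mathbf L_1(R[\mathbf z]-1)\widetilde{\boldsymbol\eta}$. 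Everything here except the genuine nonlinearity inside $P_c\mathbf J\mathbf F$ has the schematic form $\<\mathbf J\boldsymbol\Psi,S_A\widetilde{\boldsymbol\eta}\>$ with $\boldsymbol\Psi$ exponentially localized, i.e.\ $\boldsymbol\Psi\in\boldsymbol\Sigma^{\infty}$ carrying a small prefactor. Indeed $\mathcal R_{\mathbf m}\in\boldsymbol\Sigma^{\infty}$ by Proposition \ref{prop:rp}; $\|P_c\boldsymbol{\mathcal R}_1[\mathbf z]\|_{\boldsymbol\Sigma^l}\lesssim|\mathbf z|\sum_{\mathbf m\in\mathbf R_{\min}}|\mathbf z^{\mathbf m}|$ by \eqref{eq:R1FRemainder} and Remark \ref{rem:comm}; by \eqref{eq:defLz} and Proposition \ref{prop:rp}, $\mathbf L[\mathbf z]-\mathbf L_1=\mathbf J\,\mathrm{diag}(W''(\phi_1[\mathbf z])-W''(H),0)$ with $W''(\phi_1[\mathbf z])-W''(H)=O(|\mathbf z|)$ in every $\Sigma^l$, while $\boldsymbol\eta=R[\mathbf z]\widetilde{\boldsymbol\eta}$ and $R[\mathbf z]-1=O(|\mathbf z|)$ by Lemma \ref{lem:R}; since $\partial_{z_{j\mathcal A}}\boldsymbol\phi[\mathbf 0]\in L^2_{discr}$ one has $P_cD_{\mathbf z}\boldsymbol\phi[\mathbf z]\boldsymbol\zeta=P_c(D_{\mathbf z}\boldsymbol\phi[\mathbf z]-D_{\mathbf z}\boldsymbol\phi[\mathbf 0])\boldsymbol\zeta=O(|\mathbf z|)\,|\boldsymbol\zeta|$ in $\boldsymbol\Sigma^l$; and by the explicit formula for $R[\mathbf z]-1$ in Lemma \ref{lem:R}, $\mathbf L_1(R[\mathbf z]-1)\widetilde{\boldsymbol\eta}=\<\mathbf J\widetilde{\boldsymbol\eta},C_{j\mathcal A}[\mathbf z]\>\,\mathbf L_1\partial_{z_{j\mathcal A}}\boldsymbol\phi[\mathbf 0]$ with $C_{j\mathcal A}[\mathbf z]=O(|\mathbf z|)$ in $\boldsymbol\Sigma^l$ and $\mathbf L_1\partial_{z_{j\mathcal A}}\boldsymbol\phi[\mathbf 0]\in\boldsymbol\Sigma^{\infty}$. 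Finally, from $P_d=1-P_c$ and $\mathbf J^2=-\mathrm{Id}$ one gets $\mathbf J P_c\mathbf J\mathbf F=-\mathbf F-\mathbf J P_d\mathbf J\mathbf F$, where $\mathbf J P_d\mathbf J\mathbf F$ is localized with coefficients which are symplectic pairings of $\mathbf J\mathbf F$ with the $\boldsymbol\Phi_j$, hence controlled by expressions $\<F_1,(\text{loc})\>$, and $|F_1|\lesssim\eta_1^2$ by \eqref{eq:defF}.

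For each localized $\boldsymbol\Psi$ I would use that $S_A=\tfrac12\varphi_A'+\varphi_A\partial_x$ is skew-adjoint on $L^2$ and commutes with the constant matrix $\mathbf J$, so $\<\mathbf J\boldsymbol\Psi,S_A\widetilde{\boldsymbol\eta}\>=-\<S_A\mathbf J\boldsymbol\Psi,\widetilde{\boldsymbol\eta}\>$ with $S_A\mathbf J\boldsymbol\Psi$ still in $\boldsymbol\Sigma^l$ (an $O_A(1)$ loss, harmless by \eqref{eq:relABg}). Writing $\widetilde{\boldsymbol\eta}=\zeta_A^{-1}\mathbf w$ and noting that for $A$ large the weights $e^{-b\<x\>}\zeta_A^{-1}$ with $b\in(a/10,a_1)$, and $e^{-a_1\<x\>}\zeta_A^{-2}\le e^{-(a/5)\<x\>}$, are bounded, a Cauchy-Schwarz bound turns these pairings into $\|\mathbf w\|_{L^2_{-a/10}}$ and the pairing $\<F_1,(\text{loc})\>$ into $\|w_1\|_{L^2_{-a/10}}^2$ (again using $|F_1|\lesssim\eta_1^2$). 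Then Young's inequality yields the right-hand side of \eqref{eq:v11remainder}: the source terms give $|\mathbf z^{\mathbf m}|\,\|\mathbf w\|_{L^2_{-a/10}}\le c^{-1}|\mathbf z^{\mathbf m}|^2+c\,\|\mathbf w\|_{L^2_{-a/10}}^2$ (the sum over the finite set $\mathbf R_{\min}$, cf.\ Lemma \ref{lem:combinat1}); the $O(|\mathbf z|)$ prefactors on $P_c\boldsymbol{\mathcal R}_1$, on $(\mathbf L[\mathbf z]-\mathbf L_1)\boldsymbol\eta$, on $\mathbf L_1(R[\mathbf z]-1)\widetilde{\boldsymbol\eta}$ and on $\mathbf J P_d\mathbf J\mathbf F$ make those contributions $\lesssim\delta\big(\sum_{\mathbf m}|\mathbf z^{\mathbf m}|^2+\|\mathbf w\|_{L^2_{-a/10}}^2\big)$, absorbed since $\delta\ll c$; and the modulation term, which carries the \emph{extra} factor $|\mathbf z|\lesssim\delta$ because $P_cD_{\mathbf z}\boldsymbol\phi[\mathbf 0]=0$, gives $\delta\,|\dot{\mathbf z}-\widetilde{\mathbf z}|\,\|\mathbf w\|_{L^2_{-a/10}}\le\delta\,|\dot{\mathbf z}-\widetilde{\mathbf z}|^2+c\,\|\mathbf w\|_{L^2_{-a/10}}^2$. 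Proposition \ref{prop:OrbStab}, in the form $\|\widetilde{\boldsymbol\eta}\|_{H^1}+|\mathbf z|\lesssim\delta$, is used freely throughout.

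The one term that is genuinely not localized — and that I expect to be the main obstacle — is $-\<\mathbf F,S_A\widetilde{\boldsymbol\eta}\>=-\<F_1,S_A\widetilde\eta_1\>$, where by \eqref{eq:defF} $F_1=\tfrac12W'''(\phi_1[\mathbf z])\eta_1^2+O(\eta_1^3)$ and $W'''(\phi_1[\mathbf z])\to W'''(\pm\zeta)\neq0$ at $\pm\infty$ in general, so that $S_A$ cannot simply be moved onto it. I would follow the corresponding estimate of \cite{CM2109.08108}: write $\eta_1=\widetilde\eta_1+O(|\mathbf z|)(\text{loc})$, expand $-\<F_1,S_A\widetilde\eta_1\>=\tfrac12\int\varphi_A'F_1\widetilde\eta_1+\int\varphi_AF_1\widetilde\eta_1'$ and integrate by parts in the second summand so that no derivative falls on $\widetilde\eta_1$; every contribution in which a localized factor ($H'$, $\phi_1'$, $W''(\phi_1[\mathbf z])-W''(H)$, or the $\mathbf z$-corrections) occurs is then bounded as in the previous paragraph, whereas the surviving nonlocal part collapses — via the identity $\int\varphi_A\widetilde\eta_1^2\widetilde\eta_1'=-\tfrac13\int\varphi_A'\widetilde\eta_1^3$ and its higher order analogues — to $\int\zeta_A^2W'''(\phi_1[\mathbf z])\widetilde\eta_1^3$ plus strictly smaller terms. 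For this last expression I would estimate $\big|\int\zeta_A^2W'''(\phi_1[\mathbf z])\widetilde\eta_1^3\big|\lesssim\|w_1\|_{L^3}^2\,\|\widetilde\eta_1\|_{L^3}$ and apply the one-dimensional Gagliardo-Nirenberg inequality $\|f\|_{L^3}\lesssim\|f\|_{L^2}^{5/6}\|f'\|_{L^2}^{1/6}$ to $w_1$ and to $\widetilde\eta_1$, together with $\|w_1\|_{L^2}\le\|\widetilde\eta_1\|_{L^2}\lesssim\delta$ and $\|\widetilde\eta_1\|_{H^1}\lesssim\delta$, obtaining a bound $\lesssim\delta^{8/3}\|w_1'\|_{L^2}^{1/3}\lesssim\delta^{1/3}\|w_1'\|_{L^2}$; this is precisely the origin of the $\delta^{1/3}\|w_1'\|_{L^2}$ on the right of \eqref{eq:v11remainder}. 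Combining the six contributions gives \eqref{eq:v11remainder}. The delicacy, as always in this long-range setting, is exactly that the nonlinearity does not decay in $x$ and must be treated by this cancellation-plus-interpolation argument rather than by the localized mechanism of the second paragraph; the remaining five terms are essentially bookkeeping once the exponential localization built into Proposition \ref{prop:rp} and Lemma \ref{lem:R} is invoked.
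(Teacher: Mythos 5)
Your structural plan matches the paper's: you decompose $\mathbf{R}_{\widetilde{\boldsymbol{\eta}}}$ according to \eqref{eq:Rtildeeta}, handle the exponentially localized terms by moving $S_A$ across the pairing and applying weighted Cauchy--Schwarz plus Young, and isolate the genuine long-range cubic term $\<\mathbf{F},S_A\boldsymbol{\eta}\>$ for a separate argument. Your algebraic reduction of the cubic term to (essentially) $\int\zeta_A^2|\widetilde{\eta}_1|^3$ via integration by parts is also the same as the paper's. One small remark on the localized terms: for $\boldsymbol{\Psi}=P_c(\mathbf{L}[\mathbf{z}]-\mathbf{L}_1)\boldsymbol{\eta}$ the factor $\boldsymbol{\eta}$ lives only in $\boldsymbol{\mathcal{H}}^1$, so flipping $S_A$ onto $\boldsymbol{\Psi}$ produces $\varphi_A\Delta_{W''}(\mathbf{z})\eta_1'$, which is not an element of $\boldsymbol{\Sigma}^l$; the paper instead symmetrizes and uses the commutator $[S_A,\Delta_{W''}(\mathbf{z})]=\varphi_A\Delta_{W''}'(\mathbf{z})$ (a decaying multiplication operator) so that no derivative lands on $\widetilde{\eta}_1$. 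This is fixable in your scheme but you should be aware that $\boldsymbol\Psi$ is \emph{not} in $\boldsymbol\Sigma^l$ for this term.

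The real gap is the final interpolation for the cubic term. You obtain $\int\zeta_A^2 W'''(\phi_1[\mathbf{z}])\widetilde{\eta}_1^3\lesssim\|w_1\|_{L^3}^2\|\widetilde{\eta}_1\|_{L^3}\lesssim\delta^{8/3}\|w_1'\|_{L^2}^{1/3}$, and then assert this is $\lesssim\delta^{1/3}\|w_1'\|_{L^2}$. That last step is false: $\delta^{8/3}\|w_1'\|^{1/3}\lesssim\delta^{1/3}\|w_1'\|$ is equivalent to $\delta^{7/2}\lesssim\|w_1'\|$, which certainly need not hold. More importantly, the bound $\delta^{8/3}\|w_1'\|_{L^2}^{1/3}$ is not strong enough for the argument that this lemma feeds into. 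After Young one gets $\delta^{8/3}\|w_1'\|^{1/3}\le\tfrac16\|w_1'\|^2+C\delta^{16/5}$, and when \eqref{eq:1stv1} is integrated over $I=[0,T]$ the constant $C\delta^{16/5}$ produces a term $\sim\delta^{16/5}T$ on the right of \eqref{eq:proofprop1stv1}, which is unbounded as $T\to\infty$ and destroys the continuation argument. The paper avoids this by invoking Lemma 2.7 of \cite{CM19SIMA}, a weighted inequality adapted to the specific profile $\zeta_A$ (and to the length scale $A$ chosen so that $A\delta\ll\delta^{1/3}$ by \eqref{eq:relABg}), which yields $\int|\eta_1|^3\zeta_A^2\lesssim\delta^{1/3}\|w_1'\|_{L^2}^2$ --- quadratic, not linear, in $\|w_1'\|_{L^2}$ (the linear power in the lemma statement is a typo, as the proof and its use in Proposition \ref{prop:1stvirial} make clear). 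A generic Gagliardo--Nirenberg interpolation cannot reproduce this because it does not see the built-in localization at scale $A$; you need the specialized lemma (or an argument exploiting $|w_1(x)|\le|x|^{1/2}\|w_1'\|_{L^2}$ on $|x|\lesssim A$ via oddness, paying only a power of $A$).
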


\begin{proof}
Recall $\mathbf{R}_{  \widetilde{\boldsymbol{\eta}} }$ is given in \eqref{eq:Rtildeeta}.
First,
\begin{align*}
|\<\mathbf{J} P_c\boldsymbol{ \mathcal{R}}   [\mathbf{z}],S_A {\boldsymbol{\eta}} \>|\leq \| \zeta_A^{-1}S_A \mathbf{J} P_c \boldsymbol{ \mathcal{R}}[\mathbf{z}]\|_{L^2_{\frac{a}{10}}}\|\mathbf{w}\|_{L^2_{-\frac{a}{10}}}
\end{align*}
We have $\|\zeta_A^{-1}S_A\mathbf{J}P_c\|_{  \Sigma \to L^2_{\frac{a}{10}}}\lesssim 1$.
Therefore, by Proposition \ref{prop:rp} and Proposition \ref{prop:OrbStab},
\begin{align}\label{eq:v11remain01}
|\<\mathbf{J} P_c\boldsymbol{ \mathcal{R}}[\mathbf{z}],S_A \widetilde{\boldsymbol{\eta}} \>|\lesssim \sum_{\mathbf{m}\in \mathbf{R}_{\mathrm{min}}}|\mathbf{z}^{\mathbf{m}}| \|\mathbf{w}\|_{L^2_{-\frac{a}{10}}}\lesssim c^{-1}\sum_{\mathbf{m}\in \mathbf{R}_{\mathrm{min}}}|\mathbf{z}^{\mathbf{m}}|^2 +c \|\mathbf{w}\|_{L^2_{-\frac{a}{10}}}^2.
\end{align}
Next, since $\|P_c D\boldsymbol{\phi}[\mathbf{z}]\|_{\Sigma}\lesssim \delta$ by Proposition \ref{prop:OrbStab}, we have
\begin{align}\label{eq:v11remain02}
|\<\mathbf{J}P_cD\boldsymbol{\phi}[\mathbf{z}](\dot{\mathbf{z}}-\widetilde{\mathbf{z}}),S_A\widetilde{\boldsymbol{\eta}}\>|\lesssim \delta |\dot{\mathbf{z}}-\widetilde{\mathbf{z}}| \|\mathbf{w}\|_{L^2_{-\frac{a}{10}}}\lesssim
\delta |\dot{\mathbf{z}}-\widetilde{\mathbf{z}}|^2 +\delta \|\mathbf{w}\|_{L^2_{-\frac{a}{10}}}^2.
\end{align}
Using Lemma \ref{lem:R} as well as $\|\zeta_A^{-1}\|_{L^2_{\frac{a}{10}}\to L^2_{\frac{a}{10}-\frac{1}{A}}}$, $\|S_A P_c \mathbf{L}_1\|_{\Sigma \to L^2_{\frac{a}{10} }  }$, $\|\zeta_A^{-1}\|_{ L^{2}_{-\frac{a}{10}} \to \Sigma^*  }\lesssim 1$, we have
\begin{align}\label{eq:v11remain03}
|\<\mathbf{J}P_c \mathbf{L}_1(R[\mathbf{z}]-1)\widetilde{\boldsymbol{\eta}},S_A\widetilde{\boldsymbol{\eta}}\>|\lesssim \delta \|\mathbf{w}\|_{L^2_{-\frac{a}{10}}}^2.
\end{align}
For   $E_1 := \begin{pmatrix}
0 & 0 \\ 1 & 0
\end{pmatrix}  $  and $\Delta _{W ''}(\mathbf{z}):=  W''(\phi_1[\mathbf{z}])-W''(H)$, we consider
\begin{align} \label{v11remainder1}& \<P_c (\mathbf{L}[\mathbf{z}]- \mathbf{L}_1)\boldsymbol{\eta} ,S_A\widetilde{\boldsymbol{\eta}}\>  =
\<\mathbf{J} P_c  \Delta _{W ''}(\mathbf{z})  E_1  \boldsymbol{\eta} ,S_A\widetilde{\boldsymbol{\eta}}\>  \\& \nonumber =\<\mathbf{J}E_1     \Delta _{W ''}(\mathbf{z})   \widetilde{\boldsymbol{\eta}} ,S_A\widetilde{\boldsymbol{\eta}}\>-\< \mathbf{J}  P_d  \Delta _{W ''}(\mathbf{z})  E_1  \widetilde{\boldsymbol{\eta}} ,S_A\widetilde{\boldsymbol{\eta}}\>  \\&  - \<\mathbf{J} E_1    \Delta _{W ''}(\mathbf{z})    (R[\mathbf{z}]-1) \widetilde{\boldsymbol{\eta}} ,S_A\widetilde{\boldsymbol{\eta}}\>  + \< \mathbf{J}  P_d  \Delta _{W ''}(\mathbf{z})  E_1   (R[\mathbf{z}]-1) \widetilde{\boldsymbol{\eta}} ,S_A\widetilde{\boldsymbol{\eta}}\>.\nonumber
\end{align}
The most significant term in the right is the first. Since $\mathbf{J}E_1=\begin{pmatrix}
1 & 0 \\ 0 & 0
\end{pmatrix} $, it follows that $\mathbf{J}E_1S_A$ is skew--symmetric, so that
\begin{align*} & |
 \<\mathbf{J}E_1     \Delta _{W ''}(\mathbf{z})   \widetilde{\boldsymbol{\eta}} ,S_A\widetilde{\boldsymbol{\eta}}\> |=   |2^{-1}\<    [ S_A ,\Delta _{W ''}(\mathbf{z})]   \widetilde{  \eta }_1 ,\widetilde{  \eta }_1\> | \lesssim \delta  \| {w}_1\|_{L^2_{-\frac{a}{10}}} ^2 \le \delta  \| \mathbf{w }\|_{L^2_{-\frac{a}{10}}} ^2 .
\end{align*}
The other terms in  \eqref{v11remainder1} satisfy the same estimate. For example, if we consider the 2nd term in the r.h.s. of \eqref{v11remainder1}, we have
\begin{align*} & |
 \<\mathbf{J} S_A P_d  \Delta _{W ''}(\mathbf{z})  E_1  \widetilde{\boldsymbol{\eta}} ,\widetilde{\boldsymbol{\eta}}\> | \le \|  S_A P_d \| _{L^2 \to \Sigma  }
  \|  \Delta _{W ''}(\mathbf{z})\widetilde{\boldsymbol{\eta}} \| _{L^2}  \|  \widetilde{\boldsymbol{\eta}} \| _{\Sigma ^*}  \lesssim \delta  \|\mathbf{w}\|_{L^2_{-\frac{a}{10}}} ^2.
\end{align*}
The other terms in the r.h.s. of \eqref{v11remainder1} can bounded similarly, so that we can conclude
\begin{align}\label{eq:v11remain05}
\<\mathbf{J} P_c  \Delta _{W ''}(\mathbf{z})  E_1  \boldsymbol{\eta} ,S_A\widetilde{\boldsymbol{\eta}}\> \lesssim   \delta \|\mathbf{w}\|_{L^2_{-\frac{a}{10}}}^2.
\end{align}
For  $F_1$  and $\mathbf{F}$ defined in \eqref{eq:defF},    consider
\begin{align}\label{v11remainder2}
\<     \mathbf{J} P_c \mathbf{J} \mathbf{F} ,S_A\widetilde{\boldsymbol{\eta}}\>=
 -\<    \mathbf{F} ,S_A \boldsymbol{\eta} \> - \<\mathbf{J} P_d \mathbf{J} \mathbf{F} ,S_A \boldsymbol{\eta}\>
-\<\mathbf{J} P_c  \mathbf{J}\mathbf{F} ,S_A      (R[\mathbf{z}]-1) \widetilde{\boldsymbol{\eta}}\> .
\end{align}
Using the pointwise bound $|F_1|\lesssim |\eta_1|^2$, the fact that $P_d:\Sigma ^* \to \Sigma$ and \eqref{eq:lem:R},  we can bound the 2nd and the 3rd term by
\begin{align}&
|\<  \mathbf{F} ,     P_d  S_A \boldsymbol{\eta} \>|  \lesssim   \| F_1 \|_{\Sigma ^*}
\| P_d S_A \| _{\Sigma ^*\to \Sigma }  \|  {\eta}_1\|_{\Sigma ^*} \lesssim \| \eta _1 \| _{L^\infty} \|  {\eta}_1\|_{\Sigma ^*}^2 \lesssim  \delta \| {w}_1\|_{L^2_{-\frac{a}{10}}}^2 ,\label{eq:v11remain06}\\&
|\< \mathbf{F} ,   P_c S_A      (R[\mathbf{z}]-1) \widetilde{\boldsymbol{\eta}}\>| \lesssim  \|  F_1\|_{\Sigma ^*}   \| \mathbf{J} P_c S_A      (R[\mathbf{z}]-1)  \| _{\Sigma ^*\to \Sigma }   \|  \widetilde{\boldsymbol{\eta}} \|_{\Sigma ^*} \lesssim \delta  ^2 \|\mathbf{w}\|_{L^2_{-\frac{a}{10}}}^2.\label{eq:v11remain07}
\end{align}
Finally, for the 1st term of the r.h.s.\ of \eqref{v11remainder2},   we have
\begin{align*} &
 \<       \mathbf{F} ,S_A \boldsymbol{\eta}\>  = \< F_1 ,S_A\eta_1\>  = 2^{-1}\< F_1 \eta_1 , \zeta _A ^2\>
\\& - \<    W (\phi_1[\mathbf{z}]+\eta_1) -W (\phi_1[\mathbf{z}] )      -W'(\phi_1[\mathbf{z}])\eta _1-2^{-1}W''(\phi_1[\mathbf{z}])\eta_1 ^2, \zeta _A ^2\>
\\& - \<    W '(\phi_1[\mathbf{z}]+\eta_1)  -W '(\phi_1[\mathbf{z}] )      -W''(\phi_1[\mathbf{z}])\eta _1 -2^{-1}W'''(\phi_1[\mathbf{z}])\eta_1 ^2, \phi_1'[\mathbf{z}]  \varphi  _A  \> .
\end{align*}
So \begin{align*} &
 |\<      \mathbf{F} ,S_A \boldsymbol{\eta}\> |   \lesssim \int |\eta_1|^3\zeta_A^2\,dx
\end{align*}
and, by Lemma 2.7 of \cite{CM19SIMA}, we have
\begin{align}
|\<   \mathbf{F} ,S_A \boldsymbol{\eta}\>|\lesssim \delta^{1/3}\|w_1'\|_{L^2}^2.\label{eq:v11remain08}
\end{align}
By \eqref{eq:RFRemainder} and \eqref{eq:Rtildeeta} we have bounded all terms in the l.h.s.\ of \eqref{eq:v11remainder}.
Combining, \eqref{eq:v11remain01}, \eqref{eq:v11remain02}, \eqref{eq:v11remain03},  \eqref{eq:v11remain05},   \eqref{eq:v11remain06}, \eqref{eq:v11remain07} and \eqref{eq:v11remain08} we have the conclusion.
\end{proof}

Combining Lemmas \ref{lem:v11main} and \ref{lem:v11remainder} we obtain \eqref{eq:1stv1}.

We next prove \eqref{eq:1stv2}.
As \eqref{eq:1stv1}, we start from examine the contribution of the 1st term in the r.h.s.\ of \eqref{eq:diffI2}.

\begin{lemma}\label{lem:v12main}
We have
\begin{align*}
\<\mathbf{J}  \mathbf{L}_1\widetilde{\boldsymbol{\eta}} ,\zeta_A^2 \sigma_3\widetilde{\boldsymbol{\eta}} \>=\|\zeta_{\frac{10}{a}} w_2\|_{L^2}^2 + r,
\end{align*}
with $r$ satisfying
\begin{align}\label{est:v12r}
|r|\lesssim  \|w_1\|_{ \widetilde{\Sigma}}^2 .
\end{align}
\end{lemma}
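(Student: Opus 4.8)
The claim is an exact identity followed by a standard exponentially–weighted energy estimate; it is the ``diagonal'' analogue of Lemma \ref{lem:v11main}. Recalling that $\mathcal I_2$, and hence the first term on the right of \eqref{eq:diffI2}, carries the weight $\zeta_{\widetilde A}^2$ (with $\zeta_{\widetilde A}=\zeta_{\frac{10}{a}}\zeta_A$ and $\widetilde A^{-1}=A^{-1}+\frac a{10}$), the plan is to first exploit the block structure $\mathbf J\mathbf L_1\widetilde{\boldsymbol\eta}=(-L_1\widetilde\eta_1,-\widetilde\eta_2)$ together with \eqref{eq:inner1} and $\sigma_3=\mathrm{diag}(1,-1)$ to obtain the splitting
\[
\<\mathbf J\mathbf L_1\widetilde{\boldsymbol\eta},\zeta_{\widetilde A}^2\sigma_3\widetilde{\boldsymbol\eta}\>=\|\zeta_{\widetilde A}\widetilde\eta_2\|_{L^2}^2-\<L_1\widetilde\eta_1,\zeta_{\widetilde A}^2\widetilde\eta_1\> .
\]
Since $w_2=\zeta_A\widetilde\eta_2$, the first term is exactly $\|\zeta_{\frac{10}{a}}w_2\|_{L^2}^2$, the asserted main term, and it remains to show that $r:=-\<L_1\widetilde\eta_1,\zeta_{\widetilde A}^2\widetilde\eta_1\>$ obeys \eqref{est:v12r}.

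For that, I would write $L_1=-\partial_x^2+V$ with $V=W''(H)$ and integrate by parts twice; all boundary terms vanish because $\zeta_{\widetilde A}$ and its derivatives decay exponentially while $\widetilde\eta_1\in H^1(\R)\hookrightarrow C_0(\R)$, giving
\[
r=-\int_\R\zeta_{\widetilde A}^2\,|\widetilde\eta_1'|^2\,dx+\frac12\int_\R(\zeta_{\widetilde A}^2)''\,|\widetilde\eta_1|^2\,dx-\int_\R V\,\zeta_{\widetilde A}^2\,|\widetilde\eta_1|^2\,dx .
\]
I would then pass to $w_1=\zeta_A\widetilde\eta_1$. From the explicit formula \eqref{def:zetaphi} one reads off, with $a$–dependent constants, $|\zeta_A'|\lesssim A^{-1}\zeta_A$, $|\zeta_{\widetilde A}'|+|\zeta_{\widetilde A}''|\lesssim\zeta_{\widetilde A}$ (using $\widetilde A^{-1}\lesssim 1$), and $\zeta_{\frac{10}{a}}^2\lesssim e^{-\frac a5\<x\>}$. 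Writing $\zeta_A\widetilde\eta_1'=w_1'-(\zeta_A'/\zeta_A)w_1$ and $\zeta_{\widetilde A}^2=\zeta_{\frac{10}{a}}^2\zeta_A^2$ then gives $\int_\R\zeta_{\widetilde A}^2|\widetilde\eta_1'|^2\,dx\lesssim\|w_1'\|_{L^2}^2+\|w_1\|_{L^2_{-\frac a{10}}}^2$, while boundedness of $V$ and of $(\zeta_{\widetilde A}^2)''/\zeta_{\widetilde A}^2$ makes the other two integrals $\lesssim\int_\R\zeta_{\frac{10}{a}}^2|w_1|^2\,dx\lesssim\|w_1\|_{L^2_{-\frac a{10}}}^2$. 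Summing and recalling $\|w_1\|_{\widetilde\Sigma}^2\sim\|w_1'\|_{L^2}^2+\|w_1\|_{L^2_{-\frac a{10}}}^2$ from \eqref{eq:norm_rho} yields \eqref{est:v12r}.

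I do not anticipate a real obstacle. Unlike Lemma \ref{lem:v11remainder}, this lemma uses neither the equation \eqref{eq:tildeeta} nor the operators $P_c$, $R[\mathbf z]$ nor the nonlinearity: it is purely the self–adjoint, spectrally undistorted linear term, so the computation is an honest weighted integration by parts. The only points genuinely needing a line of justification are the vanishing of the boundary terms in the two integrations by parts and the elementary pointwise weight comparisons listed above, all immediate from \eqref{def:zetaphi} and $\widetilde\eta_1\in H^1$.
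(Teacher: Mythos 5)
Your proposal is correct and takes essentially the same route as the paper's: both reduce the claim to $r=-\<L_1\widetilde\eta_1,\zeta_{\widetilde A}^2\widetilde\eta_1\>$ (you rightly read the weight as $\zeta_{\widetilde A}^2$, consistent with the definition of $\mathcal I_2$ and \eqref{eq:diffI2}, despite the $\zeta_A^2$ in the lemma statement) and then control it by a weighted integration by parts expressed through $w_1=\zeta_A\widetilde\eta_1$. The only difference is presentational: the paper packages the same computation as the conjugation identity $\<L_1 f,\zeta_{\widetilde A}^2 f\>=\|(\zeta_{\widetilde A}f)'\|_{L^2}^2-\int\bigl((\log\zeta_{\widetilde A})'\bigr)^2|\zeta_{\widetilde A}f|^2+\int W''(H)|\zeta_{\widetilde A}f|^2$, whereas you expand the integration by parts term by term; the ensuing weight comparisons and the appeal to $\|w_1\|_{\widetilde\Sigma}^2\sim\|w_1'\|_{L^2}^2+\|w_1\|_{L^2_{-a/10}}^2$ are identical.
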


\begin{proof}
We have
\begin{align*}
\<\mathbf{J}  \mathbf{L}_1\widetilde{\boldsymbol{\eta}} ,\zeta_A^2 \sigma_3\widetilde{\boldsymbol{\eta}} \>=\|\zeta_{\frac{10}{a}} w_2\|_{L^2}^2-\<L_1 \widetilde{\eta}_1,\zeta_A^2\widetilde{\eta}_1\>=\|\zeta_{\frac{10}{a}} w_2\|_{L^2}^2+r.
\end{align*}
The remainder term $r$ can be expanded as
\begin{align*}
-r=\<L_1\widetilde{\eta}_1,\zeta_{\widetilde{A}}^2\widetilde{\eta}_1\>
=\|(\zeta_{\frac{10}{a}}w_1)'\|_{L^2}^2+\int \(\((\log \zeta_{\widetilde{A}})'\)^2+W''(H)\)|\zeta_{\frac{10}{a}} w_1|^2\,dx.
\end{align*}
Thus, we have \eqref{est:v12r}.
\end{proof}

The contribution of the remaining terms in the r.h.s.\ of \eqref{eq:diffI2} can be bounded as follows.

\begin{lemma}\label{lem:v12remainder}
For arbitrary $c\in (0,1)$ we have
\begin{align}
\left|\<\mathbf{J}\(   \sum_{\mathbf{m}\in \mathbf{R}_{\mathrm{min}}}\mathbf{z}^{\mathbf{m}} \mathcal{R}_{\mathbf{m}} + \boldsymbol{\mathcal{R}}_{\widetilde{\boldsymbol{\eta}}} \),\zeta_{\widetilde{A}}^2 \sigma_3\widetilde{\boldsymbol{\eta}} \>\right|\lesssim c\|\mathbf{w}\|_{L^2_{-\frac{a}{10}}}^2+c^{-1}\sum_{\mathbf{m}\in \mathbf{R}_{\mathrm{min}}}|\mathbf{z}^{\mathbf{m}}|^2+\delta|\dot{\mathbf{z}}-\widetilde{\mathbf{z}}|^2,
\end{align}
where the implicit constant is independent of $c$.
\end{lemma}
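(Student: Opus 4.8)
The plan is to bound, term by term, the contribution to $\langle\mathbf{J}(\,\cdot\,),\zeta_{\widetilde{A}}^2\sigma_3\widetilde{\boldsymbol{\eta}}\rangle$ of the source term $\sum_{\mathbf{m}\in\mathbf{R}_{\mathrm{min}}}\mathbf{z}^{\mathbf{m}}\mathcal{R}_{\mathbf{m}}$ and of each of the five pieces of $\mathbf{R}_{\widetilde{\boldsymbol{\eta}}}$ listed in \eqref{eq:Rtildeeta}. The decisive simplification relative to Lemma \ref{lem:v11remainder} is that the multiplier is now the \emph{bounded} operator $\zeta_{\widetilde{A}}^2\sigma_3$, not the unbounded $S_A$: from $\zeta_{\widetilde{A}}=\zeta_{\frac{10}{a}}\zeta_A$ and $\mathbf{w}=\zeta_A\widetilde{\boldsymbol{\eta}}$ one has $\zeta_{\widetilde{A}}^2\widetilde{\boldsymbol{\eta}}=\zeta_{\frac{10}{a}}^2\zeta_A\mathbf{w}$, which is pointwise bounded by $|\mathbf{w}|$, so every pairing reduces to a product of two weighted $L^2$ norms in which the factor carrying $\widetilde{\boldsymbol{\eta}}$ is controlled by $\|\mathbf{w}\|_{L^2_{-\frac{a}{10}}}$ once the exponential decay of the other factor is used (recall $\frac{a}{10}<a_1$). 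In particular there is no analogue here of the delicate integration by parts behind \eqref{eq:v11remain08}.

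Concretely: for $\sum_{\mathbf{m}}\mathbf{z}^{\mathbf{m}}P_c\mathcal{R}_{\mathbf{m}}+P_c\boldsymbol{\mathcal{R}}_1[\mathbf{z}]$ one uses $\mathcal{R}_{\mathbf{m}}\in\boldsymbol{\Sigma}^{\infty}$ and $\|\boldsymbol{\mathcal{R}}_1[\mathbf{z}]\|_{\boldsymbol{\Sigma}^1}\lesssim|\mathbf{z}|\sum_{\mathbf{m}\in\mathbf{R}_{\mathrm{min}}}|\mathbf{z}^{\mathbf{m}}|$ from Proposition \ref{prop:rp}, getting $\lesssim\sum_{\mathbf{m}\in\mathbf{R}_{\mathrm{min}}}|\mathbf{z}^{\mathbf{m}}|\,\|\mathbf{w}\|_{L^2_{-\frac{a}{10}}}\lesssim c^{-1}\sum|\mathbf{z}^{\mathbf{m}}|^2+c\|\mathbf{w}\|_{L^2_{-\frac{a}{10}}}^2$ by Young; for the modulation term $P_cD\boldsymbol{\phi}[\mathbf{z}](\dot{\mathbf{z}}-\widetilde{\mathbf{z}})$ one uses $\|P_cD\boldsymbol{\phi}[\mathbf{z}]\|_{\Sigma}\lesssim\delta$ (Proposition \ref{prop:OrbStab}, since $P_c$ annihilates $D\boldsymbol{\phi}[\mathbf{0}]$) to get $\lesssim\delta|\dot{\mathbf{z}}-\widetilde{\mathbf{z}}|^2+\delta\|\mathbf{w}\|_{L^2_{-\frac{a}{10}}}^2$; for the potential-difference term $P_c(\mathbf{L}[\mathbf{z}]-\mathbf{L}_1)\boldsymbol{\eta}$ one writes $P_c=1-P_d$ exactly as in \eqref{v11remainder1} and uses the exponential localization and smallness $\Delta_{W''}(\mathbf{z})=W''(\phi_1[\mathbf{z}])-W''(H)=O(\delta)$ together with $P_d:\Sigma^*\to\Sigma$ and Lemma \ref{lem:R}, each of the four resulting terms being $\lesssim\delta\|\mathbf{w}\|_{L^2_{-\frac{a}{10}}}^2$; for the nonlinearity $P_c\mathbf{J}\mathbf{F}$ one again splits $P_c=1-P_d$ and uses $|F_1|\lesssim|\eta_1|^2$ with $\|\eta_1\|_{L^\infty}\lesssim\delta$ to obtain $\lesssim\delta\|\mathbf{w}\|_{L^2_{-\frac{a}{10}}}^2$.

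The only piece needing slightly more attention is $P_c\mathbf{L}_1(R[\mathbf{z}]-1)\widetilde{\boldsymbol{\eta}}$, because $\mathbf{L}_1$ carries two derivatives: by \eqref{eq:lem:R} one has $(R[\mathbf{z}]-1)\widetilde{\boldsymbol{\eta}}\in\boldsymbol{\Sigma}^{l}$ for every $l$ with norm $\lesssim|\mathbf{z}|\,\|\widetilde{\boldsymbol{\eta}}\|_{(\boldsymbol{\Sigma}^l)^*}\lesssim\delta\|\mathbf{w}\|_{L^2_{-\frac{a}{10}}}$ — the last step being the usual $\zeta_A^{-1}$ bookkeeping, valid since $A\gg1$ makes $a_1-A^{-1}>\frac{a}{10}$ — so applying $\mathbf{L}_1$ keeps one in a weighted space (with two derivatives lost), and pairing against $\zeta_{\widetilde{A}}^2\sigma_3\widetilde{\boldsymbol{\eta}}\in(\boldsymbol{\Sigma}^1)^*$ gives $\lesssim\delta\|\mathbf{w}\|_{L^2_{-\frac{a}{10}}}^2$, all of $P_c$, $\mathbf{L}_1$ and $R[\mathbf{z}]-1$ being bounded on the weighted spaces in play. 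Summing the six contributions and recalling from \eqref{eq:relABg} that $\delta$ can be taken small compared to $c$, the various $\delta\|\mathbf{w}\|_{L^2_{-\frac{a}{10}}}^2$ terms are absorbed into the single $c\|\mathbf{w}\|_{L^2_{-\frac{a}{10}}}^2$ term, giving exactly the claimed estimate with implicit constant independent of $c$. I expect no real obstacle here: this lemma is a simpler variant of Lemma \ref{lem:v11remainder}, the only subtlety being the weighted-space bookkeeping in the $\mathbf{L}_1(R[\mathbf{z}]-1)$ term.
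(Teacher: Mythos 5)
Your proposal is correct and is essentially the proof the paper omits when it says "similar to the proof of Lemma \ref{lem:v11remainder}." You correctly identify the one genuine simplification relative to Lemma \ref{lem:v11remainder} — the multiplier $\zeta_{\widetilde{A}}^2\sigma_3$ is bounded (with $\zeta_{\widetilde{A}}^2\widetilde{\boldsymbol{\eta}}=\zeta_{10/a}^2\zeta_A\mathbf{w}$), so there is no analogue of the cubic integration-by-parts step \eqref{eq:v11remain08} that produces $\delta^{1/3}\|w_1'\|_{L^2}^2$, and each of the six pieces of the source reduces to a weighted $L^2$ pairing controlled by $\|\mathbf{w}\|_{L^2_{-a/10}}$ — which is exactly what makes the $\|w_1'\|_{L^2}^2$ term disappear from the right-hand side of the present lemma compared to Lemma \ref{lem:v11remainder}. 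Your term-by-term treatment (Young's inequality with $c$ for the $\mathcal{R}_{\mathbf{m}}$ and $\boldsymbol{\mathcal{R}}_1$ terms, $\|P_cD\boldsymbol{\phi}[\mathbf{z}]\|_{\Sigma}\lesssim\delta$ for the modulation term, the $P_c=1-P_d$ split and Lemma \ref{lem:R} for the potential-difference and $R[\mathbf{z}]-1$ terms, the pointwise bound $|F_1|\lesssim|\eta_1|^2$ for the nonlinearity) matches the structure of the paper's Lemma \ref{lem:v11remainder} proof and is sound, including your handling of the $\boldsymbol{\Sigma}^l$-smoothing of $R[\mathbf{z}]-1$ against the two derivatives in $\mathbf{L}_1$. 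The final absorption of the various $\delta\|\mathbf{w}\|^2_{L^2_{-a/10}}$ contributions into $c\|\mathbf{w}\|^2_{L^2_{-a/10}}$, using that $\delta$ is chosen last and far smaller than the eventual fixed value of $c$, is exactly the (implicit) convention the paper itself uses in Lemma \ref{lem:v11remainder}.
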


\begin{proof}
The proof is similar to the proof of Lemma \ref{lem:v11remainder}.
Thus, we omit it.
\end{proof}

Combining Lemmas \ref{lem:v12main} and \ref{lem:v12remainder} and the fact $\|w_2\|_{L^2_{-\frac{a}{10}}}\leq \|\zeta_{\frac{10}{a}} w_2\|_{L^2}$, we have \eqref{eq:1stv2}.
This completes the proof of Lemma \ref{lem:1stvdiff}.

\section{Technical estimates }\label{sec:Preliminary}

The following lemmas are proved in \cite{CM2109.08108}, to which we refer for proofs.

\begin{lemma}\label{lem:equiv_rho}  Let $U\ge 0$ be a non--zero   potential $U\in L^1(\R , \R )$. Then
   there exists a constant $C  _{U}>0$ such that
for any    function $0\le W$  such that $ \<x\> W\in L^1(\R )$ then
 \begin{align}\label{eq:lem:rhoequiv}
& \<  W f,f\> \le    C_U \| \<x\> W  \| _{L^1(\R )}  \< (-\partial _x^2+ U )f,f\>.\end{align}
In particular,  for  $a>0$  the constant in the norm $\|\cdot\|_{\widetilde{\Sigma}}$ in \eqref{eq:norm_rho},   there exists a constant $C (a )>0$ such that
 \begin{align}\label{eq:lem:rhoequiva}
& \<  W f,f\> \le    C(a ) \| \<x\> W  \| _{L^1(\R )}   \|f\|_{\widetilde{\Sigma}}^2.\end{align}
\end{lemma}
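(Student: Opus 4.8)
The plan is to first establish the pointwise bound $|f(x)|^2\lesssim_U\<x\>\,\<(-\partial_x^2+U)f,f\>$ for every $x\in\R$, and then integrate it against $W$. Here I read $\<(-\partial_x^2+U)f,f\>$ as the underlying quadratic form $\|f'\|_{L^2}^2+\<Uf,f\>$; if this quantity is infinite there is nothing to prove, so I may assume $f'\in L^2(\R)$, in which case $f$ has an absolutely continuous representative obeying $|f(x)-f(y)|\le|x-y|^{1/2}\|f'\|_{L^2}$ by Cauchy--Schwarz, so that pointwise values of $f$ are well defined and all integrals below converge.

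Using that $U\ge 0$ is not identically zero, I would first fix a length $L=L_U>0$ with $\int_{-L}^{L}U(y)\,dy\ge\frac12\|U\|_{L^1}$. For $x\in\R$ and $y\in[-L,L]$, the Lipschitz-type bound above together with $(a+b)^2\le 2a^2+2b^2$ gives $|f(x)|^2\le 2|f(y)|^2+2|x-y|\,\|f'\|_{L^2}^2$, and moreover $|x-y|\le\<x\>(1+L)$ on that interval. Multiplying by $U(y)$ and integrating over $y\in[-L,L]$ yields $\frac12\|U\|_{L^1}\,|f(x)|^2\le 2\<Uf,f\>+2(1+L)\|U\|_{L^1}\<x\>\|f'\|_{L^2}^2$, i.e.\ $|f(x)|^2\le \frac{4}{\|U\|_{L^1}}\<Uf,f\>+4(1+L)\<x\>\|f'\|_{L^2}^2$; since $\<Uf,f\>$ and $\|f'\|_{L^2}^2$ are each bounded by $\<(-\partial_x^2+U)f,f\>$, this is the announced pointwise estimate with a constant depending only on $U$.

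Then I would integrate against $W$: the pointwise bound just obtained gives $\<Wf,f\>=\int_\R W(x)|f(x)|^2\,dx\le \frac{4}{\|U\|_{L^1}}\|W\|_{L^1}\<Uf,f\>+4(1+L)\|\<x\>W\|_{L^1}\|f'\|_{L^2}^2$, and then $\|W\|_{L^1}\le\|\<x\>W\|_{L^1}$ together with $\<Uf,f\>,\,\|f'\|_{L^2}^2\le\<(-\partial_x^2+U)f,f\>$ yield \eqref{eq:lem:rhoequiv} with $C_U=4/\|U\|_{L^1}+4(1+L_U)$. Finally, \eqref{eq:lem:rhoequiva} is the special case $U=\sech^2(ax/10)$, which is nonnegative, nonzero and integrable, combined with the identity $\|f\|_{\widetilde{\Sigma}}^2=\<(-\partial_x^2+U)f,f\>$ from \eqref{eq:norm_rho}. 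There is no serious obstacle here; the one genuine idea is to use $U\not\equiv 0$ to pin down $f$ on a fixed bounded set and then propagate that control along $\R$ with the $\dot H^1$-seminorm, at the cost of a factor $\<x\>$, while the mild regularity point is absorbed by passing to the continuous representative of $f$.
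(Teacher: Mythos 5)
Your argument is correct. Since the paper does not reprint a proof of this lemma (it simply cites \cite{CM2109.08108}), there is nothing in the text to compare line by line, but your argument is self-contained, elementary, and gives an explicit constant $C_U = 4/\|U\|_{L^1}+4(1+L_U)$. The two ingredients — the Morrey-type bound $|f(x)-f(y)|\le |x-y|^{1/2}\|f'\|_{L^2}$ together with the use of $U\not\equiv 0$ to ``anchor'' $|f|$ on a fixed compact set $[-L_U,L_U]$ carrying a definite fraction of the mass of $U$, and then the propagation in $x$ of that control at the cost of the weight $\<x\>$ — are exactly what is needed, and the subsequent integration against $W$ and the reduction $\|W\|_{L^1}\le\|\<x\>W\|_{L^1}$ are routine. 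The preliminary reductions (assume the right side finite, so $f'\in L^2$, pass to the absolutely continuous representative, note that finiteness of $\<Uf,f\>$ pins down a finite value of $f$ somewhere on the support of $U$) are handled correctly, and the specialization to $U=\sech^2(ax/10)$ for \eqref{eq:lem:rhoequiva} is immediate from \eqref{eq:norm_rho}.
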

\qed

\begin{lemma}\label{claim:l2boundII}  Consider a Schwartz function   $\mathcal{V}\in \mathcal{S}(\R, \C)$. Then, for any $L\in \N\cup \{ 0\}$ there exists a constant $C_L$ s.t. we have for all $\varepsilon \in (0,1]$ and for $L^{2,s}(\R ) $  is defined  in Definition \ref{def:spaces},
\begin{align}\label{eq2stestJ21II}
 \|    \< \im \varepsilon  \partial _x \> ^{-\widetilde{N}}  \left [   \mathcal{V} , \< \im \varepsilon  \partial _x \> ^{\widetilde{N}}\right  ] \| _{L ^{2,-L}(\R ) \to L ^{2, L}(\R )} \le C_L \varepsilon . \end{align}
 where $L^{2,s}(\R ) $  is defined  in Definition \ref{def:spaces}.

  \end{lemma}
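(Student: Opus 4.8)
The plan is to reduce the operator norm estimate to a symbol/kernel computation for the commutator $\left[\mathcal{V},\langle\im\varepsilon\partial_x\rangle^{\widetilde N}\right]$, exploiting that $\mathcal{V}$ is Schwartz, so all its derivatives decay rapidly, and that the Bessel-potential multiplier $\langle\im\varepsilon\partial_x\rangle^{\widetilde N}=(1+\varepsilon^2\xi^2)^{\widetilde N/2}$ has symbol that is a polynomial of degree $\widetilde N$ in $\varepsilon\xi$ when $\widetilde N$ is even, and more generally a classical symbol of order $\widetilde N$ whose derivatives carry powers of $\varepsilon$. The key algebraic point is that $\langle\im\varepsilon\partial_x\rangle^{-\widetilde N}$ in front regularizes, so the composite operator has order $0$, and the commutator structure produces a gain of at least one power of $\varepsilon$: schematically, by the symbol calculus (or by a direct expansion when $\widetilde N\in 2\N$), one writes
\begin{align*}
\left[\mathcal{V},\langle\im\varepsilon\partial_x\rangle^{\widetilde N}\right] = \sum_{k=1}^{\widetilde N} c_k\, \varepsilon^{2k}\, \partial_x^{(2k-?)}\!\big(\text{derivatives of }\mathcal{V}\big)\,\partial_x^{(\cdot)} + (\text{lower order}),
\end{align*}
and after multiplying by $\langle\im\varepsilon\partial_x\rangle^{-\widetilde N}$ every term is bounded on weighted $L^2$ with at least one surviving factor of $\varepsilon$.

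The cleanest route, which I would take, avoids heavy pseudodifferential machinery and instead uses the integral representation via the Fourier multiplier. Write $\langle\im\varepsilon\partial_x\rangle^{\widetilde N}=\widehat{p_\varepsilon}(D)$ where $p_\varepsilon(\xi)=(1+\varepsilon^2\xi^2)^{\widetilde N/2}$, and express the commutator as an oscillatory integral operator with kernel
\begin{align*}
K_\varepsilon(x,y) = \frac{1}{2\pi}\int e^{\im(x-y)\xi}\big(\mathcal{V}(x)-\mathcal{V}(y)\big) p_\varepsilon(\xi)\,d\xi .
\end{align*}
Using $\mathcal{V}(x)-\mathcal{V}(y)=(x-y)\int_0^1 \mathcal{V}'(y+s(x-y))\,ds$ absorbs one factor $(x-y)$, which becomes $-\im\partial_\xi$ hitting $p_\varepsilon$; since $\partial_\xi p_\varepsilon(\xi)=\widetilde N\varepsilon^2\xi(1+\varepsilon^2\xi^2)^{\widetilde N/2-1}$, this manifestly extracts an $\varepsilon^2$ and lowers the order by one, so that composing with $\langle\im\varepsilon\partial_x\rangle^{-\widetilde N}$ and controlling the remaining $\varepsilon$-uniform order-$1$ multiplier against $\langle\varepsilon\xi\rangle^{-1}$ (a bounded, in fact $O(1)$, multiplier) leaves a net $O(\varepsilon)$; the Schwartz decay of $\mathcal{V}'$ and its derivatives, together with the boundedness of the smooth compactly-supported-in-the-relevant-sense cutoffs, yields the weighted bound $L^{2,-L}\to L^{2,L}$ because conjugating by $\langle x\rangle^{\pm L}$ only costs commutators with the multiplier which again produce Schwartz coefficients. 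One iterates this integration-by-parts/Taylor step as needed to saturate all the $\partial_\xi$'s acting on $p_\varepsilon$, always gaining powers of $\varepsilon$ and never losing the weighted-$L^2$ mapping property.

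The main obstacle, and the place where care is needed, is the non-integer exponent case: when $\widetilde N$ is odd, $p_\varepsilon(\xi)=(1+\varepsilon^2\xi^2)^{\widetilde N/2}$ is not a polynomial and $\partial_\xi^k p_\varepsilon$ does not terminate, so one must track the symbol-class estimate $|\partial_\xi^k p_\varepsilon(\xi)|\lesssim_k \varepsilon^{\min(k,?)}\langle\varepsilon\xi\rangle^{\widetilde N-k}$ uniformly in $\varepsilon\in(0,1]$ and argue that after one Taylor step the operator $\langle\im\varepsilon\partial_x\rangle^{-\widetilde N}\cdot(\text{order }\widetilde N-1\text{ multiplier})$ is bounded on $L^2$ with norm $O(\varepsilon)$ — this follows since $\langle\varepsilon\xi\rangle^{-1}\le 1$ and the extracted prefactor is genuinely $\varepsilon\cdot(\varepsilon|\xi|\langle\varepsilon\xi\rangle^{-1})$, the second factor being bounded. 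Keeping the weights is then routine: since $L$ is fixed and $\mathcal{V}\in\mathcal{S}$, conjugation of the whole composite operator by $\langle x\rangle^{L}$ replaces $\mathcal{V}^{(j)}$ by $\langle x\rangle^{L}\mathcal{V}^{(j)}\langle x\rangle^{-L}$ and its (finitely many, by Leibniz) commutator corrections, all still Schwartz, and likewise the weight commutes with the Fourier multipliers up to lower-order terms controlled the same way; I would remark that this is standard and state the constant $C_L$ as depending only on finitely many Schwartz seminorms of $\mathcal{V}$ and on $L$ and $\widetilde N$. Since the analogous estimate is used verbatim in \cite{CM2109.08108}, I would also point the reader there for the full details and only sketch the $\varepsilon$-gain mechanism here.
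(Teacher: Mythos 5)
The paper supplies no proof of Lemma \ref{claim:l2boundII}: it is one of the ``following lemmas \dots\ proved in \cite{CM2109.08108}, to which we refer for proofs,'' so there is no in-text argument to compare against. Your sketch nonetheless captures the essential mechanism. Writing $p_\varepsilon(\xi)=\langle\varepsilon\xi\rangle^{\widetilde{N}}$, the commutator $[\mathcal{V},p_\varepsilon(D)]$ has kernel $k_\varepsilon(x-y)(\mathcal{V}(x)-\mathcal{V}(y))$, the Taylor/IBP step trades $(x-y)$ for a $\partial_\xi$ acting on $p_\varepsilon$, and after composing with $\langle\im\varepsilon\partial_x\rangle^{-\widetilde{N}}$ the relevant multiplier ratio is $\partial_\xi p_\varepsilon(\xi)/p_\varepsilon(\xi)=\widetilde{N}\varepsilon^2\xi\langle\varepsilon\xi\rangle^{-2}=\varepsilon\cdot\big(\widetilde{N}\,\varepsilon\xi\,\langle\varepsilon\xi\rangle^{-2}\big)$, which is $O(\varepsilon)$ uniformly in $\xi$. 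The weighted bound is then a Schur estimate: $\langle x\rangle^L\lesssim\langle x-y\rangle^L\langle y\rangle^L$ lets $\langle y\rangle^{2L}$ be absorbed by the Schwartz derivatives of $\mathcal{V}$ and $\langle x-y\rangle^L$ by the exponentially localized convolution kernel $\varepsilon^{-1}G((x-y)/\varepsilon)$ of the left-hand regularizer, whose $z$-integral against $\langle z\rangle^L$ is again $O(\varepsilon)$.

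Two imprecisions worth fixing. ``This manifestly extracts an $\varepsilon^2$'' overstates: $\varepsilon^2\xi$ is only $O(\varepsilon)$ after pairing with $\langle\varepsilon\xi\rangle^{-1}$; you get the accounting right only in the last paragraph, and it should read $\varepsilon\cdot(\varepsilon\xi\langle\varepsilon\xi\rangle^{-1})$ from the start. And ``conjugating by $\langle x\rangle^{\pm L}$'' is not what the $L^{2,-L}\to L^{2,L}$ norm asks for: that norm equals $\|\langle x\rangle^{L}T\langle x\rangle^{L}\|_{L^2\to L^2}$, i.e.\ the \emph{same} weight on both sides, which is strictly stronger than conjugation invariance and is precisely why you need decay in both $x$ and $y$. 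Finally, after one Taylor step the middle factor still has a distributional kernel of order $\widetilde{N}-1$ in $x-y$; the Schur-testable kernel only materializes after the left composition with $\langle\im\varepsilon\partial_x\rangle^{-\widetilde{N}}$, and since the $x$-dependent Taylor average does not commute with that multiplier, the symbol cancellation above is the leading term only (the corrections carry additional $\varepsilon$'s and are harmless, but this must be said, especially for odd $\widetilde{N}$). You defer full details to \cite{CM2109.08108}, exactly as the paper itself does, and with these corrections the sketch is sound.
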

\qed

  \begin{lemma}\label{lem:coer5} There exist constants $C_0$ and  $C _{\widetilde{N}}$ such that for  $\varepsilon>$ small enough
we have
\begin{align}\label{eq:coer51}
\|\mathcal{T} \|_{L^2\to L^2}\le C_0 \varepsilon^{-\widetilde{N}} \text{   and }  \|\mathcal{T}  \|_{\Sigma ^{\widetilde{N}} \to \Sigma^0}\le C_{\widetilde{N}}.
\end{align}
Furthermore, let $K_{\varepsilon}(x,y) \in \mathcal{D}'(\R \times \R)$  be the Schwartz kernel of  $\mathcal{T} $. Then, we have
\begin{align}\label{eq:coer52}
| K_{\varepsilon}(x,y)    | \le C_0  e^{-\frac{|x-y|}{2\varepsilon}}  \text{  for all $x,y$ with $|x-y|\ge 1$.}
\end{align} \end{lemma}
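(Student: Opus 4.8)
The plan is to exploit that $\mathcal{A}^{*}=A_{\widetilde{N}}^{*}\cdots A_{1}^{*}$ is a differential operator of order $\widetilde{N}$ whose coefficients are smooth and bounded together with all their derivatives, and then to reduce each of the three assertions to an elementary symbol/Schur estimate for the Fourier multiplier $\langle\im\varepsilon\partial_x\rangle^{-\widetilde{N}}$. So the first step is to record the structure of $\mathcal{A}^{*}$. In the Darboux construction $A_k=\psi_k^{-1}\partial_x(\psi_k\,\cdot\,)$, hence $A_k^{*}=-\partial_x+(\log\psi_k)'$, where $\psi_k>0$ is the nodeless ground state of $L_k=-\partial_x^2+V_k$ and $V_k-\omega^2$ decays exponentially (Proposition \ref{prop:kink} for $k=1$, where $\psi_1=H'$; the iterated Darboux construction for $k\ge2$, cf.\ \cite{DT79CPAM} and Proposition 1.9 of \cite{CM2109.08108}). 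From the Riccati identity $(\log\psi_k)''=\psi_k''/\psi_k-\bigl((\log\psi_k)'\bigr)^2=(V_k-\widetilde{\lambda}_k^2)-\bigl((\log\psi_k)'\bigr)^2$ together with the standard ODE asymptotics $(\log\psi_k)'(x)\to\mp\sqrt{\omega^2-\widetilde{\lambda}_k^2}$ as $x\to\pm\infty$, one gets that $(\log\psi_k)'$ is smooth and bounded with all derivatives bounded (indeed exponentially decaying). Hence, writing $\mathcal{A}^{*}=\sum_{j=0}^{\widetilde{N}}a_j(x)\partial_x^j$, all $a_j$ are smooth and bounded with bounded derivatives and $a_{\widetilde{N}}$ is constant; equivalently $\mathcal{A}^{*}=\sum_{j=0}^{\widetilde{N}}\partial_x^j\,\widetilde{a}_j(x)$ with the $\widetilde{a}_j$ of the same type.

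For the first bound in \eqref{eq:coer51} I would note that $\langle\im\varepsilon\partial_x\rangle^{-\widetilde{N}}\partial_x^j$ has symbol $(\im\xi)^j(1+\varepsilon^2\xi^2)^{-\widetilde{N}/2}$, and for $0\le j\le\widetilde{N}$, $0<\varepsilon\le1$ one has $|\xi|^j=\varepsilon^{-j}(\varepsilon|\xi|)^j\le\varepsilon^{-\widetilde{N}}(1+\varepsilon^2\xi^2)^{\widetilde{N}/2}$, so $\|\langle\im\varepsilon\partial_x\rangle^{-\widetilde{N}}\partial_x^j\|_{L^2\to L^2}\le\varepsilon^{-\widetilde{N}}$; composing with multiplication by $\widetilde{a}_j$ and summing over $j$ gives $\|\mathcal{T}\|_{L^2\to L^2}\le\varepsilon^{-\widetilde{N}}\sum_j\|\widetilde{a}_j\|_{L^\infty}$. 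For the second bound I would conjugate by the weight, writing $e^{a_1\langle x\rangle}\mathcal{T}e^{-a_1\langle x\rangle}=\bigl(e^{a_1\langle x\rangle}\langle\im\varepsilon\partial_x\rangle^{-\widetilde{N}}e^{-a_1\langle x\rangle}\bigr)\bigl(e^{a_1\langle x\rangle}\mathcal{A}^{*}e^{-a_1\langle x\rangle}\bigr)$. Since $e^{a_1\langle x\rangle}\partial_xe^{-a_1\langle x\rangle}=\partial_x-a_1x\langle x\rangle^{-1}$ with $x\langle x\rangle^{-1}$ smooth and bounded with bounded derivatives, the second factor is again an order-$\widetilde{N}$ operator of the same type, hence bounded $H^{\widetilde{N}}\to L^2$ with a constant depending only on $\widetilde{N}$ and the fixed $a_1$; the first factor has integral kernel $e^{a_1\langle x\rangle}k_\varepsilon(x-y)e^{-a_1\langle y\rangle}$ with $k_\varepsilon(z)=\varepsilon^{-1}k_1(z/\varepsilon)$, $k_1$ the inverse Fourier transform of $\langle\xi\rangle^{-\widetilde{N}}$, satisfying $|k_1(z)|\lesssim\langle z\rangle^{\widetilde{N}}e^{-|z|}$, and, using $a_1\bigl|\langle x\rangle-\langle y\rangle\bigr|\le a_1|x-y|$ and picking $\varepsilon_0$ with $a_1\varepsilon_0<1/2$, the substitution $w=(x-y)/\varepsilon$ bounds both Schur integrals by $\int\langle w\rangle^{\widetilde{N}}e^{-|w|/2}\,dw\lesssim_{\widetilde{N}}1$ uniformly for $\varepsilon\in(0,\varepsilon_0]$, so the first factor is bounded on $L^2$ uniformly in $\varepsilon$. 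Composing gives $\|\mathcal{T}\|_{\Sigma^{\widetilde{N}}\to\Sigma^0}\le C_{\widetilde{N}}$.

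For the kernel estimate \eqref{eq:coer52} I would integrate by parts $j$ times in $(\mathcal{T}f)(x)=\int k_\varepsilon(x-x')\sum_j a_j(x')f^{(j)}(x')\,dx'$ to get, for $x\ne y$,
\begin{align*}
K_\varepsilon(x,y)=\sum_{j=0}^{\widetilde{N}}\sum_{i=0}^{j}(-1)^{i+j}\binom{j}{i}\,k_\varepsilon^{(i)}(x-y)\,a_j^{(j-i)}(y) .
\end{align*}
The $a_j^{(j-i)}$ are bounded by the first step, while $k_\varepsilon^{(i)}(z)=\varepsilon^{-(i+1)}k_1^{(i)}(z/\varepsilon)$ with $|k_1^{(i)}(w)|\lesssim_{i,\widetilde{N}}\langle w\rangle^{M}e^{-|w|}$ for $|w|\ge1$ (away from $w=0$ the possible distributional parts of $k_1^{(i)}$ are absent, and only $|w|\ge1$ is needed). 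For $|x-y|\ge1$ one has $|z/\varepsilon|\ge\varepsilon^{-1}\ge1$ with $z:=x-y$, so splitting $e^{-|z|/\varepsilon}=e^{-|z|/2\varepsilon}e^{-|z|/2\varepsilon}$ and using $\varepsilon^{-1}\le|z|/\varepsilon$ gives $\varepsilon^{-(i+1)}\langle z/\varepsilon\rangle^{M}e^{-|z|/2\varepsilon}\le(|z|/\varepsilon)^{i+1}\langle z/\varepsilon\rangle^{M}e^{-|z|/2\varepsilon}\lesssim1$, hence $|k_\varepsilon^{(i)}(z)|\lesssim e^{-|z|/2\varepsilon}$ and $|K_\varepsilon(x,y)|\le C_0 e^{-|x-y|/2\varepsilon}$ for $|x-y|\ge1$.

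I expect the only genuine obstacle to be the first step: the uniform control of the Darboux coefficients $(\log\psi_k)'$ and their derivatives, which is exactly where the positivity and exponential decay of the ground states $\psi_k$ — equivalently the exponential decay of $V_k-\omega^2$ propagated through the Darboux iteration — are used. Once that is in hand the remaining steps are routine; the only care needed is to keep the constants independent of $\varepsilon\in(0,\varepsilon_0]$ in the weighted and kernel bounds, and to extract precisely the single factor $\varepsilon^{-\widetilde{N}}$ in the $L^2$ bound.
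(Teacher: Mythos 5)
Your proof is correct, and since the paper marks Lemma~\ref{lem:coer5} with \qed and defers to \cite{CM2109.08108} for the argument (``The following lemmas are proved in \cite{CM2109.08108}''), there is no in-text proof to compare against; your reconstruction is the standard one and is almost certainly the route taken there. The two pillars are exactly right: (i) $\mathcal{A}^{*}=A_{\widetilde N}^{*}\cdots A_1^{*}=\prod_k(-\partial_x+(\log\psi_k)')$ is an order-$\widetilde N$ differential operator with $C_b^\infty$ coefficients and constant leading coefficient, because each $\psi_k$ is a positive, exponentially decaying ground state so $(\log\psi_k)'$ is $C_b^\infty$; and (ii) the multiplier/kernel bounds for $\langle\im\varepsilon\partial_x\rangle^{-\widetilde N}$, giving $\varepsilon^{-j}$ on $L^2$ when composed with $\partial_x^j$ ($j\le\widetilde N$), uniform-in-$\varepsilon$ boundedness after conjugation by $e^{a_1\langle x\rangle}$ once $a_1\varepsilon<1/2$, and the $e^{-|z|/(2\varepsilon)}$ kernel decay for $|z|\ge 1$ after absorbing the polynomial losses. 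The integration-by-parts formula for $K_\varepsilon$ and the observation that the possible distributional parts of $k_1^{(i)}$ (a $\delta$ or a principal value for $i=\widetilde N$) live only at $z=0$ and so do not affect the region $|x-y|\ge 1$ are both correct and are the only delicate points.

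One small inaccuracy worth noting, though it does not affect any conclusion: the pointwise bound $|k_1(z)|\lesssim\langle z\rangle^{\widetilde N}e^{-|z|}$ used in the weighted $L^2$ step fails near $z=0$ when $\widetilde N=1$, where $k_1=$const$\cdot K_0(|z|)$ has a logarithmic singularity. What the Schur test actually needs is $\int e^{|w|/2}|k_1(w)|\,dw<\infty$, and the log singularity is locally integrable, so the uniform $L^2$ bound on $e^{a_1\langle x\rangle}\langle\im\varepsilon\partial_x\rangle^{-\widetilde N}e^{-a_1\langle x\rangle}$ survives; you just should not claim the uniform pointwise bound near the diagonal.
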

\qed

  \begin{lemma}\label{lem:coer3}
We have \begin{align}&  \| w _1\| _{L^2(|x|\le 2B^2)}\lesssim B^2  \| w  _1\| _{\widetilde{\Sigma}}    \text{  for any $w$,} \label{eq:estimates111}
\\& \| \xi _1\| _{\widetilde{\Sigma}}^2\lesssim      \< (-\partial ^2_x- 2^{-2}  \chi _{B^2} ^{2}xV' _{D})\xi _1, \xi _1\> \lesssim   \| \xi _1 \| _{\widetilde{\Sigma}}^2  \  \text{  for any $\xi$},  \label{eq:estimates113}\\&
\| v _1\| _{L^2(\R )} \lesssim \varepsilon ^{-{\widetilde{N}}} B^2  \|  w    _1   \| _{\widetilde{\Sigma}}  \ , \label{eq:estimates115}\\&  \| v '_1 \| _{L^2(\R )} \lesssim    \varepsilon ^{-{\widetilde{N}}}\|  w  _1   \| _{\widetilde{\Sigma}} \ ,  \label{eq:estimates116} \\& \|   \< x \> ^{-M}    v_1  \|_{H^1(\R )}  \lesssim   \|      \xi _1   \|_{\widetilde{\Sigma}}    +\varepsilon ^{-\widetilde{N}} \< B \> ^{-M+3}    \|    w  _1      \| _{\widetilde{\Sigma}}  \text{ for $M\in \N$, $M\ge 4$}.\label{eq2stestJ21-4}
\end{align}
\end{lemma}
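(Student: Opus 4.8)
All five bounds come out of unwinding the definitions of $\mathbf{w}$, $\boldsymbol{\xi}$, $\mathbf{v}$ and $\mathcal{T}$ in \eqref{def:Tg}--\eqref{def:wAxiB}, \eqref{def:zetaphi}, together with Lemma \ref{lem:equiv_rho}, Lemma \ref{lem:coer5} and the hierarchy \eqref{eq:relABg}; the only real work is tracking the powers of $B$ and $\varepsilon$, and everything rests on $B^{2}\ll A$ and on the exponential off-diagonal decay of $\mathcal{T}$. For \eqref{eq:estimates111}: since the whole analysis is in the odd sector, $w_{1}=\zeta_{A}\widetilde{\eta}_{1}$ is odd, so $w_{1}(0)=0$ and $|w_{1}(x)|^{2}\le|x|\,\|w_{1}'\|_{L^{2}}^{2}$; integrating over $|x|\le 2B^{2}$ gives $\|w_{1}\|_{L^{2}(|x|\le 2B^{2})}^{2}\lesssim B^{4}\|w_{1}'\|_{L^{2}}^{2}\le B^{4}\|w_{1}\|_{\widetilde{\Sigma}}^{2}$. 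For the right-hand inequality of \eqref{eq:estimates113}: $\<-\partial_{x}^{2}\xi_{1},\xi_{1}\>=\|\xi_{1}'\|_{L^{2}}^{2}\le\|\xi_{1}\|_{\widetilde{\Sigma}}^{2}$, while, since $\chi_{B^{2}}^{2}\le1$ and $\<x\>^{2}|V_{D}'|\in L^{1}$ with $L^{1}$-norm uniform in $B$ (as $V_{D}-\omega^{2}\in\mathcal{S}$), Lemma \ref{lem:equiv_rho} in the form \eqref{eq:lem:rhoequiva}, applied with $W=\tfrac14\chi_{B^{2}}^{2}|xV_{D}'|$, bounds $\tfrac14\<(-xV_{D}')\chi_{B^{2}}^{2}\xi_{1},\xi_{1}\>\lesssim\|\xi_{1}\|_{\widetilde{\Sigma}}^{2}$. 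For the coercive (left-hand) inequality, Assumption \ref{ass:repuls} gives $-xV_{D}'\ge0$, so the middle quantity already controls $\|\xi_{1}'\|_{L^{2}}^{2}$; to also control $\|\xi_{1}\|_{L^{2}_{-a/10}}^{2}$ I would fix an interval $J$ on which $-xV_{D}'\ge\kappa>0$ (using that $-xV_{D}'$ is not identically zero), note $\chi_{B^{2}}\equiv1$ there, and combine $\<(-xV_{D}')\chi_{B^{2}}^{2}\xi_{1},\xi_{1}\>\gtrsim\|\xi_{1}\|_{L^{2}(J)}^{2}$ with the pointwise bound $|\xi_{1}(x)|^{2}\lesssim\|\xi_{1}\|_{L^{2}(J)}^{2}+(1+|x|)\|\xi_{1}'\|_{L^{2}}^{2}$ (obtained by averaging $\xi_{1}(x)=\xi_{1}(y)+\int_{y}^{x}\xi_{1}'$ over $y\in J$) and then integrating against $e^{-a\<x\>/5}$.

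For \eqref{eq:estimates115}: by Lemma \ref{lem:coer5}, $\|\mathcal{T}\|_{L^{2}\to L^{2}}\lesssim\varepsilon^{-\widetilde{N}}$, so since $v_{1}=\mathcal{T}\chi_{B^{2}}\widetilde{\eta}_{1}$ it suffices to note $\|\chi_{B^{2}}\widetilde{\eta}_{1}\|_{L^{2}}\le\|\widetilde{\eta}_{1}\|_{L^{2}(|x|\le2B^{2})}\lesssim\|w_{1}\|_{L^{2}(|x|\le2B^{2})}\lesssim B^{2}\|w_{1}\|_{\widetilde{\Sigma}}$, where $\zeta_{A}^{-1}\lesssim1$ on $|x|\le2B^{2}$ because $B^{2}\ll A$, and \eqref{eq:estimates111} is used at the last step. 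For \eqref{eq:estimates116} I would split $v_{1}'=\mathcal{T}\,\partial_{x}(\chi_{B^{2}}\widetilde{\eta}_{1})+\<\im\varepsilon\partial_{x}\>^{-\widetilde{N}}[\partial_{x},\mathcal{A}^{*}](\chi_{B^{2}}\widetilde{\eta}_{1})$. In the first term $\partial_{x}(\chi_{B^{2}}\widetilde{\eta}_{1})=\chi_{B^{2}}'\widetilde{\eta}_{1}+\chi_{B^{2}}\widetilde{\eta}_{1}'$ with $|\chi_{B^{2}}'|\lesssim B^{-2}$ and $\widetilde{\eta}_{1}'=\zeta_{A}^{-1}w_{1}'-(\log\zeta_{A})'\zeta_{A}^{-1}w_{1}$, $|(\log\zeta_{A})'|\lesssim A^{-1}$; using $B^{2}\ll A$ and \eqref{eq:estimates111} one gets $\|\partial_{x}(\chi_{B^{2}}\widetilde{\eta}_{1})\|_{L^{2}}\lesssim\|w_{1}\|_{\widetilde{\Sigma}}$ with no power of $B$, and $\|\mathcal{T}\|_{L^{2}\to L^{2}}\lesssim\varepsilon^{-\widetilde{N}}$ finishes it. For the commutator term, the key observation is that $\mathcal{A}^{*}=A_{\widetilde{N}}^{*}\cdots A_{1}^{*}$ is a differential operator of order $\widetilde{N}$ whose coefficients are polynomials in the functions $(\log\psi_{k})'$ and their derivatives, where $\psi_{k}$ is the ground state of a Schr\"odinger operator whose potential tends to $\omega^{2}-\widetilde{\lambda}_{k}^{2}>0$; hence each $(\log\psi_{k})'$ tends to a constant and all of its derivatives decay exponentially, so the coefficients of $[\partial_{x},\mathcal{A}^{*}]$ decay exponentially, at a rate dominating $e^{-a|x|/10}$ by the choice of $a$. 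Combined with the exponential off-diagonal decay of $\<\im\varepsilon\partial_{x}\>^{-\widetilde{N}}$ (as in Lemma \ref{lem:coer5}), this yields $\|\<\im\varepsilon\partial_{x}\>^{-\widetilde{N}}[\partial_{x},\mathcal{A}^{*}]f\|_{L^{2}}\lesssim\varepsilon^{-\widetilde{N}}\|f\|_{L^{2}_{-a/10}}$ — morally the operator only sees $f$ near the origin — so the commutator term is $\lesssim\varepsilon^{-\widetilde{N}}\|\chi_{B^{2}}\widetilde{\eta}_{1}\|_{L^{2}_{-a/10}}\lesssim\varepsilon^{-\widetilde{N}}\|w_{1}\|_{L^{2}_{-a/10}}\le\varepsilon^{-\widetilde{N}}\|w_{1}\|_{\widetilde{\Sigma}}$, using $\zeta_{A}^{-1}e^{-a\<x\>/10}\lesssim e^{-a\<x\>/10}$.

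For \eqref{eq2stestJ21-4} I would split $\R$ at $|x|=B$. On $|x|\le B$ one has $\chi_{B^{2}}\equiv1$, $\zeta_{B}^{-1}\lesssim1$, $|(\log\zeta_{B})'|\lesssim B^{-1}$, so there $v_{1}=\zeta_{B}^{-1}\xi_{1}$ and $\|\<x\>^{-M}v_{1}\|_{H^{1}(|x|\le B)}\lesssim\|\<x\>^{-M}\xi_{1}\|_{H^{1}}\lesssim\|\xi_{1}\|_{\widetilde{\Sigma}}$; the last step because $\|\cdot\|_{\widetilde{\Sigma}}$ controls $\|\xi_{1}\|_{L^{\infty}}$ on a fixed neighbourhood of $0$ and $\|\xi_{1}'\|_{L^{2}}$ globally, against which the weight $\<x\>^{-M}$ with $M\ge4$ is more than integrable. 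On $|x|>B$ one estimates crudely, $\|\<x\>^{-M}v_{1}\|_{H^{1}(|x|>B)}\lesssim\<B\>^{-M}\bigl(\|v_{1}\|_{L^{2}}+\|v_{1}'\|_{L^{2}}\bigr)$, and \eqref{eq:estimates115}--\eqref{eq:estimates116} bound this by $\varepsilon^{-\widetilde{N}}\<B\>^{-M}(B^{2}+1)\|w_{1}\|_{\widetilde{\Sigma}}\lesssim\varepsilon^{-\widetilde{N}}\<B\>^{-M+3}\|w_{1}\|_{\widetilde{\Sigma}}$, which is the claimed bound.

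The two places where I expect genuine care to be needed are the coercive lower bound in \eqref{eq:estimates113}, which is exactly where the repulsivity Assumption \ref{ass:repuls} is really used and which degenerates when $V_{D}\equiv\omega^{2}$ is constant (for instance in the $\phi^{4}$ case), so one must assume $-xV_{D}'\not\equiv0$; and the $B$-uniformity of \eqref{eq:estimates116}, since the naive estimate $\|\chi_{B^{2}}\widetilde{\eta}_{1}\|_{L^{2}}\lesssim B^{2}\|w_{1}\|_{\widetilde{\Sigma}}$ loses a factor $B^{2}$ in the commutator term, and recovering the clean $\varepsilon^{-\widetilde{N}}\|w_{1}\|_{\widetilde{\Sigma}}$ forces the simultaneous use of the exponential localization of $\<\im\varepsilon\partial_{x}\>^{-\widetilde{N}}$ and the exponential decay of the coefficients of $[\partial_{x},\mathcal{A}^{*}]$ in order to trade the unfavourable weight $\zeta_{A}^{-1}$ for the favourable one built into $\|w_{1}\|_{L^{2}_{-a/10}}$.
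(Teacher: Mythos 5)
Your reconstruction is essentially correct; note, however, that the paper itself supplies no proof of Lemma~\ref{lem:coer3} — it is closed immediately, with the opening sentence of Section~\ref{sec:Preliminary} deferring entirely to \cite{CM2109.08108} — so there is no in-paper argument to compare against. The mechanisms you identify are the right ones: the hierarchy $A\gg B^2$ to keep $\zeta_A^{-1}\lesssim 1$ on $\mathrm{supp}\,\chi_{B^2}$; repulsivity and Lemma~\ref{lem:equiv_rho} for the two-sided coercivity in \eqref{eq:estimates113}; and the exponential decay of the coefficients of $[\partial_x,\mathcal{A}^*]$ played against the $O(\varepsilon)$-scale localization of $\<\im\varepsilon\partial_x\>^{-\widetilde{N}}$ to avoid a $B^2$ loss in \eqref{eq:estimates116}. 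Two small caveats. For \eqref{eq:estimates111} you rely on $w_1(0)=0$; in context $w_1$ is indeed odd, but since the lemma is phrased ``for any $w$'' it is slightly cleaner to use $|w_1(0)|\lesssim\|w_1\|_{H^1((-1,1))}\lesssim\|w_1\|_{\widetilde{\Sigma}}$ and then the same fundamental-theorem-of-calculus step, which gives the identical power of $B$ without parity. For the commutator term in \eqref{eq:estimates116}, the phrase ``the operator only sees $f$ near the origin'' glosses over the fact that $[\partial_x,\mathcal{A}^*]$ places derivatives on $f$: with $[\partial_x,\mathcal{A}^*]=\sum_{l\le\widetilde{N}-1}c_l\partial_x^l$ and $c_l$ exponentially decaying, one should first rearrange $c_l\partial_x^l=\sum_{j\le l}(-1)^{l-j}\binom{l}{j}\,\partial_x^j\,c_l^{(l-j)}$, so that $\<\im\varepsilon\partial_x\>^{-\widetilde{N}}\partial_x^j$ is a Fourier multiplier on $L^2$ of norm $\lesssim\varepsilon^{-j}\le\varepsilon^{-\widetilde{N}+1}$ and the decaying coefficient $c_l^{(l-j)}$ multiplies $f=\chi_{B^2}\widetilde{\eta}_1$ directly; only then does the exponential weight overcome $\zeta_A^{-1}$ and give the claimed $\varepsilon^{-\widetilde{N}}\|w_1\|_{L^2_{-a/10}}$ (equivalently, estimate $\|c_l\partial_x^lf\|_{H^{-l}}\lesssim\|e^{-\mu|x|}f\|_{L^2}$ by duality and use $\|\<\im\varepsilon\partial_x\>^{-\widetilde{N}}\|_{H^{-l}\to L^2}\lesssim\varepsilon^{-l}$). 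The kernel localization alone does not handle those derivatives.
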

\qed

\begin{lemma}\label{lem:coer6}
We have the formula
\begin{align}\label{eq:Tinverse}
P_c\(\chi_{B^2}\widetilde{\eta} _1\)=\prod_{j=1}^{\widetilde{N}}R _{L_1}(\widetilde{\lambda} ^{2}_j) P_c \mathcal{A} \<   \im \varepsilon\partial_x\> ^{\widetilde{N}} v _1 .
\end{align}
\end{lemma}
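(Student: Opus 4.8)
The plan is to unwind the definitions \eqref{def:Tg}--\eqref{def:vBg}, to recognise the composition $\mathcal{A}\mathcal{A}^{*}$ as a polynomial in $L_{1}$ coming from the Darboux factorizations of Section \ref{sec:darboux}, and then to use that $P_{c}$ commutes with $L_{1}$ (Remark \ref{rem:comm}) so that this polynomial cancels against $\prod_{j}R_{L_1}(\widetilde{\lambda}^{2}_{j})$. Concretely, first I would read off the first component of \eqref{def:vBg} and use $\mathcal{T}=\<\im\varepsilon\partial_{x}\>^{-\widetilde{N}}\mathcal{A}^{*}$ from \eqref{def:Tg} to get $v_{1}=\<\im\varepsilon\partial_{x}\>^{-\widetilde{N}}\mathcal{A}^{*}\chi_{B^{2}}\widetilde{\eta}_{1}$; since $\<\im\varepsilon\partial_{x}\>^{\widetilde{N}}$ and $\<\im\varepsilon\partial_{x}\>^{-\widetilde{N}}$ are mutually inverse Fourier multipliers this gives $\<\im\varepsilon\partial_{x}\>^{\widetilde{N}}v_{1}=\mathcal{A}^{*}\chi_{B^{2}}\widetilde{\eta}_{1}$, and hence
\[
\mathcal{A}\<\im\varepsilon\partial_{x}\>^{\widetilde{N}}v_{1}=\mathcal{A}\mathcal{A}^{*}\chi_{B^{2}}\widetilde{\eta}_{1},
\]
all operators being read as bounded maps between the Sobolev spaces on which the right-hand side is defined (recall $\widetilde{\eta}_{1}\in H^{1}$ by Proposition \ref{prop:OrbStab} and that $\chi_{B^{2}}$ has compact support).

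The algebraic heart of the argument is the operator identity $\mathcal{A}\mathcal{A}^{*}=\prod_{j=1}^{\widetilde{N}}\bigl(L_{1}-\widetilde{\lambda}^{2}_{j}\bigr)$, which I would prove by downward induction: with $\mathcal{A}_{k}:=A_{k}A_{k+1}\cdots A_{\widetilde{N}}$ one shows $\mathcal{A}_{k}\mathcal{A}_{k}^{*}=\prod_{j=k}^{\widetilde{N}}(L_{k}-\widetilde{\lambda}^{2}_{j})$. The case $k=\widetilde{N}$ is the factorization of $L_{\widetilde{N}}$ through $A_{\widetilde{N}}A_{\widetilde{N}}^{*}$ in Section \ref{sec:darboux}; for the inductive step one writes $\mathcal{A}_{k}\mathcal{A}_{k}^{*}=A_{k}\bigl(\mathcal{A}_{k+1}\mathcal{A}_{k+1}^{*}\bigr)A_{k}^{*}$, inserts the inductive hypothesis, uses the relation between $L_{k+1}$ and $A_{k}^{*}A_{k}$ to turn each factor into one of the form $A_{k}^{*}A_{k}+c$ with $c$ a real constant, pushes $A_{k}$ rightward through each factor via the elementary identity $A_{k}(A_{k}^{*}A_{k}+c)=(A_{k}A_{k}^{*}+c)A_{k}$, and re-uses the relation between $L_{k}$ and $A_{k}A_{k}^{*}$. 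A useful consistency check: by \eqref{eq:DarConj2} the operator $\mathcal{A}\mathcal{A}^{*}$ commutes with $L_{1}$, and since every $A_{k}^{*}$ annihilates the ground state of $L_{k}$ the whole product $\mathcal{A}^{*}$ annihilates every eigenfunction of $L_{1}$, so the polynomial $\prod_{j}(L_{1}-\widetilde{\lambda}^{2}_{j})$ has exactly the right zero set. This is the step I expect to require the most care — keeping track of signs and indices so that the telescoping yields precisely $\prod_{j}(L_{1}-\widetilde{\lambda}^{2}_{j})$ rather than some other product of translates of $L_{1}$.

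Finally I would combine the two steps. Applying $\prod_{j=1}^{\widetilde{N}}R_{L_1}(\widetilde{\lambda}^{2}_{j})\,P_{c}$ to the displayed identity and commuting $P_{c}$ past $L_{1}$ (Remark \ref{rem:comm}),
\[
\prod_{j=1}^{\widetilde{N}}R_{L_1}(\widetilde{\lambda}^{2}_{j})\,P_{c}\,\mathcal{A}\<\im\varepsilon\partial_{x}\>^{\widetilde{N}}v_{1}=\prod_{j=1}^{\widetilde{N}}R_{L_1}(\widetilde{\lambda}^{2}_{j})\prod_{j=1}^{\widetilde{N}}\bigl(L_{1}-\widetilde{\lambda}^{2}_{j}\bigr)\,P_{c}\bigl(\chi_{B^{2}}\widetilde{\eta}_{1}\bigr).
\]
Now $P_{c}(\chi_{B^{2}}\widetilde{\eta}_{1})$ lies in the range $L^{2}_{disp}$ of $P_{c}$, on which $L_{1}$ has spectrum $[\omega^{2},\infty)$, all discrete eigenvalues having been removed; since $\widetilde{\lambda}_{j}<\omega$, each $R_{L_1}(\widetilde{\lambda}^{2}_{j})=(L_{1}-\widetilde{\lambda}^{2}_{j})^{-1}$ is bounded on that subspace, and as all these operators and the factors $L_{1}-\widetilde{\lambda}^{2}_{j}$ are functions of $L_{1}$ they commute, so $\prod_{j}R_{L_1}(\widetilde{\lambda}^{2}_{j})\prod_{j}(L_{1}-\widetilde{\lambda}^{2}_{j})$ restricts to the identity on $L^{2}_{disp}$. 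Hence the right-hand side equals $P_{c}(\chi_{B^{2}}\widetilde{\eta}_{1})$, which is exactly \eqref{eq:Tinverse}. Apart from the Darboux computation in the middle step everything here is bookkeeping, and the resolvents cause no trouble precisely because each eigenvalue $\widetilde{\lambda}^{2}_{j}$ lies strictly below the bottom $\omega^{2}$ of the continuous spectrum of $L_{1}$.
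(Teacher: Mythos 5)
The paper does not actually give its own proof of this lemma here; it is one of the results cited from \cite{CM2109.08108} with a blank \qed, so a line-by-line comparison is not possible. Judged on its own merits, your plan is correct and is the natural one: read off $v_{1}$ from \eqref{def:Tg}--\eqref{def:vBg}, invert the Fourier multiplier, recognise $\mathcal{A}\mathcal{A}^{*}$ as a polynomial in $L_{1}$, commute $P_{c}$ through (Remark \ref{rem:comm}), and cancel against the resolvents. The identity $\mathcal{A}\mathcal{A}^{*}=\prod_{j=1}^{\widetilde{N}}(L_{1}-\widetilde{\lambda}^{2}_{j})$ that you invoke is exactly the one the authors state and use in Section \ref{sec:FGR}, and the resolvent product is bounded on $P_{c}L^{2}_{\odd}$ for the reason you give (every $\widetilde{\lambda}_{j}^{2}<\omega^{2}$; those $\widetilde{\lambda}_{j}^{2}$ with $j$ even are removed by $P_{c}$, and those with $j$ odd are eigenvalues on even functions and never lie in $\sigma(L_{1}|_{\odd})$). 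Your final step — functional calculus of $L_{1}$ on the range of $P_{c}$ — is fine.

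The one point that deserves explicit attention is precisely the one you flag as ``the step requiring the most care.'' If you apply the Darboux relations \eqref{eq:dar3} literally as printed, namely $L_{k}=A_{k}A_{k}^{*}-\widetilde{\lambda}^{2}_{k}$ and $L_{k+1}=A_{k}^{*}A_{k}-\widetilde{\lambda}^{2}_{k}$, your telescoping produces $\mathcal{A}\mathcal{A}^{*}=\prod_{j}\bigl(L_{1}+\widetilde{\lambda}^{2}_{j}\bigr)$, which does not match the statement of the lemma. A direct computation with $A_{k}=\partial_{x}+(\log\psi_{k})'$, $A_{k}^{*}=-\partial_{x}+(\log\psi_{k})'$ gives $A_{k}A_{k}^{*}=-\partial_{x}^{2}+\psi_{k}''/\psi_{k}=L_{k}-E_{k}$, where $E_{k}=\widetilde{\lambda}^{2}_{k}$ is the ground-state energy of $L_{k}$; so the working relations should be $A_{k}A_{k}^{*}=L_{k}-\widetilde{\lambda}^{2}_{k}$ and $A_{k}^{*}A_{k}=L_{k+1}-\widetilde{\lambda}^{2}_{k}$, which is consistent with $L_{1}=A_{1}A_{1}^{*}$ (as $\widetilde{\lambda}_{1}=0$), with the spectrum of $L_{\widetilde{N}+1}$ starting at $\omega^{2}$, and with the formula stated in Section \ref{sec:FGR}. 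With these signs your induction $\mathcal{A}_{k}\mathcal{A}^{*}_{k}=\prod_{j=k}^{\widetilde{N}}(L_{k}-\widetilde{\lambda}^{2}_{j})$ closes as you describe, and the rest of the argument goes through. So: the blueprint is right, and the sign subtlety you anticipate is real — the printed \eqref{eq:dar3} would mislead a reader who does not cross-check against the spectral picture, and you should make the corrected factorizations explicit before carrying out the telescoping.
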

\qed

We next consider a number of results on linear theory.
\begin{lemma}\label{lem:generic} $\omega ^2$ is neither an eigenvalue nor a resonance for the operator $L_{D} $, that is, if    $L_{D}f= \omega ^2f $  for $f\in L^\infty (\R )$, then $f=0$.
 \end{lemma}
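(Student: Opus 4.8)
The claim is that $\omega^2$ is neither an eigenvalue nor a resonance of $L_D = -\partial_x^2 + V_D$, i.e.\ any $f \in L^\infty(\R)$ with $L_D f = \omega^2 f$ vanishes. The plan is to trace through the Darboux construction backwards and use the known spectral picture of $L_1$ together with the intertwining relation $\mathcal{A}^* L_1 = L_D \mathcal{A}^*$ from \eqref{eq:DarConj2}. First I would recall that $V_D - \omega^2 \in \mathcal{S}(\R,\R)$, so a bounded solution of $L_D f = \omega^2 f$ satisfies an ODE whose coefficients converge exponentially to those of $-\partial_x^2$; by standard Jost-solution asymptotics, $f(x) \to c_\pm$ as $x \to \pm\infty$ for constants $c_\pm$, and $f$ is a genuine (threshold) resonance or eigenfunction precisely when it is bounded and not identically zero. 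So the task is to exclude such an $f$.

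The key step is to push $f$ through the chain of adjoint Darboux maps $A_{\widetilde N}, A_{\widetilde N -1}, \ldots, A_1$. Each operator $A_k = \psi_k^{-1}\partial_x(\psi_k\,\cdot\,)$ has a ground state $\psi_k$ of $L_k$, with $\psi_k$ bounded and bounded away from zero at infinity (it decays like the lowest bound state, but the factorization $L_k = A_kA_k^* - \widetilde\lambda_k^2$, $L_{k+1}=A_k^*A_k - \widetilde\lambda_k^2$ makes $A_k^*$ map $\ker(L_{k+1}-\omega^2)$ into $\ker(L_k - \omega^2)$ once one checks the constant shift matches, since $\omega^2$ lies above all the eigenvalues $\widetilde\lambda_k^2 < \omega^2$). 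Concretely: if $L_D f = \omega^2 f$, I would set $g_{\widetilde N} := A_{\widetilde N} f$ and check $L_{\widetilde N} g_{\widetilde N} = \omega^2 g_{\widetilde N}$ using \eqref{eq:dar3}; iterating, $g_1 := A_1 \cdots A_{\widetilde N} f = \mathcal{A} f$ would satisfy $L_1 g_1 = \omega^2 g_1$. Here the crucial point is that applying $A_k$, which is first order with smooth coefficients that are asymptotically constant (because $\psi_k'/\psi_k$ tends to a constant exponentially), preserves boundedness: $g_1 \in L^\infty$. But $\omega^2$ is the bottom of the essential spectrum of $L_1 = -\partial_x^2 + W''(H)$ with $W''(H) - \omega^2$ exponentially decaying, and one knows for $L_1$ that $\omega^2$ is a regular point of the essential spectrum — there is no eigenvalue at $\omega^2$ (it would have to be embedded or a threshold) and no resonance, because $H'$ is the ground state and the repulsivity/structure rules this out, or more directly because the distorted Fourier transform for $L_1$ used in Assumption \ref{ass:FGR} and Weder \cite{weder} presupposes regularity at the threshold. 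Hence $g_1 = 0$.

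Finally I would run the vanishing backwards: if $\mathcal{A} f = A_1\cdots A_{\widetilde N} f = 0$, then $A_2 \cdots A_{\widetilde N} f \in \ker A_1 = \mathrm{span}\{(\psi_1)^{-1}\}$... more carefully, $\ker A_1 = \{c/\psi_1\}$, and since $A_2\cdots A_{\widetilde N} f$ also solves $L_2 (\cdot) = \omega^2(\cdot)$ and $1/\psi_1$ grows exponentially (being the reciprocal of a decaying ground state), boundedness forces $c = 0$, so $A_2\cdots A_{\widetilde N} f = 0$; inductively each $A_k\cdots A_{\widetilde N} f = 0$ and at the last stage $f \in \ker A_{\widetilde N}$ is a bounded multiple of $1/\psi_{\widetilde N}$, which again grows exponentially, forcing $f = 0$. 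The main obstacle — and the step I would spend the most care on — is the backwards induction: making precise that at each removed eigenvalue the "spurious" kernel direction $1/\psi_k$ is genuinely unbounded, so that a bounded solution cannot acquire it, and making precise the threshold-regularity input for $L_1$ itself (that $\omega^2$ carries no bounded solution of $L_1 g = \omega^2 g$). The latter is presumably a known fact about the linearization, or can be extracted from the decay $|W''(H) - \omega^2| \lesssim e^{-2\omega|x|}$ via the Jost/Wronskian argument, noting $\ker L_1$ is spanned by the exponentially decaying $H'$ so the threshold behaviour is non-degenerate.
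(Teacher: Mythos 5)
There is a genuine gap. Your plan is to transport the putative bounded solution through the Darboux chain to $L_1$, appeal to threshold regularity of $L_1$ at $\omega^2$ to conclude $\mathcal{A}f=0$, and then run the chain backwards. The backward step is fine (each kernel $\ker A_k = \mathrm{span}\{\psi_k^{-1}\}$ is exponentially growing, so a bounded iterate must vanish), and the forward transport of boundedness is also fine. But the pivot — that $L_1$ has no eigenvalue or resonance at $\omega^2$ — is nowhere justified and is in fact essentially \emph{equivalent} to what you are trying to prove: since all the Darboux steps remove eigenvalues $\widetilde\lambda_k^2$ strictly below the threshold $\omega^2$, the very same forward/backward transport you describe shows that $L_1$ has a bounded nonzero solution of $L_1g=\omega^2g$ iff $L_D$ does. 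So the Darboux loop is a wash, and you never use Assumption \ref{ass:repuls}, which is the only hypothesis that can possibly give the conclusion. The two ``justifications'' you offer for regularity of $L_1$ at $\omega^2$ do not hold up: the distorted Fourier transform of Weder exists in both the generic and the exceptional (resonant) case, so Assumption \ref{ass:FGR} does not presuppose it; and the exponential decay of $W''(H)-\omega^2$ and the fact that $\ker L_1 = \mathrm{span}\{H'\}$ (a statement at energy $0$, not at the threshold $\omega^2$) say nothing about the presence of a resonance at $\omega^2$. In the $\phi^4$ case $L_1$ \emph{does} have a resonance at $\omega^2$ (the P\"oschl--Teller potential is reflectionless), and correspondingly $L_D=-\partial_x^2+\omega^2$ does too, which is exactly why that case is excluded by the ``$V_D$ not identically constant'' part of Assumption \ref{ass:repuls}.

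The paper's proof works directly on $L_D$ and uses repulsivity. Since $xV_D'\le 0$ and $V_D\to\omega^2$, one has $V_D\ge\omega^2$, so $L_D\ge -\partial_x^2+\omega^2$. The paper then removes a small nonnegative bump $\alpha\psi$ supported in an interval where $-xV_D'>0$, chosen so that $V_D-\alpha\psi$ remains repulsive; a threshold eigenstate or resonance of $L_D$ at $\omega^2$ would, under this attractive perturbation, produce an eigenvalue of $L_D-\alpha\psi$ strictly below $\omega^2$, contradicting $L_D-\alpha\psi\ge -\partial_x^2+\omega^2$. (An even more elementary route, in the same spirit: a bounded real solution of $f''=(V_D-\omega^2)f$ with $V_D-\omega^2\ge0$ is convex on any interval where it is positive, and a sign change forces unbounded growth via $f'$ nondecreasing from a positive value; so $f$ is globally convex and bounded, hence constant, forcing $V_D\equiv\omega^2$, which is excluded.) The moral is that the repulsivity assumption is the non-negotiable ingredient, and any correct proof must invoke it.
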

    \textit{Proof(sketch)}   If the statement is false,   there exists a nonzero and bounded solution of  $L_{D}f= \omega ^2f $. We can assume $f$ is real valued. Now, let $[a,b]$ be an interval where $\left . -xV'_{D}\right | _{ [a,b] }>0$ and let $\psi \in C_c ^{ \infty}((a,b), [0,+\infty ) )$ be a nonzero function such that $-x\( V'_{D}- \alpha  \psi  '\)>0$ in $[a,b]$  for all $\alpha \in [0,1]$.  Then it can be shown that for small $\alpha >0$ the operator $L_{D}-\alpha \psi $ has exactly one negative eigenvalue. But it is elementary to see that this is incompatible with the fact that $V _{D}-\lambda \psi$   is repulsive.

\qed

Notice that we can apply Komech--Kopylova
   \cite[Proposition 3.3]{KomKop111}  and conclude the following.

\begin{lemma}\label{lem:lemg9}
Let $ \Lambda$ be a
finite subset of $( 0,\infty) $
and let $S >5/2$. Then  there exists a fixed $ c(S,\Lambda)$ s.t.
  for every $t \ge 0$ and  $\lambda \in  \Lambda$
\begin{equation} \label{eq:lemg91}
\|   e^{  \mathbf{L}_{D}   t} R_{  \im \mathbf{L}_{D}  }^{+}( \lambda )\mathbf{f} \|_{ \boldsymbol{\mathcal{H}}^{1,-S}   (\R )}\le c(S,\Lambda)\langle  t\rangle ^{-\frac 32} \|  \mathbf{f}  \|_{ \boldsymbol{\mathcal{H}}^{1, S}  (\R  )}  \text{  for all $\mathbf{f}\in  \boldsymbol{\mathcal{H}}^{1, S}  (\R )$} .
\end{equation}
\end{lemma}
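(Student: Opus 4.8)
The plan is to obtain the weighted decay estimate \eqref{eq:lemg91} as a direct consequence of the corresponding dispersive estimate for the scalar Klein--Gordon evolution generated by $L_D$, which is exactly what Komech--Kopylova establish in \cite[Proposition 3.3]{KomKop111}. First I would record the algebraic relation between the ``first order'' propagator $e^{\mathbf{L}_D t}$ on $\boldsymbol{\mathcal{H}}^1 = \Sigma^1 \times \Sigma^0$ and the scalar solution operators: writing $\mathbf{L}_D = \begin{pmatrix} 0 & 1 \\ -L_D & 0\end{pmatrix}$, the components of $e^{\mathbf{L}_D t}\mathbf{f}$ are built out of $\cos(t\sqrt{L_D})$, $\sin(t\sqrt{L_D})/\sqrt{L_D}$ and $\sqrt{L_D}\sin(t\sqrt{L_D})$ applied to $f_1,f_2$. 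Since $L_D = -\partial_x^2 + V_D$ with $V_D - \omega^2 \in \mathcal{S}(\R,\R)$ and, by Lemma \ref{lem:generic}, $\omega^2$ is neither an eigenvalue nor a resonance of $L_D$ (and $L_D > 0$ has no discrete spectrum below $\omega^2$ under Assumption \ref{ass:repuls}, by the repulsivity argument in the proof of Lemma \ref{lem:generic}), the operator $L_D$ is of the ``non-resonant, non-eigenvalue'' type to which the Komech--Kopylova weighted-$L^2$ decay theory applies.

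Next I would pass from the free propagator to its resolvent-regularized version $R^+_{\im\mathbf{L}_D}(\lambda)$. The point is that for $\lambda \in \Lambda \subset (0,\infty)$ fixed, $R^+_{\im\mathbf{L}_D}(\lambda)\mathbf{f}$ lies in the range of the limiting absorption operator for the continuous spectrum of $\mathbf{L}_D$; composing with $e^{\mathbf{L}_D t}$ and localizing with the weight $\langle x\rangle^{-S}$, one is precisely in the situation of \cite[Proposition 3.3]{KomKop111}, which yields the $\langle t\rangle^{-3/2}$ rate in one space dimension for data in a sufficiently weighted space, here $S > 5/2$. Concretely, I would: (i) verify that $\im\mathbf{L}_D$ (equivalently the KG generator for $L_D$) has purely absolutely continuous spectrum $\pm[\omega,\infty)$ with no embedded eigenvalues and no threshold resonance — all supplied by Lemma \ref{lem:generic} and Assumption \ref{ass:repuls}; (ii) invoke the spectral representation so that $R^+_{\im\mathbf{L}_D}(\lambda)$ and $e^{\mathbf{L}_D t}$ commute through the (distorted) Fourier transform associated to $L_D$; (iii) apply the cited Komech--Kopylova bound to conclude, with $c(S,\Lambda)$ obtained by taking the maximum over the finite set $\Lambda$ (and absorbing the finitely many $\lambda$-dependent resolvent bounds, uniform since $\Lambda$ is a fixed finite subset bounded away from the thresholds $\pm\omega$ — note that the $\lambda_j$ here are the internal-mode frequencies, or their harmonics, which by Assumption \ref{ass:generic} satisfy $(\mathbf{m}\cdot\boldsymbol{\lambda})^2 \neq \omega^2$, so they avoid the threshold).

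The main obstacle I expect is purely a bookkeeping one: matching the precise hypotheses and norms of \cite[Proposition 3.3]{KomKop111} — which is stated for the scalar second-order Klein--Gordon equation in a possibly different normalization of weights and of the first-order reduction — to the first-order formulation with $\boldsymbol{\mathcal{H}}^{1,\pm S} = \Sigma^{1,\pm S}\times\Sigma^{0,\pm S}$ used here, and in particular checking that the weight exponent $S > 5/2$ is the one that makes the $\langle t\rangle^{-3/2}$ rate valid in 1D for all of $\cos$, $\sin/\sqrt{L_D}$ and $\sqrt{L_D}\sin$ simultaneously (the last of these is the delicate one near the threshold, and is exactly why the absence of a resonance at $\omega^2$ from Lemma \ref{lem:generic} is essential). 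Once that identification is in place, there is nothing further to prove: the estimate is literally \cite[Proposition 3.3]{KomKop111} read through the reduction, and the uniformity in $\lambda\in\Lambda$ is automatic because $\Lambda$ is finite and contained in a compact subset of the resolvent set of $\im\mathbf{L}_D$ away from the continuous spectrum's edge.
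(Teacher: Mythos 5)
The paper's own ``proof'' is simply the preceding sentence citing Komech--Kopylova \cite[Proposition 3.3]{KomKop111}, followed by $\qed$, and your proposal carries out exactly the same reduction (Klein--Gordon propagator decomposition, non-resonance at $\omega^2$ via Lemma \ref{lem:generic}, finiteness of $\Lambda$ away from the threshold), only spelled out in more detail. One small slip worth noting: at the end you describe $\Lambda$ as lying in a compact subset of the \emph{resolvent set} of $\im\mathbf{L}_D$, but since $\lambda>\omega$ these values are embedded in the continuous spectrum $[\omega,\infty)$; the relevant fact, which you state correctly earlier in the proposal, is that $R^{+}_{\im\mathbf{L}_D}(\lambda)$ is the limiting-absorption boundary value, and uniformity over $\Lambda$ holds because $\Lambda$ is a finite set bounded away from the thresholds $\pm\omega$.
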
  \qed

We have the following resolvent identity, see   Komech--Kopylova \cite[formula (3.6)]{KomKop10},
 \begin{align}\label{eq:resolv1}
 R _{\im  \mathbf{L}_{D}} ( \varsigma ) =  \begin{pmatrix}
	 \varsigma R _{L_D} (\varsigma ^2-\omega ^2 )
	& \im R _{L_D} (\varsigma ^2-\omega ^2 ) \\
	-\im \( 1+ \varsigma ^2  R _{L_D} (\varsigma ^2-\omega ^2 ) \)
	&\varsigma  R_{L_D} (\varsigma ^2-\omega ^2 )
\end{pmatrix} .
\end{align}

For the following see   Komech--Kopylova \cite[Sect. 3]{KomKop10}.

\begin{lemma}\label{lem:Lemma3.5}
For  any preassigned $r>0$ and for $S>1/2$ we have
\begin{align}\label{eq:Lemma3.51}
  \text{$R _{\im  \mathbf{L}_{\mathrm{Tr}}} ( \varsigma \pm \im \epsilon )\xrightarrow{\epsilon \to 0^+}R _{\im  \mathbf{L}_{D}} ^{\pm}( \varsigma   )$ in $L^\infty \(  (-\infty ,-r-\omega ] \cup [ r+\omega ,\infty ),  \mathcal{L}\( \boldsymbol{\mathcal{H}}^{1, S} , \boldsymbol{\mathcal{H}}^{1, -S} \) \) $}
\end{align}
 \end{lemma}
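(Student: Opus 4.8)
The plan is to reduce the matrix limiting absorption principle for $\im\mathbf{L}_{\mathrm{Tr}}=\im\mathbf{L}_{D}$ to the scalar one for the one--dimensional Schr\"odinger operator $L_D=-\partial_x^2+V_D$, where $V_D-\omega^2\in\mathcal{S}(\R)$, by means of the resolvent identity \eqref{eq:resolv1}, and then to quote the weighted resolvent estimates of Komech--Kopylova \cite[Sect.\ 3]{KomKop10} (see also \cite{KomKop111}).

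First I would note that on the set $\{\,|\varsigma|\ge r+\omega\,\}$ the shifted spectral parameter $\mu:=\varsigma^2-\omega^2$ ranges over $[r^2+2r\omega,\infty)$, a closed subset of $(0,\infty)$ bounded away from the threshold $0$ of the continuous spectrum of $L_D-\omega^2$. Since $V_D-\omega^2$ is Schwartz, $L_D$ has no embedded eigenvalues in $(\omega^2,\infty)$; hence for $S>1/2$ the boundary values $R_{L_D}(\mu\pm\im0)=\lim_{\epsilon\to0^+}R_{L_D}(\mu\pm\im\epsilon)$ exist in $\mathcal{L}(L^{2,S},H^{1,-S})$, with operator norms bounded uniformly over $\mu\in[r^2+2r\omega,\infty)$, with the high--energy gain $\|R_{L_D}(\mu\pm\im0)\|_{L^{2,S}\to L^{2,-S}}\lesssim\langle\mu\rangle^{-1/2}$, and with a convergence $R_{L_D}(\mu\pm\im\epsilon)\to R_{L_D}(\mu\pm\im0)$ that is uniform over this range and whose $\epsilon$--modulus of continuity improves as $\mu\to\infty$ (the relevant oscillatory kernels and the Neumann expansion around the free resolvent become more regular at high energy). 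All of this is contained in \cite[Sect.\ 3]{KomKop10}. Because the cutoff $r>0$ keeps $\mu$ away from $0$, the threshold analysis --- Lemma \ref{lem:generic}, which concerns precisely $\omega^2$ --- plays no role here; it enters only when the threshold is reached, as in Lemma \ref{lem:LAP}.

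Next I would carry out the elementary sign bookkeeping: for $\varsigma\ge r+\omega$ one has $\Im\bigl((\varsigma\pm\im\epsilon)^2-\omega^2\bigr)=\pm2\varsigma\epsilon+O(\epsilon^2)$, so $R_{L_D}\bigl((\varsigma\pm\im\epsilon)^2-\omega^2\bigr)\to R_{L_D}(\mu\pm\im0)$ uniformly in $\varsigma$, while for $\varsigma\le-(r+\omega)$ the two signs on the right are interchanged. Inserting these limits entrywise into \eqref{eq:resolv1} defines the candidate boundary operator $R^{\pm}_{\im\mathbf{L}_{D}}(\varsigma)$. Promoting the pointwise convergence to convergence in $L^\infty\bigl((-\infty,-r-\omega]\cup[r+\omega,\infty),\mathcal{L}(\boldsymbol{\mathcal{H}}^{1,S},\boldsymbol{\mathcal{H}}^{1,-S})\bigr)$ requires handling the $\varsigma$-- and $\varsigma^2$--prefactors in \eqref{eq:resolv1}: the diagonal entries $\varsigma R_{L_D}$ acquire the gain $\langle\mu\rangle^{-1/2}\sim\langle\varsigma\rangle^{-1}$ on the matching Sobolev scale and are therefore uniformly controlled, and for the entry $1+\varsigma^2 R_{L_D}$ I would use both the identity $1+\varsigma^2R_{L_D}(\mu)=(-\partial_x^2+V_D)R_{L_D}(\mu)$, which exchanges the prefactor for derivatives falling on $R_{L_D}$, and the fact that the $\epsilon$--modulus of continuity of $R_{L_D}(\mu\pm\im\epsilon)$ decays in $\mu$ fast enough to beat $\varsigma^2$. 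This is the form in which the estimates are stated in \cite[Sect.\ 3]{KomKop10}, so the conclusion \eqref{eq:Lemma3.51} follows.

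The only genuinely delicate point, and the one I would simply invoke the cited references for rather than reprove, is this uniformity of the scalar limiting absorption over the unbounded energy interval $[r^2+2r\omega,\infty)$: there are no eigenvalue or resonance obstructions away from the threshold, but one has to know that the weighted resolvent bounds and the $\epsilon$--convergence are uniform as $\mu\to\infty$, which is exactly where the Schwartz decay of $V_D-\omega^2$ is used.
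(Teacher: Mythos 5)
Your proposal is correct and matches the paper's route: the paper gives no argument of its own for this lemma, simply quoting Komech--Kopylova \cite[Sect.\ 3]{KomKop10}, whose weighted scalar resolvent estimates away from the threshold, fed entrywise into the identity \eqref{eq:resolv1}, are exactly what you sketch (including the treatment of the $\varsigma$, $\varsigma^2$ prefactors via $1+\varsigma^2R_{L_D}=L_D R_{L_D}$). The one delicate point you flag --- uniformity of the scalar limiting absorption as $\mu\to\infty$ --- is precisely what the citation is meant to cover, so deferring it to \cite{KomKop10} is consistent with the paper.
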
  \qed

Combining Lemma \ref{lem:Lemma3.5}  with Lemma 8.5 \cite{CM2109.08108} we have the following.
\begin{lemma} \label{lem:LAP} For  $S>5/2$ and $\tau >1/2$ we have
 \begin{align}&   \label{eq:LAP1}   \sup _{\varsigma \in \R } \|   R ^{\pm }_{\im  \mathbf{L}_{D}}(\varsigma  ) \| _{ \boldsymbol{\mathcal{H}}^{1, \tau}\to  \boldsymbol{\mathcal{H}}^{1, -S}} <\infty  .
\end{align}
\end{lemma}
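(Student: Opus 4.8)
The plan is to deduce \eqref{eq:LAP1} by combining the away-from-threshold bounds of Lemma \ref{lem:Lemma3.5} with the threshold analysis behind Lemma 8.5 of \cite{CM2109.08108}, passing through the resolvent identity \eqref{eq:resolv1} in order to reduce matters to the scalar operator $L_D$. I would begin by recording the spectral picture: since the Darboux construction removes all eigenvalues of $L_1$ and, by Lemma \ref{lem:generic}, $\omega^2$ is neither an eigenvalue nor a resonance of $L_D$, the operator $L_D$ has purely absolutely continuous spectrum $[\omega^2,\infty)$, so $\sigma(\im\mathbf{L}_D)=(-\infty,-\omega]\cup[\omega,\infty)$ and the open interval $(-\omega,\omega)$ lies in its resolvent set. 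Because $\boldsymbol{\mathcal{H}}^{1,\sigma'}\hookrightarrow\boldsymbol{\mathcal{H}}^{1,\sigma}$ for $\sigma'\ge\sigma$, it suffices to prove the estimate for one fixed pair $1/2<\tau<1$, $5/2<S<3$; the general statement follows from these inclusions on domain and range.

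I would then split $\R$, for a small fixed $r>0$, into the spectral gap $\{|\varsigma|\le\omega-r\}$, the high-energy region $\{|\varsigma|\ge\omega+r\}$, and the threshold window $\{\varsigma:\bigl||\varsigma|-\omega\bigr|\le r\}$. On the gap, $\varsigma$ is at positive distance from $\sigma(\im\mathbf{L}_D)$, so $R^{\pm}_{\im\mathbf{L}_D}(\varsigma)$ is the ordinary resolvent, holomorphic in $\varsigma$ and bounded on $\boldsymbol{\mathcal{H}}^1$ uniformly on this compact set; since $\langle\varsigma\rangle$ is bounded there the claim is immediate. On the high-energy region I would invoke \eqref{eq:resolv1}, which exhibits each entry of $R^{\pm}_{\im\mathbf{L}_D}(\varsigma)$ as $1$, $\varsigma R_{L_D}(\varsigma^2-\omega^2)$, $\im R_{L_D}(\varsigma^2-\omega^2)$ or $\varsigma^2 R_{L_D}(\varsigma^2-\omega^2)$ (with the appropriate boundary value $\pm\im 0$ when $|\varsigma|\ge\omega$); the entries carrying the powers $\langle\varsigma\rangle$ and $\langle\varsigma\rangle^2$ are precisely those mapping the $H^1$-slot of $\boldsymbol{\mathcal{H}}^1$ into the $L^2$-slot. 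The classical high-energy resolvent estimates for the one-dimensional Schr\"odinger operator $L_D$ give decay of $R_{L_D}(\varsigma^2-\omega^2\pm\im 0)$ by a negative power of $\langle\varsigma\rangle$, with one extra power of $\langle\varsigma\rangle^{-1}$ gained on $H^1$ data, and these powers exactly absorb the factors produced by \eqref{eq:resolv1}; Lemma \ref{lem:Lemma3.5}, together with the standard limiting absorption principle at each fixed non-threshold energy, supplies the existence and boundedness of the boundary values, and norm-continuity of $\varsigma\mapsto R^{\pm}_{\im\mathbf{L}_D}(\varsigma)$ on the compact subregions then gives a uniform bound on $\{|\varsigma|\ge\omega+r\}$.

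The hard part is the threshold window, where $\varsigma^2-\omega^2\to0$ — from inside the gap and from either side of the cut — while $\langle\varsigma\rangle$ stays bounded: here one must show that $R_{L_D}(\varsigma^2-\omega^2\pm\im 0)$ remains uniformly bounded as $\varsigma^2-\omega^2\to0$, as an operator from the weighted space $H^1_{\tau}$ (norm $\|\langle x\rangle^{\tau}\cdot\|_{H^1}$) to $H^1_{-S}$. This is exactly the statement that $\omega^2$ is a regular point of $L_D$, i.e.\ Lemma \ref{lem:generic}, which is itself a consequence of the repulsivity Assumption \ref{ass:repuls}; and its quantitative form — uniform boundedness of $(L_D-\lambda\mp\im 0)^{-1}:H^1_{\tau}\to H^1_{-S}$, $\tau>1/2$, $S>5/2$, for $\lambda$ near $\omega^2$ under that non-degeneracy — is Lemma 8.5 of \cite{CM2109.08108}, resting on the Komech--Kopylova analysis \cite{KomKop10,KomKop111}. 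The asymmetry $\tau>1/2$ versus $S>5/2$ reflects a genuinely one-dimensional threshold phenomenon: the zero-energy solution of $(-\partial_x^2+V_D-\omega^2)f=0$ does not decay, so the heavier weight $\langle x\rangle^{-S}$ is needed to absorb it. Feeding Lemma \ref{lem:Lemma3.5} and Lemma \ref{lem:generic} into Lemma 8.5 of \cite{CM2109.08108}, and combining with the gap and high-energy bounds, yields \eqref{eq:LAP1}. I expect this threshold step to be the main obstacle: as remarked in the introduction, the argument collapses if $L_D$ has a resonance at $\omega^2$, for instance when $L_D=-\partial_x^2+\omega^2$ as in the $\phi^4$ case, so the non-resonance input from the repulsivity hypothesis is indispensable.
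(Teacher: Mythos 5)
Your overall architecture tracks the paper's proof closely: split $\varsigma$ into a region away from the threshold where Lemma \ref{lem:Lemma3.5} applies and a bounded region where you descend, via \eqref{eq:resolv1}, to the scalar operator $L_D$ and invoke the threshold analysis from Lemma 8.5 of \cite{CM2109.08108}, with Lemma \ref{lem:generic} guaranteeing non-resonance. The only cosmetic difference is that you make a three-way split (gap, high-energy, threshold window) where the paper makes a two-way split at $|\varsigma|=1+\omega$; this does not matter.

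There is, however, a real gap in the threshold step. The target estimate is from $\boldsymbol{\mathcal{H}}^{1,\tau}=H^{1}_{\tau}\times L^{2}_{\tau}$ to $\boldsymbol{\mathcal{H}}^{1,-S}=H^{1}_{-S}\times L^{2}_{-S}$, and in the matrix \eqref{eq:resolv1} the entries that land in the first slot of the range must map into $H^{1}_{-S}$, not merely $L^{2}_{-S}$. Unwinding this, you need uniform weighted bounds not only on $R^{\pm}_{L_D-\omega^2}(\varsigma^2-\omega^2)$ but also on its first $x$-derivative — the $j=1$ case in the paper's notation in \eqref{eq:LAP1b}--\eqref{eq:LAP1c}. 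You assert that this $H^{1}_{\tau}\to H^{1}_{-S}$ bound is precisely Lemma 8.5 of \cite{CM2109.08108}, but as the paper itself records, that lemma only proves the $j=0$ (purely $L^2$-weighted) estimate. The derivative case is genuinely additional content: the paper differentiates the Jost-function representation \eqref{eq:resolvKern} of the kernel, producing one term proportional to $\sqrt{\varsigma^2-\omega^2}\,R^{+}_{L_D-\omega^2}$ (harmless, since $\varsigma$ stays bounded) and one term in which $m_-$ is replaced by $\partial_x m_-$, and then uses the improved estimate \eqref{eq:dejost} on $\partial_x m_\pm$ — which, unlike \eqref{eq:kernel2} for $m_\pm-1$, carries no linearly growing factor — to see that this second term is actually easier to bound than the $j=0$ case. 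Without this derivative argument your threshold step is incomplete, since a generic $L^{2}_{\tau}\to L^{2}_{-S}$ bound does not self-improve to an $H^{1}_{\tau}\to H^{1}_{-S}$ bound at a threshold where the resolvent kernel is on the verge of a singularity.
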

\proof    An uniform upper bound in   $|\varsigma|\ge 1+\omega$   holds by \eqref{eq:Lemma3.51}.  So now we focus on  $|\varsigma|\le 1+\omega$. By  \eqref{eq:resolv1}
it is enough to bound, for $j=0,1$
\begin{align}&   \label{eq:LAP1b}   \sup _{|\varsigma|\le 1+\omega } \|   \< x \> ^{-S} \partial _x ^{j} R ^{\pm }_{  L_D-\omega ^2 }(\varsigma  ^2 -\omega ^2 )  \< y \> ^{-\tau} \| _{     L^2(\R )\to  L^2(\R ) } <\infty  .
\end{align}
 which in turn  will be  a consequence of the following bound, for $j=0,1$,  for the integral kernel,
 \begin{align}  &   \sup _{|\varsigma|\le 1+\omega } \int _{\R ^2}  \< x \> ^{-2S}    | \partial _x ^{j} R ^{\pm }_{ L_D-\omega ^2 }(x,y,\varsigma  ^2 -\omega ^2  )| ^2  \< y \> ^{-2\tau} dx dy   <\infty  . \label{eq:LAP1c}
\end{align}
For $j=0$, in the $+$ case (case $-$ is similar),  this is proved in Lemma 8.5 \cite{CM2109.08108}, and for $j=1$  the same is true. Indeed, recall that, say for  $x<y$, with an analogous formula for $x>y$, \begin{align}&   R _{L_{D} -\omega ^2   }^{+}(x,y,\varsigma  ^2-\omega ^2  ) =     \frac{T(\sqrt{\varsigma  ^2-\omega ^2})}{2\im \sqrt{\varsigma  ^2-\omega ^2}}     e^{\im \sqrt{\varsigma  ^2-\omega ^2} (x-y)}       m_- (x, \sqrt{\varsigma  ^2-\omega ^2})   m_+ (y, \sqrt{\varsigma  ^2-\omega ^2})  ,    \label{eq:resolvKern} \end{align}
where  the  Jost functions  $f_{\pm } (x,k )=e^{\pm \im k x}m_{\pm } (x,k )$  solve $ \( - \Delta +  V _{D}-\omega ^2   \)u=k^2 u$ with\begin{align*} \lim _{x\to +\infty }   {m_{ + } (x,k )}  =1 =\lim _{x\to -\infty }  {m_{- } (x,k)}  .\end{align*}  These functions  satisfy, see Lemma 1 p. 130 \cite{DT79CPAM},\begin{align}  \label{eq:kernel2} &  |m_\pm(x, k )-1|\le  C _1 \langle  \max \{ 0,\mp x \}\rangle\langle k \rangle ^{-1}  \ ,
\\&   | \partial _x m_\pm(x, k ) |\le  C _1   \<    k\>  ^{-1}  ,\label{eq:dejost}  \end{align} while $T(k) =\alpha k (1+o(1))$  near $k=0$ for some $\alpha \in \R$    and $T(k) = 1+O(1/k) $ for $k\to \infty$ and $T\in C^0(\R )$, see Theorem 1 \cite{DT79CPAM}.

\noindent  Now,
\begin{align*}    \partial _x R ^{+}_{L_{D-\omega ^2 }   }(x,y,\varsigma  ^2-\omega ^2   )& =   \im \sqrt{\varsigma  ^2-\omega ^2}  R _{L_{D}-\omega ^2    }(x,y,\varsigma  ^2-\omega ^2  ) \\ &+
\frac{T(\sqrt{\varsigma  ^2-\omega ^2})}{2\im \sqrt{\varsigma  ^2-\omega ^2}}     e^{\im \sqrt{\varsigma  ^2-\omega ^2} (x-y)}       m_-' (x, \sqrt{\varsigma  ^2-\omega ^2})   m_+ (y, \sqrt{\varsigma  ^2-\omega ^2})  . \end{align*}
The first term on the right, by $|\varsigma|\le 1+\omega$ is essentially like the kernel \eqref{eq:resolvKern}, so the corresponding contribution to  \eqref{eq:LAP1c}
is like the case $j=0$.  It is easy to see, following the discussion in Lemma 8.5 \cite{CM2109.08108},
that the bound  of the last line is simpler, basically  because \eqref{eq:dejost}  is better than \eqref{eq:kernel2}.  \qed

\begin{remark}\label{rem:ass:repuls1}  Lemma \ref{lem:LAP} is essential to us  to get the key inequality \eqref{eq:essential1}.  Notice that  Lemma \ref{lem:LAP} is true under the   repulsivity hypothesis of Kowalczyk and Martel \cite{KM22}, which we have recalled in Remark \ref{rem:ass:repuls}, if we further assume that $\omega ^2$ is not a resonance for $L_D$. But if it has a resonance,   then the status of the Lemma \ref{lem:LAP} is unclear.   For $L_D=-\partial _x ^2+\omega ^2$,  by $ R_{-\partial_x ^2} (x,y,\varsigma ) = \frac{\im}{\sqrt{\varsigma}} e^{\im \sqrt{\varsigma }|x-y| }$ and by  a cancelation due to the odd functions,
we are reduced to   the following opposite  of \eqref{eq:LAP1c}
\begin{align*}  &   \sup _{|\varsigma|\le 1  } \int _{\R ^2}  \< x \> ^{-2S}    \left | \frac{1}{ \sqrt{\varsigma   }} \(  e^{\im \sqrt{\varsigma  } |x-y|}  -1  \) \right |^2  \< y \> ^{-2\tau} dx dy =+\infty .
\end{align*}
 Notice that this follows from the fact that the above is larger than
 \begin{align*}  &   \sup _{|\varsigma|\le 1  } \int _{ |x|\ll |y| \ll 1/\sqrt{|\varsigma|}}  \< x \> ^{-2S}    \left | \frac{1}{ \sqrt{\varsigma   }} \(  e^{\im \sqrt{\varsigma  } |x-y|}  -1  \) \right |^2  \< y \> ^{-2\tau} dx dy \\& \sim  \sup _{|\varsigma|\le 1  } \int _{ |x|\ll |y| \ll 1/\sqrt{|\varsigma|}}  \< x \> ^{-2S}  \< y \> ^{2-2\tau} dx dy  \sim \sup _{|\varsigma|\le 1  } |\varsigma| ^{ \frac{ 2\tau  -3}{2}} =+\infty  \text{  for   $\tau \in (1/2, 3/2)$}
\end{align*}
and is infinite also for $\tau =3/2$.
So, even though   a resonance  of $L_D$      involves  even functions, this still seems to affect  the estimates for the resolvent acting  only on odd functions. See also the resolvent expansions   in Lemma 2.2 in Murata \cite{murata}  or  Lemma 2.2 in Jensen and Kato \cite{JK}, which require increasing weights.

  \end{remark}

The following formulas   can be proved following    Mizumachi \cite[Lemma 4.5]{mizu08}, to which we refer for the proof.
 \begin{lemma} \label{lem:lemma11} Let  for $\mathbf{g}\in \mathcal{S}(\R \times \R , \C ^2 )$ \begin{align*}&  \mathbf{U}(t,x) = \frac{1}{\sqrt{2\pi} \im }\int _\R e^{-\im \lambda t}\( R ^{-}_{\im \mathbf{L}_{D}}(\lambda )+R ^{+}_{\im \mathbf{L}_{D}}(\lambda )   \) \mathcal{F}_{t}^{-1}\mathbf{g}( \lambda , \cdot ) d\lambda  ,
\end{align*}
where $\mathcal{F}_{t}^{-1}$  is the inverse Fourier transform in $t$. Then
 \begin{align} \label{eq:lemma11}2\int _0^t e^{  (t-t') \mathbf{L}_{D} }{\im \mathbf{L}_{D}}(t') dt'   &=  \mathbf{U}(t,x)  - \int  _{\R _-} e^{  (t-t')\mathbf{L}_{D} }{\im \mathbf{L}_{D}}(t') dt'
 \\& + \int  _{\R _+} e^{  (t-t') \mathbf{L}_{D} }{\im \mathbf{L}_{D}}(t') dt' .\nonumber
\end{align}
\end{lemma}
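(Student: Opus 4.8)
The plan is to reproduce, for the operator $\mathbf{L}_{D}$, the argument of Mizumachi \cite[Lemma 4.5]{mizu08}: take the Fourier transform in $t$, split the truncated Duhamel integral $\int_0^t$ into one-sided integrals $\int_{-\infty}^{t}$, $\int_{t}^{\infty}$, $\int_{\R_-}$, $\int_{\R_+}$, and identify the two infinite one-sided integrals with the limiting absorption resolvents $R^{\pm}_{\im\mathbf{L}_{D}}$, whose sum is by definition $\mathbf{U}$. Two structural facts are used throughout: first, $\mathbf{L}_{D}$ has purely imaginary spectrum with essential part $\im((-\infty,-\omega]\cup[\omega,\infty))$ and, thanks to the repulsivity of $V_{D}$ and Lemma \ref{lem:generic}, no eigenvalues and no threshold resonance, so $e^{t\mathbf{L}_{D}}$ is uniformly bounded on $\boldsymbol{\mathcal{H}}^{1}$; second, since $\mathbf{g}\in\mathcal{S}(\R\times\R,\C^{2})$, the map $\lambda\mapsto\mathcal{F}_t^{-1}\mathbf{g}(\lambda,\cdot)$ lies in $\mathcal{S}(\R_\lambda,\Sigma^{l})$ for every $l$ and $t'\mapsto\|\mathbf{g}(t',\cdot)\|_{\boldsymbol{\mathcal{H}}^{1}}$ is Schwartz, so each of the four one-sided integrals of $e^{(t-t')\mathbf{L}_{D}}\im\mathbf{g}(t')$ converges absolutely in $\boldsymbol{\mathcal{H}}^{1}$.

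First I would reduce to a one-sided identity. For $t>0$ (the case $t<0$ follows by time reversal) write one copy of $\int_0^t$ as $\int_{-\infty}^t-\int_{-\infty}^0$ and the other as $\int_0^\infty-\int_t^\infty$; adding and regrouping,
\begin{align*}
2\int_0^t e^{(t-t')\mathbf{L}_{D}}\im\mathbf{g}(t')\,dt'&=\Big(\int_{-\infty}^t-\int_{t}^\infty\Big)e^{(t-t')\mathbf{L}_{D}}\im\mathbf{g}(t')\,dt'\\
&\quad+\int_{\R_+}e^{(t-t')\mathbf{L}_{D}}\im\mathbf{g}(t')\,dt'-\int_{\R_-}e^{(t-t')\mathbf{L}_{D}}\im\mathbf{g}(t')\,dt',
\end{align*}
so, comparing with the claimed formula, it suffices to prove that $\big(\int_{-\infty}^t-\int_{t}^\infty\big)e^{(t-t')\mathbf{L}_{D}}\im\mathbf{g}(t')\,dt'=\mathbf{U}(t,x)$. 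For this, substitute $\tau=t-t'$, insert the time-Fourier representation of $\im\mathbf{g}$, interchange the $\tau$ and $\lambda$ integrals, and Abel-regularize: for $\nu>0$, using that $e^{\tau\mathbf{L}_{D}}$ is bounded,
\begin{align*}
\int_0^\infty e^{\tau(\mathbf{L}_{D}+\im\lambda-\nu)}\,d\tau&=-\im R_{\im\mathbf{L}_{D}}(\lambda+\im\nu)\xrightarrow{\nu\to0^{+}}-\im R^{+}_{\im\mathbf{L}_{D}}(\lambda),\\
\int_{-\infty}^0 e^{\tau(\mathbf{L}_{D}+\im\lambda+\nu)}\,d\tau&=\im R_{\im\mathbf{L}_{D}}(\lambda-\im\nu)\xrightarrow{\nu\to0^{+}}\im R^{-}_{\im\mathbf{L}_{D}}(\lambda),
\end{align*}
the limits holding by Lemma \ref{lem:Lemma3.5} on $|\lambda|>\omega$ and, near the thresholds $\lambda=\pm\omega$, via the resolvent identity \eqref{eq:resolv1} together with the continuity of $R^{\pm}_{L_{D}}(\varsigma^{2}-\omega^{2})$ up to $\varsigma^{2}=\omega^{2}$ on odd functions. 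Carrying out the $\lambda$ integration then expresses $\int_{-\infty}^t e^{(t-t')\mathbf{L}_{D}}\im\mathbf{g}(t')\,dt'$ and $\int_{t}^\infty e^{(t-t')\mathbf{L}_{D}}\im\mathbf{g}(t')\,dt'$ as $\lambda$ integrals of $R^{+}_{\im\mathbf{L}_{D}}(\lambda)$ and of $-R^{-}_{\im\mathbf{L}_{D}}(\lambda)$ against $e^{-\im\lambda t}\mathcal{F}_t^{-1}\mathbf{g}(\lambda,\cdot)$; subtracting produces $R^{+}_{\im\mathbf{L}_{D}}(\lambda)+R^{-}_{\im\mathbf{L}_{D}}(\lambda)$, which with the normalization fixed in the definition of $\mathbf{U}$ is exactly $\mathbf{U}(t,x)$.

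The main obstacle is to make the two interchanges rigorous in the weighted space $\boldsymbol{\mathcal{H}}^{1,-S}$ with $S>5/2$ — the Fubini step between the $\tau$ and $\lambda$ integrals, and passing $\nu\to0^{+}$ through the $\lambda$ integral — uniformly down to the thresholds $\lambda=\pm\omega$ and with integrability as $|\lambda|\to\infty$. Integrability at infinity follows from the Schwartz decay of $\mathcal{F}_t^{-1}\mathbf{g}(\lambda,\cdot)$ combined with the uniform bound of Lemma \ref{lem:LAP}; the thresholds are the delicate point and are handled through \eqref{eq:resolv1} and the fact, guaranteed by Lemma \ref{lem:generic} and the repulsivity of $V_{D}$, that $\omega^{2}$ is neither an eigenvalue nor a resonance of $L_{D}$, which makes $R^{\pm}_{L_{D}}(\varsigma^{2}-\omega^{2})$ continuous up to $\varsigma^{2}=\omega^{2}$ on the odd subspace (cf.\ the discussion in Remark \ref{rem:ass:repuls1}); and the limit $\nu\to0^{+}$ is then a dominated-convergence argument whose dominating function is furnished by the uniform-in-$\nu$ resolvent estimates underlying Lemmas \ref{lem:Lemma3.5} and \ref{lem:LAP}. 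Since this is structurally identical to \cite[Lemma 4.5]{mizu08}, it is cleanest to prove the identity first for $\mathbf{g}$ smooth and compactly supported in $t$, where all integrals converge absolutely and the interchanges are trivially justified, and then pass to general $\mathbf{g}\in\mathcal{S}(\R\times\R,\C^{2})$ by density.
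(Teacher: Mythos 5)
Your proof is correct and is essentially the argument the paper intends: the paper gives no proof for this lemma, referring only to Mizumachi~\cite[Lemma 4.5]{mizu08}, and your reconstruction — splitting $2\int_0^t$ into one-sided integrals, Abel-regularizing the two infinite tails to produce $R^{\pm}_{\im\mathbf{L}_{D}}$, and then invoking Lemma~\ref{lem:generic} (no threshold resonance), Lemma~\ref{lem:Lemma3.5} and Lemma~\ref{lem:LAP} to justify the $\nu\to 0^{+}$ limit and the $\lambda$-integrability down to the thresholds — is precisely the Mizumachi scheme. Proving the identity first for $\mathbf{g}$ compactly supported in $t$ and then passing to Schwartz $\mathbf{g}$ by density is a clean way to handle the Fubini and limit interchanges, and the only loose end is a normalization constant in the definition of $\mathbf{U}$, which you rightly leave to the conventions and which does not affect the structure of the argument.
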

\qed

By repeating verbatim the proof of  \cite[Lemma 8.7]{CM2109.08108},   the  last two lemmas give us the following smoothing estimate.
\begin{lemma} \label{lem:smoothest} For  $S>5/2$ and $\tau >1/2$ there exists a constant $C(S,\tau )$ such that we have
 \begin{align}&   \label{eq:smoothest1}   \left \|   \int   _{0} ^{t   }e^{(t-t') \mathbf{L}_{D} }\mathbf{g}(t') dt' \right \| _{L^2( \R ,\boldsymbol{\mathcal{H}}^{1, -S})  } \le C(S,\tau ) \|  \mathbf{g } \| _{L^2( \R , \boldsymbol{\mathcal{H}}^{1, \tau}) }.
\end{align}
\end{lemma}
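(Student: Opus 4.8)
The plan is to run the standard Kato-smoothing argument, following essentially verbatim the proof of Lemma 8.7 of \cite{CM2109.08108}, so that the only non-elementary inputs are the representation formula of Lemma \ref{lem:lemma11} and the limiting absorption principle of Lemma \ref{lem:LAP}. First I would reduce, by density of $\mathcal{S}(\R\times\R,\C^2)$ and by the time reversibility of $e^{t\mathbf{L}_D}$, to the case $\mathbf{g}\in\mathcal{S}(\R\times\R,\C^2)$ with the $L^2$-norm in $t$ taken over $\R_+$. Then I would apply Lemma \ref{lem:lemma11}, which writes $2\int_0^t e^{(t-t')\mathbf{L}_D}\mathbf{g}(t')\,dt'$ as $\mathbf{U}(t,\cdot)-\int_{\R_-}e^{(t-t')\mathbf{L}_D}\mathbf{g}(t')\,dt'+\int_{\R_+}e^{(t-t')\mathbf{L}_D}\mathbf{g}(t')\,dt'$, so that it suffices to bound each of these three terms in $L^2(\R,\boldsymbol{\mathcal{H}}^{1,-S})$ by $C(S,\tau)\,\|\mathbf{g}\|_{L^2(\R,\boldsymbol{\mathcal{H}}^{1,\tau})}$. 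The common mechanism is that, after the Fourier transform in $t$, each of the three terms becomes a boundary value of the resolvent $R^{\pm}_{\im\mathbf{L}_D}(\lambda)$ (or a sum of two of them) acting as an $x$-operator-valued Fourier multiplier in $t$ on a truncation of $\mathbf{g}$; Plancherel in $t$ together with the uniform bound of Lemma \ref{lem:LAP} then closes the estimate, the spatial weight staying at the level $\tau$ precisely because $R^{\pm}_{\im\mathbf{L}_D}(\lambda)\in\mathcal{L}(\boldsymbol{\mathcal{H}}^{1,\tau},\boldsymbol{\mathcal{H}}^{1,-S})$ uniformly in $\lambda$.

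Concretely, for the term $\mathbf{U}$ its very definition in Lemma \ref{lem:lemma11} shows that $\mathbf{U}(t,\cdot)$ is, up to a constant, the inverse $t$-Fourier transform of $\lambda\mapsto\bigl(R^-_{\im\mathbf{L}_D}(\lambda)+R^+_{\im\mathbf{L}_D}(\lambda)\bigr)\mathcal{F}^{-1}_{t}\mathbf{g}(\lambda,\cdot)$, whence by Plancherel in $t$ and Lemma \ref{lem:LAP} (applicable because $\tau>1/2$ and $S>5/2$)
\begin{align*}
\|\mathbf{U}\|^2_{L^2(\R,\boldsymbol{\mathcal{H}}^{1,-S})}&\sim\int_\R\bigl\|\bigl(R^-_{\im\mathbf{L}_D}(\lambda)+R^+_{\im\mathbf{L}_D}(\lambda)\bigr)\mathcal{F}^{-1}_{t}\mathbf{g}(\lambda,\cdot)\bigr\|^2_{\boldsymbol{\mathcal{H}}^{1,-S}}\,d\lambda\\
&\lesssim\int_\R\bigl\|\mathcal{F}^{-1}_{t}\mathbf{g}(\lambda,\cdot)\bigr\|^2_{\boldsymbol{\mathcal{H}}^{1,\tau}}\,d\lambda=\|\mathbf{g}\|^2_{L^2(\R,\boldsymbol{\mathcal{H}}^{1,\tau})}.
\end{align*}

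For the two tail terms I would first observe that, on the set $t\ge0$ that is now relevant, $\int_{\R_-}e^{(t-t')\mathbf{L}_D}\mathbf{g}(t')\,dt'$ coincides with the retarded Duhamel integral $\int_{-\infty}^{t}e^{(t-t')\mathbf{L}_D}\,(1_{\R_-}\mathbf{g})(t')\,dt'$, whose $t$-Fourier transform is (a constant times) a boundary value of $R_{\im\mathbf{L}_D}$ applied to $\widehat{1_{\R_-}\mathbf{g}}$; Plancherel and Lemma \ref{lem:LAP} then bound its $L^2(\R,\boldsymbol{\mathcal{H}}^{1,-S})$-norm by $\|1_{\R_-}\mathbf{g}\|_{L^2(\R,\boldsymbol{\mathcal{H}}^{1,\tau})}\le\|\mathbf{g}\|_{L^2(\R,\boldsymbol{\mathcal{H}}^{1,\tau})}$. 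The term $\int_{\R_+}e^{(t-t')\mathbf{L}_D}\mathbf{g}(t')\,dt'=e^{t\mathbf{L}_D}\int_{\R_+}e^{-t'\mathbf{L}_D}\mathbf{g}(t')\,dt'$ I would split, again on $t\ge0$, as the difference of the retarded and the advanced Duhamel integrals of $1_{\R_+}\mathbf{g}$; the advanced piece is handled exactly as the retarded one, with the opposite boundary value of $R_{\im\mathbf{L}_D}$, which is again covered by Lemma \ref{lem:LAP}. Collecting the three bounds and appealing to time reversal to pass from $\R_+$ to $\R$ gives \eqref{eq:smoothest1}.

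I do not expect a genuine obstacle here once Lemmas \ref{lem:lemma11} and \ref{lem:LAP} are available; this is exactly why the argument can be transferred essentially verbatim from \cite{CM2109.08108}. The points that require a little care are the bookkeeping of which boundary value $R^{\pm}_{\im\mathbf{L}_D}$ occurs in each term (dictated by the sign of $t-t'$ on the relevant interval), the justification of the Fubini-type interchanges for Schwartz $\mathbf{g}$, and---most importantly---keeping the spatial weight at the level $\tau$ rather than $S$ throughout the tail estimates, which is precisely what the representation of Lemma \ref{lem:lemma11} is designed to allow. The real content sits upstream in Lemma \ref{lem:LAP}: it is there that the uniform resolvent bound up to the spectral thresholds $\varsigma=\pm\omega$ is needed, hence the absence of a resonance of $L_D$ at $\omega^2$ (Lemma \ref{lem:generic}) and the repulsivity Assumption \ref{ass:repuls}.
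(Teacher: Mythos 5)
Your proposal is correct and follows essentially the same route as the paper's own proof, which invokes Lemma \ref{lem:lemma11} (the Mizumachi-type representation) and Lemma \ref{lem:LAP}, bounds $\mathbf{U}$ by Fourier transform in $t$, Plancherel, and the uniform resolvent bound, and declares the two tail terms ``similar.'' You have simply spelled out the tail-term bookkeeping (identifying them as retarded/advanced Duhamel integrals of $1_{\R_\pm}\mathbf{g}$) that the paper leaves implicit, and correctly identified the upstream dependence on Lemma \ref{lem:generic} and Assumption \ref{ass:repuls} through Lemma \ref{lem:LAP}.
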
\proof We repeat verbatim the proof of  \cite[Lemma 8.7]{CM2109.08108},  which in turn is taken from Mizumachi \cite{mizu08}, that is,  can use formula \eqref{eq:lemma11} and bound $\mathbf{U}$, with the bound on the last two terms in the right hand side of \eqref{eq:lemma11} similar. So we have, taking Fourier transform in $t$  and by Plancherel,
 \begin{align*}&
\| \mathbf{U}\|_{
L_{t}^2\boldsymbol{\mathcal{H}}^{1, -S}} \le   2 \sup _{\pm}\| R_{ \im \mathbf{L}_{D} }^\pm (\lambda )
   \widehat{ \mathbf{g}}(\lambda,\cdot )\|_{L_{\lambda  }^2 \boldsymbol{\mathcal{H}}^{1, -S}}   \le \\& \le
2\sup _{\pm}   \sup _{\lambda \in \R }
\|  R_{ \im \mathbf{L}_{D} }^\pm (\lambda )  \| _{ \boldsymbol{\mathcal{H}}^{1, \tau}  \to \boldsymbol{\mathcal{H}}^{1, -S}   } \|
     \widehat{\mathbf{g}} (\lambda,x) \|_{   L_{\lambda  }^2 \boldsymbol{\mathcal{H}}^{1, \tau}}\,
 \lesssim           \| \mathbf{g}\|_{L_{t }^2\boldsymbol{\mathcal{H}}^{1, \tau} }.
\end{align*}

\qed

\section{Second virial estimate, for $\boldsymbol{\xi}$}
We set
\begin{align} \label{eq:2nviqf}&
\mathcal{J}_1(\mathbf{v}):=\frac{1}{2}\<\mathbf{J}\mathbf{v},\widetilde{S}_B \mathbf{v}\>,\quad \mathcal{J}_2(\mathbf{v}):=\frac{1}{2}\<\mathbf{J}\mathbf{v},\(\chi_{B^2}\zeta_{\widetilde{B}}\)^2\sigma_3 \mathbf{v}\> \\& \text{where }\widetilde{B}^{-1}=B^{-1}+\frac{a}{10} \label{eq:2nviqf2}
\end{align}
 and
\begin{align}
\widetilde{S}_B:=\frac{\psi_B'}{2}+\psi_B\partial_x,\ \psi_B:=\chi_{B^2}^2\varphi_B. \label{eq:2nviqf3}
\end{align}

The main result of the section is the following.
\begin{lemma}\label{lem:2ndv}
\begin{align}
&\frac{d}{dt}\mathcal{J}_1(\mathbf{v})+\frac{1}{2}\|\xi_1\|_{\widetilde{\Sigma}}^2\lesssim  (c+\varepsilon)\|\xi_2\|_{L^2_{-\frac{a}{10}}}^2+c^{-1}
\sum_{\mathbf{m}\in \mathbf{R}_{\mathrm{min}}}|\mathbf{z}^{\mathbf{m}}|^2+\delta |\dot{\mathbf{z}}-\widetilde{\mathbf{z}}|^2
+\varepsilon \| \mathbf{w} \|_{\boldsymbol{ \widetilde{\Sigma}}}^2  ,\label{eq:2ndv1}\\
&-\frac{d}{dt}\mathcal{J}_2(\mathbf{v})+\frac{1}{2}\|\xi_2\|_{L^2_{-\frac{a}{10}}}^2\lesssim  \|\xi_1\|_{\widetilde{\Sigma}}^2+\sum_{\mathbf{m}\in \mathbf{R}_{\mathrm{min}}}|\mathbf{z}^{\mathbf{m}}|^2+\delta|\dot{\mathbf{z}}-\widetilde{\mathbf{z}}|^2
 +\varepsilon   \| \mathbf{w} \|_{\boldsymbol{ \widetilde{\Sigma}}}^2 .\label{eq:2ndv2}
\end{align}
\end{lemma}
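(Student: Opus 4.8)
# Proof Proposal for Lemma 6.3 (Second Virial Estimate)

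The plan is to differentiate $\mathcal{J}_1(\mathbf{v})$ and $\mathcal{J}_2(\mathbf{v})$ in time using the equation \eqref{eq:vBg} for $\mathbf{v}$, and to estimate the resulting terms one by one, in close analogy with the first virial estimate (Lemmas \ref{lem:v11main}--\ref{lem:v12remainder}) and with the corresponding argument in \cite{CM2109.08108}. Since both quadratic forms $\<\mathbf{J}\cdot,\widetilde{S}_B\cdot\>$ and $\<\mathbf{J}\cdot,(\chi_{B^2}\zeta_{\widetilde B})^2\sigma_3\cdot\>$ are symmetric, we have
\begin{align*}
\frac{d}{dt}\mathcal{J}_k(\mathbf{v}) = \<\mathbf{J}\Bigl(\mathbf{L}_D\mathbf{v}+\sum_{\mathbf{m}\in\mathbf{R}_{\mathrm{min}}}\mathbf{z}^{\mathbf{m}}\widetilde{\mathcal{R}}_{\mathbf{m}}+\mathbf{R}_{\mathbf{v}}\Bigr),\,\mathcal{S}_k\mathbf{v}\>,\quad k=1,2,
\end{align*}
where $\mathcal{S}_1=\widetilde{S}_B$ and $\mathcal{S}_2=(\chi_{B^2}\zeta_{\widetilde B})^2\sigma_3$. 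The first step is to handle the main linear term $\<\mathbf{J}\mathbf{L}_D\mathbf{v},\widetilde{S}_B\mathbf{v}\>$: as in the first virial estimate this reduces to $-\<L_D v_1,\widetilde{S}_B v_1\>-\<v_2,\widetilde{S}_B v_2\>$, the second piece vanishing by skew-symmetry; the first piece, by the Darboux/commutator computation of \cite{CM2109.08108} applied to $L_D=-\partial_x^2+V_D$, produces $+\|\xi_1\|_{\dot H^1}^2$ plus a term involving $-2^{-2}\chi_{B^2}^2 x V_D'$, which by Assumption \ref{ass:repuls} (repulsivity) has a good sign, and then \eqref{eq:estimates113} upgrades $\|\xi_1\|_{\dot H^1}^2$ to $\|\xi_1\|_{\widetilde\Sigma}^2$ up to the acceptable error terms. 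For $\mathcal{J}_2$ the analogous computation gives $\|\zeta_{\frac{10}{a}}\xi_2\|_{L^2}^2$ up to a remainder controlled by $\|\xi_1\|_{\widetilde\Sigma}^2$.

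The second step is to estimate the source term $\sum_{\mathbf{m}}\mathbf{z}^{\mathbf{m}}\widetilde{\mathcal{R}}_{\mathbf{m}}$ paired against $\widetilde S_B\mathbf{v}$. Since $\widetilde{\mathcal{R}}_{\mathbf{m}}=\mathcal{T}\chi_{B^2}P_c\mathcal{R}_{\mathbf{m}}$ with $\mathcal{R}_{\mathbf{m}}\in\boldsymbol{\Sigma}^\infty$ by Proposition \ref{prop:rp}, and using the mapping properties of $\mathcal{T}$ from Lemma \ref{lem:coer3}, this is bounded by $\sum_{\mathbf{m}}|\mathbf{z}^{\mathbf{m}}|\,\|\xi_1\|_{\widetilde\Sigma}$ (up to weight-shuffling), hence absorbed via Young into $c^{-1}\sum|\mathbf{z}^{\mathbf{m}}|^2$ and a small multiple of $\|\xi_1\|_{\widetilde\Sigma}^2$. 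The third step is the remainder term $\mathbf{R}_{\mathbf{v}}$ from \eqref{def:RvBg}: this splits into $\mathcal{T}\chi_{B^2}P_c\mathbf{R}_{\widetilde{\boldsymbol\eta}}$, where $\mathbf{R}_{\widetilde{\boldsymbol\eta}}$ is given by \eqref{eq:Rtildeeta} and each of its five pieces — the $\boldsymbol{\mathcal{R}}_1[\mathbf{z}]$ piece, the nonlinear $P_c\mathbf{J}\mathbf{F}$ piece, the $(\mathbf{L}[\mathbf{z}]-\mathbf{L}_1)\boldsymbol\eta$ piece, the modulation piece $P_cD\boldsymbol\phi[\mathbf{z}](\dot{\mathbf{z}}-\widetilde{\mathbf{z}})$, and the $P_c\mathbf{L}_1(R[\mathbf{z}]-1)\widetilde{\boldsymbol\eta}$ piece — is estimated exactly as in Lemma \ref{lem:v11remainder}, yielding contributions $o_\varepsilon(1)\|\mathbf{w}\|_{\boldsymbol{\widetilde\Sigma}}^2$, $\delta|\dot{\mathbf{z}}-\widetilde{\mathbf{z}}|^2$, $\sum|\mathbf{z}^{\mathbf{m}}|^2$ and small multiples of $\|\xi\|$; plus the two extra terms in \eqref{def:RvBg}. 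These extra terms — the commutator $[V_D,\<\im\varepsilon\partial_x\>^{\widetilde N}]$ term, controlled by $\varepsilon$ via Lemma \ref{claim:l2boundII}, and the $\mathcal{T}(2\chi_{B^2}'\partial_x+\chi_{B^2}'')\widetilde\eta_1$ term, which is supported in $|x|\sim B^2$ and so picks up a factor that is small relative to the $B$-dependent weights — are handled using the exponential off-diagonal decay \eqref{eq:coer52} of the kernel of $\mathcal{T}$ together with \eqref{eq:estimates111} to reinsert $\|w_1\|_{\widetilde\Sigma}$.

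I expect the main obstacle to be the bookkeeping around the cutoff $\chi_{B^2}$ in the transformed variable $\mathbf{v}=\mathcal{T}\chi_{B^2}\widetilde{\boldsymbol\eta}$, specifically controlling the commutator term $\mathcal{T}(2\chi_{B^2}'\partial_x+\chi_{B^2}'')\widetilde\eta_1$ in \eqref{def:RvBg} and the terms where $\widetilde S_B$ does not commute with $\chi_{B^2}$: one must trade the loss of $\varepsilon^{-\widetilde N}$ powers coming from $\mathcal{T}$ (Lemma \ref{lem:coer6} and \eqref{eq:coer51}) against the smallness of the cutoff-derivative supports and the hierarchy \eqref{eq:relABg} $B\gg\exp(\varepsilon^{-1})$, so that the net contribution is genuinely $o_\varepsilon(1)$ times $\|\mathbf{w}\|_{\boldsymbol{\widetilde\Sigma}}^2$ or $\|\xi\|$. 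Once all terms are collected with the correct $c$-, $\varepsilon$- and $\delta$-dependence, \eqref{eq:2ndv1} and \eqref{eq:2ndv2} follow, completing the proof of Lemma \ref{lem:2ndv}.
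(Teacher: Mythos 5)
Your plan is structurally the same as the paper's: differentiate $\mathcal{J}_1,\mathcal{J}_2$ using \eqref{eq:vBg}, extract the good term $\<L_D v_1,\widetilde S_B v_1\>$ via skew-adjointness and the Darboux/repulsivity computation of \cite{CM2109.08108}, bound the source $\sum_{\mathbf{m}}\mathbf{z}^{\mathbf{m}}\widetilde{\mathcal{R}}_{\mathbf{m}}$ against $\widetilde S_B\mathbf{v}$, and estimate the five pieces of $\mathbf{R}_{\widetilde{\boldsymbol\eta}}$ plus the two commutator-type terms in \eqref{def:RvBg}. The one spot where you are too brief is the source-term pairing (the paper's Lemma \ref{lem:Gv1}): the bracket $\<\mathbf{J}\widetilde{\mathcal{R}}_{\mathbf{m}},\widetilde S_B\mathbf{v}\>$ splits into a $v_1$ piece controlled by $\|\xi_1\|_{\widetilde\Sigma}$ \emph{and} a $v_2$ piece controlled by $\|\xi_2\|_{L^2_{-a/10}}$ (you only mention $\xi_1$), and the $v_2$ piece requires showing $\|\widetilde S_B\widetilde{\mathcal{R}}_{\mathbf{m}1}\|_{L^2_{\frac a{10}+B^{-1}}}\lesssim 1$ \emph{uniformly in} $\varepsilon$; the crude bound $\|\mathcal T\|_{L^2\to L^2}\lesssim\varepsilon^{-\widetilde N}$ from Lemma \ref{lem:coer5} is not adequate here, and the paper instead writes out the Schwartz kernel of $\<\im\varepsilon\partial_x\>^{-\widetilde N}$ via a contour shift to get an $\varepsilon$-uniform weighted bound, a computation your plan does not allude to even though it is the one nontrivial point in that lemma.
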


\begin{proof}[Proof of Proposition \ref{prop:2ndvirial} assuming Lemma \ref{lem:2ndv}]
We have
\begin{align*}
|\mathcal{J}_1(\mathbf{v})|  &= | \< v_2,\widetilde{S}_B v_1\>  |      \le \|  {v}_2\| _{L^2}
|\widetilde{S}_B v_1 | \\& \lesssim  B \| \< \im \varepsilon \partial _x\> ^{-\widetilde{N}}\| _{L^2 \to L^2}   \| \< \im \varepsilon \partial _x\> ^{-\widetilde{N}}\| _{H^1 \to H^1} \|   \widetilde{ \eta}_2 \| _{L^2} \| \widetilde{ \eta}_1 \| _{H^1} \lesssim B \varepsilon ^{-2\widetilde{N}} \delta^2.
\end{align*}
Similarly, we have   $|\mathcal{J}_2(\mathbf{v})|\lesssim \varepsilon^{-2\widetilde{N} }\delta^2$.
Integrating \eqref{eq:2ndv1} and \eqref{eq:2ndv2} we have
\begin{align*}
\|\xi_1\|_{L^2\widetilde{\Sigma}}^2\lesssim& B\varepsilon^{-2\widetilde{N}}\delta^2+(c+\varepsilon)\|\xi_2\|_{L^2 L^2_{-\frac{a}{10}}}^2+c^{-1}\sum_{\mathbf{m}\in \mathbf{R}_{\mathrm{min}}}\|\mathbf{z}^{\mathbf{m}}\|_{L^2}^2+\delta\|\dot{\mathbf{z}}-\widetilde{\mathbf{z}}\|_{L^2}^2 +\varepsilon \|  \mathbf{w } \|_{L^2\boldsymbol{ \widetilde{\Sigma}}}^2 ,\\
\|\xi_2\|_{L^2L^2_{-\frac{a}{10}}}^2\lesssim& \varepsilon^{-2\widetilde{N}}\delta^2+\|\xi_1\|_{L^2\widetilde{\Sigma}}^2+\sum_{\mathbf{m}\in \mathbf{R}_{\mathrm{min}}}\|\mathbf{z}^{\mathbf{m}}\|_{L^2}^2+\delta\|\dot{\mathbf{z}}-\widetilde{\mathbf{z}}\|_{L^2}^2 +\varepsilon\|  \mathbf{w } \|_{L^2\boldsymbol{ \widetilde{\Sigma}}}^2 .
\end{align*}
Thus, as for the proof of Proposition \ref{prop:1stvirial}, we have the conclusion.
\end{proof}

As in \eqref{eq:diffI1} and \eqref{eq:diffI2}, we have
\begin{align}
\frac{d}{dt} \mathcal{J}_1(\mathbf{v})
&=\<\mathbf{J}\(  \mathbf{L}_{D}\mathbf{v} + \sum_{\mathbf{m}\in \mathbf{R}_{\mathrm{min}}}\mathbf{z}^{\mathbf{m}}\widetilde{ \mathcal{R}}_{\mathbf{m}} + \mathbf{R}_{\mathbf{v}} \),\widetilde{S}_B \mathbf{v} \>,\label{eq:diffJ1}\\
\frac{d}{dt} \mathcal{J}_2(\mathbf{v})
&=\<\mathbf{J}\(  \mathbf{L}_{D}\mathbf{v} + \sum_{\mathbf{m}\in \mathbf{R}_{\mathrm{min}}}\mathbf{z}^{\mathbf{m}}\widetilde{\mathcal{R}}_{\mathbf{m}} + \mathbf{R}_{\mathbf{v}} \),\chi_{B^2}^2\zeta_{\widetilde{B}}^2 \sigma_3\mathbf{v} \>.\label{eq:diffJ2}
\end{align}

\begin{lemma}
\label{lem:2ndv1m}
We have
\begin{align}
-\<\mathbf{J}  \mathbf{L}_{D}\mathbf{v} ,\widetilde{S}_B \mathbf{v} \>\geq \frac{1}{2}\<\(-\partial_x^2-\frac{1}{2}\chi_{B^2}^2 x V_{D}'\)\xi_1,\xi_1\> + B^{-1/2}O(\|\xi_1\|_{\widetilde{\Sigma}}^2+\|w_1\|_{\widetilde{\Sigma}}^2). \label{eq:2ndv1m}
\end{align}
\end{lemma}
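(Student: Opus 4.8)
The plan is to reduce $-\<\mathbf{J}\mathbf{L}_{D}\mathbf{v},\widetilde{S}_B\mathbf{v}\>$ to a scalar virial identity for the Schr\"odinger operator $L_D=-\partial_x^2+V_D$ and then control the error terms generated by the two cutoffs ($\chi_{B^2}$, at scale $B^2$, and $\zeta_B$, at scale $B$). First, since $\widetilde{S}_B=\tfrac12\psi_B'+\psi_B\partial_x$ is skew-adjoint and acts diagonally, $\<v_2,\widetilde{S}_B v_2\>=0$; writing $\mathbf{J}\mathbf{L}_{D}\mathbf{v}=(-L_D v_1,-v_2)$ and using $\<\mathbf{f},\mathbf{g}\>=\Re\int(f_1\overline{g_1}+f_2\overline{g_2})\,dx$ one gets $-\<\mathbf{J}\mathbf{L}_{D}\mathbf{v},\widetilde{S}_B\mathbf{v}\>=\<L_D v_1,\widetilde{S}_B v_1\>$. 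Then I would invoke the standard virial identity — the computation behind \cite[Lemma 4.2]{CM2109.08108}, with $L_1,S_A,W''(H)$ replaced by $L_D,\widetilde{S}_B,V_D$ — namely
\[
\<L_D v_1,\widetilde{S}_B v_1\>=\int\psi_B'|v_1'|^2\,dx-\tfrac14\int\psi_B'''|v_1|^2\,dx-\tfrac12\int\psi_B V_D'|v_1|^2\,dx .
\]

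Next, set $\rho_B:=\chi_{B^2}\zeta_B$, so that $\xi_1=\rho_B v_1$ and $\int\rho_B^2|v_1'|^2=\|\xi_1'\|_{L^2}^2+\int\rho_B\rho_B''|v_1|^2$. Since $\psi_B=\chi_{B^2}^2\varphi_B$ and $\varphi_B'=\zeta_B^2$ we have $\psi_B'=\rho_B^2+(\chi_{B^2}^2)'\varphi_B$; substituting, and recalling that the right-hand side of \eqref{eq:2ndv1m} equals $\tfrac12\|\xi_1'\|_{L^2}^2-\tfrac14\int\chi_{B^2}^2 xV_D'|\xi_1|^2$ (up to the $B^{-1/2}O(\cdot)$ term), the difference between $\<L_D v_1,\widetilde{S}_B v_1\>$ and that right-hand side becomes $\tfrac12\|\xi_1'\|_{L^2}^2-\tfrac14\int xV_D'|v_1|^2+\mathcal{E}$, where $\mathcal{E}$ collects $\int(\chi_{B^2}^2)'\varphi_B|v_1'|^2$, the terms issuing from $\rho_B\rho_B''$ and from $\psi_B'''$, and the potential discrepancy $\int V_D'\bigl(-\tfrac12\chi_{B^2}^2\varphi_B+\tfrac14\chi_{B^2}^4\zeta_B^2 x+\tfrac14 x\bigr)|v_1|^2$ (the coefficient here vanishes for $|x|\le 1$). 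Two observations then close the argument: $\tfrac12\|\xi_1'\|_{L^2}^2\ge 0$, and, by the repulsivity Assumption \ref{ass:repuls}, $xV_D'\le 0$, so $-\tfrac14\int xV_D'|v_1|^2\ge 0$; hence it suffices to prove $|\mathcal{E}|\lesssim B^{-1/2}\bigl(\|\xi_1\|_{\widetilde{\Sigma}}^2+\|w_1\|_{\widetilde{\Sigma}}^2\bigr)$.

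For this I would split $\mathcal{E}$ into two families. The terms carrying at least one derivative of $\chi_{B^2}$ are supported in $B^2\le|x|\le 2B^2$ with coefficients ranging from $O(B^{-1})$ (for $(\chi_{B^2}^2)'\varphi_B$, using $|\varphi_B|\lesssim B$) down to $O(B^{-5})$; on that region only the lossy bounds \eqref{eq:estimates115}--\eqref{eq:estimates116} are available, $\|v_1'\|_{L^2}\lesssim\varepsilon^{-\widetilde{N}}\|w_1\|_{\widetilde{\Sigma}}$ and $\|v_1\|_{L^2}\lesssim\varepsilon^{-\widetilde{N}}B^2\|w_1\|_{\widetilde{\Sigma}}$, and the hierarchy \eqref{eq:relABg} (which forces $\varepsilon^{-2\widetilde{N}}\le B^{1/2}$) turns them into $O(B^{-1/2})\|w_1\|_{\widetilde{\Sigma}}^2$. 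The remaining contributions involve only derivatives of $\zeta_B$ (which vanish for $|x|\le 1$, are $O(B^{-1})$ on $1\le|x|\le 2$, and are $O(B^{-2})e^{-2|x|/B}$ for $|x|\ge 2$) together with the Schwartz function $V_D'$ and the discrepancy coefficient (which vanishes for $|x|\le 1$ and is $O(|x|^2/B)$ on $\mathrm{supp}\,V_D'$); for these I use that $\xi_1=v_1$ on $|x|\le 2$, that $|\xi_1|^2=\chi_{B^2}^2\zeta_B^2|v_1|^2$ yields $\int e^{-2|x|/B}|v_1|^2\,\mathbf{1}_{|x|\le B^2}\lesssim\|\xi_1\|_{L^2}^2$, and that $\|\xi_1\|_{L^2_{-a/10}}^2\lesssim\|\xi_1\|_{\widetilde{\Sigma}}^2$, to obtain $O(B^{-1})\|\xi_1\|_{\widetilde{\Sigma}}^2$; summing the two families gives the claimed bound on $|\mathcal{E}|$.

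The step I expect to be the main obstacle is precisely the control of the commutator terms generated by $\chi_{B^2}$ inside $\mathcal{E}$ — above all $\int(\chi_{B^2}^2)'\varphi_B|v_1'|^2$ — which are localized at $|x|\sim B^2$, far from where $\xi_1$ carries any mass, so that only the crude estimates of Lemma \ref{lem:coer3} are usable there. Making the small $B^{-1}$ (or smaller) prefactors beat the $\varepsilon^{-\widetilde{N}}$-type losses through the ordering \eqref{eq:relABg}, and then consolidating every single term into one clean $B^{-1/2}O(\cdot)$, is where the bookkeeping is most delicate; everything else is a routine integration-by-parts computation together with the exponential decay of $\zeta_B$ and the Schwartz decay of $V_D'$.
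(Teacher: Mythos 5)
Your overall strategy matches the paper's: reduce $-\<\mathbf{J}\mathbf{L}_D\mathbf{v},\widetilde S_B\mathbf{v}\>$ to $\<L_D v_1,\widetilde S_B v_1\>$ via the skew-adjointness of $\widetilde S_B$, then run a scalar virial computation. The paper simply cites Lemma 6.1 of \cite{CM2109.08108} for the latter step, which is what you are spelling out, so the substance is the same. Your scalar virial identity and the identification of $\mathcal{E}$ are correct, as is the treatment of the $(\chi_{B^2}^2)'\varphi_B|v_1'|^2$ commutator term and of the $V_D'$ discrepancy.

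However, there is a genuine gap in your treatment of the terms $\int\rho_B\rho_B''|v_1|^2$ and $-\tfrac14\int\psi_B'''|v_1|^2$ on the region $2\le|x|\le B^2$ (i.e.\ where $\chi_{B^2}\equiv 1$). There each term separately equals $\pm\tfrac{1}{B^2}\int_{2\le|x|\le B^2}e^{-2|x|/B}|v_1|^2 = \pm\tfrac{1}{B^2}\int_{2\le|x|\le B^2}|\xi_1|^2$, which you propose to control via $\|\xi_1\|_{L^2}^2$. But $\|\xi_1\|_{L^2}$ is not bounded by $\|\xi_1\|_{\widetilde\Sigma}$ with any $B$-independent constant (recall $\|\cdot\|_{\widetilde\Sigma}^2\sim\|\cdot\|_{\dot H^1}^2+\|\cdot\|_{L^2_{-a/10}}^2$): a smooth plateau of height $1$ on $B\le|x|\le B^2$ has $\|\xi_1\|_{L^2}^2\sim B^2$ while $\|\xi_1\|_{\widetilde\Sigma}^2\sim 1$, so $\tfrac{1}{B^2}\|\xi_1\|_{L^2}^2$ can be of order $1$, which does not fit inside a $B^{-1/2}O(\cdot)$ error. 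The step that makes the argument close — and that you omit — is an exact cancellation: on $\{|x|\le B^2\}$ one has $\rho_B=\zeta_B$, $\psi_B=\varphi_B$, and
\begin{align*}
\rho_B\rho_B'' - \tfrac14\psi_B''' \;=\; \zeta_B\zeta_B''-\tfrac14(\zeta_B^2)'' \;=\; \tfrac12\bigl(\zeta_B\zeta_B''-(\zeta_B')^2\bigr)\;=\;\tfrac12\,\zeta_B^2\,(\log\zeta_B)''\,,
\end{align*}
and since $\log\zeta_B(x)=-\tfrac{|x|}{B}(1-\chi(x))$ is linear for $|x|\ge 2$, the coefficient $(\log\zeta_B)''$ is supported in $1\le|x|\le 2$, where it is $O(B^{-1})$. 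After this reorganization the contribution becomes $\tfrac12\int(\log\zeta_B)''|\xi_1|^2$, a compactly supported $O(B^{-1})$ weight that genuinely is bounded by $B^{-1}\|\xi_1\|_{L^2_{-a/10}}^2\lesssim B^{-1}\|\xi_1\|_{\widetilde\Sigma}^2$. This is precisely how the cited \cite[Lemma 6.1]{CM2109.08108} organizes its virial identity (with the weight written directly against $|\xi_1|^2$ rather than $|v_1|^2$); without this cancellation the term-by-term bound you propose does not close.

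A smaller, though related, imprecision: you invoke $\|\xi_1\|_{L^2_{-a/10}}^2\lesssim\|\xi_1\|_{\widetilde\Sigma}^2$ immediately after bounding things by the \emph{unweighted} $\|\xi_1\|_{L^2}^2$, so the chain of inequalities as written is a non sequitur even before one checks the counterexample above. Once the cancellation is put in place, the remaining bookkeeping you outline (the $\partial\chi_{B^2}$ commutator family handled by \eqref{eq:estimates115}--\eqref{eq:estimates116} together with $\varepsilon^{-2\widetilde N}\le B^{1/2}$ from \eqref{eq:relABg}, and the Schwartz decay of $V_D'$) is correct and consolidates into the stated $B^{-1/2}O(\|\xi_1\|_{\widetilde\Sigma}^2+\|w_1\|_{\widetilde\Sigma}^2)$.
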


\begin{proof}
Since $\widetilde{S}_B$ is skew-adjoint, we have the following
\begin{align}
-\<\mathbf{J}  \mathbf{L}_{D}\mathbf{v} ,\widetilde{S}_B \mathbf{v} \>=\<L_Dv_1, \widetilde{S}_B v_1\>,
\end{align}
where by    \cite[Lemma 6.1]{CM2109.08108} the very last term has the lower bound in the right hand side of \eqref{eq:2ndv1m}.
\end{proof}

\begin{lemma}\label{lem:Gv1}
We have
\begin{align}\label{eq:Gv11}
\left| \<\mathbf{z}^{\mathbf{m}} \mathbf{J}  \widetilde{\mathcal{R}}_{\mathbf{m}},\widetilde{S}_B \mathbf{v} \>\right| \lesssim |\mathbf{z}^{\mathbf{m}}|    \(
\| \boldsymbol{ \xi}  \|_{\boldsymbol{ \widetilde{\Sigma}}}   + e^{-B/2} \| \mathbf{w }   \| _{\boldsymbol{ \widetilde{\Sigma}} }    \)   .
\end{align}
\end{lemma}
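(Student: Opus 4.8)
The strategy is to unfold the definition $\widetilde{\mathcal{R}}_{\mathbf{m}} = \mathcal{T}\chi_{B^2}P_c\mathcal{R}_{\mathbf{m}}$ from \eqref{def:Gbgm}, peel off the operator $\widetilde{S}_B$ via its structure $\widetilde{S}_B = \tfrac{1}{2}\psi_B' + \psi_B\partial_x$ with $\psi_B = \chi_{B^2}^2\varphi_B$, and then split the pairing according to where the weight $\psi_B$ is supported. Since $\mathbf{m}\in\mathbf{R}_{\min}$, Proposition \ref{prop:rp} gives $\mathcal{R}_{\mathbf{m}}\in\boldsymbol{\Sigma}^\infty$, so $\chi_{B^2}P_c\mathcal{R}_{\mathbf{m}}$ is Schwartz-class with all weighted norms bounded uniformly (the cutoff only improves localization). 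First I would write
\begin{align*}
\<\mathbf{z}^{\mathbf{m}}\mathbf{J}\widetilde{\mathcal{R}}_{\mathbf{m}},\widetilde{S}_B\mathbf{v}\>
= \mathbf{z}^{\mathbf{m}}\<\mathbf{J}\mathcal{T}\chi_{B^2}P_c\mathcal{R}_{\mathbf{m}},\widetilde{S}_B\mathbf{v}\>,
\end{align*}
and then move $\widetilde{S}_B$ onto the first factor using skew-adjointness, obtaining $-\mathbf{z}^{\mathbf{m}}\<\widetilde{S}_B\mathbf{J}\mathcal{T}\chi_{B^2}P_c\mathcal{R}_{\mathbf{m}},\mathbf{v}\>$, at which point it remains to pair a fixed rapidly-decaying function against $\mathbf{v}$ and compare with the norms of $\boldsymbol{\xi} = \chi_{B^2}\zeta_B\mathbf{v}$ and $\mathbf{w} = \zeta_A\widetilde{\boldsymbol{\eta}}$.

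The key point is the region decomposition. On $|x|\le 2B^2$ the cutoff $\chi_{B^2}$ is active, so the pairing sees $\mathbf{v}$ only through $\boldsymbol{\xi}$; more precisely, one writes $\mathbf{v} = \chi_{B^2}^{-1}\zeta_B^{-1}\boldsymbol{\xi}$ on $\mathrm{supp}\,\chi_{B^2}$, and since $\zeta_B^{-1}$ grows only like $e^{|x|/B}$ while the test function $\widetilde{S}_B\mathbf{J}\mathcal{T}\chi_{B^2}P_c\mathcal{R}_{\mathbf{m}}$ still decays faster than any such exponential (because $\mathcal{R}_{\mathbf{m}}\in\boldsymbol{\Sigma}^\infty$ and, by \eqref{eq:coer52}, the kernel of $\mathcal{T}$ decays like $e^{-|x-y|/(2\varepsilon)}$ which beats $e^{|x|/B}$ by \eqref{eq:relABg}), Cauchy–Schwarz against the $\boldsymbol{\widetilde{\Sigma}}$-norm of $\boldsymbol{\xi}$ gives the first term $|\mathbf{z}^{\mathbf{m}}|\,\|\boldsymbol{\xi}\|_{\boldsymbol{\widetilde{\Sigma}}}$. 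On the complementary region $|x|\gtrsim B^2$ the cutoff in front is zero, so the only nonzero contribution comes from the kernel of $\mathcal{T}$ reaching out from $\mathrm{supp}(\chi_{B^2}P_c\mathcal{R}_{\mathbf{m}})\subset\{|y|\le 2B^2\}$ to $\{|x|\gtrsim B^2\}$; there $|x-y|\gtrsim B^2$, so \eqref{eq:coer52} produces a gain $e^{-B^2/(4\varepsilon)}$, and pairing the resulting small Schwartz tail against $\mathbf{v}$ — controlled in the far region by $\mathbf{w}$ through the inverse formula Lemma \ref{lem:coer6} together with the estimates in Lemma \ref{lem:coer3} (in particular \eqref{eq2stestJ21-4}) — yields a bound of the form $|\mathbf{z}^{\mathbf{m}}|\,e^{-cB^2/\varepsilon}\varepsilon^{-\widetilde{N}}\|\mathbf{w}\|_{\boldsymbol{\widetilde{\Sigma}}}$, which is $\lesssim |\mathbf{z}^{\mathbf{m}}|\,e^{-B/2}\|\mathbf{w}\|_{\boldsymbol{\widetilde{\Sigma}}}$ since $B^2/\varepsilon \gg B$ and $\varepsilon^{-\widetilde{N}}$ is dominated by $e^{B^2/(8\varepsilon)}$, again by \eqref{eq:relABg}.

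The main obstacle, and the step requiring the most care, is the far-region estimate: one must quantify precisely how the exponentially small kernel bound \eqref{eq:coer52} for $\mathcal{T}$ interacts with the at-most-exponential growth $e^{|x|/B}$ coming from $\zeta_B^{-1}$ and with the polynomial loss $\varepsilon^{-\widetilde{N}}$ from the $\langle i\varepsilon\partial_x\rangle^{-\widetilde{N}}$ factor, and check that the hierarchy \eqref{eq:relABg} makes the product genuinely of size $e^{-B/2}$ or smaller. The derivative $\partial_x$ inside $\widetilde{S}_B$ and the factor $\psi_B$ (which grows like $\varphi_B\sim B$) are harmless: differentiating the Schwartz test function costs nothing, and the linear-in-$B$ factor is swallowed by the exponential gains. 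One also uses that $\zeta_A^{-1}$ appearing when one rewrites $\widetilde{\boldsymbol{\eta}}$ in terms of $\mathbf{w}$ grows only like $e^{|x|/A}$, and $A\ll B^2$, so this too is dominated. I would also keep in mind that $P_c$ is bounded on the relevant weighted spaces (it commutes with the weights up to exponentially localized corrections, since $L^2_{discr}$ is spanned by Schwartz functions), so inserting or removing it never costs more than a constant. Assembling these two regional bounds gives \eqref{eq:Gv11}.
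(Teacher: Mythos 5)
Your plan is substantially the same as the paper's: move $\widetilde{S}_B$ onto the fixed profile by skew-adjointness, split $1=\chi_{B^2}+(1-\chi_{B^2})$, pair the $\chi_{B^2}$ piece against $\boldsymbol{\xi}$ via $\chi_{B^2}\mathbf{v}=\zeta_B^{-1}\boldsymbol{\xi}$, and recover an exponential gain in the transition zone $B^2\le|x|\le 2B^2$ paired against $\mathbf{w}$. The only structural difference is that the paper separates the two components of $\mathbf{J}\widetilde{\mathcal{R}}_{\mathbf{m}}$ and keeps $\widetilde{S}_B$ on $v_1$ for the first one (invoking Lemma 6.3 of \cite{CM2109.08108} directly), whereas you transfer $\widetilde{S}_B$ to the test function in both components; this is fine because you then only need $\|\boldsymbol{\xi}\|_{L^2_{-a/10}}\lesssim\|\boldsymbol{\xi}\|_{\boldsymbol{\widetilde{\Sigma}}}$ on the right.

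Two local slips and one genuinely glossed-over step deserve attention. First, writing $\mathbf{v}=\chi_{B^2}^{-1}\zeta_B^{-1}\boldsymbol{\xi}$ ``on $\mathrm{supp}\,\chi_{B^2}$'' is problematic since $\chi_{B^2}^{-1}$ blows up near $|x|=2B^2$; the clean statement is $\chi_{B^2}\mathbf{v}=\zeta_B^{-1}\boldsymbol{\xi}$, which is what you actually use after inserting the $\chi_{B^2}+(1-\chi_{B^2})$ split. Second, the claim that $\mathrm{supp}(\chi_{B^2}P_c\mathcal{R}_{\mathbf{m}})\subset\{|y|\le 2B^2\}$ together with $|x|\gtrsim B^2$ forces $|x-y|\gtrsim B^2$ is false (take $x,y$ both near $2B^2$); the correct reasoning is that $\mathcal{R}_{\mathbf{m}}\in\boldsymbol{\Sigma}^\infty$ already decays like $e^{-a_1|y|}$, so either $|y|$ is large and the source is tiny, or $|y|$ is small and then indeed $|x-y|\gtrsim B^2$ and the kernel bound \eqref{eq:coer52} kicks in; you reach the right answer for the wrong stated reason. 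Third, and most importantly, the bound $\|\zeta_B^{-1}\widetilde{S}_B\widetilde{\mathcal{R}}_{\mathbf{m}1}\|_{L^2_{a/10}}\lesssim 1$ uniformly in $\varepsilon\in(0,1]$ — which is the real content of the near-region estimate — is not a one-line consequence of ``$e^{-|x-y|/(2\varepsilon)}$ beats $e^{|x|/B}$'': one has to show that $\langle\im\varepsilon\partial_x\rangle^{-\widetilde{N}}$, acting on a fixed $e^{-a_1|y|}$-decaying source, produces a function decaying like $e^{-(a/5)|x|}$ with constant uniform in $\varepsilon$, and the paper carries this out via a contour shift $\tau_2=2^{-1/2}\varepsilon^{-1}\sign(x-y)$ in the Fourier integral, handling the logarithmic singularity at $\widetilde N=1$ separately. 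That computation (formula \eqref{eq:Gv13} and the subsequent bounds) is the actual work of the proof and should not be waved away.
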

\proof  We have
\begin{align}\label{eq:Gv12} & \<\mathbf{z}^{\mathbf{m}} \mathbf{J}  \widetilde{\mathcal{R}}_{\mathbf{m}},\widetilde{S}_B \mathbf{v} \> = \<\mathbf{z}^{\mathbf{m}}   \widetilde{\mathcal{R}}_{\mathbf{m}2},\widetilde{S}_B v_1 \> +  \<\mathbf{z}^{\mathbf{m}} \widetilde{S}_B  \widetilde{\mathcal{R}}_{\mathbf{m}1}, v_2 \> .
\end{align}
The following inequality is the content of Lemma 6.3 in \cite{CM2109.08108},
\begin{align*} & \left |  \<\mathbf{z}^{\mathbf{m}}   \widetilde{\mathcal{R}}_{\mathbf{m}2},\widetilde{S}_B v_1 \>  \right | \le  |\mathbf{z}^{\mathbf{m}}|    \(     \|\xi_1\|_{\widetilde{\Sigma}}   + e^{-B/2} \| w_1   \| _{\widetilde{\Sigma}   }         \) .
\end{align*}
We turn to the second term in the right in \eqref{eq:Gv12}. Using   $1=(1-\chi_{B^2}) + \chi_{B^2}$  we split in two and  bound separately the two terms.  Using $ \xi_2 = \chi_{B^2}  \zeta _{B}v_2$
the contribution from $ \chi_{B^2}$   is
\begin{align*} & \left | \<\mathbf{z}^{\mathbf{m}}   e ^{\frac{a}{10}   |x|} \zeta _{B}^{-1} \widetilde{S}_B  \widetilde{\mathcal{R}}_{\mathbf{m}1}, e ^{-\frac{a}{10}   |x|}  \xi_2 \>  \right | \le  |\mathbf{z}^{\mathbf{m}}|      \|   \xi_2 \| _{L^2 _{-\frac{a}{10}}}
\|        \widetilde{S}_B  \widetilde{\mathcal{R}}_{\mathbf{m}1} \| _{L^2_{ \frac{a}{10} + B ^{-1}}}    .
\end{align*}
We show now that the last factor is $\lesssim 1$. The term we need to bound is
\begin{align*} &        e ^{\frac{a}{10}   |x|} \zeta _{B}^{-1}       (\chi_{B^2}^2\varphi_B) '   \<\im \varepsilon \partial_x\>^{-\widetilde{N} }\mathcal{A}^*   \chi_{B^2} {\mathcal{R}}_{\mathbf{m}1} +2 e ^{\frac{a}{10}   |x|} \zeta _{B}^{-1}  \chi_{B^2}^2\varphi_B            \<\im \varepsilon \partial_x\>^{-\widetilde{N} }\partial_x  \mathcal{A}^*   \chi_{B^2} {\mathcal{R}}_{\mathbf{m}1}.
\end{align*}
 We bound only the second term, since the first can be bounded similarly and in fact is smaller.  Let us set $f:=\partial_x  \mathcal{A}^*   \chi_{B^2} {G}_{\mathbf{m}1}$.  We have
 \begin{align} &                \<\im \varepsilon \partial_x\>^{-\widetilde{N} } f  (x) = \int  f(y)    e ^{\frac{a}{5}   |y|}  I  dy \text{ , where }     I:=\int \frac{e ^{-\frac{a}{5}   |y|} e ^{\im (x-y)  ( \tau _1+\im \tau _2)}}{ \( 1+ \varepsilon ^2 \tau _1^2 -\varepsilon ^2 \tau _2^2  + 2\im \varepsilon ^2 \tau _1   \tau _2\) ^{ \widetilde{N}/2}   } d \tau _1 \label{eq:Gv13}
\end{align}
 is a generalized  integral in $\tau _1$,   well defined, using integration by parts,  also  for $ \widetilde{N}=1$, when it is not absolutely convergent. $I$  is constant in $ |\tau _2|< \varepsilon ^{-1} $. Then, for
 $  \tau _2 = 2 ^{-1/2} \varepsilon ^{-1}   \sign (x-y)$ we have
 \begin{align*} &              I =      e ^{-\frac{a}{5}   |y|}     e ^{- \frac{|x-y|}{2\varepsilon}}    II\text{ , where } II:=  \int \frac{ e ^{\im (x-y)   \tau _1 }}{ \( 1/2+ \varepsilon ^2 \tau _1^2   + \sqrt{2} \im \varepsilon   \tau _1   \sign (x-y)\) ^{ \widetilde{N}/2}   } d \tau _1.
\end{align*}
 For $ \widetilde{N}>1$  we have $|II|\lesssim \varepsilon ^{-1}$. Standard arguments show  $|II|\lesssim  \varepsilon ^{-1} \log \(  2+ \varepsilon /|x-y| \)     $    for $ \widetilde{N}=1$.
 Since $ e ^{-\frac{a}{5}   |y|}     e ^{- \frac{|x-y|}{4\varepsilon}} \le  e ^{-\frac{a}{5}   |x|}$,  we conclude
 \begin{align*} &            \left |     \<\im \varepsilon \partial_x\>^{-\widetilde{N} } f  (x)    \right | \lesssim   e ^{-\frac{a}{5}   |x|}  \int     \varepsilon ^{-1}   K _{\widetilde{N}}\(   \frac{x-y }{\varepsilon}   \)
  | f(y)   |  e ^{\frac{a}{5}   |y|} dy \text{  with }  K _{\widetilde{N}}\(   x   \) =  e ^{- \frac{|x|}{4 }}     \log  ^{\sigma _{\widetilde{N}}}\(  2+   /|x| \)  ,
\end{align*}
where $\sigma _{\widetilde{N}} =0$  for $\widetilde{N}>1$  and   $\sigma _{\widetilde{N}} =1$  for $\widetilde{N}=1$.
Then
  \begin{align*} &      \|        \chi_{B^2}^2\varphi_B            \<\im \varepsilon \partial_x\>^{-\widetilde{N} }f  \| _{L^2_{ \frac{a}{10} + B ^{-1}}} \lesssim \|    e ^{\frac{a}{10}   |x|} \zeta _{B}^{-1}   \varphi_B     e ^{-\frac{a}{5}   |x|} \| _{L^\infty} \| K _{\widetilde{N}} \| _{L^1}  \|  e ^{\frac{a}{5}   |x|}\partial_x  \mathcal{A}^*   \chi_{B^2} \mathcal{R}_{\mathbf{m}1} \| _{L^2_{ \frac{a}{10}  }} \lesssim 1.
\end{align*}
To finish the proof we consider the following, which completes the proof,
\begin{align*}     \left |    \<\mathbf{z}^{\mathbf{m}}  e ^{ \frac{a}{10}} \zeta _{A}^{-1}   (1-\chi_{B^2})   \widetilde{S}_B  \widetilde{\mathcal{R}}_{\mathbf{m}1},   e ^{-\frac{a}{10}   |y|}\zeta _{A}   v_2 \>    \right  | &  \lesssim
 | \mathbf{z}^{\mathbf{m}}  | \  \| w_2  \| _{L^2_{-\frac{a}{10} }}     \|      (1-\chi_{B^2})   \widetilde{S}_B  \widetilde{\mathcal{R}}_{\mathbf{m}1}  \| _{L^2_{ \frac{a}{10} + A ^{-1}}}  \\& \le e^{-B}   | \mathbf{z}^{\mathbf{m}}  | \  \| w_2  \| _{L^2_{-\frac{a}{10} }}  .
\end{align*}

 \qed

\begin{lemma}
\label{lem:2ndv1r}
We have
\begin{align}
\left |\<\mathbf{J}  \mathbf{R}_{\mathbf{v}}  ,\widetilde{S}_B \mathbf{v} \>   \right |
\le o _
\varepsilon (1)& \left [
  \| \boldsymbol{ \xi}  \|_{\boldsymbol{ \widetilde{\Sigma}}}^2+\|  \mathbf{w}  \|_{\boldsymbol{ \widetilde{\Sigma}}}^2    +
	|\dot{\mathbf{z}}-\widetilde{\mathbf{z}}|^2 + \sum_{\mathbf{m}\in \mathbf{R}_{\mathrm{min}}} |\mathbf{z}^{\mathbf{m}}| ^2   \right ] .
 \label{eq:v21r0}
\end{align}

\end{lemma}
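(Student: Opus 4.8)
The plan is to expand $\mathbf{R}_{\mathbf{v}}$ using \eqref{def:RvBg} together with \eqref{eq:Rtildeeta} into seven pieces and to bound the pairing of each against $\widetilde{S}_B\mathbf{v}$ separately, in the same spirit as the corresponding estimate in \cite{CM2109.08108}. The pieces are the five coming from $\mathcal{T}\chi_{B^2}P_c\mathbf{R}_{\widetilde{\boldsymbol{\eta}}}$, namely $\mathcal{T}\chi_{B^2}P_c\boldsymbol{\mathcal{R}}_1[\mathbf{z}]$, $\mathcal{T}\chi_{B^2}P_c\mathbf{J}\mathbf{F}$, $\mathcal{T}\chi_{B^2}P_c(\mathbf{L}[\mathbf{z}]-\mathbf{L}_1)\boldsymbol{\eta}$, $\mathcal{T}\chi_{B^2}P_cD\boldsymbol{\phi}[\mathbf{z}](\dot{\mathbf{z}}-\widetilde{\mathbf{z}})$ and $\mathcal{T}\chi_{B^2}P_c\mathbf{L}_1(R[\mathbf{z}]-1)\widetilde{\boldsymbol{\eta}}$, plus the two terms in the second component of \eqref{def:RvBg}, the cutoff commutator $-\mathcal{T}(2\chi_{B^2}'\partial_x+\chi_{B^2}'')\widetilde{\eta}_1$ and the symbol commutator $\<\im\varepsilon\partial_x\>^{-\widetilde{N}}[V_D,\<\im\varepsilon\partial_x\>^{\widetilde{N}}]v_1$. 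Throughout I would use that $\widetilde{S}_B$ is skew-adjoint, that $\mathcal{T}=\<\im\varepsilon\partial_x\>^{-\widetilde{N}}\mathcal{A}^*$ satisfies the mapping bounds and the kernel decay of Lemma \ref{lem:coer5}, and the estimates of Lemmas \ref{lem:coer3} and \ref{lem:coer6} relating $v_1$, $\xi_1$ and $w_1$; and, exactly as in the proof of Lemma \ref{lem:Gv1}, I would control $\widetilde{S}_B\mathbf{v}$ after splitting $1=(1-\chi_{B^2})+\chi_{B^2}$, so that it is dominated by $\|\boldsymbol{\xi}\|_{\boldsymbol{\widetilde{\Sigma}}}$ plus an $e^{-B/2}$-type multiple of $\|\mathbf{w}\|_{\boldsymbol{\widetilde{\Sigma}}}$.

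The five $P_c\mathbf{R}_{\widetilde{\boldsymbol{\eta}}}$-pieces each carry a small factor. For $\boldsymbol{\mathcal{R}}_1[\mathbf{z}]$ one uses \eqref{eq:R1FRemainder} to extract $|\mathbf{z}|\sum_{\mathbf{m}\in\mathbf{R}_{\mathrm{min}}}|\mathbf{z}^{\mathbf{m}}|$; for $D\boldsymbol{\phi}[\mathbf{z}](\dot{\mathbf{z}}-\widetilde{\mathbf{z}})$ one uses that $P_cD\boldsymbol{\phi}[\mathbf{z}]=P_c(D\boldsymbol{\phi}[\mathbf{z}]-D\boldsymbol{\phi}[\mathbf{0}])$ is $O(|\mathbf{z}|)$ in every $\boldsymbol{\Sigma}^{l}$ since $D\boldsymbol{\phi}[\mathbf{0}]$ spans $L^2_{discr}$; for $(\mathbf{L}[\mathbf{z}]-\mathbf{L}_1)\boldsymbol{\eta}=\mathbf{J}\Delta_{W''}(\mathbf{z})E_1\boldsymbol{\eta}$ one uses that $\Delta_{W''}(\mathbf{z})$ is exponentially localized and $O(|\mathbf{z}|)$, handling the four sub-pieces exactly as in \eqref{v11remainder1} and rewriting $\boldsymbol{\eta}=R[\mathbf{z}]\widetilde{\boldsymbol{\eta}}$ and $\chi_{B^2}\widetilde{\eta}_1$ through \eqref{eq:lem:R} and \eqref{eq:Tinverse}; for $\mathbf{L}_1(R[\mathbf{z}]-1)\widetilde{\boldsymbol{\eta}}$ one uses \eqref{eq:lem:R} directly; and for $\mathbf{J}\mathbf{F}$ one uses the pointwise bound $|F_1|\lesssim|\eta_1|^2\lesssim\delta|\eta_1|$ on $\mathrm{supp}\,\chi_{B^2}$ together with Lemmas \ref{lem:coer3}--\ref{lem:coer6} to bound the localized $L^2$-norm of $\chi_{B^2}\eta_1$ by $\varepsilon^{-\widetilde{N}}B^2\|w_1\|_{\widetilde{\Sigma}}$. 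In each case the resulting prefactor is $\delta$ times a power of $B$ and $\varepsilon^{-1}$, which is $o_\varepsilon(1)$ by the hierarchy \eqref{eq:relABg}; a Cauchy--Schwarz then splits products such as $|\mathbf{z}^{\mathbf{m}}|\,\|\boldsymbol{\xi}\|_{\boldsymbol{\widetilde{\Sigma}}}$ into the squared quantities on the right of \eqref{eq:v21r0}.

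For the symbol-commutator term, Lemma \ref{claim:l2boundII} gives $\|\<\im\varepsilon\partial_x\>^{-\widetilde{N}}[V_D,\<\im\varepsilon\partial_x\>^{\widetilde{N}}]\|_{L^{2,-M}\to L^{2,M}}\lesssim\varepsilon$, and \eqref{eq2stestJ21-4} bounds $\|\<x\>^{-M}v_1\|_{H^1}$ by $\|\xi_1\|_{\widetilde{\Sigma}}+\varepsilon^{-\widetilde{N}}\<B\>^{-M+3}\|w_1\|_{\widetilde{\Sigma}}$, so that pairing against $\widetilde{S}_B v_1$ produces $O(\varepsilon)$ times these quantities. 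The step I expect to be the main obstacle is the cutoff-commutator term $-\mathcal{T}(2\chi_{B^2}'\partial_x+\chi_{B^2}'')\widetilde{\eta}_1$: its inhomogeneity is supported in $B^2\le|x|\le 2B^2$, where $\widetilde{\eta}_1$ is controlled (with the weight $\zeta_A$, $A\gg B^2$) by $\|w_1\|_{\widetilde{\Sigma}}$, while $\mathcal{T}$ has an integral kernel with the off-diagonal decay $e^{-|x-y|/2\varepsilon}$ of Lemma \ref{lem:coer5}, combined, when $\widetilde{N}=1$, with the logarithmic refinement already used in \eqref{eq:Gv13}. One then pairs against $\widetilde{S}_B v_1$ and splits the $x$-integration at $|x|\simeq B^2/2$: on $|x|\le B^2/2$ the kernel decay of $\mathcal{T}$ gives a factor $e^{-cB^2/\varepsilon}$, and on $|x|\ge B^2/2$ one instead uses the weight $e^{-(a/10)|x|}\le e^{-aB^2/20}$ contained in the $\widetilde{\Sigma}$-norm of $\xi_1$; either way the prefactor is exponentially small in $B$, hence $o_\varepsilon(1)$ by \eqref{eq:relABg}. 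Collecting the seven bounds yields \eqref{eq:v21r0}.
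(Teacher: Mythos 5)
Your decomposition of $\mathbf{R}_{\mathbf{v}}$ into the seven pieces, and the treatment of the five $P_c\mathbf{R}_{\widetilde{\boldsymbol{\eta}}}$-pieces and of the symbol-commutator term, follows the paper's proof closely (the paper uses the decompositions \eqref{eq:v21r7--} and \eqref{eq:2vinl} and the identity \eqref{eq:sbv} exactly as you describe). The one place where your sketch deviates in substance — and contains a genuine error — is the cutoff-commutator term $-\mathcal{T}(2\chi_{B^2}'\partial_x+\chi_{B^2}'')\widetilde{\eta}_1$, which you correctly single out as the delicate piece. Your proposed argument for the region $|x|\ge B^2/2$, namely to extract a small factor $e^{-aB^2/20}$ ``from the weight contained in the $\widetilde{\Sigma}$-norm of $\xi_1$,'' does not work: the weight $e^{-a\langle x\rangle/10}$ in $\|\xi_1\|_{L^2_{-a/10}}$ is a \emph{decreasing} weight, so a bound on that norm gives no pointwise exponential decay of $\xi_1$ (let alone of $\widetilde{S}_B v_1$) far from the origin. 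Worse, on $|x|\ge B^2/2$ the relation $v_1=\chi_{B^2}^{-1}\zeta_B^{-1}\xi_1$ amplifies by $\zeta_B^{-1}\gtrsim e^{B/2}$ rather than giving decay, so the intended ``prefactor exponentially small in $B$'' is not forthcoming from this route.

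Fortunately the term is easier than your sketch suggests and does not need the $\mathcal{T}$-kernel decay of Lemma~\ref{lem:coer5} at all. Writing $h=(2\chi_{B^2}'\partial_x+\chi_{B^2}'')\widetilde{\eta}_1$, supported in $B^2\le|x|\le2B^2$, one has $|\chi_{B^2}'|\lesssim B^{-2}$, $|\chi_{B^2}''|\lesssim B^{-4}$, and on that annulus $\zeta_A^{-1}\lesssim1$ (since $A\gg B^2$); combined with $\|w_1\|_{L^2(|x|\le2B^2)}\lesssim B^2\|w_1\|_{\widetilde{\Sigma}}$ from \eqref{eq:estimates111} and $\|w_1'\|_{L^2}\le\|w_1\|_{\widetilde{\Sigma}}$ this gives $\|h\|_{L^2}\lesssim B^{-2}\|w_1\|_{\widetilde{\Sigma}}$, so $\|\mathcal{T}h\|_{L^2}\lesssim\varepsilon^{-\widetilde{N}}B^{-2}\|w_1\|_{\widetilde{\Sigma}}$ by \eqref{eq:coer51}. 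Pairing with \eqref{eq:sbv}, $\|\widetilde{S}_B v_1\|_{L^2}\lesssim\varepsilon^{-\widetilde{N}}B\|w_1\|_{\widetilde{\Sigma}}+B\|\xi_1\|_{\widetilde{\Sigma}}$, yields
\begin{align*}
|\langle \mathcal{T}h,\widetilde{S}_B v_1\rangle|
\lesssim \varepsilon^{-2\widetilde{N}}B^{-1}\|w_1\|_{\widetilde{\Sigma}}^2
+\varepsilon^{-\widetilde{N}}B^{-1}\|w_1\|_{\widetilde{\Sigma}}\|\xi_1\|_{\widetilde{\Sigma}},
\end{align*}
and since $B\gg\exp(\varepsilon^{-1})$ by \eqref{eq:relABg} the prefactors are $o_\varepsilon(1)$, matching the $\varepsilon^{-\widetilde{N}}B^{-1}$ bound stated in \eqref{eq:v21r2}. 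You should replace your two-region kernel argument for this piece with this elementary operator-norm estimate (or simply cite the source in \cite{CM2109.08108} as the paper does); the rest of the proposal is sound.
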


\begin{proof}
As the proof of Lemma \ref{lem:v11remainder} we estimate each term.
First,
\begin{align}
&\<\mathbf{J}\begin{pmatrix} 0 \\ -\mathcal{T}(2\chi_{B^2}'\partial_x +\chi_{B^2}'')\widetilde{\eta}_1+\<\im \varepsilon\partial_x\>^{-\widetilde{N}}[V_{D},\<\im \varepsilon\partial_x\>^{\widetilde{N}}]v_1
\end{pmatrix},\widetilde{S}_B \mathbf{v} \>\label{eq:v21r1}\\&=
\<-\mathcal{T}(2\chi_{B^2}'\partial_x +\chi_{B^2}'')\widetilde{\eta}_1+\<\im \varepsilon\partial_x\>^{-\widetilde{N}}[V_{D},\<\im \varepsilon\partial_x\>^{\widetilde{N}}]v_1, \widetilde{S}_Bv_1\>\nonumber
\end{align}
The last term is bounded in  (6.19) and (6.20) of \cite{CM2109.08108},
and in particular  we have
\begin{align}\label{eq:v21r2}
|\eqref{eq:v21r1}|\lesssim (\varepsilon+\varepsilon^{-\widetilde{N}}B^{-1})\(\|\xi_1\|_{\widetilde{\Sigma}}^2+\|w_1\|_{\widetilde{\Sigma}}^2\).
\end{align}
We next, we have
\begin{align*} & \left |
\<\mathbf{J}\mathcal{T}\chi_{B^2}P_c\boldsymbol{\mathcal{R}}_1[\mathbf{z}],\widetilde{S}_B\mathbf{v}\> \right |   +    \left |
    \<
	\mathbf{J}\mathcal{T}\chi_{B^2}P_cD_{\mathbf{z}}\boldsymbol{\phi}[\mathbf{z}](\dot{\mathbf{z}}-\widetilde{\mathbf{z}}),
	\widetilde{S}_B\mathbf{v}
\>       \right |     \\&  \lesssim \delta   \(   |\dot{\mathbf{z}}-\widetilde{\mathbf{z}}| +  \sum_{\mathbf{m}\in  \mathbf{R}_{\mathrm{min}}} |\mathbf{z}^{\mathbf{m}}|   \)   \(     \| \boldsymbol{ \xi}  \|_{\boldsymbol{ \widetilde{\Sigma}}} + e^{-B/2}  \| \mathbf{w }\|_{\boldsymbol{ \widetilde{\Sigma}}}       \) ,
\end{align*}
because the first term in the left    can be treated   like \eqref{eq:Gv11}, except that it is   smaller  because of
the bound  \eqref{eq:R1FRemainder}  on $\boldsymbol{\mathcal{R}}_1[\mathbf{z}]$, and a similar argument holds for the second term on the left, where additionally we use $  \|    P_cD_{\mathbf{z}}\boldsymbol{\phi}[\mathbf{z}]    \| _{\C   ^{ {N}}   \to \Sigma }\lesssim |\mathbf{z}|.$

\noindent Next, proceeding as above
\begin{align}
 \left |\<
	\mathbf{J}\mathcal{T}\chi_{B^2}P_c  \mathbf{L}_1(R[\mathbf{z}]-1)\widetilde{\boldsymbol{\eta}},
	\widetilde{S}_B\mathbf{v}
\>  \right | &\lesssim
\|
	\widetilde{S}_B  \mathcal{T}\chi_{B^2}P_c \mathbf{L}_1(R[\mathbf{z}]-1)\widetilde{\boldsymbol{\eta}}
\|_{L^2_{\frac{a}{10}+B^{-1}}}
\nonumber
\\& \times
 \(     \| \boldsymbol{ \xi}  \|_{\boldsymbol{ \widetilde{\Sigma}}} + e^{-B/2}  \| \mathbf{w }\|_{\boldsymbol{ \widetilde{\Sigma}}}       \) .
\label{eq:v21r7}
\end{align}
By
\begin{align}
&\|\mathcal{T}\chi_{B^2} P_c \widetilde{S}_B[\mathbf{z}]\|_{\Sigma^{\widetilde{N}+1}\to L^2_{\frac{a}{10}+B^{-1}}}\lesssim \varepsilon^{-\widetilde{N}},\\
& \|R[\mathbf{z}]-1\|_{\Sigma^* \to \Sigma^{\widetilde{N}+1}  }\lesssim \delta,\\
& \|\widetilde{\boldsymbol{\eta}}\|_{\Sigma^*}\lesssim \|\mathbf{w}\|_{L^2_{-\frac{a}{10}}},
\end{align}
we conclude
\begin{align*}
\|
	\widetilde{S}_B  \mathcal{T}\chi_{B^2}P_c \mathbf{L}_1(R[\mathbf{z}]-1)\widetilde{\boldsymbol{\eta}}
\|_{L^2_{\frac{a}{10}+B^{-1}}}  \lesssim \delta \varepsilon^{-\widetilde{N}} \|\mathbf{w}\|_{L^2_{-\frac{a}{10}}} .
\end{align*}
Next, following the notation in Lemma \ref{lem:v11remainder},   we consider
\begin{align}
\<\mathbf{J} \mathcal{T}\chi_{B^2} P_c  (L[\mathbf{z}]-\mathbf{L}_1)  \boldsymbol{\eta} ,\widetilde{S}_B\mathbf{v}\> =\<    \mathcal{T}\chi_{B^2}   \Delta _{W ''}(\mathbf{z})    \widetilde{\eta} _1 ,\widetilde{S}_B {v}_1\>-\< \mathbf{J} \mathcal{T}\chi_{B^2} P_d (L[\mathbf{z}]-\mathbf{L}_1)  \widetilde{\boldsymbol{\eta}} ,\widetilde{S}_B\mathbf{v}\>  \nonumber\\  - \<\mathbf{J}\mathcal{T}\chi_{B^2} (L[\mathbf{z}]-\mathbf{L}_1)     (R[\mathbf{z}]-1) \widetilde{\boldsymbol{\eta}} ,\widetilde{S}_B\mathbf{v}\>  + \< \mathbf{J}\mathcal{T}\chi_{B^2}  P_d (L[\mathbf{z}]-\mathbf{L}_1)   (R[\mathbf{z}]-1) \widetilde{\boldsymbol{\eta}} ,\widetilde{S}_B\mathbf{v}\>.\label{eq:v21r7--}
\end{align}
Like in Lemma \ref{lem:v11remainder},  the most significant term in the right is the first, which for brevity is the only one we bound explicitly, since the other ones are simpler.
We have
\begin{align*}
  | \<    \mathcal{T}\chi_{B^2}   \Delta _{W ''}(\mathbf{z})    \widetilde{\eta} _1 ,\widetilde{S}_B {v}_1\> |\le  \varepsilon ^{-\widetilde{N}}  \|  \chi_{B^2}   \Delta _{W ''}(\mathbf{z})     \widetilde{\eta} _1 \| _{L^1} \|  \widetilde{S}_Bv_1  \| _{L^2}  \\ \lesssim  \varepsilon ^{-\widetilde{N}} \| e^{\frac{a}{10}  |x|} \Delta _{W ''}(\mathbf{z}) \| _{L^\infty} \| w_1\| _{L^2_{-\frac{a}{10}  }}  \(    \varepsilon ^{-\widetilde{N}}B\| w _1 \| _{\widetilde{\Sigma}}  + B\| \xi  _1   \| _{\widetilde{\Sigma}} \) \\  \lesssim \varepsilon ^{-2\widetilde{N}} B \delta   \| w_1\| _{L^2_{-\frac{a}{10}  }}    \(     \| w _1 \| _{\widetilde{\Sigma}}  + | \xi  _1   \| _{\widetilde{\Sigma}} \) ,
\end{align*}
where we  used,
 \begin{align}
 \|  \widetilde{S}_Bv_1  \| _{L^2} \lesssim    \varepsilon ^{-N}B\| w _1 \| _{\widetilde{\Sigma}}  + B\| \xi  _1   \| _{\widetilde{\Sigma}} .  \label{eq:sbv}
\end{align}
The proof of \eqref{eq:sbv} is   in \cite{KMM3} and for completeness we write the proof in \cite{CM2109.08108}. By  \eqref{eq:estimates116} and $\|  \psi _{B}   \|  _{L^\infty} \lesssim B$
\begin{align*}
 \|   \widetilde{S}_Bv_1  \| _{L^2}\lesssim \| \psi _{B}' v  _1 \|  _{L^2} + \| \psi _{B}  v '_1  \|  _{L^2} \lesssim \| \psi _{B}' v_1  \|  _{L^2} +  \varepsilon ^{-N}B\| w _1   \| _{\widetilde{\Sigma}}.
\end{align*}
Next, we have
\begin{align}&
| \psi ' _B| = |2\chi ' _{B^2} \chi _{B^2} \varphi _B + \chi _{B^2}^2 \zeta ^2 _B|\lesssim B ^{-1}\chi _{B^2}+  \chi _{B^2}^2 \zeta ^2 _B.\label{eq2stestJ33}
\end{align}
Then
\begin{align*}
 B ^{-1} \|   v _1  \|  _{L^2}\lesssim B  \varepsilon ^{-N}B\| w _1   \| _{\widetilde{\Sigma}}
\end{align*}
by \eqref{eq:estimates115}. By Lemma \ref{lem:equiv_rho} we have
\begin{align*}
   \|   \chi _{B^2}^2 \zeta ^2 _B v_1   \|  _{L^2} =    \|   \chi _{B^2}  \zeta  _B \xi  _1 \|  _{L^2}       \lesssim
    \sqrt{\| \<   x \> \chi _{B^2}  \zeta  _B \| _{L^1} }\| \xi    \| _{\widetilde{\Sigma}} \sim B \| \xi   _1 \| _{\widetilde{\Sigma}}
\end{align*}
and, finally  the following by \eqref{eq:estimates116}, which completes the proof of \eqref{eq:sbv},
\begin{align*}
   \|   \chi _{B^2}^2 \varphi _B   v  '_1  \|  _{L^2} \lesssim B   \|      v  '_1  \|  _{L^2}    \lesssim  B \varepsilon ^{-N} \| w_1  \| _{\widetilde{\Sigma}}.
\end{align*}
Next, following again the notation in Lemma \ref{lem:v11remainder},   we consider
\begin{align}
\<     \mathbf{J}    \mathcal{T}\chi_{B^2}  P_c\mathbf{ JF} ,\widetilde{S}_B \mathbf{v}\>=
 \<     \mathcal{T}\chi_{B^2}  \mathbf{F} ,\widetilde{S}_B  \mathbf{v}  \> - \<\mathbf{J} \mathcal{T}\chi_{B^2} P_d \mathbf{JF },\widetilde{S}_B \mathbf{v}\> .\label{eq:2vinl}
\end{align}
The main term in the right is the first, which by \eqref{eq:sbv}  can be treated as
\begin{align*}
 |
 \<     \mathcal{T}\chi_{B^2} F_1 ,\widetilde{S}_B   {v}  _1 \> |   \le \|  \mathcal{T}\chi_{B^2} F_1 \| _{L^2}\|  \widetilde{S}_B   {v}  _1\| _{L^2} \lesssim    \varepsilon ^{-\widetilde{N}} \|   \chi_{B^2} \eta _1 ^2 \| _{L^2}  \(  \varepsilon ^{-N}B\| w _1 \| _{\widetilde{\Sigma}}  + B\| \xi  _1   \| _{\widetilde{\Sigma}} \)  \\  \lesssim \varepsilon ^{-2\widetilde{N}} B \| \eta _1  \| _{H^1}
 \(  \|   \chi_{B^2} \widetilde{\eta} _1  \| _{L^2}  +  \|    ( R[\mathbf{z}]-1)  \widetilde{\eta} _1  \| _{L^2}   \) \(  \| w _1 \| _{\widetilde{\Sigma}}  +  \| \xi  _1   \| _{\widetilde{\Sigma}} \)
\\   \lesssim  \varepsilon ^{-2\widetilde{N}} B \delta
 \(  \|  w _1  \| _{L^2(|x|\le 2B^2)}  +\delta        \| w  _1   \| _{\widetilde{\Sigma}}    \) \(  \| w _1 \| _{\widetilde{\Sigma}}  +  \| \xi  _1   \| _{\widetilde{\Sigma}} \)  \\  \lesssim     \varepsilon ^{-2\widetilde{N}} B ^3 \delta   \| w _1 \| _{\widetilde{\Sigma}}
  \(   \| w _1 \| _{\widetilde{\Sigma}} +  \| \xi  _1   \| _{\widetilde{\Sigma}} \)   ,
\end{align*}
where we used \eqref{eq:estimates111},  Lemma \ref{lem:R}.

\noindent Turning to the second term in the right of \eqref{eq:2vinl}, it is bounded from above by
\begin{align*}
 \| \widetilde{S}_B  \mathcal{T}\chi_{B^2} P_d\mathbf{J} \mathbf{F} \| _{\Sigma} \|  \mathbf{v}  \| _{\Sigma ^*}\lesssim \| F_1\| _{\Sigma ^*} \|  \mathbf{w}  \| _{ L^2_{-\frac{a}{10}}} \lesssim  \| \eta _1  \| _{H^1}  \| \eta _1  \| _{\Sigma ^*} \|  \mathbf{w}  \| _{ L^2_{-\frac{a}{10}}}\lesssim \delta  \|  \mathbf{w}  \| _{ L^2_{-\frac{a}{10}}}^2.
\end{align*}

\end{proof}

\begin{lemma}
\label{lem:2ndv2m}
For the   $\widetilde{B}$  defined in \eqref{eq:2nviqf2}, we have
\begin{align}
\label{eq:v22m0}
\<
	\mathbf{J}
	\(
		\mathbf{L}_{D}\mathbf{v}
	\),
	\chi_{B^2}^2
	\zeta_{\widetilde{B}}^2
	\sigma_3\mathbf{v}
\>
=
	\int \zeta_{\frac{10}{a}}^2|\xi_2|^2\,dx
-
	\<
		(L_D+\omega ^2) v_1,
		\chi_{B^2}^2 \zeta_{\widetilde{B}}^2 v_1
	\> ,
\end{align}
with
\begin{align}
\label{eq:v22m1}
|\<
		(L_D+\omega ^2)v_1,
		\chi_{B^2} \zeta_{\widetilde{B}}^2 v_1
\>|
\lesssim
	\|\xi_1\|_{\widetilde{\Sigma}}
	\(
		\|\xi_1\|_{\widetilde{\Sigma}}
			+
		e^{-B}\varepsilon^{\widetilde{N}}\|w_1\|_{\widetilde{\Sigma}}.
	\)
\end{align}
\end{lemma}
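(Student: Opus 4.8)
The plan is to obtain \eqref{eq:v22m0} by unwinding the matrix algebra, and \eqref{eq:v22m1} by a single integration by parts.

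For \eqref{eq:v22m0}, the point is that $\mathbf{J}\mathbf{L}_D$ is \emph{diagonal}: it acts as $-(L_D+\omega^2)$ on the first coordinate and as $-1$ on the second (the lower-left block of $\mathbf{L}_D$ being $L_D+\omega^2$, consistently with the resolvent identity \eqref{eq:resolv1}). Since $\sigma_3\mathbf{v}=(v_1,-v_2)$, this yields at once
\[
\<\mathbf{J}\mathbf{L}_D\mathbf{v},\chi_{B^2}^2\zeta_{\widetilde{B}}^2\sigma_3\mathbf{v}\>=-\<(L_D+\omega^2)v_1,\chi_{B^2}^2\zeta_{\widetilde{B}}^2v_1\>+\int\chi_{B^2}^2\zeta_{\widetilde{B}}^2|v_2|^2\,dx ,
\]
the last integral being real and nonnegative since $\chi_{B^2}^2\zeta_{\widetilde{B}}^2$ is a scalar weight. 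By \eqref{def:zetaphi} together with $\widetilde{B}^{-1}=B^{-1}+\frac{a}{10}$ (see \eqref{eq:2nviqf2}) one has $\zeta_{\widetilde{B}}=\zeta_{\frac{10}{a}}\zeta_B$, while $\xi_2=\chi_{B^2}\zeta_B v_2$ by \eqref{def:wAxiB}; hence $\chi_{B^2}^2\zeta_{\widetilde{B}}^2|v_2|^2=\zeta_{\frac{10}{a}}^2|\xi_2|^2$, giving \eqref{eq:v22m0}. The feature worth stressing is the contrast with Lemma \ref{lem:2ndv1m}, where the analogous $v_2$--term is killed by the skew--adjointness of $\widetilde{S}_B$: here $\chi_{B^2}^2\zeta_{\widetilde{B}}^2\sigma_3$ is a matrix multiplication operator, so the $v_2$--term survives, and it is exactly the coercive term $\int\zeta_{\frac{10}{a}}^2|\xi_2|^2\gtrsim\|\xi_2\|_{L^2_{-\frac{a}{10}}}^2$ needed on the left of \eqref{eq:2ndv2}.

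For \eqref{eq:v22m1}, I would integrate by parts once in the first term above. Writing $L_D+\omega^2=-\partial_x^2+\mathcal{V}$ with $\|\mathcal{V}\|_{L^\infty}\lesssim 1$,
\[
\<(L_D+\omega^2)v_1,\chi_{B^2}^2\zeta_{\widetilde{B}}^2v_1\>=\|\chi_{B^2}\zeta_{\widetilde{B}}v_1'\|_{L^2}^2+\Re\int v_1'\,(\chi_{B^2}^2\zeta_{\widetilde{B}}^2)'\,\overline{v_1}\,dx+\<\mathcal{V}v_1,\chi_{B^2}^2\zeta_{\widetilde{B}}^2v_1\> .
\]
The last term is $\lesssim\|\chi_{B^2}\zeta_{\widetilde{B}}v_1\|_{L^2}^2=\|\zeta_{\frac{10}{a}}\xi_1\|_{L^2}^2\lesssim\|\xi_1\|_{L^2_{-\frac{a}{10}}}^2\le\|\xi_1\|_{\widetilde{\Sigma}}^2$, using $\zeta_{\frac{10}{a}}\lesssim e^{-\frac{a}{10}\<x\>}$ and the norm equivalence in \eqref{eq:norm_rho}. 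For the first term I would write $\chi_{B^2}\zeta_B v_1'=\xi_1'-(\chi_{B^2}\zeta_B)'v_1$ and split $(\chi_{B^2}\zeta_B)'=\chi_{B^2}\zeta_B'+\chi_{B^2}'\zeta_B$: since $|\zeta_B'|\lesssim B^{-1}\zeta_B$, the first piece contributes $\lesssim B^{-1}\|\zeta_{\frac{10}{a}}\xi_1\|\lesssim B^{-1}\|\xi_1\|_{\widetilde{\Sigma}}$, while the second is supported on $\{B^2\le|x|\le2B^2\}$, where $|\chi_{B^2}'|\lesssim B^{-2}$, $\zeta_B\sim e^{-B}$ and $\zeta_{\frac{10}{a}}\sim e^{-\frac{a}{10}B^2}$, so by \eqref{eq:estimates115} it contributes $\lesssim e^{-B}\varepsilon^{\widetilde{N}}\|w_1\|_{\widetilde{\Sigma}}$ (the factor $e^{-\frac{a}{10}B^2}$ more than absorbing $\varepsilon^{-\widetilde{N}}B^2$); hence $\|\chi_{B^2}\zeta_{\widetilde{B}}v_1'\|\lesssim\|\xi_1\|_{\widetilde{\Sigma}}+e^{-B}\varepsilon^{\widetilde{N}}\|w_1\|_{\widetilde{\Sigma}}$. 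The middle term is treated the same way after $(\chi_{B^2}^2\zeta_{\widetilde{B}}^2)'=2\chi_{B^2}^2\zeta_{\widetilde{B}}\zeta_{\widetilde{B}}'+2\chi_{B^2}\chi_{B^2}'\zeta_{\widetilde{B}}^2$, using $|\zeta_{\widetilde{B}}'|\lesssim\zeta_{\widetilde{B}}$ and Cauchy--Schwarz. Collecting the three contributions, together with an elementary rearrangement of the type $ab\lesssim a^2+b^2$, gives \eqref{eq:v22m1}.

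The whole computation is routine; the one point needing care is the commutator bookkeeping behind \eqref{eq:v22m1}, namely that $\chi_{B^2}$ and $\zeta_B$ do not commute with $\partial_x$. The errors this generates are of two harmless types: those in which a derivative lands inside the core region, which are absorbed into the localised quantity $\|\xi_1\|_{\widetilde{\Sigma}}$ once $v_1'$ is traded for $\xi_1'$; and those supported near $|x|\sim B^2$, where the gain $e^{-B}$, reinforced by $e^{-\frac{a}{10}B^2}$, overwhelms the polynomial loss $\varepsilon^{-\widetilde{N}}B^2$ from \eqref{eq:estimates115}, thanks to the hierarchy $B\gg\exp(\varepsilon^{-1})$ in \eqref{eq:relABg}. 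No ingredient beyond the mapping properties in Lemma \ref{lem:coer3} and Definition \ref{def:spaces} is required.
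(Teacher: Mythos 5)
Your proof is correct in substance, but it follows a genuinely different path from the paper's, and the result you actually obtain for \eqref{eq:v22m1} is (harmlessly) a little weaker than what is stated. The paper rewrites $\chi_{B^2}^2\zeta_{\widetilde B}^2 v_1=\chi_{B^2}\zeta_B\zeta_{10/a}^2\xi_1$ and integrates by parts \emph{twice} in $\<-v_1'',\chi_{B^2}\zeta_B\zeta_{10/a}^2\xi_1\>$, then runs a full Leibniz expansion so that one slot of every resulting inner product carries a factor $\xi_1$ (with $v_1$ in the other slot replaced by $\xi_1$ plus a $(1-\chi_{B^2})\zeta_B\mathcal T\chi_{B^2}\zeta_A^{-1}w_1$ correction). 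That bookkeeping guarantees that every edge term appears as $\|\xi_1\|\cdot\|w_1\|$, hence the literal form $\|\xi_1\|_{\widetilde\Sigma}\bigl(\|\xi_1\|_{\widetilde\Sigma}+e^{-B}\varepsilon^{\widetilde N}\|w_1\|_{\widetilde\Sigma}\bigr)$ of \eqref{eq:v22m1}. Your route integrates by parts once, isolates $\|\chi_{B^2}\zeta_{\widetilde B}v_1'\|_{L^2}^2$, and trades $\chi_{B^2}\zeta_B v_1'=\xi_1'-(\chi_{B^2}\zeta_B)'v_1$; this is shorter and conceptually cleaner, but because the quantity is a square in $v_1'$ it inevitably produces a term of size $e^{-2B}\varepsilon^{2\widetilde N}\|w_1\|_{\widetilde\Sigma}^2$ that is not dominated by $\|\xi_1\|_{\widetilde\Sigma}\,e^{-B}\varepsilon^{\widetilde N}\|w_1\|_{\widetilde\Sigma}$ (take $\|\xi_1\|_{\widetilde\Sigma}=0$). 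So you have not proved the inequality \eqref{eq:v22m1} exactly as written, only the practically equivalent bound $\lesssim\|\xi_1\|_{\widetilde\Sigma}^2+e^{-2B}\varepsilon^{2\widetilde N}\|w_1\|_{\widetilde\Sigma}^2$; this is entirely sufficient for its only use, namely the $\varepsilon\|\mathbf w\|_{\boldsymbol{\widetilde\Sigma}}^2$ term on the right of \eqref{eq:2ndv2}, since $e^{-2B}\varepsilon^{2\widetilde N}\ll\varepsilon$ under \eqref{eq:relABg}, but it is worth noting that the paper's $\xi_1$-factoring is what produces the statement exactly as displayed. Finally, you are right that the algebra behind \eqref{eq:v22m0} requires the lower-left block of $\mathbf L_D$ to be $-(L_D+\omega^2)$: this matches the lemma and the resolvent identity \eqref{eq:resolv1}, and is not literally what \eqref{eq:LTR} says, so you correctly picked up on what is evidently a notational slip in the paper rather than a gap in your argument.
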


\begin{proof}
Formula \eqref{eq:v22m0} follows from direct computation.
We prove \eqref{eq:v22m1}.
First,
\begin{align}
\label{eq:v22m02}
\<
		(L_D+\omega ^2)v_1,
		\chi_{B^2} \zeta_{\widetilde{B}}^2 v_1
\>
=
	\<
		-v_1'',
		\chi_{B^2}\zeta_B\zeta_{\frac{10}{a}}^2\xi_1
	\>
		+
	\<
		(V_{D}+\omega ^2)\xi_1,
		\zeta_{\frac{10}{a}}^2 \xi_1
	\>.
\end{align}
The 2nd term in r.h.s.\ of \eqref{eq:v22m02} can be bounded by
\begin{align}
\label{eq:v22m03}
|\<
		(V_{D}+\omega ^2)\xi_1,
		\zeta_{\frac{10}{a}}^2 \xi_1
\>|
	\lesssim
\| \xi_1 \|_{L^2_{-\frac{a}{10}}}^2.
\end{align}
For the 1st term of r.h.s.\ of \eqref{eq:v22m02} we write
\begin{align} \nonumber &
\<
	-v_1'',
	\chi_{B^2} \zeta_B \zeta_{\frac{10}{a}}^2\xi_1
\>
	 =
-\<
	v_1,
	(
		\chi_{B^2} \zeta_B \zeta_{\frac{10}{a}}^2\xi_1
	)''
\>
	\\&  \nonumber =
-\<
	v_1,
	(
		\chi_{B^2} \zeta_B \zeta_{\frac{10}{a}}^2
	)''
	\xi_1
		+
	2(
		\chi_{B^2} \zeta_B \zeta_{\frac{10}{a}}^2
	)'
	\xi_1'
		+
	\chi_{B^2}  \zeta_B  \zeta_{\frac{10}{a}}^2
	\xi_1''
\>\\&
	=
-\<
	v_1,
	 \chi_{B^2}  '' \zeta_B  \zeta_{\frac{10}{a}}^2 \xi_1
\>
	-
\<
	v_1,
	2 \chi_{B^2} '
		(\zeta_B \zeta_{\frac{10}{a}}
	)'
	\xi_1
\>
	-
\<
	v_1,
	\chi_{B^2}
	(
		\zeta_B \zeta_{\frac{10}{a}}^2
	)''
	\xi_1
\>
	\label{eq:v22m04}\\&\quad-
\<
	v_1,
	2\chi_{B^2}'
	\zeta_B \zeta_{\frac{10}{a}}^2
	\xi_1'
\>
	-
\<
	v_1,
	2\chi_{B^2}
	(
		\zeta_B \zeta_{\frac{10}{a}}^2
	)'
	\xi_1'
\>
	-
\<
	v_1,
	\chi_{B^2} \zeta_B \zeta_{\frac{10}{a}}^2 \xi_1''v
\> . \label{eq:v22m04-}
\end{align}
For the 1st term of line \eqref{eq:v22m04}, we have
\begin{align}
\nonumber
-\<
	v_1,
	\chi_{B^2}'' \zeta_B \zeta_{\frac{10}{a}}^2 \xi_1
\>
	&=
-\<
	\zeta_B v_1,
	\chi_{B^2}'' \zeta_B \zeta_{\frac{10}{a}}^2 \xi_1
\>
	\\&=
-\<
	\xi_1,
	\chi_{B^2}'' \zeta_B \zeta_{\frac{10}{a}}^2 \xi_1
\>
	-
\<
	(1-\chi_{B^2})\zeta_B \mathcal{T} \chi_{B^2} \zeta_A^{-1} w_1,
	\chi_{B^2}'' \zeta_B \zeta_{\frac{10}{a}}^2 \xi_1
\> . \label{eq:v22m05}
\end{align}
For the 1st term of line \eqref{eq:v22m05},
\begin{align}
\label{eq:v22m06}
|\<
	\xi_1,
	\chi_{B^2}'' \zeta_B \zeta_{\frac{10}{a}}^2 \xi_1
\>|
	\lesssim
B^{-4}
\| \xi_1 \|_{L^2_{-\frac{a}{10}}}^2.
\end{align}
For the 2nd term of line \eqref{eq:v22m05},
\begin{align}
\label{eq:v22m07}
|\<
	(1-\chi_{B^2})\zeta_B \mathcal{T} \chi_{B^2} \zeta_A^{-1} w_1,
	\chi_{B^2}'' \zeta_B \zeta_{\frac{10}{a}}^2 \xi_1
\>|
	\lesssim
e^{-B} \varepsilon^{\widetilde{N}} B^{-4}
\|w_1\|_{L^2_{-\frac{a}{10}}}
 \|\xi_1\|_{L^2_{-\frac{a}{10}}}
\end{align}
Combining \eqref{eq:v22m06} and \eqref{eq:v22m07} we have
\begin{align}
\label{eq:v22m071}
|\<
	v_1,
	\chi_{B^2}'' \zeta_B \zeta_{\frac{10}{a}}^2 \xi_1
\>|
	\lesssim
B^{-4} \| \xi_1 \|_{L^2_{-\frac{a}{10}}}^2
	+
e^{-B} \varepsilon^{\widetilde{N}} B^{-4}
\|w_1\|_{L^2_{-\frac{a}{10}}}
 \|\xi_1\|_{L^2_{-\frac{a}{10}}}.
\end{align}
Next, for the 2nd term of   line \eqref{eq:v22m04} we have
\begin{align}
\nonumber
&-\<
	v_1,
	2\chi_{B^2}'
	(
		\zeta_B\zeta_{\frac{10}{a}}
	)'
	\xi_1
\>
	=
-\<
	\zeta_Bv_1,
	2\chi_{B^2}' \zeta_B^{-1}
	(
		\zeta_B\zeta_{\frac{10}{a}}
	)'
	\xi_1
\>
	\\&=
-\<
	\xi_1,
	2\chi_{B^2}' \zeta_B^{-1}
	(
		\zeta_B\zeta_{\frac{10}{a}}
	)'
	\xi_1
\>	
	-
\<
	(1-\chi_{B^2})\zeta_B \mathcal{T} \chi_{B^2} \zeta_A^{-1} w_1,
	2\chi_{B^2}' \zeta_B^{-1}
	(
		\zeta_B\zeta_{\frac{10}{a}}
	)'
	\xi_1
\>	 . \label{eq:v22m08}
\end{align}
The 1st term of  line \eqref{eq:v22m08} can be bounded as
\begin{align*}
|-\<
	\xi_1,
	2\chi_{B^2}' \zeta_B^{-1}
	(
		\zeta_B\zeta_{\frac{10}{a}}
	)'
	\xi_1
\>|
	\lesssim
B^{-2}
\|\xi_1 \|_{L^2_{-\frac{a}{10}}}^2
\end{align*}
and the 2nd term of line  \eqref{eq:v22m08} can be bounded as
\begin{align*}
|\<
	(1-\chi_{B^2})\zeta_B \mathcal{T} \chi_{B^2} \zeta_A^{-1} w_1,
	2\chi_{B^2}' \zeta_B^{-1}
	(
		\zeta_B\zeta_{\frac{10}{a}}
	)'
	\xi_1
\>|
	\lesssim
e^{-B}\varepsilon^{-\widetilde{N}} B^{-2}
\|w_1\|_{L^2_{-\frac{a}{10}}}
\|\xi_1 \|_{L^2_{-\frac{a}{10}}}.
\end{align*}
Thus, we have
\begin{align}
\label{eq:v22m081}
|\<
	v_1,
	2\chi_{B^2}'
	(
		\zeta_B\zeta_{\frac{10}{a}}
	)'
	\xi_1
\>|
	\lesssim
B^{-2}
\|\xi_1 \|_{L^2_{-\frac{a}{10}}}^2
	+	
e^{-B}\varepsilon^{-\widetilde{N}} B^{-2}
\|w_1\|_{L^2_{-\frac{a}{10}}}
\|\xi_1 \|_{L^2_{-\frac{a}{10}}}.
\end{align}
For the 3rd term of  line \eqref{eq:v22m04}, we have
\begin{align}
\label{eq:v22m09}
|\<
	v_1,
	\chi_{B^2}
	(
		\zeta_B \zeta_{\frac{10}{a}}^2
	)''
	\xi_1
\>|
	=
|\<
	\xi_1,
	\zeta_B^{-1}(\zeta_B\zeta_{\frac{10}{a}}^2)'' \xi_1
\>|
	\lesssim
\|\xi_1\|_{L^2_{-\frac{a}{10}}}^2
\end{align}
For the 1st term of  line \eqref{eq:v22m04-}  we have
\begin{align}
\label{eq:v22m10}
-
\<
	v_1,
	2\chi_{B^2}'
	\zeta_B \zeta_{\frac{10}{a}}^2
	\xi_1'
\>
	=
-\<
	\xi_1,
	2\chi_{B^2}'
	\zeta_B \zeta_{\frac{10}{a}}^2
	\xi_1'
\>
	-
\<
	(1-\chi_{B^2})\zeta_B \mathcal{T} \chi_{B^2} \zeta_A^{-1} w_1,
	2\chi_{B^2}'
	\zeta_B \zeta_{\frac{10}{a}}^2
	\xi_1'
\>
\end{align}
For the 1st term of the r.h.s.\ of \eqref{eq:v22m10},
\begin{align}
\label{eq:v22m11}
|\<
	\xi_1,
	2\chi_{B^2}'
	\zeta_B \zeta_{\frac{10}{a}}^2
	\xi_1'
\>|
	\lesssim
B^{-2}\|\xi_1\|_{L^2_{-\frac{a}{10}}} \|\xi_1'\|_{L^2_{-\frac{a}{10}}},
\end{align}
and for the 2nd term of the  r.h.s.\ of \eqref{eq:v22m10},
\begin{align}
\label{eq:v22m12}
|\<
	(1-\chi_{B^2})\zeta_B \mathcal{T} \chi_{B^2} \zeta_A^{-1} w_1,
	2\chi_{B^2}'
	\zeta_B \zeta_{\frac{10}{a}}^2
	\xi_1'
\>|
	\lesssim
e^{-B}B^{-1}\varepsilon^{-\widetilde{N}} \|w_1\|_{L^2_{-\frac{a}{10}}} \|\xi_1' \|_{L^2_{-\frac{a}{10}}}.
\end{align}
Combining \eqref{eq:v22m11} and \eqref{eq:v22m12}, we have
\begin{align}
\label{eq:v22m13}
|\<
	v_1,
	2\chi_{B^2}'
	\zeta_B \zeta_{\frac{10}{a}}^2
	\xi_1'
\>|
	\lesssim
\(
	B^{-2}\|\xi_1\|_{L^2_{-\frac{a}{10}}}
		+
	B^{-1}e^{-B}\varepsilon^{\widetilde{N}}
	\|w_1\|_{L^2_{-\frac{a}{10}}}
\)
\|\xi_1' \|_{\widetilde{\Sigma}}.
\end{align}
For the 2nd term of line \eqref{eq:v22m04-}, we have
\begin{align}
\label{eq:v22m14}
|\<
	v_1,
	2\chi_{B^2}
	(
		\zeta_B \zeta_{\frac{10}{a}}^2
	)'
	\xi_1'
\>|
	=
2|\<
	\xi_1,
	\zeta_B^{-1}
	(
		\zeta_B \zeta_{\frac{10}{a}}^2
	)'
	\xi_1'
\>|
	\lesssim
\|\xi_1\|_{L^2_{-\frac{a}{10}}}\|\xi_1\|_{\widetilde{\Sigma}}.
\end{align}
For the last term of line \eqref{eq:v22m04-}, we have
\begin{align}
\label{eq:v22m15}
|\<
	v_1,
	\chi_{B^2} \zeta_B \zeta_{\frac{10}{a}}^2 \xi_1''
\>|
	=
|\<
	\xi_1,
	 \zeta_{\frac{10}{a}}^2 \xi_1''
\>|
	\leq
|\<
	\xi_1',
	\zeta_{\frac{10}{a}}^2 \xi_1'
\>|
	+
|\<
	\xi_1,
	 \(\zeta_{\frac{10}{a}}^2\)' \xi_1'
\>|
	\lesssim
\|\xi_1\|_{\widetilde{\Sigma}}^2.
\end{align}
Collecting \eqref{eq:v22m071}, \eqref{eq:v22m081}, \eqref{eq:v22m09}, \eqref{eq:v22m13}, \eqref{eq:v22m14} and \eqref{eq:v22m15} we have \eqref{eq:v22m1}.
\end{proof}

\begin{lemma}
\label{lem:2ndv2r}
We have
\begin{align}
\label{eq:v22r00}
|\<
	\mathbf{J}
	\(
		\sum_{\mathbf{m}\in\boldsymbol{\mathcal{R}}_{\mathrm{min}}}
			\mathbf{z}^{\mathbf{m}}\widetilde{\mathbf{G}}_{\mathbf{m}}
			+
		\boldsymbol{\mathcal{R}}_{\mathbf{v}}
	\),
	\chi_{B^2}^2\zeta_{\widetilde{B}}^2 \sigma_3\mathbf{v}
\>|	
	\lesssim&
c^{-1}\sum_{\mathbf{m}\in \mathbf{R}_{\mathrm{min}}}
	|\mathbf{z}^{\mathbf{m}}|^2
	+
\delta |\dot{\mathbf{z}}-\widetilde{\mathbf{z}}|^2
	+
\|\xi_1\|_{\widetilde{\Sigma}}^2
	\\&+
 (c+\varepsilon) \|\xi_2\|_{L^2_{-\frac{a}{10}}}^2
	+
\varepsilon \|w_1\|_{\widetilde{\Sigma}}^2
	+
\varepsilon \|w_2\|_{L^2_{-\frac{a}{10}}}^2 \nonumber
\end{align}
\end{lemma}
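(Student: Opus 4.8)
The plan is to follow, term by term, the pattern already established in Lemma~\ref{lem:Gv1} (for the discrete part $\sum_{\mathbf{m}}\mathbf{z}^{\mathbf{m}}\widetilde{\mathcal{R}}_{\mathbf{m}}$) and in Lemma~\ref{lem:2ndv1r} (for $\mathbf{R}_{\mathbf{v}}$), but with the first--order operator $\widetilde{S}_B$ replaced throughout by the bounded multiplication operator $\chi_{B^2}^2\zeta_{\widetilde{B}}^2\sigma_3$. The preliminary observation is that, since $\zeta_{\widetilde{B}}=\zeta_{\frac{10}{a}}\zeta_B$ and $\xi_j=\chi_{B^2}\zeta_B v_j$, one has the pointwise identity $\chi_{B^2}^2\zeta_{\widetilde{B}}^2\,v_j=\chi_{B^2}\zeta_{\frac{10}{a}}^2\zeta_B\,\xi_j$; writing $\mathbf{J}\mathbf{a}=(a_2,-a_1)$ in components, the left hand side of \eqref{eq:v22r00} becomes $\<a_2,\chi_{B^2}\zeta_{\frac{10}{a}}^2\zeta_B\xi_1\>+\<a_1,\chi_{B^2}\zeta_{\frac{10}{a}}^2\zeta_B\xi_2\>$, so that \emph{both} $\|\xi_1\|_{L^2_{-\frac{a}{10}}}$ and $\|\xi_2\|_{L^2_{-\frac{a}{10}}}$ enter --- the latter being the source of the $(c+\varepsilon)\|\xi_2\|_{L^2_{-\frac{a}{10}}}^2$ term on the right. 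Since no derivative ever falls on $\mathbf{v}$, each estimate below is a simplification of its analogue in Lemma~\ref{lem:2ndv1r}.

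Concretely, for the discrete part I would use $\widetilde{\mathcal{R}}_{\mathbf{m}}=\mathcal{T}\chi_{B^2}P_c\mathcal{R}_{\mathbf{m}}$ with $\mathcal{R}_{\mathbf{m}}\in\boldsymbol{\Sigma}^{\infty}$ (Proposition~\ref{prop:rp}) and $\mathcal{T}\colon\Sigma^{\widetilde{N}}\to\Sigma^{0}$ bounded (Lemma~\ref{lem:coer5}) to see that the exponentially weighted $L^2$ norm of each component of $\widetilde{\mathcal{R}}_{\mathbf{m}}$ is $\lesssim1$; Cauchy--Schwarz then gives $\lesssim|\mathbf{z}^{\mathbf{m}}|\bigl(\|\xi_1\|_{L^2_{-\frac{a}{10}}}+\|\xi_2\|_{L^2_{-\frac{a}{10}}}\bigr)$, which by Young's inequality and $\|\xi_1\|_{L^2_{-\frac{a}{10}}}\lesssim\|\xi_1\|_{\widetilde{\Sigma}}$ is absorbed into $c^{-1}\sum_{\mathbf{m}}|\mathbf{z}^{\mathbf{m}}|^2+\|\xi_1\|_{\widetilde{\Sigma}}^2+c\|\xi_2\|_{L^2_{-\frac{a}{10}}}^2$. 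For $\mathbf{R}_{\mathbf{v}}$ I would expand via \eqref{def:RvBg} and \eqref{eq:Rtildeeta}: the terms $\mathcal{T}\chi_{B^2}P_c\boldsymbol{\mathcal{R}}_1[\mathbf{z}]$ and $\mathcal{T}\chi_{B^2}P_cD_{\mathbf{z}}\boldsymbol{\phi}[\mathbf{z}](\dot{\mathbf{z}}-\widetilde{\mathbf{z}})$ are treated exactly like $\widetilde{\mathcal{R}}_{\mathbf{m}}$, now using \eqref{eq:R1FRemainder} for the first and $\|P_cD_{\mathbf{z}}\boldsymbol{\phi}[\mathbf{z}]\|\lesssim\delta$ for the second, producing $\delta$--small multiples of $\|\boldsymbol{\xi}\|_{\boldsymbol{\widetilde{\Sigma}}}^2$ together with $c^{-1}\sum|\mathbf{z}^{\mathbf{m}}|^2$ and $\delta|\dot{\mathbf{z}}-\widetilde{\mathbf{z}}|^2$; the terms $\mathcal{T}\chi_{B^2}P_c\mathbf{L}_1(R[\mathbf{z}]-1)\widetilde{\boldsymbol{\eta}}$, $\mathcal{T}\chi_{B^2}P_c(\mathbf{L}[\mathbf{z}]-\mathbf{L}_1)\boldsymbol{\eta}$ --- writing $\mathbf{L}[\mathbf{z}]-\mathbf{L}_1=\mathbf{J}\Delta_{W''}(\mathbf{z})E_1$ and peeling off the $P_d$ and $(R[\mathbf{z}]-1)$ contributions as in \eqref{v11remainder1} --- and $\mathcal{T}\chi_{B^2}P_c\mathbf{J}\mathbf{F}$ --- using $|F_1|\lesssim|\eta_1|^2$, Lemma~\ref{lem:R}, $\|\mathcal{T}\|_{L^2\to L^2}\lesssim\varepsilon^{-\widetilde{N}}$ and \eqref{eq:estimates111} --- all carry an explicit factor of $\delta$, and since $\delta$ is vastly smaller than $\varepsilon$ by \eqref{eq:relABg} these are subsumed into $\varepsilon\|w_1\|_{\widetilde{\Sigma}}^2+\varepsilon\|w_2\|_{L^2_{-\frac{a}{10}}}^2+\|\xi_1\|_{\widetilde{\Sigma}}^2$. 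The last, genuinely new, piece is $-\mathcal{T}(2\chi_{B^2}'\partial_x+\chi_{B^2}'')\widetilde{\eta}_1+\<\im\varepsilon\partial_x\>^{-\widetilde{N}}[V_D,\<\im\varepsilon\partial_x\>^{\widetilde{N}}]v_1$ in the second slot, paired with $\chi_{B^2}^2\zeta_{\widetilde{B}}^2 v_1=\chi_{B^2}\zeta_{\frac{10}{a}}^2\zeta_B\xi_1$: the commutator gains a factor $\varepsilon$ by Lemma~\ref{claim:l2boundII}, while the $\chi_{B^2}',\chi_{B^2}''$ terms are supported in $\{B^2\le|x|\le2B^2\}$, so after inserting weights $e^{\pm\frac{a}{10}|x|}$ and using the off--diagonal kernel decay \eqref{eq:coer52} of $\mathcal{T}$ one gains an exponentially small constant; this reproduces the mechanism of (6.19)--(6.20) of \cite{CM2109.08108} and yields an $o_{\varepsilon}(1)$ multiple of $\|\xi_1\|_{\widetilde{\Sigma}}^2+\|w_1\|_{\widetilde{\Sigma}}^2$. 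Summing all contributions gives \eqref{eq:v22r00}.

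The step I expect to be the main obstacle is this last one: controlling $\mathcal{T}(2\chi_{B^2}'\partial_x+\chi_{B^2}'')\widetilde{\eta}_1$ and the commutator term. There $\mathcal{T}$ does not commute with the cutoff $\chi_{B^2}$, so one cannot argue with plain operator norms but must route the estimate through the explicit, exponentially localized Schwartz kernel of $\mathcal{T}$ (Lemma~\ref{lem:coer5}), checking that the weight $\chi_{B^2}^2\zeta_{\widetilde{B}}^2$ on the test side and the weight $\zeta_A^{-1}$ implicit in $w_1=\zeta_A\widetilde{\eta}_1$ combine so that the gain from the $\{B^2\le|x|\le2B^2\}$ support (respectively the $\varepsilon$ gain from Lemma~\ref{claim:l2boundII}) is not destroyed; this is the analogue of the single place in Lemma~\ref{lem:2ndv1r} where the kernel of $\mathcal{T}$ appears directly, and is in fact slightly easier here because no $\partial_x\mathbf{v}$ is present. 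All remaining terms reduce to Cauchy--Schwarz together with the smallness of $\delta$ relative to $\varepsilon$ and of $B^{-1},e^{-B}$ relative to both, and are routine.
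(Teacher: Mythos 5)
Your proposal is correct and follows essentially the same route as the paper: the same decomposition of $\mathbf{R}_{\mathbf{v}}$ via \eqref{def:RvBg} and \eqref{eq:Rtildeeta}, the same observation that $\chi_{B^2}^2\zeta_{\widetilde{B}}^2\sigma_3$ is a bounded multiplication operator so the estimates simplify relative to Lemma~\ref{lem:2ndv1r}, the same use of $\delta\varepsilon^{-\widetilde{N}}\ll\varepsilon$ to absorb the $\delta$-factored pieces, and the same treatment of the cutoff/commutator term by pairing it only with the first component $\chi_{B^2}\zeta_{10/a}^2\zeta_B\xi_1$ and using Lemma~\ref{claim:l2boundII} together with the exponential gain from the $\{B^2\le|x|\le2B^2\}$ support.
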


\begin{proof}
First, recalling \eqref{eq:RFRemainder},  we have
\begin{align}
\label{eq:v22r01}
|\<
	\mathbf{J} \mathcal{T} \chi_{B^2}
	P_c
	\boldsymbol{\mathcal{R}}[\mathbf{z}],
	\chi_{B^2}^2\zeta_{\widetilde{B}}^2 \sigma_3\mathbf{v}
\>|
	\lesssim
\sum_{\mathbf{m}\in\boldsymbol{\mathcal{R}}_{\mathrm{min}}} |\mathbf{z}^{\mathbf{m}}|
 \|\boldsymbol{\xi}\|_{L^2_{-\frac{a}{10}}}.
\end{align}
Next,
\begin{align}
\label{eq:v22r02} &
|\<
	\mathbf{J} \mathcal{T} \chi_{B^2}
	P_c
	D\boldsymbol{\phi}[\mathbf{z}](\dot{\mathbf{z}}-\widetilde{\mathbf{z}}),
	\chi_{B^2}^2\zeta_{\widetilde{B}}^2 \sigma_3\mathbf{v}
\>|
	\lesssim
\delta |\dot{\mathbf{z}}-\widetilde{\mathbf{z}}|
 \|\boldsymbol{\xi}\|_{L^2_{-\frac{a}{10}}}, \\&
\label{eq:v22r03}
|\<
	\mathbf{J} \mathcal{T} \chi_{B^2}
	P_c
	\mathbf{L}[\mathbf{z}]\(R[\mathbf{z}]-1\)\widetilde{\boldsymbol{\eta}},
	\chi_{B^2}^2\zeta_{\widetilde{B}}^2 \sigma_3\mathbf{v}
\>|
	\lesssim
\delta
\|\mathbf{w} \|_{L^2_{-\frac{a}{10}}}
 \|\boldsymbol{\xi}\|_{L^2_{-\frac{a}{10}}} .\end{align}
 We have, using the notation of Lemma \ref{lem:v11remainder},
 \begin{align}
\label{eq:v22r04} &
|\<
	\mathbf{J} \mathcal{T} \chi_{B^2}
	 P_c  \Delta _{W ''}(\mathbf{z})  E_1  \boldsymbol{\eta},
	\chi_{B^2}^2\zeta_{\widetilde{B}}^2 \sigma_3\mathbf{v}
\>|
	\lesssim
\delta \varepsilon^{-\widetilde{N}}
\|\mathbf{w} \|_{L^2_{-\frac{a}{10}}}
 \|\boldsymbol{\xi}\|_{L^2_{-\frac{a}{10}}}\\&
\label{eq:v22r05}
|\<
	\mathbf{J}   \mathcal{T}\chi_{B^2}  P_c  \mathbf{JF},
	\chi_{B^2}^2\zeta_{\widetilde{B}}^2 \sigma_3\mathbf{v}
\>|
	\lesssim
\varepsilon^{-\widetilde{N}}
\|\chi_{B^2} |\boldsymbol{\eta}|^2 \|_{L^2_{-\frac{a}{10}}}
\|\boldsymbol{\xi}\|_{L^2_{-\frac{a}{10}}}
 	\lesssim
\delta \varepsilon^{-\widetilde{N}}
\|\mathbf{w} \|_{L^2_{-\frac{a}{10}}}
 \|\boldsymbol{\xi}\|_{L^2_{-\frac{a}{10}}} .
\end{align}
Finally,
\begin{align}
\nonumber
&|\<
	\mathbf{J}
	\begin{pmatrix}
		0 \\
		 -\mathcal{T}(2\chi_{B^2}'\partial_x +\chi_{B^2}'')\widetilde{\eta}_1+\<\im \varepsilon\partial_x\>^{-\widetilde{N}}\left [V_{D},\<\im \varepsilon\partial_x\>^{\widetilde{N}}\right ]v_1
	\end{pmatrix},
	\chi_{B^2}^2\zeta_{\widetilde{B}}^2 \sigma_3\mathbf{v}
\>|
	\\&\nonumber=
|\<
	-\mathcal{T}(2\chi_{B^2}'\partial_x +\chi_{B^2}'')\widetilde{\eta}_1+\<\im \varepsilon\partial_x\>^{-\widetilde{N}}\left [V_{D},\<\im \varepsilon\partial_x\>^{\widetilde{N}}\right ] v_1,
	\chi_{B^2}^2\zeta_{\widetilde{B}}^2 v_1
\>|
	\\&\nonumber\lesssim
\varepsilon^{-\widetilde{N}}e^{-B}
\|w_1\|_{\widetilde{\Sigma}}
\|\xi_1 \|_{\widetilde{\Sigma}}
	+
\varepsilon
\|\<x\>^{-10} v_1\|_{L^2}
\|\xi_1\|_{L^2_{-\frac{a}{10}}}
	\\&\lesssim
\varepsilon^{-\widetilde{N}}e^{-B}
\|w_1\|_{\widetilde{\Sigma}}
\|\xi_1 \|_{\widetilde{\Sigma}}
	+
\varepsilon
\(
	\|\xi_1\|_{\widetilde{\Sigma}}
		+
	\varepsilon^{-\widetilde{N}}
	B^{-1} \|w_1\|_{\widetilde{\Sigma}}
\)
\|\xi_1\|_{L^2_{-\frac{a}{10}}}
	\nonumber\\&\lesssim
\varepsilon
\(
	\|w_1\|_{\widetilde{\Sigma}}
	\|\xi_1 \|_{\widetilde{\Sigma}}
		+
	\|\xi_1\|_{\widetilde{\Sigma}}^2
\).\label{eq:v22r06}
\end{align}
Combining \eqref{eq:v22r01}--\eqref{eq:v22r06}, we obtain \eqref{eq:v22r00}.
\end{proof}

\begin{proof}[Proof of Lemma \ref{lem:2ndv}]
The inequalities \eqref{eq:2ndv1} and \eqref{eq:2ndv2} follows from Lemmas \ref{lem:2ndv1m}, \ref{lem:2ndv1r}, \ref{lem:2ndv2m} and \ref{lem:2ndv2r}.
\end{proof}

\section{Proof of Proposition \ref{prop:modp}}
\label{sec:propdmodes1}

 \textit{ Proof of Proposition \ref{prop:modp}.} Recalling  equation \eqref{eq:eta}, which we rewrite in an equivalent form
\begin{align*}
\partial_t \boldsymbol{\eta} + D_{\mathbf{z}}\boldsymbol{\phi}[\mathbf{z}](\dot{\mathbf{z}}-\widetilde{\mathbf{z}})= \mathbf{L}[\mathbf{z}]\boldsymbol{\eta}    + \mathbf{J} \mathbf{F}[\mathbf{z},\boldsymbol{\eta}] + \boldsymbol{\mathcal{R}}[\mathbf{z}],
\end{align*}
and taking the inner product between  of this equation with  and $\mathbf{J}D_{\mathbf{z}}\boldsymbol{\phi}[\mathbf{z}]\boldsymbol{\zeta}$, for any fixed $\boldsymbol{\zeta} \in \C  ^{\widetilde{N}}$, we have
\begin{align}&
\<\partial_t \boldsymbol{\eta} ,\mathbf{J}D_{\mathbf{z}}\boldsymbol{\phi}[\mathbf{z}]\boldsymbol{\zeta}\>+ \< D_{\mathbf{z}}\boldsymbol{\phi}[\mathbf{z}](\dot{\mathbf{z}}-\widetilde{\mathbf{z}}),\mathbf{J}D_{\mathbf{z}}\boldsymbol{\phi}[\mathbf{z}]\boldsymbol{\zeta})\> =
\< \mathbf{L}[\mathbf{z}]\boldsymbol{\eta}  ,\mathbf{J}D_{\mathbf{z}}\boldsymbol{\phi}[\mathbf{z}]\boldsymbol{\zeta}\> + \<\mathbf{F} , D_{\mathbf{z}}\boldsymbol{\phi}[\mathbf{z}]\boldsymbol{\zeta}\> , \label{eq:inneqe}
\end{align}
where we exploited the orthogonality  \eqref{eq:R1FRemainder--},    $\<\mathbf{J} \boldsymbol{\mathcal{R}}[\mathbf{z}],D_{\mathbf{z}}\boldsymbol{\phi}[\mathbf{z}]\boldsymbol{\zeta}\>=0$.
By Leibnitz and the orthogonality condition  $\<\boldsymbol{\eta}, \mathbf{J} D_{\mathbf{z}}\boldsymbol{\phi}[\mathbf{z}]\boldsymbol{\zeta}\>=0$,  we have
\begin{align*}&
\<\partial_t \boldsymbol{\eta} ,\mathbf{J}D_{\mathbf{z}}\boldsymbol{\phi}[\mathbf{z}]\boldsymbol{\zeta}\> = -\<  \boldsymbol{\eta} ,\mathbf{J}D^2_{\mathbf{z}}\boldsymbol{\phi}[\mathbf{z}]( \dot {\mathbf{z}} , \boldsymbol{\zeta})\> .
\end{align*}
Next, differentiating \eqref{eq:RF} w.r.t.\ $\mathbf{z}$, we have
\begin{align} \mathbf{L}[\mathbf{z}]D_{\mathbf{z}}\boldsymbol{\phi}[\mathbf{z}]\boldsymbol{\zeta}  = -
D_{\mathbf{z}}^2 \boldsymbol{\phi}[\mathbf{z}](\widetilde{\mathbf{z}},\boldsymbol{\zeta}) +D_{\mathbf{z}} \boldsymbol{\phi}[\mathbf{z}](D_{\mathbf{z}}\widetilde{\mathbf{z}}[\mathbf{z}]\boldsymbol{\zeta})   - D_{\mathbf{z}}\boldsymbol{\mathcal{R}}[\mathbf{z}]\boldsymbol{\zeta}.\label{eq:RFdiff}
\end{align}
Inserting the information in \eqref{eq:inneqe}, we obtain
\begin{align*}
\<D_{\mathbf{z}}\boldsymbol{\phi}[\mathbf{z}]
(\dot{\mathbf{z}}-\widetilde{\mathbf{z}}),\mathbf{J}D_{\mathbf{z}}\boldsymbol{\phi}[\mathbf{z}]\boldsymbol{\zeta}\>+\<\mathbf{J}\boldsymbol{\eta},
D^2_{\mathbf{z}}\boldsymbol{\phi}[\mathbf{z}](\dot{\mathbf{z}}-\widetilde{\mathbf{z}},\boldsymbol{\zeta})\>=
\<\mathbf{J}\boldsymbol{\eta},D_\mathbf{z}\boldsymbol{\mathcal{R}}[\mathbf{z}]\boldsymbol{\zeta}\>+\<\mathbf{F}[\mathbf{z},\boldsymbol{\eta}],D_{\mathbf{z}}\boldsymbol{\phi}[\mathbf{z}]\boldsymbol{\zeta}\>.
\end{align*}
Now,
\begin{align*}
\<D_{\mathbf{z}}\boldsymbol{\phi}[\mathbf{z}](\dot{\mathbf{z}}-\widetilde{\mathbf{z}}),\mathbf{J}D_{\mathbf{z}}\boldsymbol{\phi}[\mathbf{z}]\boldsymbol{\zeta}\>
=\<D_{\mathbf{z}}\boldsymbol{\phi}[0](\dot{\mathbf{z}}-\widetilde{\mathbf{z}}),\mathbf{J}D_{\mathbf{z}}\boldsymbol{\phi}[0]\boldsymbol{\zeta}\>+O(|\mathbf{z}||\dot{\mathbf{z}}-\widetilde{\mathbf{z}}|),
\end{align*}
and
\begin{align*}
\<D_{\mathbf{z}}\boldsymbol{\phi}[0](\dot{\mathbf{z}}-\widetilde{\mathbf{z}}),\mathbf{J}D\boldsymbol{\phi}[0]\mathbf{e}^j\>=4\<\mathrm{Re}\boldsymbol{\phi}_j(\dot{\mathbf{z}}-\widetilde{\mathbf{z}}),\mathbf{J}\mathrm{Re}\boldsymbol{\phi}_j\>=-2\mathrm{Im}(\dot{\mathbf{z}}-\widetilde{\mathbf{z}}),\\
\<D_{\mathbf{z}}\boldsymbol{\phi}[0](\dot{\mathbf{z}}-\widetilde{\mathbf{z}}),\mathbf{J}D\boldsymbol{\phi}[0]\im \mathbf{e}^j\>=-4\<\mathrm{Re}\boldsymbol{\phi}_j(\dot{\mathbf{z}}-\widetilde{\mathbf{z}}),\mathbf{J}\mathrm{Im}\boldsymbol{\phi}_j\>=-2\mathrm{Re}(\dot{\mathbf{z}}-\widetilde{\mathbf{z}}).
\end{align*}
Thus, by
\begin{align}
|\<\mathbf{J}\boldsymbol{\eta},D_\mathbf{z}\boldsymbol{\mathcal{R}}[\mathbf{z}]\boldsymbol{\zeta}\>|\lesssim \delta\|\mathbf{w}\|_{L^2_{-\frac{a}{10}}},\\
|\<\mathbf{F}[\mathbf{z},\boldsymbol{\eta}],D\boldsymbol{\phi}[\mathbf{z}]\boldsymbol{\zeta}\>|\lesssim \|\eta_1^2\|_{L^2_{-\frac{a}{10}-A^{-1}}}\lesssim \delta \|w_1\|_{L^2_{-\frac{a}{10}}},
\end{align}
for $\boldsymbol{\zeta}=\mathbf{e}^j, \im \mathbf{e}^j$, we have the conclusion.

\qed

Our next task, is to examine the terms $\mathbf{z} ^{\mathbf{m}}$ and show  $\mathbf{z}   \xrightarrow {t  \to  + \infty  }0$, that is the discrete modes are damped   by nonlinear interaction with the radiation.
In order to do so, we   expand the variable $\mathbf{v}$, defined in \eqref{eq:vBg}, in a part resonating with the discrete modes $\mathbf{z}$, which will yield the damping,  and a remainder which we denote by $\mathbf{g}$. Notice that this additional variable $\mathbf{g}$, is standard in the field, starting from \cite{BP2,SW3}.

\section{Smoothing estimate  for $\mathbf{g}$}
\label{sec:smoothing}

Looking at the equation for $\mathbf{v}$, \eqref{eq:vBg}, we introduce the functions \begin{align}&  \label{eq:def_zetam}
 \boldsymbol{\rho}  _ {\mathbf{m}}:=  R ^{+}_{\im \mathbf{L}_{D} }(\boldsymbol{\lambda} \cdot \mathbf{m})  \im \widetilde{\mathcal{R}}_{ \mathbf{m}},
\end{align}
which solve
\begin{align}&  \label{eq:eq_zetam}
  (\im \mathbf{L}_{D}-\boldsymbol{\lambda} \cdot \mathbf{m}  )\boldsymbol{\rho} _{ \mathbf{m}} =  \im   \widetilde{\mathcal{R}}_{ \mathbf{m}}
\end{align}
and we set
\begin{align} \label{eq:expan_v}
 \mathbf{g}= \mathbf{v}+ Z(\mathbf{z}) \text{  where } Z(\mathbf{z}):=-  \sum_{\mathbf{m}\in \mathbf{R}_{\mathrm{min}}}   \mathbf{z} ^{\mathbf{m}} \boldsymbol{\rho} _{ \mathbf{m}} .
\end{align}
  An elementary computation yields
\begin{align*}  &
\im \partial_t     \mathbf{g }   = \im \mathbf{L}_{D} \mathbf{g} - \sum_{\mathbf{m}\in \mathbf{R}_{\min}}\(\im \partial_t \(\mathbf{z}^{\mathbf{m}}\)-\boldsymbol{\lambda}\cdot \mathbf{m} \, \mathbf{z}^{\mathbf{m}} \) \boldsymbol{\rho} _{ \mathbf{m}}+ \im  \mathbf{{R}}_{v}
\end{align*}
or, equivalently,
\begin{align} \label{eq:equation_g11}& \mathbf{g}(t) = e^{t \mathbf{L}_{D} }\mathbf{v}(0) + \im \sum_{\mathbf{m}\in \mathbf{R}_{\min}}\mathbf{z}^{\mathbf{m}}(0) e^{  t\mathbf{L}_{D} }R ^{+}_{\im \mathbf{L}_{D} }(\boldsymbol{\lambda} \cdot \mathbf{m})  \widetilde{\mathcal{R}}_{ \mathbf{m}} \\&  \label{eq:equation_g12}-  \sum_{\mathbf{m}\in \mathbf{R}_{\min}}\im \int _0 ^t e^{  (t-t') \mathbf{L}_{D} }\(  \partial_t \(\mathbf{z}^{\mathbf{m}}\)+\im \boldsymbol{\lambda}\cdot \mathbf{m}  \, \mathbf{z}^{\mathbf{m}} \) \boldsymbol{\rho} _{ \mathbf{m}} dt' \\& \label{eq:equation_g13}- \im \int _0 ^t e^{  (t-t') \mathbf{L}_{D} }\mathcal{T} \(2\chi_{B^2}'\partial_x + \chi_{B^2}''\)\tilde{\eta}_1 \mathbf{j}  dt'
\\& \label{eq:equation_g14}- \im \int _0 ^t e^{  (t-t') \mathbf{L}_{D} } \( \<\im \varepsilon \partial_x\>^{-\widetilde{N}}[V_{D},\< \im \varepsilon  \partial_x\> ^{\widetilde{N}}]v _1 \mathbf{j }  + \mathcal{T}  \chi_{B^2} \mathbf{{R}}_{\widetilde{\eta}} \) .
\end{align}

We will prove the following, where we use the weighted spaces defined in  Definition \ref{def:spaces}.
\begin{proposition}\label{prop:estg1} For $S>4$ we have  \begin{align}   \label{eq:estg11}&
  \| \mathbf{g}   \| _{L^2(I,  \boldsymbol{\mathcal{H}}  ^{1,-S}(\R )) }       \le   o_\varepsilon (1) \epsilon  .
\end{align}
 \end{proposition}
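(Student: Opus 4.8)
The plan is to estimate each of the four groups of terms in the Duhamel-type representation \eqref{eq:equation_g11}--\eqref{eq:equation_g14} of $\mathbf{g}$ separately, using the smoothing estimate of Lemma \ref{lem:smoothest} for the integral terms and the decay estimate of Lemma \ref{lem:lemg9} for the terms involving $e^{t\mathbf{L}_D}$ applied to fixed profiles. First, for the free-evolution terms in \eqref{eq:equation_g11}: the term $e^{t\mathbf{L}_D}\mathbf{v}(0)$ is handled by noting $\mathbf{v}(0)=\mathcal{T}\chi_{B^2}\widetilde{\boldsymbol{\eta}}(0)$ has $\boldsymbol{\mathcal{H}}^{1,S}$-norm $\lesssim \varepsilon^{-\widetilde N}B^{2}\delta$ by Lemma \ref{lem:coer5} and orbital stability, so after integrating the $\langle t\rangle^{-3/2}$ decay from Lemma \ref{lem:lemg9} in $L^2(I)$ we get a bound $\lesssim \varepsilon^{-\widetilde N}B^{2}\delta = o_\varepsilon(1)\epsilon$ by the hierarchy \eqref{eq:relABg}. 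Since $\boldsymbol{\lambda}\cdot\mathbf{m}$ lies in a finite set (Lemma \ref{lem:combinat1}) and $\widetilde{\mathcal{R}}_{\mathbf{m}}\in\boldsymbol{\mathcal{H}}^{1,S}$, the second term in \eqref{eq:equation_g11} is controlled the same way, now using $|\mathbf{z}^{\mathbf{m}}(0)|\lesssim \delta^{|\mathbf{m}|}$ and orbital stability.

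For the integral term \eqref{eq:equation_g12}, apply Lemma \ref{lem:smoothest}: it suffices to bound $\|(\partial_t(\mathbf{z}^{\mathbf{m}})+\im\boldsymbol{\lambda}\cdot\mathbf{m}\,\mathbf{z}^{\mathbf{m}})\boldsymbol{\rho}_{\mathbf{m}}\|_{L^2(I,\boldsymbol{\mathcal{H}}^{1,\tau})}$. Since $\boldsymbol{\rho}_{\mathbf{m}}\in\boldsymbol{\mathcal{H}}^{1,-S'}$ for appropriate weights (it is a resolvent at the edge applied to a Schwartz-class profile, but here one needs it only in a negative power weight, which is fine for $\tau$ chosen below $S$; more carefully, one uses that $\widetilde{\mathcal{R}}_{\mathbf{m}}$ is exponentially localized and Lemma \ref{lem:LAP}), the $\boldsymbol{\mathcal{H}}^{1,\tau}$-norm of the profile is a fixed constant, and the scalar factor $\partial_t(\mathbf{z}^{\mathbf{m}})+\im\boldsymbol{\lambda}\cdot\mathbf{m}\,\mathbf{z}^{\mathbf{m}}$ is bounded in $L^2(I)$ by $\sum_{\mathbf{n}\in\mathbf{R}_{\min}}\|\mathbf{z}^{\mathbf{n}}\|_{L^2(I)}+\delta\|\dot{\mathbf{z}}-\widetilde{\mathbf{z}}\|_{L^2(I)}$ using the modulation equation together with the definition \eqref{eq:tildez} of $\widetilde{\mathbf{z}}$; by the FGR estimate \eqref{eq:FGRint} and Proposition \ref{prop:modp} this is $o_\varepsilon(1)\epsilon$. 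For the term \eqref{eq:equation_g14}: the contribution of $\mathcal{T}\chi_{B^2}\mathbf{R}_{\widetilde{\boldsymbol{\eta}}}$ is bounded via Lemma \ref{lem:smoothest} by estimating $\|\mathbf{R}_{\widetilde{\boldsymbol{\eta}}}\|$ in a weighted space term by term exactly as in Section \ref{sec:1virial} and Section 8, producing $o_\varepsilon(1)(\epsilon + \|\mathbf{w}\|_{L^2(I,\boldsymbol{\widetilde{\Sigma}})}+\|\boldsymbol{\xi}\|_{L^2(I,\boldsymbol{\widetilde{\Sigma}})}+\dots)$, which by the already-established bounds \eqref{FGReqfinal}--\eqref{eq:1stvIntfin} closes to $o_\varepsilon(1)\epsilon$; the commutator term $\langle\im\varepsilon\partial_x\rangle^{-\widetilde N}[V_D,\langle\im\varepsilon\partial_x\rangle^{\widetilde N}]v_1\,\mathbf{j}$ carries an $\varepsilon$ gain from Lemma \ref{claim:l2boundII}, combined with the bound on $v_1$ in negative-weight norms from Lemma \ref{lem:coer3}.

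The main obstacle is the commutator term \eqref{eq:equation_g13}, namely $\int_0^t e^{(t-t')\mathbf{L}_D}\mathcal{T}(2\chi_{B^2}'\partial_x+\chi_{B^2}'')\widetilde{\eta}_1\,\mathbf{j}\,dt'$, which is exactly the delicate term flagged in the introduction. Here $\chi_{B^2}'$ is supported in $B^2\le|x|\le 2B^2$, far from the origin, so $\widetilde{\eta}_1$ there is not directly controlled by the local norms $\|\mathbf{w}\|_{\boldsymbol{\widetilde{\Sigma}}}$ or $\|\boldsymbol{\xi}\|_{\boldsymbol{\widetilde{\Sigma}}}$; one must instead use that $\zeta_A$ has decay rate $A^{-1}\gg B^{-2}$ so that $\chi_{B^2}'\widetilde{\eta}_1 = \chi_{B^2}'\zeta_A^{-1}w_1$ with $\|\zeta_A^{-1}\mathbf{1}_{|x|\sim B^2}\|_{L^\infty}\lesssim e^{B^2/A}$, and then exploit the exponential off-diagonal decay of the kernel of $\mathcal{T}$ from \eqref{eq:coer52} to regain localization near the support of the output weight $\langle x\rangle^{-S}$, picking up a factor $e^{-cB^2/\varepsilon}$ that beats $e^{B^2/A}$ by \eqref{eq:relABg}. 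Feeding this into Lemma \ref{lem:smoothest} with $\mathbf{g}$-side weight $\tau$, one obtains a bound $\lesssim e^{-cB^2/\varepsilon}e^{B^2/A}\varepsilon^{-\widetilde N}\|w_1\|_{L^2(I,\widetilde{\Sigma})} = o_\varepsilon(1)\epsilon$. Assembling the four groups yields \eqref{eq:estg11}. \qed
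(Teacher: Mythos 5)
Your overall decomposition into the four groups \eqref{eq:equation_g11}--\eqref{eq:equation_g14} matches the paper, and your treatment of \eqref{eq:equation_g14} agrees with the paper's. However, there are two genuine gaps in the treatment of \eqref{eq:equation_g11} and \eqref{eq:equation_g12}, both stemming from a confusion about which lemma applies where.

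For the first term $e^{t\mathbf{L}_D}\mathbf{v}(0)$ in \eqref{eq:equation_g11}, you invoke the $\langle t\rangle^{-3/2}$ decay of Lemma \ref{lem:lemg9}, but that lemma only bounds $e^{t\mathbf{L}_D}R^+_{\im\mathbf{L}_D}(\lambda)\mathbf{f}$, i.e.\ it requires a resolvent factor in front of the data. There is no such factor in front of $\mathbf{v}(0)$, so Lemma \ref{lem:lemg9} does not apply: the point of the resolvent is to produce a spectral localization away from the low-frequency threshold which drives the $t^{-3/2}$ decay, and free data $\mathbf{v}(0)$ has no such localization. The paper instead proves and uses a Kato-type local smoothing estimate, Lemma \ref{lem:smooth}, giving $\|e^{t\mathbf{L}_D}\mathbf{f}\|_{L^2(\R,\boldsymbol{\mathcal{H}}^{1,-S})}\lesssim\|\mathbf{f}\|_{\boldsymbol{\mathcal{H}}^1}$ with no pointwise-in-time decay at all; this is derived from the limiting absorption principle (Lemma \ref{lem:LAP}) together with Kato's smoothing theory. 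You need this estimate (or an equivalent one) for the non-resolvent part of \eqref{eq:equation_g11}.

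For \eqref{eq:equation_g12}, you apply the inhomogeneous smoothing estimate Lemma \ref{lem:smoothest}, which requires the forcing to belong to $L^2(I,\boldsymbol{\mathcal{H}}^{1,\tau})$ with $\tau>1/2$, i.e.\ a \emph{positively}-weighted space (in the paper's convention $\|\cdot\|_{\boldsymbol{\mathcal{H}}^{1,\sigma}}=\|\langle x\rangle^{\sigma}\cdot\|_{\boldsymbol{\mathcal{H}}^1}$, so positive $\sigma$ enforces spatial decay). But the profile $\boldsymbol{\rho}_{\mathbf{m}}=R^+_{\im\mathbf{L}_D}(\boldsymbol{\lambda}\cdot\mathbf{m})\im\widetilde{\mathcal{R}}_{\mathbf{m}}$ does \emph{not} lie in any positively-weighted space: the boundary value of the resolvent at an embedded spectral point produces an oscillating, non-decaying function even when its argument $\widetilde{\mathcal{R}}_{\mathbf{m}}$ is Schwartz; Lemma \ref{lem:LAP} only places $\boldsymbol{\rho}_{\mathbf{m}}$ in $\boldsymbol{\mathcal{H}}^{1,-S}$ (growth allowed), which is a strictly weaker statement and does not feed into Lemma \ref{lem:smoothest}. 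Your parenthetical remark ("one needs it only in a negative power weight, which is fine for $\tau$ chosen below $S$") conflates the sign conventions and is where the argument breaks. The paper circumvents this by \emph{not} using Lemma \ref{lem:smoothest} for this term: it uses Lemma \ref{lem:lemg9} applied directly to $e^{(t-t')\mathbf{L}_D}\boldsymbol{\rho}_{\mathbf{m}}=e^{(t-t')\mathbf{L}_D}R^+_{\im\mathbf{L}_D}(\boldsymbol{\lambda}\cdot\mathbf{m})\im\widetilde{\mathcal{R}}_{\mathbf{m}}$, which is \emph{exactly} the form required by Lemma \ref{lem:lemg9} with data $\mathbf{f}=\im\widetilde{\mathcal{R}}_{\mathbf{m}}\in\boldsymbol{\mathcal{H}}^{1,S}$, and then closes with Young's convolution inequality $L^1_{t}\ast L^2_{t}\subset L^2_{t}$ using the integrable kernel $\langle t-t'\rangle^{-3/2}$. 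So the two lemmas are used in exactly the opposite pairing from what you propose, and each pairing works for a structural reason that does not carry over to the other term.

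Your sketch for the commutator term \eqref{eq:equation_g13} is in the right spirit (exploiting the kernel bound \eqref{eq:coer52} and the separation between the scales $B^2$ and $A$), but it conflates the input weight required by Lemma \ref{lem:smoothest} with the output weight of $\mathbf{g}$: one needs to bound the forcing $\mathcal{T}(2\chi_{B^2}'\partial_x+\chi_{B^2}'')\widetilde{\eta}_1$ in a positively-weighted space $L^{2,\tau}$, and since its support is at $|x|\sim B^2$ the weight $\langle x\rangle^\tau\sim B^{2\tau}$ is large; the off-diagonal decay of $\mathcal{T}$ does not help beat this, so a more careful decomposition (as in the cited formulas of \cite{CM2109.08108}) is needed. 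Still, this part is a matter of precision, whereas the two issues above are structural misapplications of the lemmas.
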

To prove Proposition \ref{prop:estg1}   we will need to bound one by one the terms in \eqref{eq:equation_g11}--\eqref{eq:equation_g14}.

   \begin{lemma}\label{lem:smooth}
For any $S >5/2$  there exists a fixed $ c(S)$ s.t.
 \begin{equation} \label{eq:smooth1a}
\|   e^{  t\mathbf{L}_{D}     }
\mathbf{f} \|_{L^{ 2 }(\R ,  \boldsymbol{\mathcal{H}}  ^{1,-S} )} \le c(S)  \| \mathbf{f}   \|_{ \boldsymbol{\mathcal{H}}  ^{1}}  \text{  for all $f \in \boldsymbol{\mathcal{H}}  ^{1}(\R )$} .
\end{equation}
\end{lemma}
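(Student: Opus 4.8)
The plan is to obtain \eqref{eq:smooth1a} from the limiting absorption bound of Lemma~\ref{lem:LAP} via Kato's theory of smooth operators, once one observes that $e^{t\mathbf L_D}$ is a unitary group on an energy space equivalent to $\boldsymbol{\mathcal H}^1$. First, since the Darboux transforms of Section~\ref{sec:darboux} successively delete the eigenvalues $\widetilde\lambda_1^2,\dots,\widetilde\lambda_{\widetilde N}^2$ of $L_1$, the operator $L_D=-\partial_x^2+V_D$ has spectrum $[\omega^2,\infty)$; in particular $L_D\ge\omega^2>0$, so $\|\mathbf f\|_E^2:=\langle L_Df_1,f_1\rangle+\|f_2\|_{L^2}^2$ is a norm equivalent to $\|\mathbf f\|_{\boldsymbol{\mathcal H}^1}^2$ on odd pairs, with respect to which $\mathbf L_D$ is skew-adjoint (this is energy conservation for $\ddot u=-L_Du$). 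Equivalently $\mathcal L:=\im\mathbf L_D$ is self-adjoint on the Hilbert space $\mathcal H:=(\boldsymbol{\mathcal H}^1,\|\cdot\|_E)$ and $e^{t\mathbf L_D}=e^{-\im t\mathcal L}$ is a $C_0$ group of unitaries on $\mathcal H$, preserving the odd subspace, so that Plancherel in $t$ is available.

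Next, let $B$ be the bounded operator on $\mathcal H$ given by multiplication by $\langle x\rangle^{-S}$; then $\|B\mathbf f\|_{\boldsymbol{\mathcal H}^1}=\|\mathbf f\|_{\boldsymbol{\mathcal H}^{1,-S}}$, so \eqref{eq:smooth1a} is precisely the assertion that $B$ is $\mathcal L$-smooth. By Kato's theorem this is equivalent to
\[
\sup_{\operatorname{Im}z\neq 0}\bigl\|B\,R_{\mathcal L}(z)\,B^{*}\bigr\|_{\mathcal H\to\mathcal H}<\infty ,
\]
with $B^{*}$ the $\|\cdot\|_E$-adjoint. The only mildly technical point is that $B^{*}=\operatorname{diag}\bigl(L_D^{-1}\langle x\rangle^{-S}L_D,\ \langle x\rangle^{-S}\bigr)$ is not exactly $B$; however, writing $L_D^{-1}\langle x\rangle^{-S}L_D=\langle x\rangle^{-S}+L_D^{-1}[\langle x\rangle^{-S},L_D]$ and using that $[\langle x\rangle^{-S},L_D]$ is a first-order operator with coefficients decaying like $\langle x\rangle^{-S-1}$, while $L_D^{-1}$ is bounded on weighted Sobolev spaces because $L_D\ge\omega^2>0$ and $V_D-\omega^2\in\mathcal S$, one checks that $B^{*}\colon\boldsymbol{\mathcal H}^1\to\boldsymbol{\mathcal H}^{1,S}$ is bounded.

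Finally, identify $R_{\mathcal L}(z)=R_{\im\mathbf L_D}(z)$, which is analytic off $\sigma_{\mathrm{ess}}$ and whose boundary values $R^{\pm}_{\im\mathbf L_D}(\varsigma)$ exist by Lemma~\ref{lem:Lemma3.5}; by the maximum principle applied to the operator-valued analytic function $z\mapsto\langle x\rangle^{-S}R_{\im\mathbf L_D}(z)\langle y\rangle^{-S}$ in each half-plane it suffices to bound $\|BR^{\pm}_{\im\mathbf L_D}(\varsigma)B^{*}\|_{\mathcal H}$ uniformly in $\varsigma\in\R$. Since $B^{*}\colon\boldsymbol{\mathcal H}^1\to\boldsymbol{\mathcal H}^{1,S}$, $R^{\pm}_{\im\mathbf L_D}(\varsigma)\colon\boldsymbol{\mathcal H}^{1,S}\to\boldsymbol{\mathcal H}^{1,-S}$ uniformly (Lemma~\ref{lem:LAP} with $\tau=S>5/2>1/2$), and $B\colon\boldsymbol{\mathcal H}^{1,-S}\to\boldsymbol{\mathcal H}^1$ isometrically, the composition is uniformly bounded on $\mathcal H$, which yields \eqref{eq:smooth1a}.

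The bulk of the difficulty is already absorbed into Lemma~\ref{lem:LAP}, so the only genuine work here is the bookkeeping in the first two steps: pinning down the right energy inner product, which hinges on $L_D\ge\omega^2>0$, itself a consequence of the Darboux construction removing all bound states, and verifying that the energy-adjoint $B^{*}$ retains the localization of $B$ so that Lemma~\ref{lem:LAP} still applies. If one prefers to avoid Kato's abstract theorem, the same conclusion follows directly from Stone's formula: write $e^{-\im t\mathcal L}$ through the spectral density $\tfrac{1}{2\pi\im}\bigl(R^{+}_{\im\mathbf L_D}-R^{-}_{\im\mathbf L_D}\bigr)$, take the $t$-Fourier transform of $e^{-\mu|t|}e^{t\mathbf L_D}\mathbf f$ (which lies in $L^2_t\mathcal H$ for $\mu>0$), bound the resulting resolvent expression uniformly in $\mu$ by Lemma~\ref{lem:LAP}, and let $\mu\to0^{+}$.
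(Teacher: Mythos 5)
Your proposal is correct and takes essentially the same route as the paper's proof: both reduce the smoothing estimate to Kato's smooth-operator criterion for the multiplication operator $\langle x\rangle^{-S}$ relative to the generator $\im\mathbf L_D$, with the uniform resolvent bound supplied by Lemma~\ref{lem:LAP}, and both invoke Kato \cite{kato66}. The paper is terser—it does not spell out the energy inner product making $\im\mathbf L_D$ self-adjoint, nor the treatment of the $\boldsymbol{\mathcal H}^1$-adjoint of $\langle x\rangle^{-S}$—but these are precisely the tacit ingredients you have made explicit, not a different argument.
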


By Lemma \ref{lem:smooth}  we have
\begin{align*}&        \| \text{r.h.s. of \eqref{eq:equation_g11}}   \| _{L^2(I,  \boldsymbol{\mathcal{H}}  ^{1,-S}(\R )) }  \lesssim \| v(0) \| _{\boldsymbol{\mathcal{H}}  ^{1}}+ \|\mathbf{z}(0) \| ^2
\end{align*}

\textit{Proof of Lemma  \ref{lem:smooth}}.   Recall that, as a consequence of  \eqref{eq:LAP1}, we have  \begin{align*}&      \sup _{0<\epsilon \le 1}  \sup _{\varsigma \in \R } \|   R  _{\im  \mathbf{L}_{D}}(\varsigma  \pm \im \epsilon ) \| _{ \boldsymbol{\mathcal{H}}^{1, S}\to  \boldsymbol{\mathcal{H}}^{1, -S}} <\infty  .
\end{align*}
This easily  implies that
\begin{align*}&     a_3 :=     \sup _{0<\epsilon \le 1}\sup _{\varsigma , \mathbf{f}}    \< \<x\> ^{-S} \( R  _{\im  \mathbf{L}_{D}}(\varsigma   + \im \epsilon )  - R  _{\im  \mathbf{L}_{D}}(\varsigma   - \im \epsilon ) \)   (\<x\> ^{-S}) ^*\mathbf{f}, \mathbf{f}\> _{ \boldsymbol{\mathcal{H}}^{1 }} ,
\end{align*}
where $ (\<x\> ^{-S}) ^*$  is the adjoint of the multiplicative operator $   \<x\> ^{-S} $   in $\boldsymbol{\mathcal{H}}^{1 }$.  Then by  Lemma 3.6  and Lemma 5.5  \cite{kato66} we have \eqref{eq:smooth1a}  with $C(S)= \sqrt{2\pi  a_3}$. \qed

By Lemma \ref{lem:lemg9}  we have
 \begin{align*}&     \| \text{\eqref{eq:equation_g12}}   \| _{L^2(I,  \boldsymbol{\mathcal{H}}  ^{1,-S}(\R )) }\lesssim \sum_{\mathbf{m}\in \mathbf{R}_{\min}}  \int _0^t     \langle  t -t'\rangle ^{-\frac 32 }     |\partial_t \(\mathbf{z}^{\mathbf{m}}\)+\im \boldsymbol{\lambda}\cdot \mathbf{m} \mathbf{z}^{\mathbf{m}}|    dt' \|   \widetilde{\mathcal{R}}_{ \mathbf{m}}  \|_{ \boldsymbol{\mathcal{H}}^{1, S}  }  \\& \lesssim  \sum_{\mathbf{m}\in \mathbf{R}_{\min}} \| \partial_t \(\mathbf{z}^{\mathbf{m}}\)+\im \boldsymbol{\lambda}\cdot \mathbf{m} \mathbf{z}^{\mathbf{m}}\| _{L^2(I)}  \|   \mathcal{R}_{ \mathbf{m}}  \|_{ \Sigma ^{\widetilde{N}+1}  }  \lesssim  \sum_{\mathbf{m}\in  \mathbf{R}_{\min}} \| \partial_t \(\mathbf{z}^{\mathbf{m}}\)+\im \boldsymbol{\lambda}\cdot \mathbf{m} \mathbf{z}^{\mathbf{m}}\| _{L^2(I)} \\& \lesssim \sum_{\mathbf{m}\in\mathbf{R}_{\min}} \(\| D_{\mathbf{z}}\mathbf{z}^{\mathbf{m}} (\dot {\mathbf{z}}- \widetilde{{\mathbf{z}}}) \| _{L^2(I)}  + \| D_{\mathbf{z}}\mathbf{z}^{\mathbf{m}} (  \widetilde{{\mathbf{z}}} +\im    \boldsymbol{\lambda} \mathbf{z} ) \| _{L^2(I)} \) \\& \lesssim
 \| \mathbf{z} \| _{L^\infty(I)} \|  \dot {\mathbf{z}}- \widetilde{{\mathbf{z}}}  \| _{L^2(I)} + \| \mathbf{z} \| _{L^\infty(I)}  \sum_{\mathbf{m}\in \mathbf{R}_{\min}}\| \mathbf{z}^{\mathbf{m}}\| _{L^2(I)}\lesssim \delta ^2 \| \mathbf{w } \| _{L^{2}_{-\frac{a}{10}}} + \delta   \sum_{\mathbf{m}\in \mathbf{R}_{\min}}\| \mathbf{z}^{\mathbf{m}}\| _{L^2(I)} .
\end{align*}
By Lemma \ref{lem:smoothest}  we have
\begin{align}&     \| \text{\eqref{eq:equation_g13}}   \| _{L^2(I,  \boldsymbol{\mathcal{H}}  ^{1,-S} ) }\lesssim
 \|  \mathcal{T} \(2\chi_{B^2}'\partial_x + \chi_{B^2}''\)\tilde{\eta}_1 \| _{L^2( I , L^{2, \tau}) }\lesssim B ^{-\frac{1}{2}}\varepsilon =o_\varepsilon (1) \epsilon  \label{eq:essential1}
\end{align}
where the last inequality is proved in \S 8, in particular formulas (8.23)--(8.25), \cite{CM2109.08108}.

We next look at  \eqref{eq:equation_g14}. Again by   Lemma \ref{lem:smoothest}  we have
\begin{align*}&     \| \int _0 ^t e^{  (t-t') \mathbf{L}_{D} } \<\im \varepsilon \partial_x\>^{-\widetilde{N}}[V_{D},\< \im \varepsilon  \partial_x\> ^{\widetilde{N}}]v _1 \mathbf{j }    \| _{L^2(I,  \boldsymbol{\mathcal{H}}  ^{1,-S} ) } \lesssim
 \|   \<\im \varepsilon \partial_x\>^{-\widetilde{N}}[V_{D},\< \im \varepsilon  \partial_x\> ^{\widetilde{N}}]v _1    \| _{L^2(I,  L  ^{1,\tau} ) } \lesssim  \varepsilon   \epsilon ,
\end{align*}
 where the last inequality is proved in formula (8.26) \cite{CM2109.08108}.  Finally,  we consider
\begin{align*}&     \| \int _0 ^t e^{  (t-t') \mathbf{L}_{D} }  \mathcal{T}  \chi_{B^2} \mathbf{R}_{\widetilde{\eta}}   \| _{L^2(I,  \boldsymbol{\mathcal{H}}  ^{1,-S} ) } \lesssim
 \|  \mathcal{T}  \chi_{B^2} \mathbf{R}_{\widetilde{\eta}}   \| _{L^2(I,  \boldsymbol{\mathcal{H}}  ^{1,\tau} ) }  .
\end{align*}
The right hand side is less than $I+II$ where
\begin{align*} \nonumber & I =    \|    \chi _{8B^2}     \mathcal{T}\chi _{B^2} \mathbf{R}_{\widetilde{\eta}}   \| _{L^2(I,  \boldsymbol{\mathcal{H}}  ^{1,\tau} ) } \\& II =    \|    (1- \chi _{8B^2})     \mathcal{T}\chi _{B^2} \mathbf{R}_{\widetilde{\eta}}   \| _{L^2(I,  \boldsymbol{\mathcal{H}}  ^{1,\tau} ) }
\end{align*}
We have
\begin{align*}   & I   \lesssim B ^{2\tau }   \|         \mathcal{T}\chi _{B^2} \boldsymbol{\mathcal{R}}_{\widetilde{\eta}}   \| _{L^2(I,  \boldsymbol{\mathcal{H}}  ^{1 } ) }
\end{align*}
with
\begin{align*}   & \|         \mathcal{T}\chi _{B^2} \boldsymbol{\mathcal{R}}_{\widetilde{\eta}}   \| _{L^2(I,  \boldsymbol{\mathcal{H}}  ^{1 } ) } \le
 ( I_1  +I_2+I_3) \text{ where}  \\& \nonumber I_1 =    \| \mathcal{T} \chi _{B^2}  P_c\(\boldsymbol{\mathcal{R}}_1[\mathbf{z}]     -  D\boldsymbol{\phi}[\mathbf{z}](\dot{\mathbf{z}}-\widetilde{\mathbf{z}})
+  \mathbf{L}_1(R[\mathbf{z}]-1)\widetilde{\boldsymbol{\eta}}   \)  \| _{L^2 (I,  \boldsymbol{\mathcal{H}}  ^{1 })}   ,\\& I_2= \|  \mathcal{T} \chi _{B^2}  P_c (\mathbf{L}[\mathbf{z}]- \mathbf{L}_1)\boldsymbol{\eta}     \| _{L^2 (I,  \boldsymbol{\mathcal{H}}  ^{1 })},  \\&  I_3=  \| \mathcal{T} \chi _{B^2}   P_c\mathbf{J}\mathbf{F} \| _{L^2 (I,  \boldsymbol{\mathcal{H}}  ^{1 })}.
\end{align*}
We have
\begin{align*}    & I_1 \le    \|  \boldsymbol{\mathcal{R}}_1[\mathbf{z}] \| _{L^2 (I,  \Sigma ^{\widetilde{N}+1})} + \| \dot{\mathbf{z}}-\widetilde{\mathbf{z}} \| _{L^2 (I)}
+ \|   (R[\mathbf{z}]-1)\widetilde{\boldsymbol{\eta}} \| _{L^2 (I,  \Sigma ^{\widetilde{N}+1} )}  \\& \lesssim   \| \mathbf{z} \| _{L^\infty(I)} [   \sum_{\mathbf{m}\in \boldsymbol{\mathcal{R}}_{\min}}\| \mathbf{z}^{\mathbf{m}}\| _{L^2(I)} +
  \|  \dot {\mathbf{z}}- \widetilde{{\mathbf{z}}}  \| _{L^2(I)} + \| \boldsymbol{\eta} \| _{L^2 (I,  \Sigma ^* )}]\\&  \lesssim \delta  [   \sum_{\mathbf{m}\in \boldsymbol{\mathcal{R}}_{\min}}\| \mathbf{z}^{\mathbf{m}}\| _{L^2(I)} +
  \|  \dot {\mathbf{z}}- \widetilde{{\mathbf{z}}}  \| _{L^2(I)} + \| \mathbf{w} \| _{L^2 (I,  L ^{2} _{-\frac{a}{10}})}] .
\end{align*}
We have
\begin{align*}    & I_2 \le   \|  \mathcal{T} \chi _{B^2}  (\mathbf{L}[\mathbf{z}]- \mathbf{L}_1)\boldsymbol{\eta}     \| _{L^2 (I,  \boldsymbol{\mathcal{H}}  ^{1 })}
+  \| \mathcal{T} \chi _{B^2}  P_d (\mathbf{L}[\mathbf{z}]- \mathbf{L}_1)\boldsymbol{\eta}     \| _{L^2 (I,  \boldsymbol{\mathcal{H}}  ^{1 })}\\&
 \lesssim \varepsilon ^{-\widetilde{N}}\|    (\mathbf{L}[\mathbf{z}]- \mathbf{L}_1)\boldsymbol{\eta}     \| _{L^2 (I,  \boldsymbol{\mathcal{H}}  ^{1 })} \\& \lesssim
\varepsilon ^{-\widetilde{N}}\|    (\mathbf{L}[\mathbf{z}]- \mathbf{L}_1)\widetilde{\boldsymbol{\eta}}     \| _{L^2 (I,  \boldsymbol{\mathcal{H}}  ^{1 })}
+ \varepsilon ^{-\widetilde{N}}\|    (\mathbf{L}[\mathbf{z}]- \mathbf{L}_1)(R[\mathbf{z}]-1)\widetilde{\boldsymbol{\eta}}     \| _{L^2 (I,  \boldsymbol{\mathcal{H}}  ^{1 })}
\\&\lesssim
\|    (W''(\phi_1[\mathbf{z}])-W''(H)) \widetilde{{\eta}} _2     \| _{L^2 (I,  L  ^{2 })}  +  \delta\| \mathbf{w} \| _{L^2 (I,  L ^{2} _{-\frac{a}{10}})}   \\& \le   \( \|    (W''(\phi_1[\mathbf{z}])-W''(H))  \zeta _A  ^{-1} e^{|x|\frac{a}{10}}  \| _{L^\infty (I,  L  ^{\infty })} + \delta \)  \| \mathbf{w} \| _{L^2 (I,  L ^{2} _{-\frac{a}{10}})} \lesssim   \delta   \| \mathbf{w} \| _{L^2 (I,  L ^{2} _{-\frac{a}{10}})}.
\end{align*}
We have
\begin{align*}   & I_3   \lesssim     \varepsilon ^{-\widetilde{N}}  \|   \chi _{B^2}  F _1     \| _{L^2 (I,  L  ^{2 })}  \lesssim  \varepsilon ^{-\widetilde{N}}  \|   \chi _{B^2}   \eta _1^2    \| _{L^2 (I,  L  ^{2 })} \\& \lesssim    \varepsilon ^{-\widetilde{N}}    \| \eta _1 \| _{L^\infty (I,  H  ^{1}  )}
\(  \|        w  _1    \| _{L^2 (I,  L  ^{2 } (|x|\le 2B^2 ) )}  + \|    R[\mathbf{z}]-1)   \eta _1    \| _{L^2 (I,  L  ^{2 })}    \) \\& \lesssim    \varepsilon ^{-\widetilde{N}} \delta  \(    B^2 \|        w  _1    \| _{L^2 (I,  \widetilde{\Sigma })}  +  \| \mathbf{z} \| _{L^\infty (I   )}     \|       w _1    \| _{L^2 (I, \widetilde{\Sigma })}    \)  \lesssim  \varepsilon ^{-\widetilde{N}} \delta       B^2 \|        w  _1    \| _{L^2 (I,  \widetilde{\Sigma })} .
\end{align*}
We conclude that
\begin{align} \label{eq:smooth7} &    \|         \mathcal{T}\chi _{B^2} \boldsymbol{\mathcal{R}} _{\widetilde{\eta}}   \| _{L^2(I,  \boldsymbol{\mathcal{H}}  ^{1 } ) } \lesssim       \text{    and} \\& I \lesssim B ^{2\tau + 2} \delta ^2 \epsilon =o_\varepsilon (1) \epsilon.\nonumber
\end{align}
Turning to the analysis of $II$, we have
\begin{align} \nonumber  & II   \lesssim     \|   (1- \chi _{8B^2})    \< x \> ^{\tau }  \mathcal{T}    \< x \> ^{-\tau } \chi _{2B^2}   \| _{  \boldsymbol{\mathcal{H}}  ^{1 } \to \boldsymbol{\mathcal{H}}  ^{1 } }     \|        \chi _{B^2} \mathbf{R}_{\widetilde{\eta}}   \| _{L^2 (I,  \boldsymbol{\mathcal{H}}  ^{1,\tau} )}   \\& \lesssim \|        \chi _{B^2} \mathbf{R}_{\widetilde{\eta}}   \| _{L^2 (I,  \boldsymbol{\mathcal{H}}  ^{1,\tau} )} \lesssim B ^{2\tau + 2} \delta ^2 \epsilon = o_\varepsilon (1) \epsilon\label{eq:smooth8}
\end{align}
where we used
\begin{align*}    &  \|   (1- \chi _{8B^2})    \< x \> ^{\tau }  \mathcal{T}    \< x \> ^{-\tau } \chi _{2B^2}   \| _{  \boldsymbol{\mathcal{H}}  ^{1 } \to \boldsymbol{\mathcal{H}}  ^{1 } }  \lesssim 1 .
\end{align*}
 Notice this will be a consequence of
\begin{align}    &  \|   (1- \chi _{8B^2})    \< x \> ^{\tau }  \mathcal{T}    \< x \> ^{-\tau } \chi _{2B^2}   \| _{ H  ^{1 } \to H ^{1 } }  \lesssim 1 \label{eq:smooth81} \\& \|   (1- \chi _{8B^2})    \< x \> ^{\tau }  \mathcal{T}    \< x \> ^{-\tau } \chi _{2B^2}   \| _{ L  ^{2 } \to L ^{2 } }  \lesssim 1 .\label{eq:smooth82}
\end{align}
Inequality \eqref{eq:smooth82} is proved in \S 8 \cite{CM2109.08108}. We turn to \eqref{eq:smooth81}. It is enough to bound the  operator norm  of
\begin{align}  &  \partial _x (1- \chi _{8B^2})    \< x \> ^{\tau }  \mathcal{T}    \< x \> ^{-\tau } \chi _{2B^2}     = [\partial _x  ,(1- \chi _{8B^2})  \< x \> ^{\tau }]     \mathcal{T}    \< x \> ^{-\tau } \chi _{2B^2} \nonumber\\&  +   (1- \chi _{8B^2})    \< x \> ^{\tau }    [\partial _x ,\mathcal{T} ]    \< x \> ^{-\tau } \chi _{2B^2}  \label{eq:smooth83}\\& +      (1- \chi _{8B^2})    \< x \> ^{\tau }    \mathcal{T}   [ \partial _x ,\< x \> ^{-\tau } \chi _{2B^2}] \nonumber\\&  +      (1- \chi _{8B^2})    \< x \> ^{\tau }    \mathcal{T}    \< x \> ^{-\tau } \chi _{2B^2}  \partial _x .  \label{eq:smooth84}
\end{align}
All   terms  except the one in line \eqref{eq:smooth84}
 are similar to the operator in \eqref{eq:smooth81}. The most interesting is the one in line \eqref{eq:smooth83}.  This operator equals
\begin{align*}    &   (1- \chi _{8B^2})    \< x \> ^{\tau }  \< \im \varepsilon \partial  _x \> ^{-\widetilde{N} }   [\partial _x , \mathcal{A} ^*]    \< x \> ^{-\tau } \chi _{2B^2} \\& =  \sum _{j=1}^{\widetilde{N}}  (1- \chi _{8B^2})    \< x \> ^{\tau }  \< \im \varepsilon \partial  _x \> ^{-\widetilde{N} }\( \prod _{i=0} ^{\widetilde{N}-1-j} A^{*}_{\widetilde{N}-i} \) \( \log \psi _j \) ^{\prime\prime}  \(\prod _{i= 1}^{j-1} A^{*}_{j-i} \) \< x \> ^{-\tau } \chi _{2B^2}
\end{align*}
with  the convention $ \prod _{i=0} ^{l}B_i =B_0\circ ...\circ B_l$  and where $\psi _j$ is a ground state of $L_j$, see
 \S \ref{sec:intmodes}. The  operators  in the last line summation are similar to the one in  \eqref{eq:smooth81} and satisfy the same estimate.  Obviously for the operator in line \eqref{eq:smooth84} we have
\begin{align*}    &   \|   (1- \chi _{8B^2})    \< x \> ^{\tau }  \mathcal{T}    \< x \> ^{-\tau } \chi _{2B^2}   \partial _x  \| _{ H  ^{1 } \to L ^{2 } } \le \|   (1- \chi _{8B^2})    \< x \> ^{\tau }  \mathcal{T}    \< x \> ^{-\tau } \chi _{2B^2}      \| _{ L  ^{2 } \to L ^{2 } }\lesssim 1.
\end{align*}

\section{Proof of  Proposition \ref{prop:FGR}: the Fermi Golden Rule} \label{sec:FGR}

We can aptly name   $E(\boldsymbol{\phi}[\mathbf{z}])$  \textit{localized} energy, since   $\boldsymbol{\eta}(t)$ is expected to disperse to infinity as $t\to +\infty$ and what remains locally of the solution is $\boldsymbol{\phi}[\mathbf{z}(t)]$. In our analysis of the FGR,    $E(\boldsymbol{\phi}[\mathbf{z}])$  is like a Lyapunov function. So we compute, recall $  \< \mathbf{f},\mathbf{g}\> :=\Re \int {^t\mathbf{f}} \mathbf{g} dx$,
\begin{align}
\frac{d}{dt} E(\boldsymbol{\phi}[\mathbf{z}]) &= \< \nabla E(\boldsymbol{\phi}[\mathbf{z}]),D_{\mathbf{z}}\boldsymbol{\phi}[\mathbf{z}]\dot{\mathbf{z}}\> \nonumber\\&
=-\<\mathbf{J}\(\boldsymbol{\mathcal{R}}[\mathbf{z}]+D_{\mathbf{z}}\boldsymbol{\phi}[\mathbf{z}]\widetilde{\mathbf{z}}\),D_{\mathbf{z}}\boldsymbol{\phi}[\mathbf{z}]\dot{\mathbf{z}}\>\nonumber\\&
=\<\mathbf{J} D_{\mathbf{z}}\boldsymbol{\phi}[\mathbf{z}]\(\dot{\mathbf{z}}-\widetilde{\mathbf{z}}\),D_{\mathbf{z}}\boldsymbol{\phi}[\mathbf{z}]\widetilde{\mathbf{z}}\>\nonumber\\&
=\<\mathbf{J}\( \mathbf{L}[\mathbf{z}]\boldsymbol{\eta} + \mathbf{J} \mathbf{F}[\mathbf{z},\boldsymbol{\eta}] + \boldsymbol{\mathcal{R}}[\mathbf{z}]-\partial_t \boldsymbol{\eta} \),D_{\mathbf{z}}\boldsymbol{\phi}[\mathbf{z}]\widetilde{\mathbf{z}}\>.\label{eq:FGR1}
\end{align}
where we have used \eqref{eq:RF} in the 2nd equality, the cancelation \eqref{eq:R1FRemainder--} and $\<\mathbf{J}f,f\>=0$ in the 3rd equality and \eqref{eq:eta} in the 4th inequality.
Now, from $\boldsymbol{\eta}\in \mathcal{H}_c[\mathbf{z}]$,
\begin{align}
-\<\mathbf{J}\partial_t \boldsymbol{\eta} ,D_{\mathbf{z}}\boldsymbol{\phi}[\mathbf{z}]\widetilde{\mathbf{z}}\> = \<\mathbf{J} \boldsymbol{\eta} ,D_{\mathbf{z}}^2\boldsymbol{\phi}[\mathbf{z}]\(\dot{\mathbf{z}},\widetilde{\mathbf{z}}\)\>,\label{eq:FGR2}
\end{align}
and from \eqref{eq:RFdiff}, the fact that $\mathbf{J}\mathbf{L}[\mathbf{z}] $ is self-adjoint and, again, the fact that $\boldsymbol{\eta}\in \mathcal{H}_c[\mathbf{z}]$,
\begin{align}
\<\mathbf{J} \mathbf{L}[\mathbf{z}]\boldsymbol{\eta} ,D_{\mathbf{z}}\boldsymbol{\phi}[\mathbf{z}]\widetilde{\mathbf{z}}\>&=
\<\boldsymbol{\eta},\mathbf{J} \mathbf{L}[\mathbf{z}] D_{\mathbf{z}}\boldsymbol{\phi}[\mathbf{z}]\widetilde{\mathbf{z}}\>\nonumber\\&=
\<\boldsymbol{\eta},\mathbf{J} \(D_{\mathbf{z}}^2 \boldsymbol{\phi}[\mathbf{z}](\widetilde{\mathbf{z}},\widetilde{\mathbf{z}}) + D_{\mathbf{z}}\boldsymbol{\mathcal{R}}[\mathbf{z}]\widetilde{\mathbf{z}}\)\>.\label{eq:FGR3}
\end{align}
Substituting, \eqref{eq:FGR2} and \eqref{eq:FGR3} into \eqref{eq:FGR1}, we have
\begin{align}\label{eq:compdtEphi}
\frac{d}{dt} E(\boldsymbol{\phi}[\mathbf{z}]) =
-\<\mathbf{J}\boldsymbol{\eta},  D_{\mathbf{z}}\boldsymbol{\mathcal{R}}[\mathbf{z}]\widetilde{\mathbf{z}}\>
+
 \<\mathbf{J} \boldsymbol{\eta} ,D_{\mathbf{z}}^2\boldsymbol{\phi}[\mathbf{z}]\(\dot{\mathbf{z}}-\widetilde{\mathbf{z}},\widetilde{\mathbf{z}}\)\>
 -
 \< \mathbf{F}[\mathbf{z},\boldsymbol{\eta}] ,D_{\mathbf{z}}\boldsymbol{\phi}[\mathbf{z}]\widetilde{\mathbf{z}}\>
\end{align}
\begin{claim}\label{claim:1} For all $t\in I$
\begin{align}\label{eq:claim:1}
 \left | \int _{0} ^{t}\<\mathbf{J}\boldsymbol{\eta},  D_{\mathbf{z}}\boldsymbol{\mathcal{R}}[\mathbf{z}]\widetilde{\mathbf{z}}\> dt' \right |
= o_{\varepsilon}(1) \epsilon ^2.
\end{align}
\end{claim}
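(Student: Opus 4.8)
The plan is to bound the integral of $\<\mathbf{J}\boldsymbol{\eta},D_{\mathbf{z}}\boldsymbol{\mathcal{R}}[\mathbf{z}]\widetilde{\mathbf{z}}\>$ by exploiting two facts: that $D_{\mathbf{z}}\boldsymbol{\mathcal{R}}[\mathbf{z}]$ is exponentially localized in $x$ with a size $\lesssim \sum_{\mathbf{m}\in\mathbf{R}_{\mathrm{min}}}|\mathbf{z}^{\mathbf{m}}|$ (up to lower order), coming from \eqref{eq:RFRemainder}--\eqref{eq:R1FRemainder}, and that $\widetilde{\mathbf{z}}$ itself is controlled pointwise by $|\mathbf{z}|$, so that $D_{\mathbf{z}}\boldsymbol{\mathcal{R}}[\mathbf{z}]\widetilde{\mathbf{z}}$ is, schematically, of size $|\mathbf{z}|\sum_{\mathbf{m}\in\mathbf{R}_{\mathrm{min}}}|\mathbf{z}^{\mathbf{m}}|$ in a weighted space $\boldsymbol{\Sigma}^{l}$. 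Pairing against $\mathbf{J}\boldsymbol{\eta}$ and using that $\boldsymbol{\eta}=R[\mathbf{z}]\widetilde{\boldsymbol{\eta}}$ with $\|\widetilde{\boldsymbol{\eta}}\|_{\boldsymbol{\Sigma}^*}\lesssim\|\mathbf{w}\|_{L^2_{-a/10}}$ (Lemma \ref{lem:R} together with the definition $\mathbf{w}=\zeta_A\widetilde{\boldsymbol{\eta}}$), one obtains the pointwise-in-$t$ bound
\begin{align*}
\left|\<\mathbf{J}\boldsymbol{\eta},D_{\mathbf{z}}\boldsymbol{\mathcal{R}}[\mathbf{z}]\widetilde{\mathbf{z}}\>\right|\lesssim |\mathbf{z}|\,\|\mathbf{w}\|_{L^2_{-\frac{a}{10}}}\sum_{\mathbf{m}\in\mathbf{R}_{\mathrm{min}}}|\mathbf{z}^{\mathbf{m}}|\lesssim \delta\,\|\mathbf{w}\|_{L^2_{-\frac{a}{10}}}\sum_{\mathbf{m}\in\mathbf{R}_{\mathrm{min}}}|\mathbf{z}^{\mathbf{m}}|,
\end{align*}
using the orbital stability bound $|\mathbf{z}|\lesssim\delta$ from Proposition \ref{prop:OrbStab}.

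The next step is to integrate over $I=[0,t]$ and apply Cauchy--Schwarz in time, which yields
\begin{align*}
\left|\int_0^t\<\mathbf{J}\boldsymbol{\eta},D_{\mathbf{z}}\boldsymbol{\mathcal{R}}[\mathbf{z}]\widetilde{\mathbf{z}}\>\,dt'\right|\lesssim \delta\,\|\mathbf{w}\|_{L^2(I,L^2_{-\frac{a}{10}})}\sum_{\mathbf{m}\in\mathbf{R}_{\mathrm{min}}}\|\mathbf{z}^{\mathbf{m}}\|_{L^2(I)}.
\end{align*}
Now I invoke the bootstrap hypothesis \eqref{eq:main11}: both $\|\mathbf{w}\|_{L^2(I,\boldsymbol{\widetilde{\Sigma}})}$ (hence $\|\mathbf{w}\|_{L^2(I,L^2_{-a/10})}$, since $\|\cdot\|_{L^2_{-a/10}}\lesssim\|\cdot\|_{\widetilde{\Sigma}}$) and $\sum_{\mathbf{m}\in\mathbf{R}_{\mathrm{min}}}\|\mathbf{z}^{\mathbf{m}}\|_{L^2(I)}$ are $\le\epsilon$. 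Thus the integral is $\lesssim\delta\,\epsilon^2$. Finally, because of the relation $\log(\delta^{-1})\gg\exp(\varepsilon^{-1})$ in \eqref{eq:relABg}, $\delta$ is far smaller than any $o_\varepsilon(1)$ quantity, so $\delta\,\epsilon^2=o_\varepsilon(1)\,\epsilon^2$, which is \eqref{eq:claim:1}.

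The only mildly delicate point — and the one I would be most careful about — is justifying the weighted bound on $D_{\mathbf{z}}\boldsymbol{\mathcal{R}}[\mathbf{z}]\widetilde{\mathbf{z}}$ in the appropriate $\boldsymbol{\Sigma}^l$ norm paired against $\boldsymbol{\eta}\in\boldsymbol{\Sigma}^*$. This requires differentiating the expansion \eqref{eq:RFRemainder} in $\mathbf{z}$, noting that $D_{\mathbf{z}}(\mathbf{z}^{\mathbf{m}})$ for $\mathbf{m}\in\mathbf{R}_{\mathrm{min}}$ is again controlled by $\sum_{\mathbf{m}'\in\mathbf{R}_{\mathrm{min}}}|\mathbf{z}^{\mathbf{m}'}|$ up to factors of $|\mathbf{z}|^{-1}$ that are absorbed by the extra $|\mathbf{z}|$ in \eqref{eq:R1FRemainder}, together with the smoothness of $\widetilde{\mathbf{z}}$ from Proposition \ref{prop:rp} (here \eqref{eq:tildez}--\eqref{eq:boundzjR} give $|\widetilde{\mathbf{z}}|\lesssim|\mathbf{z}|$). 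Once this localization is in hand, the rest is the Cauchy--Schwarz-plus-bootstrap routine described above, and the smallness of $\delta$ relative to $\varepsilon$ does the rest; there is no genuine obstruction, only bookkeeping with the weighted norms and the parameter hierarchy \eqref{eq:relABg}.
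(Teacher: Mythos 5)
There is a genuine gap, and it is in the pointwise estimate you propose for $D_{\mathbf{z}}\boldsymbol{\mathcal{R}}[\mathbf{z}]\widetilde{\mathbf{z}}$. Differentiating \eqref{eq:RFRemainder} and contracting with $\widetilde{\mathbf{z}}$ does \emph{not} gain an extra factor of $|\mathbf{z}|$: by the identity $D_{\mathbf{z}}\mathbf{z}^{\mathbf{m}}(\im\boldsymbol{\lambda}\mathbf{z})=\im(\boldsymbol{\lambda}\cdot\mathbf{m})\,\mathbf{z}^{\mathbf{m}}$ from \eqref{eq:difzm}, the contribution of the main part $\sum_{\mathbf{m}\in\mathbf{R}_{\mathrm{min}}}\mathbf{z}^{\mathbf{m}}\mathcal{R}_{\mathbf{m}}$, contracted against the leading piece $-\im\boldsymbol{\lambda}\mathbf{z}$ of $\widetilde{\mathbf{z}}$, is exactly $-\im\sum_{\mathbf{m}\in\mathbf{R}_{\mathrm{min}}}(\boldsymbol{\lambda}\cdot\mathbf{m})\mathbf{z}^{\mathbf{m}}\mathcal{R}_{\mathbf{m}}$. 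The derivative drops one power of $\mathbf{z}$, the factor $\widetilde{\mathbf{z}}\sim|\mathbf{z}|$ restores it, and the homogeneity in $\mathbf{z}$ stays $|\mathbf{m}|$. (The extra $|\mathbf{z}|$ in \eqref{eq:R1FRemainder} pertains only to the lower-order remainder $\boldsymbol{\mathcal{R}}_1[\mathbf{z}]$, not to this main term, so you cannot use it to absorb the $|\mathbf{z}|^{-1}$ produced by differentiating the leading monomials.) The honest pointwise bound is therefore $\|\mathbf{w}\|_{L^2_{-\frac{a}{10}}}\sum_{\mathbf{m}}|\mathbf{z}^{\mathbf{m}}|$ with no additional $\delta$, and after Cauchy--Schwarz and \eqref{eq:main11} the time integral is only $O(\epsilon^2)$, which is not $o_\varepsilon(1)\epsilon^2$.

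This failure is structural, not a bookkeeping slip. The term you would like to discard, $\sum_{\mathbf{m}}(\boldsymbol{\lambda}\cdot\mathbf{m})\mathbf{z}^{\mathbf{m}}\<\mathbf{J}\boldsymbol{\eta},\mathcal{R}_{\mathbf{m}}\>$, is precisely the one that, later in Section \ref{sec:FGR} after substituting $\mathbf{v}=\mathbf{g}-Z(\mathbf{z})$, produces the sign-definite Fermi Golden Rule contribution bounded below by $\Gamma\sum_{\mathbf{m}}\boldsymbol{\lambda}\cdot\mathbf{m}\,|\mathbf{z}^{\mathbf{m}}|^2$ in \eqref{eq:line1111aa}; if it were $\delta$-small pointwise there would be nothing to damp the internal modes. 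The paper's proof of Claim \ref{claim:1} accordingly never estimates the integrand directly. It instead integrates the identity \eqref{eq:compdtEphi} over $[0,t]$, so that the integral in \eqref{eq:claim:1} equals $E(\boldsymbol{\phi}[\mathbf{z}(0)])-E(\boldsymbol{\phi}[\mathbf{z}(t)])$ (which is $O(\delta^2)$ by Proposition \ref{prop:OrbStab}) plus the time integrals of $\<\mathbf{J}\boldsymbol{\eta},D_{\mathbf{z}}^2\boldsymbol{\phi}[\mathbf{z}](\dot{\mathbf{z}}-\widetilde{\mathbf{z}},\widetilde{\mathbf{z}})\>$ and of $-\<\mathbf{F}[\mathbf{z},\boldsymbol{\eta}],D_{\mathbf{z}}\boldsymbol{\phi}[\mathbf{z}]\widetilde{\mathbf{z}}\>$. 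Those integrands genuinely carry a $\delta$ factor: the first because $|\widetilde{\mathbf{z}}|\lesssim|\mathbf{z}|\lesssim\delta$, the second because $|F_1|\lesssim|\eta_1|^2$. After Cauchy--Schwarz and \eqref{eq:main11} they are $\lesssim\delta\epsilon^2$, and \eqref{eq:relABg} converts $\delta$ into $o_\varepsilon(1)$. You should argue through \eqref{eq:compdtEphi} as the paper does; the direct bound you propose cannot give the claim.
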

\proof Indeed  we have $\left .E(\boldsymbol{\phi}[\mathbf{z}]) \right ] _{0}^{t}=O(\delta ^2)$  from  Proposition \ref{prop:OrbStab}. we have
\begin{align*}
| \<\mathbf{J} \boldsymbol{\eta} ,D_{\mathbf{z}}^2\boldsymbol{\phi}[\mathbf{z}]\(\dot{\mathbf{z}}-\widetilde{\mathbf{z}},\widetilde{\mathbf{z}}\)\>|\lesssim \delta\|\mathbf{w}\|_{L^2_{-\frac{a}{10}}} |\dot{\mathbf{z}}-\widetilde{\mathbf{z}}|,
\end{align*}
and
\begin{align*}
|\< \mathbf{F}[\mathbf{z},\boldsymbol{\eta}] ,D_{\mathbf{z}}\boldsymbol{\phi}[\mathbf{z}]\widetilde{\mathbf{z}}\>|\lesssim \delta \|w_1\|_{L^2_{-\frac{a}{10}}}^2 ,
\end{align*}
and integrating in time, we obtain the desired bound  \eqref{eq:claim:1}.   \qed

Let us focus now on the term in the left hand side of \eqref{eq:claim:1}.
By   the expansion  \eqref{eq:RFRemainder}   of $\boldsymbol{\mathcal{R}}[\mathbf{z}]$,          we have
\begin{align}
 \<\mathbf{J}\boldsymbol{\eta},  D_{\mathbf{z}}\boldsymbol{\mathcal{R}}[\mathbf{z}]\widetilde{\mathbf{z}}\>=
& \sum_{\mathbf{m}\in \mathbf{R}_{\mathrm{min}}}\<\mathbf{J}\boldsymbol{\eta},  D_{\mathbf{z}}\mathbf{z}^{\mathbf{m}}(-\im \boldsymbol{\lambda}\mathbf{z}) \mathcal{R}_{\mathbf{m}}\>    \label{main1} \\&
+\sum_{\mathbf{m}\in \mathbf{R}_{\mathrm{min}}}\<\mathbf{J}\boldsymbol{\eta}, D\mathbf{z}^{\mathbf{m}}(\widetilde{\mathbf{z}}+\im \boldsymbol{\lambda}\mathbf{z}) \mathcal{R}_{\mathbf{m}}\>
-\<\mathbf{J}\boldsymbol{\eta}, D_{\mathbf{z}}\boldsymbol{\mathcal{R}}_1[\mathbf{z}]\widetilde{\mathbf{z}}\>, \label{nomain1}
\end{align}
where $\boldsymbol{\lambda}\mathbf{z}:=(\lambda_1z_1,\cdots,\lambda_N z_N)$.
The 2nd line can be bounded as
\begin{align*}
\sum_{\mathbf{m}\in \mathbf{R}_{\mathrm{min}}}|\<\mathbf{J}\boldsymbol{\eta}, D\mathbf{z}^{\mathbf{m}}(\widetilde{\mathbf{z}}+\im \boldsymbol{\lambda}\mathbf{z}) \mathcal{R}_{\mathbf{m}}\>|
+|\<\mathbf{J}\boldsymbol{\eta}, D_{\mathbf{z}}\boldsymbol{\mathcal{R}}_1[\mathbf{z}]\widetilde{\mathbf{z}}\>|\lesssim \delta \|\mathbf{w}\|_{L^2_{-\frac{a}{10}}} \sum_{\mathbf{m}\in \mathbf{R}_{\mathrm{min}}}|\mathbf{z}^{\mathbf{m}}|.
\end{align*}
Notice that the time integral of the last formula is of the form $o_{\varepsilon}(1) \epsilon ^2.$

\noindent Now we focus on the term in the right in line \eqref{main1}.   Using the identity  $D_{\mathbf{z}}\mathbf{z}^{\mathbf{m}} (\im \boldsymbol{\lambda}\mathbf{z})=\im \mathbf{m}\cdot \boldsymbol{\lambda} \  \mathbf{z}^{\mathbf{m}}$,  this term equals the sum
\begin{align}&
 -\sum_{\mathbf{m}\in \mathbf{R}_{\mathrm{min}}}\boldsymbol{\lambda}\cdot \mathbf{m}\<\mathbf{J}P_c  \boldsymbol{\eta},\im \mathbf{z}^{\mathbf{m}} \mathcal{R}_{\mathbf{m}}\> \label{main2}\\& -
\sum_{\mathbf{m}\in \mathbf{R}_{\mathrm{min}}}\boldsymbol{\lambda}\cdot \mathbf{m}\<\mathbf{J}P_d  (R[\mathbf{z}]-1) P_c\boldsymbol{\eta},\im \mathbf{z}^{\mathbf{m}} \mathcal{R}_{\mathbf{m}}\>  ,\label{nomain2}
\end{align}
where, by Lemma \ref{lem:R},
\begin{align*}
 |\text{\eqref{nomain2}}|\lesssim \delta \| \mathbf{w} \| _{\widetilde{\boldsymbol{\Sigma}}} \sum_{\mathbf{m}\in \mathbf{R}_{\mathrm{min}}} |\mathbf{z}^{\mathbf{m}}|,
\end{align*}
so that its time integral is of the form $o_{\varepsilon}(1) \epsilon ^2.$

So now let us focus on the term in line \eqref{main2}. It equals the sum
\begin{align} & -\sum_{\mathbf{m}\in \mathbf{R}_{\mathrm{min}}}\boldsymbol{\lambda}\cdot \mathbf{m}\<\mathbf{J}P_c \chi_{B^2}\boldsymbol{\eta},\im \mathbf{z}^{\mathbf{m}} \mathcal{R}_{\mathbf{m}}\> \label{eq:line1}\\& -
\sum_{\mathbf{m}\in \mathbf{R}_{\mathrm{min}}}\boldsymbol{\lambda}\cdot \mathbf{m}\<\mathbf{J}P_c (1-\chi_{B^2})\boldsymbol{\eta},\im \mathbf{z}^{\mathbf{m}} \mathcal{R}_{\mathbf{m}}\>  ,\label{eq:line2}
\end{align}
where the terms in  line  \eqref {eq:line2} can be bounded as follows,
\begin{align*}
\sum_{\mathbf{m}\in \mathbf{R}_{\mathrm{min}}}|\boldsymbol{\lambda}\cdot \mathbf{m}\<\mathbf{J}P_c (1-\chi_{B^2})\boldsymbol{\eta},\im \mathbf{z}^{\mathbf{m}} \mathcal{R}_{\mathbf{m}}\>|\lesssim
B^{-1}\|\mathbf{w}\|_{L^2_{-\frac{a}{10}}} \sum_{\mathbf{m}\in \mathbf{R}_{\mathrm{min}}}|\mathbf{z}^{\mathbf{m}}|,
\end{align*}
and so again the time integral is of the form $o_{\varepsilon}(1) \epsilon ^2.$

Now let us focus on \eqref{eq:line1}.     By Lemma  \ref{lem:coer6}, we have
\begin{align*}
\sum_{\mathbf{m}\in \mathbf{R}_{\mathrm{min}}}\boldsymbol{\lambda}\cdot \mathbf{m}\<\mathbf{J}P_c \chi_{B^2}\boldsymbol{\eta},\im \mathbf{z}^{\mathbf{m}} \mathcal{R}_{\mathbf{m}}\>=
\sum_{\mathbf{m}\in \mathbf{R}_{\mathrm{min}}}\boldsymbol{\lambda}\cdot \mathbf{m}\<\mathbf{J}\prod_{j=1}^{\widetilde{N}}R_{L_1}(\widetilde{\lambda}_j^2)P_c \mathcal{A}\<\im \varepsilon\partial_x\>^{\widetilde{N}}\mathbf{v},\im \mathbf{z}^{\mathbf{m}} \mathcal{R}_{\mathbf{m}}\>
\end{align*}
We substitute $\mathbf{v}=\mathbf{g}-Z(\mathbf{z})$ using \eqref{eq:def_zetam} and \eqref{eq:expan_v}. Then the above term becomes
\begin{align} &
\sum_{\mathbf{m}\in \mathbf{R}_{\mathrm{min}}}\boldsymbol{\lambda}\cdot \mathbf{m}  |\mathbf{z}^{\mathbf{m}}| ^2    \<\mathbf{J}\prod_{j=1}^{\widetilde{N}}R_{L_1}(\widetilde{\lambda}_j^2)P_c \mathcal{A}\<\im \varepsilon\partial_x\>^{\widetilde{N}}R ^{+}_{\im \mathbf{L}_{D} }(\boldsymbol{\lambda} \cdot \mathbf{m})  \im \widetilde{\mathcal{R}}_{ \mathbf{m}} ,\im  \mathcal{R}_{\mathbf{m}}\> \label{eq:line11} \\& +\sum_{ \substack{\mathbf{m},\mathbf{n}\in \mathbf{R}_{\mathrm{min}} \\ \mathbf{m}\neq \mathbf{n}}}\boldsymbol{\lambda}\cdot \mathbf{m}      \< \mathbf{z}^{\mathbf{n}} \mathbf{J}\prod_{j=1}^{\widetilde{N}}R_{L_1}(\widetilde{\lambda}_j^2)P_c \mathcal{A}\< \im \varepsilon\partial_x\>^{\widetilde{N}}R ^{+}_{\im \mathbf{L}_{D} }(\boldsymbol{\lambda} \cdot \mathbf{n})  \im \widetilde{\mathcal{R}}_{ \mathbf{n}} ,\im \mathbf{z}^{\mathbf{m}} \mathcal{R}_{\mathbf{m}}\> \label{eq:line12}
\\& +
\sum_{\mathbf{m}\in \mathbf{R}_{\mathrm{min}}}\boldsymbol{\lambda}\cdot \mathbf{m}\<\mathbf{J}\prod_{j=1}^{\widetilde{N}}R_{L_1}(\widetilde{\lambda}_j^2)P_c \mathcal{A}\<\im \varepsilon\partial_x\>^{\widetilde{N}}\mathbf{g},\im \mathbf{z}^{\mathbf{m}} \mathcal{R}_{\mathbf{m}}\> . \label{eq:line13}
\end{align}
The main term is the one in line \eqref{eq:line11} which we leave aside for a moment. We have
\begin{align*} &
    | \text{\ref{eq:line13}} | \lesssim \sum_{\mathbf{m}\in \mathbf{R}_{\mathrm{min}}} |\mathbf{z}^{\mathbf{m}}|   \| \mathbf{g}\| _{L ^{2,-S}}
    \| \<\im \varepsilon\partial_x\>^{\widetilde{N}} \mathcal{A} ^* P_c  \prod_{j=1}^{\widetilde{N}}R_{L_1}(\widetilde{\lambda}_j^2)    \mathcal{R}_{\mathbf{m}} \| _{L ^{2, S}} \\&   \nonumber \lesssim  \| \mathbf{g}\| _{L ^{2,-S}} \sum_{\mathbf{m}\in \mathbf{R}_{\mathrm{min}}} |\mathbf{z}^{\mathbf{m}}|,
\end{align*}
so that, using Proposition \ref{prop:estg1} and the continuation hypothesis \eqref{eq:main11}, we have
\begin{align} &  \label{eq:line131}
    \| \text{\ref{eq:line13}} \| _{L^1 _t} \lesssim      \| \mathbf{g}\| _{L^2L ^{2,-S}} \sum_{\mathbf{m}\in \mathbf{R}_{\mathrm{min}}} \|\mathbf{z}^{\mathbf{m}}\|_{L^2}\le o_\epsilon  (1) \epsilon ^2.
\end{align}
The generic bracket in line  \eqref{eq:line12} is   of the form
 \begin{align*} &   \< \mathbf{z^{n}}\mathbf{z}^{\overline{\mathbf{m}}}, A \> =    \frac{1}{\boldsymbol{\lambda} \cdot (\mathbf{n}-\mathbf{m})} \< -\im  \boldsymbol{\lambda} \cdot (\mathbf{n}-\mathbf{m}) \mathbf{z^{n}}\mathbf{z}^{\overline{\mathbf{m}}}, -\im  A     \> \\&=  \frac{1}{\boldsymbol{\lambda} \cdot (\mathbf{n}-\mathbf{m})} \frac{d}{dt}  \<   \mathbf{z^{n}}\mathbf{z}^{\overline{\mathbf{m}}}, -\im  A     \>-\frac{1}{\boldsymbol{\lambda} \cdot (\mathbf{n}-\mathbf{m})}  \<  D_{\mathbf{z}}( \mathbf{z^{n}}\mathbf{z}^{\overline{\mathbf{m}}} )   \( \dot {\mathbf{z}} +\im \boldsymbol{\lambda}\mathbf{z}\)    , -\im  A     \> ,
\end{align*}
where, for $B^* = \overline{{^tB}  }$,   $A$ is defined as
  \begin{align*} &   A =   ( R ^{+}_{\im \mathbf{L}_{D} }(\boldsymbol{\lambda} \cdot \mathbf{n})    \widetilde{\mathcal{R}}_{ \mathbf{n}} )^*
  \< \im \varepsilon\partial_x\>^{\widetilde{N}} \mathcal{A}^* \mathbf{J}\prod_{j=1}^{\widetilde{N}}R_{L_1}(\widetilde{\lambda}_j^2)P_c \mathcal{R}_{\mathbf{m}} .
\end{align*}
 So we have
\begin{align*} &  \left | \int _0 ^t  \< \mathbf{z^{n}}\mathbf{z}^{\overline{\mathbf{m}}}, A \>  dt'  \right | \lesssim       \left | \left . \<   \mathbf{z^{n}}\mathbf{z}^{\overline{\mathbf{m}}}, -\im  A     \>   \right ] _{0}^{t} \right |  +    \|   D_{\mathbf{z}}( \mathbf{z^{n}}\mathbf{z}^{\overline{\mathbf{m}}} )   \( \dot {\mathbf{z}} +\im \boldsymbol{\lambda}\mathbf{z}\)    \| _{L^1(0,t)}   \| A \| _{L^1_x}.
\end{align*}
We have $ \| A \| _{L^1_x}\lesssim 1$ uniformly in $\varepsilon \in (0,1]$. So the first term on the right is $O( \delta ^2)$. We bound the second term
\begin{align*} &   \|   D_{\mathbf{z}}( \mathbf{z^{n}}\mathbf{z}^{\overline{\mathbf{m}}} )   \( \dot {\mathbf{z}} +\im \boldsymbol{\lambda}\mathbf{z}\)    \| _{L^1 } \le \|   D_{\mathbf{z}}( \mathbf{z^{n}}\mathbf{z}^{\overline{\mathbf{m}}} )   \( \dot {\mathbf{z}} - \widetilde{\mathbf{z}}\)    \| _{L^1 } + \|   D_{\mathbf{z}}( \mathbf{z^{n}}\mathbf{z}^{\overline{\mathbf{m}}} )   \(   \widetilde{\mathbf{z}} +\im \boldsymbol{\lambda}\mathbf{z}    \)    \| _{L^1 } \\& \lesssim  \|   D_{\mathbf{z}}( \mathbf{z^{n}}\mathbf{z}^{\overline{\mathbf{m}}} )       \| _{L^2 }\|    \dot {\mathbf{z}} - \widetilde{\mathbf{z}}     \| _{L^2 } +  \|   \mathbf{z}    \| _{L^\infty } \|    \mathbf{z^{n}}  \| _{L^2 }\|    \mathbf{z^{m}}  \| _{L^2 } =o_\varepsilon (1) \epsilon ^2,
\end{align*}
and so we conclude
\begin{align} &  \left | \int _0 ^t   \text{\eqref{eq:line12} }  dt'  \right | =o_\varepsilon (1) \epsilon ^2. \label{eq:line121}
\end{align}
Now we focus on line    \eqref{eq:line11}, which represents the main term of formula \eqref{eq:line11}--\eqref{eq:line13}.
Using $\widetilde{\mathcal{R}}_{ \mathbf{m}} = \mathcal{T} \chi _{B^2}  P_c\mathcal{R}_{\mathbf{m}}$, the bracket in line \eqref{eq:line11} can be rewritten
 \begin{align} &    \label{mainFGR4}
   \<\mathbf{J}\prod_{j=1}^{\widetilde{N}}R_{L_1}(\widetilde{\lambda}_j^2)P_c \mathcal{A} R ^{+}_{\im \mathbf{L}_{D} }(\boldsymbol{\lambda} \cdot \mathbf{m})  \mathcal{A}^*\chi _{B^2}\mathcal{R}_{ \mathbf{m}} ,  \mathcal{R}_{\mathbf{m}}\> \\& +  \<\mathbf{J}\prod_{j=1}^{\widetilde{N}}R_{L_1}(\widetilde{\lambda}_j^2)P_c \mathcal{A} \< \im \varepsilon\partial_x\>^{\widetilde{N}} [R ^{+}_{\im \mathbf{L}_{D} }(\boldsymbol{\lambda} \cdot \mathbf{m}) , \< \im \varepsilon\partial_x\>^{-\widetilde{N}}] \mathcal{A}^*\chi _{B^2}\mathcal{R}_{ \mathbf{m}} ,  \mathcal{R}_{\mathbf{m}}\>    \tag{$a_{\mathbf{m}}$}
\end{align} where we will show   now that the quantity in $(a_{\mathbf{m}})$ is the form $o ( \varepsilon )$.  This will imply that
 \begin{align}&  \left \| \sum_{\mathbf{m}\in \mathbf{R}_{\mathrm{min}}}\boldsymbol{\lambda}\cdot \mathbf{m}  |\mathbf{z}^{\mathbf{m}}| ^2    (a_{\mathbf{m}})\right \| _{L^1(0,t)}=o ( \varepsilon ) \epsilon ^2.  \label{0remainFGR4-}
\end{align}
For $E_1$ the matrix in \eqref{v11remainder1},   the quantity in $(a_{\mathbf{m}})$  can be bounded by the product $\mathfrak{A}\cdot \mathfrak{B}$, where
  \begin{align*}
  	& \mathfrak{A}=
   \|    \< \im \varepsilon  \partial _x \> ^{ \widetilde{N}}   \mathcal{A}^* \prod_{j=1}^{\widetilde{N}}R_{L_1}(\widetilde{\lambda}_j^2) P_c  \mathcal{R}_{\mathbf{m}}\| _{L^{2,\ell}} \text{  and} \\& \mathfrak{B}= \|   R ^{+}_{\im \mathbf{L}_{D} }(\boldsymbol{\lambda} \cdot \mathbf{m})  E_1  \left [ V _D   ,\< \im \varepsilon  \partial _x \> ^{-\widetilde{N}} \right ]  R ^{+}_{\im \mathbf{L}_{D} }(\boldsymbol{\lambda} \cdot \mathbf{m})   \mathcal{A}^*\chi  _{B^2} \mathcal{R}_{\mathbf{m}} \| _{L^{2,-\ell}} ,
\end{align*}
  for $\ell \ge 2$.   We have
  \begin{align*}    \mathfrak{B}\le &  \|  R ^{+}_{\im \mathbf{L}_{D} }(\boldsymbol{\lambda} \cdot \mathbf{m})   \| _{ \boldsymbol{\mathcal{H}}  ^{1,\ell }\to L^{2,-\ell}}^2
    \| \< \im \varepsilon  \partial _x \> ^{-\widetilde{N}}  \left [ V _{D}   ,\< \im \varepsilon  \partial _x \> ^{ \widetilde{N}} \right ]
     \| _{\boldsymbol{\mathcal{H}}  ^{1,-\ell }\to\boldsymbol{\mathcal{H}}  ^{1,\ell }}   \\& \times \|   \< \im \varepsilon  \partial _x \> ^{-\widetilde{N}} \| _{\boldsymbol{\mathcal{H}}  ^{1,-\ell }\to \boldsymbol{\mathcal{H}}  ^{1,-\ell }}
     \| \mathcal{A}^*\chi  _{B^2}\mathcal{R}_{\mathbf{m}} \| _{\boldsymbol{\mathcal{H}}  ^{1,\ell }}  \lesssim \varepsilon ,
\end{align*}
where the $\varepsilon$ comes from the commutator term in the first line,   by   a simple adaptation of Lemma \ref{claim:l2boundII}, while the other terms are uniformly bounded, with $\|   \< \im \varepsilon  \partial _x \> ^{-\widetilde{N}} \| _{\boldsymbol{\mathcal{H}}  ^{1,-\ell }\to \boldsymbol{\mathcal{H}}  ^{1,-\ell }}  \lesssim 1$  uniformly in $\varepsilon \in (0,1]$,  by the proof of  the bound on (10.23) in  \cite{CM2109.08108}.  Uniformly in $\varepsilon \in (0,1]$, we have
\begin{align*}    \mathfrak{A}\le &  \|  \< \im \varepsilon  \partial _x \> ^{ \widetilde{N}}   \< \im   \partial _x \> ^{-2 \widetilde{N}}  \| _{\boldsymbol{\mathcal{H}}  ^{1,\ell } \to \boldsymbol{\mathcal{H}}  ^{1,\ell }}  \|      \< \im   \partial _x \> ^{ 2 \widetilde{N}}  \mathcal{A}^* \prod_{j=1}^{\widetilde{N}}R_{L_1}(\widetilde{\lambda}_j^2) P_c \mathcal{R}_\mathbf{m} \| _{\boldsymbol{\mathcal{H}}  ^{1,\ell } } \lesssim 1.
\end{align*}
We have thus proved what was needed to obtain  \eqref{0remainFGR4-}.

\noindent We consider  \eqref{mainFGR4},  the main term. Using
\begin{align*} &    R ^{+}_{\im \mathbf{L}_{D} }(\boldsymbol{\lambda} \cdot \mathbf{m})  \mathcal{A}^* = \mathcal{A}^* R ^{+}_{\im \mathbf{L}_{1} }(\boldsymbol{\lambda} \cdot \mathbf{m}),
\end{align*}
which follows from \eqref{eq:DarConj2} and \eqref{eq:resolv1}, using   the formula
\begin{align*} &   \mathcal{A}  \mathcal{A}^*=  \prod_{j=1}^{\widetilde{N}}(L_1-\widetilde{\lambda}_j^2),
\end{align*}
which is an elementary consequence of the discussion in \S \ref{sec:darboux} and is proved in \cite{CM2109.08108}, and finally using the fact that $L_1$ commutes with $P_c$, see Remark \ref{rem:comm}, we conclude that
line \eqref{mainFGR4}  equals
\begin{align}    \label{eq:line1111}
      \<\mathbf{J} P_c   R ^{+}_{\im \mathbf{L}_{1} }(\boldsymbol{\lambda} \cdot \mathbf{m})  \chi _{B^2}\mathcal{R}_{ \mathbf{m}} ,  \mathcal{R}_{\mathbf{m}}\>  = & \<\mathbf{J} P_c   R ^{+}_{\im \mathbf{L}_{1} }(\boldsymbol{\lambda} \cdot \mathbf{m})   \mathcal{R}_{ \mathbf{m}} ,  \mathcal{R}_{\mathbf{m}}\> \\& -  \<\mathbf{J} P_c   R ^{+}_{\im \mathbf{L}_{1} }(\boldsymbol{\lambda} \cdot \mathbf{m}) (1- \chi _{B^2})\mathcal{R}_{ \mathbf{m}} ,  \mathcal{R}_{\mathbf{m}}\>     . \label{eq:line1112}
\end{align}
Is is elementary to show that the last line is $O(B ^{-1})$, so that
 \begin{align}&  \left \| \sum_{\mathbf{m}\in \mathbf{R}_{\mathrm{min}}}\boldsymbol{\lambda}\cdot \mathbf{m}  |\mathbf{z}^{\mathbf{m}}| ^2    \<\mathbf{J} P_c   R ^{+}_{\im \mathbf{L}_{1} }(\boldsymbol{\lambda} \cdot \mathbf{m}) (1- \chi _{B^2})\mathcal{R}_{ \mathbf{m}} ,  \mathcal{R}_{\mathbf{m}}\>\right \| _{L^1(0,t)}=o ( B ^{-1} ) \epsilon ^2.  \label{eq:line111-}
\end{align}
  Using an obvious  analogue of   \eqref{eq:resolv1}, the term in the right hand side in line \eqref{eq:line1111}  can be rewritten as
 \begin{align} &  \label{eq:line1111a}
- \<    \im \mathbf{A}_{\mathbf{m}}    P.V. (L_1-r_{\mathbf{m}} )^{-1} P_c\mathcal{R}_{ \mathbf{m}} ,  P_c\mathcal{R}_{\mathbf{m}}\> \\& +\pi \<      \mathbf{A}_{\mathbf{m}}    \delta (L_1-r_{\mathbf{m}} ) P_c\mathcal{R}_{ \mathbf{m}} ,  P_c\mathcal{R}_{\mathbf{m}}\>  \text{  ,   }    \label{eq:line1111b}\\& \text{where }   \mathbf{A}_{\mathbf{m}}= \begin{pmatrix}
	 r_{\mathbf{m}}^2
	& \im r_{\mathbf{m}} \\
	-\im r_{\mathbf{m}}
	&1
\end{pmatrix}  \ , \quad r_{\mathbf{m}}=\sqrt{(\boldsymbol{\lambda} \cdot \mathbf{m})^2-\omega ^2}  . \nonumber
\end{align}
 By antisymmetry, line \eqref{eq:line1111a} is equal  to  0.   We have
 \begin{align*} & \mathbf{B}_{\mathbf{m}}^{-1}\mathbf{A}_{\mathbf{m}} \mathbf{B}_{\mathbf{m}}=\text{diag}\(0, 1+r_{\mathbf{m}}^2\) \text{ where }
 \mathbf{B}_{\mathbf{m}}=  \begin{pmatrix}
	  1
	&  \im r_{\mathbf{m}} \\
	 \im r_{\mathbf{m}}
	&1
\end{pmatrix} .
\end{align*}
Noticing that $ \mathbf{B}_{\mathbf{m}} ^*=(1+ r_{\mathbf{m}}^2)\mathbf{B}_{\mathbf{m}}^{-1}$,
line \eqref{eq:line1111b}  equals
\begin{align} & \nonumber \pi \<   \mathbf{B}_{\mathbf{m}} ^*   \mathbf{A}_{\mathbf{m}}  \mathbf{B}_{\mathbf{m}}   \delta (L_1-r_{\mathbf{m}} )  \mathbf{B}_{\mathbf{m}}^{-1} P_c\mathcal{R}_{ \mathbf{m}} ,  \mathbf{B}_{\mathbf{m}}^{-1}P_c\mathcal{R}_{\mathbf{m}}\> \\&=\pi
\<      \delta (L_1-r_{\mathbf{m}} )    ,|   -\im r_{\mathbf{m}} (P_c\mathcal{R}_{\mathbf{m}}) _{1}+(P_c\mathcal{R}_{\mathbf{m}}) _{2}|^2 \>  \nonumber \\& = \frac{\pi}{2\sqrt{r_{\mathbf{m}}} }   \sum _{\pm} \left |  \left [-\im r_{\mathbf{m}} \widehat{(P_c\mathcal{R}}_{\mathbf{m}}) _{1}+\widehat{(P_c\mathcal{R}_{\mathbf{m}})} _{2} \right ]  ( \pm \sqrt{r_{\mathbf{m}}})   \right | ^2 \ge 0.  \nonumber
\end{align}
where $(P_c\mathcal{R}_{\mathbf{m}}) _{j}$ are the two components of  $P_c\mathcal{R}_{\mathbf{m}}$ for $j=1,2$ and we are taking the distorted Fourier transform associated to operator $L_1$,   for  which we refer to   Weder \cite{weder}  and  Deift and Trubowitz \cite{DT79CPAM}.
By Assumption  \ref{ass:FGR}  there is a fixed $\Gamma >0$ such that
\begin{align} &  \frac{\pi \boldsymbol{\lambda}\cdot \mathbf{m}   }{2\sqrt{r_{\mathbf{m}}} }   \sum _{\pm} \left |  \left [-\im r_{\mathbf{m}} \widehat{(P_c\mathcal{R}}_{\mathbf{m}}) _{1}+\widehat{(P_c\mathcal{R}_{\mathbf{m}})} _{2} \right ]  ( \pm \sqrt{r_{\mathbf{m}}})   \right | ^2 \ge \Gamma >0   \text{  for all } \mathbf{m} \in \mathbf{R}_{\min} .  \label{eq:line1111aa}
\end{align}
Hence we conclude
\begin{align} &
 \sum_{\mathbf{m}\in \mathbf{R}_{\mathrm{min}}}  \boldsymbol{\lambda}\cdot \mathbf{m}  |\mathbf{z}^{\mathbf{m}}| ^2  \<\mathbf{J} P_c   R ^{+}_{\im \mathbf{L}_{1} }(\boldsymbol{\lambda} \cdot \mathbf{m})  \chi _{B^2}\mathcal{R}_{ \mathbf{m}} ,  \mathcal{R}_{\mathbf{m}}\> \ge \Gamma \sum_{\mathbf{m}\in \mathbf{R}_{\mathrm{min}}}  \boldsymbol{\lambda}\cdot \mathbf{m}  |\mathbf{z}^{\mathbf{m}}| ^2.  \label{eq:line1111aa}
\end{align}
So we have expanded the integral in the left hand side of \eqref{eq:claim:1} as a sum of terms which are $o_{\varepsilon}(1) \epsilon ^2$ plus  the integral in $(0,t)$ of the left hand side of  \eqref{eq:line1111aa}.
 We conclude
\begin{align*} & \sum_{\mathbf{m}\in \mathbf{R}_{\mathrm{min}}}  \|\mathbf{z}^{\mathbf{m}}\| ^2 _{L^2(I)} =o_{\varepsilon}(1) \epsilon ^2,
\end{align*}
completing the proof of  Proposition \ref{prop:FGR}. \qed

\section{Repulsivity of the $\phi^8$ model near the $\phi^4$ model} \label{sec:phi8}
In this section, we study that the following nonlinear potential,
\begin{align*}
W_\epsilon(u):=
	\frac{1}{4}(1+\epsilon)^2\(u^2-1\)^2
	\(\epsilon u^2-1\)^2,\ \epsilon\in[0,1),
\end{align*}
which appears in the $\phi^8$ theory.
Notice that when $\epsilon=0$, $W_0$ is the nonlinear potential of the $\phi^4$ theory.
It was shown by \cite{KMMvdB21AnnPDE} that for $2-\sqrt{3}\leq \epsilon<1$, $L_{2,\epsilon}$ has repulsive potential, in the sense of the definition in \cite{KMMvdB21AnnPDE}.
Here, $L_{1,\epsilon}$ is given by $-\partial_x^2+W_{\epsilon}''(H_\epsilon)$ with $H_\epsilon$   the odd kink satisfying $H_{\epsilon}''=W'(H_\epsilon)$ and $L_{j,\epsilon}$  given by Darboux transformations in section \ref{sec:darboux}.

Recall that the potential $V_{2,\epsilon}$ of the 1st transformed operator $L_{2,\epsilon}=-\partial_x^2+V_{2,\epsilon}$ is given by $-W_\epsilon''(H_\epsilon)+\frac{(W_\epsilon'(H_\epsilon))^2}{W_\epsilon(H_\epsilon)}$.
So, to check the repulsivity of $V_{2,\epsilon}$, one only needs to study the function $-W_\epsilon''(x)+\frac{(W_\epsilon'(x))^2}{W_\epsilon(x)}$ in the domain $x\in[-1,1]$ because $H_\epsilon$ is monotone.
This was the very nice observation of \cite{KMMvdB21AnnPDE}.

On the other hand, when $\epsilon=0$, $L_{1,0}$ has two eigenvalues ($0$ and $\frac{3}{2}$), so $L_{2,0}$ is not   repulsive and the 2nd transformed operator $L_{3,0}=-\partial_x^2+2$ has a flat potential, which lies in the boundary of repulsive potential and is not a repulsive potential  in our definition, Assumption \ref{ass:repuls}).

Since it seems that as $\epsilon$ increases, the number of eigenvalues decreases,  it is natural to expect that $V_{3,\epsilon}$ is repulsive for $\epsilon \in (0, \epsilon _*)$ for the first  $\epsilon_*>0$ when $L_{1,\epsilon_*}$ stops to have two eigenvalues.
We will confirm this observation by computing the 1st order expansion of $V_{3,\epsilon}=2+\epsilon \widetilde{V}_{3}+O(\epsilon^2)$ and by numerical computation.
First, $\widetilde{V}_3$ can be computed explicitly.
\begin{proposition}\label{prop:phi8}
We have
\begin{align}\label{eq:tildeV3epsilon}
\widetilde{V}_3=\frac{6}{5}\sech^{2} \(\frac{x}{\sqrt{2}}\)+\frac{3}{5}\sech^{4} \(\frac{x}{\sqrt{2}}\).
\end{align}
In particular, we have $x\widetilde{V}_3'(x)<0$ for $x\neq 0$.
\end{proposition}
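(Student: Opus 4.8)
The plan is to compute the first-order term $\widetilde{V}_3$ in the expansion $V_{3,\epsilon} = 2 + \epsilon \widetilde{V}_3 + O(\epsilon^2)$ by tracking the $\phi^4$ data through one Darboux transformation, then verify the repulsivity inequality directly. First I would recall the explicit $\phi^4$ objects: at $\epsilon = 0$ one has $H_0(x) = \tanh(x/\sqrt 2)$, $L_{1,0} = -\partial_x^2 + 2 - 3\sech^2(x/\sqrt2)$ with ground state $\psi_{1,0} \propto \sech^2(x/\sqrt2)$ (eigenvalue $0$) and second eigenfunction $\psi_{2,0}\propto \tanh(x/\sqrt2)\sech(x/\sqrt2)$ (eigenvalue $3/2$), and after the two Darboux steps $L_{3,0} = -\partial_x^2 + 2$. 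I would then perturb each of these in $\epsilon$. Since $W_\epsilon(u) = \tfrac14(1+\epsilon)^2(u^2-1)^2(\epsilon u^2 - 1)^2$, writing $W_\epsilon = W_0 + \epsilon W^{(1)} + O(\epsilon^2)$ gives an explicit polynomial $W^{(1)}(u)$; the kink perturbs as $H_\epsilon = H_0 + \epsilon h + O(\epsilon^2)$ where $h$ solves the linearized ODE $L_{1,0} h = -W^{(1)\prime}(H_0)$, which can be solved explicitly because $L_{1,0}$ is a Pöschl–Teller operator and the right-hand side is a polynomial in $\tanh, \sech$. Propagating through $V_{1,\epsilon} = W_\epsilon''(H_\epsilon)$, then through the two Darboux steps $V_{k+1} = V_k - 2(\log\psi_k)''$ (equivalently $V_{k+1} = -V_k + 2(\psi_k'/\psi_k)^2 + 2\widetilde\lambda_k^2$ after using $L_k \psi_k = \widetilde\lambda_k^2 \psi_k$), and keeping only the $O(\epsilon)$ part, one lands on the claimed formula \eqref{eq:tildeV3epsilon}.

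A cleaner route for the bookkeeping, which I would actually carry out, is to use the factorization identities from Section~\ref{sec:darboux} more structurally rather than differentiating ground states. The point is that $L_{3,\epsilon}$ is obtained from $L_{1,\epsilon}$ by two Darboux steps removing the eigenvalues $\widetilde\lambda_1^2 = 0$ and $\widetilde\lambda_2^2 = \lambda_1^2$; and there is the standard ``two-fold Darboux'' formula expressing $V_{3,\epsilon}$ directly in terms of $V_{1,\epsilon}$ and the Wronskian of two solutions of $L_{1,\epsilon}u = \widetilde\lambda_j^2 u$, namely $V_{3,\epsilon} = V_{1,\epsilon} - 2(\log W[u_1,u_2])''$ where $u_1,u_2$ are (suitably chosen) solutions at the two removed levels. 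Linearizing this Wronskian formula in $\epsilon$ reduces everything to: (i) the $O(\epsilon)$ corrections of the two eigenvalues $\widetilde\lambda_1^2 \equiv 0$ (fixed, no correction), $\widetilde\lambda_2^2 = \lambda_1^2$, obtained by first-order perturbation theory $\delta(\lambda_1^2) = \langle \psi_{2,0}, (W_\epsilon''(H_\epsilon) - W_0''(H_0))'\,\psi_{2,0}\rangle/\|\psi_{2,0}\|^2$ at first order; and (ii) the $O(\epsilon)$ correction of the two solutions $u_1, u_2$. These are all explicit computations with hyperbolic functions; I expect the algebra to collapse nicely to $\widetilde V_3 = \tfrac65\sech^2(x/\sqrt2) + \tfrac35\sech^4(x/\sqrt2)$, and the appearance of the denominators $5$ is a good sanity check consistent with $\phi^8$ normalization constants.

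Once \eqref{eq:tildeV3epsilon} is established, the repulsivity assertion $x\widetilde V_3'(x) < 0$ for $x \neq 0$ is immediate: differentiating, $\widetilde V_3'(x) = -\tfrac{6}{5}\cdot\tfrac{2}{\sqrt2}\sech^2(x/\sqrt2)\tanh(x/\sqrt2) - \tfrac{3}{5}\cdot\tfrac{4}{\sqrt2}\sech^4(x/\sqrt2)\tanh(x/\sqrt2)$, so $x\widetilde V_3'(x) = -\tfrac{\sqrt2}{5}\, x\tanh(x/\sqrt2)\bigl(6\sech^2(x/\sqrt2) + 12\sech^4(x/\sqrt2)\bigr)$, and since $x\tanh(x/\sqrt2) > 0$ for $x \neq 0$ and the parenthesized factor is strictly positive, the product is strictly negative. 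This then feeds into the intended downstream conclusion (stated in the surrounding text, to be made precise via continuity/numerics): for small $\epsilon > 0$, $V_{3,\epsilon} = 2 + \epsilon\widetilde V_3 + O(\epsilon^2)$ inherits $xV_{3,\epsilon}'(x) < 0$ away from the origin, so Assumption~\ref{ass:repuls} holds for the $\phi^8$ model near the $\phi^4$ model.

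The main obstacle I anticipate is purely computational stamina in step (ii): propagating the $O(\epsilon)$ kink correction $h$ and the eigenfunction corrections through the (iterated) Darboux transform without sign or normalization errors, since each Darboux step involves a ground state whose $\epsilon$-derivative is itself only defined up to the (nonlinear) normalization and up to adding multiples of the ground state, and one must check these ambiguities cancel in $V_{3,\epsilon}$ (they do, because $V_3$ depends only on $(\log\psi)''$, which kills additive constants in $\log\psi$, i.e. multiplicative constants in $\psi$). Organizing the computation so that this gauge-invariance is manifest — e.g. by working with the Wronskian form throughout — is the key to keeping it manageable, and is the step I would be most careful about.
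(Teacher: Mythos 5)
Your proposal takes essentially the same perturbative route as the paper: expand everything to first order in $\epsilon$ around the explicit $\phi^4$ data, track the expansion through the two Darboux steps, and read off $\widetilde V_3$. The paper organizes the Darboux bookkeeping somewhat differently than your ``cleaner route'': rather than using the two-fold Crum/Wronskian formula $V_{3,\epsilon}=V_{1,\epsilon}-2(\log W[u_1,u_2])''$, it writes $V_{3,\epsilon}=\tilde V_{2,\epsilon}(H_\epsilon)-2(\varphi_\epsilon'/\varphi_\epsilon)'$ with $\varphi_\epsilon=A_{1,\epsilon}^*\psi_\epsilon$, so that only the $O(\epsilon)$ correction of $\varphi_\epsilon$ is needed; and it obtains $\partial_\epsilon H_\epsilon|_{\epsilon=0}=H_0(1-H_0^2)$ not by solving $L_{1,0}h=-W^{(1)\prime}(H_0)$ (which has a one-dimensional kernel and so requires choosing the correct odd branch) but by differentiating the explicit implicit representation $x=\int_0^{H_\epsilon}dh/\sqrt{2W_\epsilon(h)}$, which fixes the normalization automatically. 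Your observation that the normalization of the Darboux ground states drops out because $V_3$ depends only on $(\log\psi)''$ is correct and is exactly the gauge-invariance the paper exploits implicitly (via the orthogonality condition $\widetilde\psi_\epsilon\perp\psi_0$); but note that for the \emph{kink} correction $h$ the additive freedom $h\mapsto h+cH_0'$ does change $\partial_\epsilon V_{1,\epsilon}$, so this piece must be pinned down by oddness, not just by gauge-invariance of the Darboux transform. Finally, your explicit check of $x\widetilde V_3'(x)<0$ is sound; there is a small arithmetic slip (the $\sech^4$ coefficient in the factored form should be $6$, not $12$: both terms in $\widetilde V_3'$ come out to $-\tfrac{12}{5\sqrt2}$), but since both terms in the parenthesis are strictly positive in any case, the sign conclusion and hence the verification of Assumption~\ref{ass:repuls} for small $\epsilon$ are unaffected.
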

Next, the result of numerically computation of $V_{3,\epsilon}$ is given by the following graph.

\begin{figure*}[t]
  \begin{center}
    \includegraphics[height=9cm,keepaspectratio
    ]{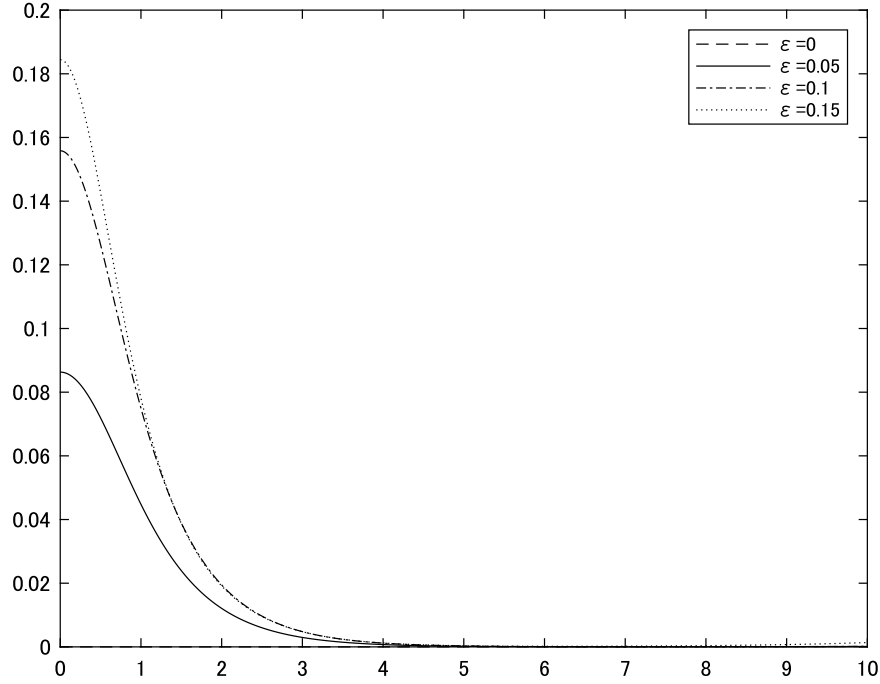}
  \end{center}
  \caption{Graph of $V_{3,\epsilon}-2+4\epsilon^2-2\epsilon^4$ generated by numerically computing $H_{\epsilon}$ and $\varphi_{\epsilon}$.
  The case $\epsilon=0$ is not visible because it is flat.}
  \label{fig:image_label}
\end{figure*}

\begin{proof}[Proof of Proposition \ref{prop:phi8}]
First, by multiplying $H_\epsilon'$ to $H_\epsilon''=W_\epsilon'(H_\epsilon)$ and integrating it, we have
\begin{align*}
H_{\epsilon}'=\sqrt{2W_{\epsilon}(H_{\epsilon})},
\end{align*}
which gives a implicit representation of the kink $H_\epsilon$ by
\begin{align*}
x=\int_0^{H_\epsilon} \frac{dh}{\sqrt{2W_\epsilon(h)}}.
\end{align*}
The above formula holds for any nonlinear potential $W$.
In our case, we can compute the integral in the right hand side and obtain
\begin{align}\label{kinkcompute1}
\sqrt{2}(1-\epsilon^2)x=\log(1+H) - \log(1-H) +\sqrt{\epsilon} \log(1-\sqrt{\epsilon}H) -\sqrt{\epsilon} \log(1+\sqrt{\epsilon}H)
\end{align}
When $\epsilon=0$, we can solve \eqref{kinkcompute1} w.r.t.\ $H_0$ and obtain the $\phi^4$-kink:
\begin{align}\label{phi4kink}
H_0=\tanh\(\frac{x}{\sqrt{2}}\)
\end{align}
Differentiating, \eqref{kinkcompute1} w.r.t.\ $\epsilon$, we have
\begin{align*}
\partial_\epsilon H_\epsilon = \frac{\(1-H_\epsilon^2\)\(1-\epsilon H_\epsilon^2\)}{2(1-\epsilon)} \(-\frac{1}{2\sqrt{\epsilon}}\(\log(1-\sqrt{\epsilon}H_\epsilon)-\log(1+\sqrt{\epsilon}H_\epsilon)\)+\frac{H_\epsilon}{1-\epsilon H_\epsilon^2}-2\sqrt{2}\epsilon x\),
\end{align*}
and by $\lim_{h\to 0}\frac{\log(1+hH_\epsilon)}{h}=H_\epsilon \left.\frac{d}{dh}\right|_{h=0}\log(1+h )=H_\epsilon$, we have
\begin{align}\label{depH0}
\left.\partial_{\epsilon}\right|_{\epsilon=0}H_\epsilon=H_0(1-H_0^2).
\end{align}

We set $\psi_{\epsilon}:=\psi_0+\epsilon\widetilde{\psi}_{\epsilon}$, with  $\widetilde{\psi}_{\epsilon}\perp \psi_0$,  to be the eigenfunction of
\begin{align*}
L_{\epsilon}:=-\partial_x^2 + W_{\epsilon}''(H_{\epsilon}),
\end{align*}
associated to the eigenvalue $\lambda_{\epsilon}=\frac{3}{2}+\epsilon \widetilde{\lambda}_{\epsilon}$, where
\begin{align}\label{phi4ev}
\psi_0(x):=\(\frac{9}{8}\)^{\frac{1}{4}}\frac{\sinh(\frac{x}{\sqrt{2}})}{\cosh^2(\frac{x}{\sqrt{2}})}
\end{align}
is the normalized eigenvector of $L_0$ satisfying $L_0\psi_{0}=\frac{3}{2}\psi_{0}$.
\begin{remark}
By the stability of eigenvalues, $L_{\epsilon}$ has a unique eigenvalue near $\frac{3}{2}$.
\end{remark}
We set $A_{1,\epsilon}=(H_\epsilon')^{-1}\partial_x \(H_\epsilon'\cdot\)$ and $\varphi_{\epsilon}=A_{1,\epsilon}^* \psi_{\epsilon}$.
Since the 2nd transformed potential $V_{3,\epsilon}$ is given by
\begin{align}\label{eq:V3epsilon}
V_{3,\epsilon}=\tilde{V}_{2,\epsilon}(H_\epsilon)-2\(\frac{\varphi_\epsilon'}{\varphi_{\epsilon}}\)',
\end{align}
with $\tilde{V}_{2,\epsilon}$ is given by $V_{2,\epsilon}=\tilde{V}_{2,\epsilon}(H_\epsilon)$, which can be explicitly written as
\begin{align*}
\tilde{V}_{2,\epsilon}(x)&=-W_\epsilon(x) (\log W_{\epsilon}(x))''
\\&=(1+3\epsilon+3\epsilon^2+\epsilon^3)+(1-2\epsilon-6\epsilon^2-2\epsilon^3+\epsilon^4)x^2-\epsilon(1+\epsilon)^3x^4+2\epsilon^2(1+\epsilon)^2x^6,
\end{align*}
it suffices to compute $\left.\partial_{\epsilon}\right|_{\epsilon=0}\varphi_{\epsilon}=A_{1,0}^*\widetilde{\psi}+\(\left.\partial_{\epsilon}\right|_{\epsilon=0}A_{1,\epsilon}^*\) \psi_{0}$.

Expanding $L_{\epsilon}\psi_{\epsilon}=\lambda_{\epsilon}\psi_{\epsilon}$, we have
\begin{align*}
\(L_0-\frac{3}{2}\)\widetilde{\psi}_{\epsilon}= -\frac{W_{\epsilon}''(H_\epsilon)-W_0''(H_0)}{\epsilon} \psi_0 + \widetilde{\lambda}_{\epsilon}\psi_0 + \epsilon\(-\frac{W_{\epsilon}''(H_\epsilon)-W_0''(H_0)}{\epsilon}\widetilde{\psi}_{\epsilon} +\widetilde{\lambda}_{\epsilon}\widetilde{\psi}_{\epsilon}\).
\end{align*}
Thus, taking $\epsilon\to 0$, we have
\begin{align}\label{3}
	\(L_0-\frac{3}{2}\)\widetilde{\psi}_{0}= -\(W_0'''(H_0)\left.\partial_{\epsilon}\right|_{\epsilon=0}H_{\epsilon}+\left.\partial_{\epsilon}\right|_{\epsilon=0}W_{\epsilon}''(H_0)\) \psi_0 + \widetilde{\lambda}_{0}\psi_0 .
\end{align}
Here, $\widetilde{\lambda}_0$ is determined from the orthogonality condition:
\begin{align*}
\widetilde{\lambda}_0=\<\(W'''(H_0)\left.\partial_{\epsilon}\right|_{\epsilon=0}H_{\epsilon}+\left.\partial_{\epsilon}\right|_{\epsilon=0}W_{\epsilon}''(H_0)\)\psi_0,\psi_0\>.
\end{align*}
From \eqref{depH0},
we have
\begin{align}\label{W'''+W''ep}
	W_0'''(H_0)\left.\partial_{\epsilon}\right|_{\epsilon=0}H_{\epsilon}+\left.\partial_{\epsilon}\right|_{\epsilon=0}W_{\epsilon}''(H_0)=-3+24H_0^2-21H_0^4.
\end{align}
Therefore, from  \eqref{phi4kink} and \eqref{phi4ev},
\begin{align}
	\widetilde{\lambda}_0
	=-3+\frac{3}{2}\int \(24\frac{\sinh^2x }{\cosh^2x}-21\frac{\sinh^4x }{\cosh^4 x}\)\frac{\sinh^2x}{\cosh^4x}\,dx
	=\frac{12}{5}.\label{lambda0}
\end{align}
From \eqref{lambda0} and \eqref{W'''+W''ep}, \eqref{3} can be written as
\begin{align}\label{phi8:tildepsi}
\(L_0-\frac{3}{2}\)\widetilde{\psi}_{0}= \(\frac{27}{5}-24H_0^2+21H_0^4\) \psi_0.
\end{align}
Let $A_{2,\epsilon}=\varphi_{\epsilon}^{-1}\partial_x\(\varphi_{\epsilon}\cdot\)$.
Applying $A_{1,0}^*$ to \eqref{phi8:tildepsi}, from $A_{1,0}^*(L_0-3/2)=A_{2,0}A_{2,0}^* A_{1,0}^*$we have
\begin{align}
A_{2,0}A_{2,0}^*A_{1,0}^*\widetilde{\psi}_0=A_0^* \(\frac{27}{5}-24H_0^2+21H_0^4\) \psi_0.
\end{align}
Solving this, we have
\begin{align}\label{A0tildepsi}
	A_0^* \widetilde{\psi}_0=
	-\(\frac{9}{8}\)^{\frac{1}{4}}\sqrt{2}\frac{1}{\cosh(\frac{x}{\sqrt{2}})}
	\(
		\frac{6}{5}\log \(\cosh \frac{x}{\sqrt{2}}\)
		-\frac{27}{10} \frac{1}{\cosh^2(\frac{x}{\sqrt{2}})}
		+3 \frac{1}{\cosh^4(\frac{x}{\sqrt{2}})}
	\).
\end{align}
This provides all the ingredients for the computation of $\widetilde{V}_3$ by differentiating \eqref{eq:V3epsilon}.
After elementary but somewhat long computation, we obtain \eqref{eq:tildeV3epsilon}.
\end{proof}

\begin{remark}
In \cite{KM22}, the asymptotic stability in the odd setting for the odd kink of $\phi^8$ model near the $\phi^4$ model is shown.
They show this result by proving $\phi^4$ model is asymptotically stable and all models near $\phi^4$ model are also asymptotically stable.
\end{remark}

\section*{Acknowledgments}
C. was supported by a FRA of the University of Trieste and by the Prin 2020 project \textit{Hamiltonian and Dispersive PDEs} N. 2020XB3EFL.
M. was supported by the JSPS KAKENHI Grant Number 19K03579, G19KK0066A and JP17H02853.

Department of Mathematics and Geosciences,  University
of Trieste, via Valerio  12/1  Trieste, 34127  Italy.
{\it E-mail Address}: {\tt scuccagna@units.it}

Department of Mathematics and Informatics,
Graduate School of Science,
Chiba University,
Chiba 263-8522, Japan.
{\it E-mail Address}: {\tt maeda@math.s.chiba-u.ac.jp}


\begin{thebibliography}{10}

\bibitem{AS1} M. Alammari and S. Snelson,  {\em On asymptotic stability for near-constant solutions of variable--coefficient scalar field equations},
 arXiv:2104.13909.

\bibitem{AS2} M. Alammari and S. Snelson,  {\em Linear and orbital stability analysis for solitary-wave solutions of variable-coefficient scalar field equations},
arXiv:2102.07347.




\bibitem{AS20DCDS}
Xinliang An and A. Soffer, \emph{Fermi's golden rule and {$H^1$} scattering
  for nonlinear {K}lein-{G}ordon equations with metastable states}, Discrete
  Contin. Dyn. Syst.  {40} (2020), no.~1, 331--373.

\bibitem{BC11AJM}D. Bambusi and S. Cuccagna, \emph{On dispersion of small energy solutions to the nonlinear {K}lein {G}ordon equation with a potential}, Amer.J. Math. {133} (2011), no.~5, 1421--1468.




\bibitem{BP2}
V.Buslaev and  G.Perelman, {\em On the stability of solitary waves for
nonlinear Schr\"odinger equations},   Nonlinear evolution
equations, editor N.N. Uraltseva, Transl. Ser. 2, 164, Amer. Math.
Soc.,
     75--98, Amer. Math. Soc., Providence (1995).


\bibitem{Chen} G. Chen, J. Liu and  B. Lu, \emph{Long--time asymptotics and stability for the sine-Gordon equation}, Preprint
arXiv:2009.04260.














\bibitem{CM19SIMA}
S. Cuccagna and M. Maeda, \emph{On stability of small solitons of the
	1-{D} {NLS} with a trapping delta potential}, SIAM J. Math. Anal. {51}
(2019), no.~6, 4311--4331.


\bibitem{CM2109.08108}
S. Cuccagna and M. Maeda, \emph{On selection of standing wave at small
  energy in the 1d cubic Schr\"odinger equation with a trapping potential},
  preprint arXiv:2109.08108v2.

\bibitem{CM2111.02681}
S. Cuccagna and M. Maeda, \emph{
Revisiting asymptotic stability of solitons of nonlinear Schr\"odinger equations via refined profile method}, preprint arXiv:2111.02681.




\bibitem{CMP2016}
S. Cuccagna, M. Maeda and  T. V. Phan,{ \em  On  small energy stabilization in the NLKG with a trapping potential}, Nonlinear Analysis,  146 (2016),  32--58.


\bibitem{DT79CPAM}
P.~Deift and E.~Trubowitz, \emph{Inverse scattering on the line}, Comm. Pure
  Appl. Math.   (1979),   121--251.


  \bibitem{DM20} J.-M.Delort and N.Masmoudi, \emph{Long  time  Dispersive Estimates for perturbations of a kink solution of one dimensional wave equations},  	preprint  hal--02862414.


\bibitem{GP20} P. Germain and  F.  Pusateri, \emph{Quadratic Klein-Gordon equations with a potential in one dimension},  	arXiv:2006.15688.


\bibitem{GPZ}
 P. Germain,  F.  Pusateri  and K.Z.Zhang, \emph{On 1d quadratic Klein--Gordon equations with a potential and symmetries}, preprint
arXiv:2202.13273.





\bibitem{HPW82}D. B. Henry, J. F. Perez  and W. F. Wreszinski,  {\em Stability theory for solitary-wave solutions
of scalar field equations}, Comm. Math. Phys. 85 (1982),   351--361.


\bibitem{JK}A. Jensen and
T.Kato, \emph{Spectral properties of Schr\"odinger operators and time-decay of wave functions}, Duke Math. Jour 46 (1979),  583--611.


\bibitem{kato66}
T.Kato, \emph{ Wave operators and similarity for some non-selfadjoint
operators},  Math. Annalen, 162
 (1966),
  258--269.

\bibitem{Kl-Tao} { M.~Keel and T,~Tao}, {\em Endpoint Strichartz estimates},  Amer. J. Math.  {120} (1998), 955--980.
   \bibitem{KomKop10} A.  Komech and E.  Kopylova, \emph{    Weighted energy decay for 1D Klein-Gordon equation,} Comm. PDE 35    (2010), 353--374.

\bibitem{KomKop111}A.  Komech and E.  Kopylova, \emph{  On asymptotic stability of kink for relativistic Ginzburg-Landau equations}, Arch. Ration. Mech. Anal. 202 (2011), 213--245.

\bibitem{KomKop112}A.  Komech and E.  Kopylova, \emph{  On asymptotic stability of moving kink for relativistic Ginzburg-Landau equation}, Comm. Math. Phys. 302 (2011), 225--252.

\bibitem {KM22}  M. Kowalczyk and  Y. Martel,
{\em  Kink dynamics under odd perturbations for $(1+1)$-scalar field models with one internal mode}, preprint
arXiv:2203.04143v1.


\bibitem {KMM2}
M. Kowalczyk, Y. Martel and C. Mu\~{n}oz,
{\em  Kink dynamics in the $\phi ^4$ model: asymptotic stability for odd perturbations in the energy space},
J. Amer. Math. Soc. 30 (2017), 769--798.

\bibitem {KMM1}
M. Kowalczyk, Y. Martel and C. Mu\~{n}oz,
{\em  Nonexistence of small, odd breathers for a class of nonlinear wave equations},
 Lett. Math. Phys. 107 (2017),  921--931.

\bibitem {KMM3}
M. Kowalczyk, Y. Martel and C. Mu\~{n}oz,
{\em  Soliton dynamics for the 1D NLKG equation with symmetry and in the absence of internal modes},   arXiv:1903.12460, to appear in Jour.   Eur. Math Soc.


\bibitem{KMMvdB21AnnPDE}
M. Kowalczyk, Y. Martel, C. Mu\~{n}oz, and H. Van Den~Bosch,
\emph{A sufficient condition for asymptotic stability of kinks in general
	{$(1+1)$}-scalar field models}, Ann. PDE \textbf{7} (2021), no.~1, Paper No.
10, 98.


\bibitem {LiLu} Y. Li  and   J. L\" uhrmann,  \emph{Soliton dynamics for the 1D quadratic Klein-Gordon equation with symmetry,}
preprint 	arXiv:2203.11371.



\bibitem {LP} T. L\'eger and  F. Pusateri,\emph{Internal modes and radiation damping for quadratic Klein--Gordon in 3D}, preprint
arXiv:2112.13163.

\bibitem {LS1} H. Lindblad, J. L\" uhrmann, W.Schlag  and A. Soffer, {\em
On modified scattering for 1D quadratic Klein--Gordon equations with non-generic potentials},   arXiv:2012.15191.




\bibitem {LS-1} H. Lindblad, J. L\"uhrmann and A. Soffer,   {\em    Decay and asymptotics for the one--dimensional Klein--Gordon equation with variable coefficient cubic nonlinearities}, SIAM J. Math. Anal. 52 (2020),   6379--6411.


\bibitem {LS0}
H. Lindblad and A. Soffer, {\em A remark on asymptotic completeness for the critical nonlinear Klein--Gordon equation}, Lett. Math. Phys., 73 (2005),   249--258.


\bibitem {LS2}
H. Lindblad and A. Soffer, {\em  A remark on long range scattering for the nonlinear Klein--Gordon equation}, J. Hyperbolic Differ. Equ., 2 (2005),   77–89.
\bibitem {LS3}
H. Lindblad and A. Soffer, {\em  Scattering and small data completeness for the critical nonlinear Schr\"odinger equation}, Nonlinearity, 19 (2006),    345--353.


\bibitem {LS4}
H. Lindblad and A. Soffer, {\em  Scattering for the Klein--Gordon equation with quadratic and variable coefficient cubic nonlinearities}, Trans. Amer. Math. Soc., 367 (2015),   8861--8909.




\bibitem {LuSchlag} J. L\"uhrmann and  W.Schlag,  {\em
Asymptotic stability of the sine-Gordon kink under odd perturbations}, 	arXiv:2106.09605.



\bibitem{Martel2110.01492}
Y. Martel, \emph{Asymptotic stability of solitary waves for the 1D cubic-quintic Schr\"odinger equation with no internal mode}, preprint arXiv:2110.01492.


\bibitem{MM01ARMA}
Y. Martel and F. Merle, \emph{Asymptotic stability of solitons for
  subcritical generalized {K}d{V} equations}, Arch. Ration. Mech. Anal.
  \textbf{157} (2001), no.~3, 219--254.

\bibitem{MM05}
Y. Martel and F. Merle, \emph{Asymptotic stability of solitons of the subcritical gKdV equations revisited}, Nonlinearity 18 (2005),  55--80.


\bibitem{MM081}
Y. Martel and F. Merle, \emph{Asymptotic stability of solitons of the gKdV equations with general nonlinearity}, Math. Ann. 341 (2008),  391--427.


\bibitem{MM082}
Y. Martel and F. Merle, \emph{ Refined asymptotics around solitons for gKdV equations}, Discrete Contin. Dyn. Syst. 20 (2008),   177--218.




\bibitem{mizu08}T.Mizumachi, \emph{Asymptotic stability of small solitary waves to 1D nonlinear  Schr\"odinger equations with potential}, J. Math. Kyoto Univ. 48 (2008), no. 3, 471--497.


\bibitem{murata}M. Murata, \emph{Asymptotic expansions in time for  solutions of Schr\"odinger-type equations}, J.Funct.Analysis 49 (1982),  10--56.

 \bibitem{SW3}
A.Soffer and  M.I.Weinstein, {\em Resonances, radiation damping and
instability in Hamiltonian nonlinear wave equations},  Invent.
Math.    {   136}
 (1999),
  9--74.

\bibitem{Snelson}
S. Snelson, \emph{ Asymptotic stability for odd perturbations of the stationary kink in the variable-speed $\phi ^4$ model}, Trans. Amer. Math. Soc. 370 (2018),   7437--7460.


\bibitem{Sterbenz}
J. Sterbenz, \emph{ Dispersive decay for the 1D Klein-Gordon equation with variable coefficient nonlinearities}, Trans. Amer. Math. Soc., 368 (2016),   2081--2113.

\bibitem{weder}   R.Weder, {\em $L^p-L ^{p'}$ estimates for the Schr\"{o}dinger equation on the
line and Inverse Scattering for the Nonlinear
Schr\"{o}dinger equation with a potential}, Journal of Functional Analysis 170  (2000), 37--68.


\end{thebibliography}
\end{document}